\definecolor{webblue}{rgb}{0,.5,0}
\definecolor{webred}{rgb}{0,.5,0}
\definecolor{webbrown}{rgb}{.6,0,0}
\newtheorem{thm}{Theorem}[section]
\newtheorem{cor}[thm]{Corollary}
\newtheorem{prop}[thm]{Proposition}
\newtheorem{conj}[thm]{Conjecture}
\theoremstyle{definition}
\newtheorem{definition}[thm]{Definition}
\newtheorem{ex}[thm]{Example}
\newtheorem{ques}[thm]{Question}
\newtheorem{rem}[thm]{Remark}
\numberwithin{equation}{section}
\def\JS{\text{JS}}
\def\eop{\hbox{\kern1pt\vrule height6pt width4pt
depth1pt\kern1pt}\medskip}
\newcommand{\textbfit}[1]{\textbf{\textit{#1}}}
\newcommand{\be}{\begin{equation}}
\newcommand{\ee}{\end{equation}}
\def\reff#1{(\protect\ref{#1})}
\def\spose#1{\hbox to 0pt{#1\hss}}
\def\ltapprox{\mathrel{\spose{\lower 3pt\hbox{$\mathchar"218$}}
    \raise 2.0pt\hbox{$\mathchar"13C$}}}
\def\gtapprox{\mathrel{\spose{\lower 3pt\hbox{$\mathchar"218$}}
    \raise 2.0pt\hbox{$\mathchar"13E$}}}
\renewcommand{\qed}{ $\square$ \bigskip}
\newcommand{\diag}{\mathop{\rm diag}\nolimits}
\newcommand{\scro}{{\mathcal{O}}}
\newcommand{\scrp}{{\mathcal{P}}}
\newcommand{\bfr}{{\mathbf{r}}}
\newcommand{\bfw}{{\mathbf{w}}}
\newcommand{\bfx}{{\mathbf{x}}}
\newcommand{\x}{{\mathbf{x}}}
\newcommand{\Z}{{\mathbb Z}}
\newcommand{\N}{{\mathbb N}}
\newcommand{\ba}{{\bm{a}}}
\newcommand{\balpha}{{\bm{\alpha}}}
\newcommand{\sn}{{\rm sn}}
\newcommand{\cn}{{\rm cn}}
\newcommand{\dn}{{\rm dn}}
\newcommand{\euler}[2]{\genfrac{\langle}{\rangle}{0pt}{}{#1}{#2}}
\title{Coefficientwise Hankel-total positivity of the row-generating
polynomials for the output matrices of certain production matrices
\thanks{Supported partially by the National Natural
Science Foundation of China (No. 12022105) and the Natural Science
Foundation for Distinguished Young Scholars of Jiangsu Province (No.
BK20200048).
\newline\hspace*{5mm}
   {\it Email address:} bxzhu@jsnu.edu.cn (B.-X. Zhu)}}
\author{Bao-Xuan Zhu}
\date{\footnotesize School of Mathematics and Statistics,
         Jiangsu Normal University,
         Xuzhou 221116, PR China}
\begin{document}

\maketitle

\begin{abstract}
Total positivity of matrices is deeply studied and plays an
important role in various branches of mathematics. The aim of this
paper is to study the criteria for coefficientwise Hankel-total
positivity of the row-generating polynomials of generalized
$m$-Jacobi-Rogers triangles and their applications.

Using the theory of production matrices, we present the criteria for
coefficientwise Hankel-total positivity of the row-generating
polynomials of the output matrices of certain production matrices.
In particular, we gain a criterion for coefficientwise Hankel-total
positivity of the row-generating polynomial sequence of the
generalized $m$-Jacobi-Rogers triangle. This immediately implies
that the corresponding generalized $m$-Jacobi-Rogers triangular
convolution preserves the Stieltjes moment property of sequences and
its zeroth column sequence is coefficientwise Hankel-totally
positive and log-convex of higher order in all the indeterminates.
In consequence, for $m=1$, we immediately obtain some results on
Hankel-total positivity for the Catalan-Stieltjes matrices. In
particular, we in a unified manner apply our results to some
combinatorial triangles or polynomials including the generalized
Jacobi Stirling triangle, a generalized elliptic polynomial, a
refined Stirling cycle polynomial and a refined Eulerian polynomial.
For the general $m$, combining our criterion and a function
satisfying an autonomous differential equation, we present different
criteria for coefficientwise Hankel-total positivity of the
row-generating polynomial sequence of exponential Rirodan arrays. In
addition, we also derive some results for coefficientwise
Hankel-total positivity in terms of compositional functions and
$m$-branched Stieltjes-type continued fractions. Finally, we apply
our criteria to: (1) rook polynomials and signless Laguerre
polynomials (confirming a conjecture of Sokal on coefficientwise
Hankel-total positivity of rook polynomials), (2) labeled trees and
forests (proving some conjectures of Sokal on total positivity and
Hankel-total positivity), (3) $r$th-order Eulerian polynomials
(giving a new proof for the coefficientwise Hankel-total positivity
of $r$th-order Eulerian polynomials, which in particular implies the
conjecture of Sokal on the coefficientwise Hankel-total positivity
of reversed $2$th-order Eulerian polynomials), (4) multivariate Ward
polynomials, labeled series-parallel networks and nondegenerate
fanout-free functions, (5) an array from the Lambert function and a
generalization of Lah numbers and associated triangles, and so on.
\bigskip\\
{\sl \textbf{MSC}:}\quad 05A20; 05A15; 15B48; 30B70; 44A60; 15B05;
30E05
\bigskip\\
{\sl \textbf{Keywords}:}\quad Total positivity; Hankel-total
positivity; Toeplitz-total-positivity; Stieltjes moment sequences;
P\'olya frequency; $3$-$\textbf{x}$-log-convexity; Convolutions;
Production matrices; Exponential Riordan arrays; Stieltjes-type
continued fractions; Jacobi-type continued fractions; Branched
continued fractions; Stieltjes-Rogers polynomials; Jacobi-Rogers
polynomials; Laguerre polynomials; $r$th-order Eulerian polynomials;
Multivariate Ward polynomials; Elliptic functions; Lambert functions
\end{abstract}

\tableofcontents
\section{Introduction}
Recently, in \cite{PSZ18}, based on lattice paths and branched
continued fractions, P\'{e}tr\'{e}olle, Sokal and Zhu developed the
theory for coefficientwise Hankel-total positivity of the
$m$-Stieltjes-Rogers polynomial sequence in all the indeterminates.
Note that an $m$-Stieltjes-Rogers polynomial sequence coincides with
the zeroth column of certain $m$-Jacobi-Rogers triangle. The purpose
of this paper is to consider a more generalized problem whether the
row-generating polynomial sequence of the generalized
$m$-Jacobi-Rogers triangles is coefficientwise Hankel-totally
positive. Since the generalized $m$-Jacobi-Rogers triangle can be
viewed as the output matrix of certain lower-Hessenberg matrix, in
terms of the theory of production matrices, in general, total
positivity of production matrices implies coefficientwise
Hankel-total positivity of the zeroth column of the output matrices,
but does not deduce that of its row-generating polynomials. This
motivates us to consider the stronger properties of production
matrices and propose a stronger concept: binomial-total positivity.
We develop some criteria for coefficientwise Hankel-total positivity
of the row-generating polynomials of output matrices and then
present many applications for the generalized $m$-Jacobi-Rogers
triangle.

The organization of this paper is as follows. We introduce the
background and definitions from total positivity in subsection
\ref{section+the background+Definitions} and give the definitions of
the $m$-Stieltjes-Rogers triangle, the $m$-Jacobi-Rogers triangle
and the generalized $m$-Jacobi-Rogers triangle in terms of lattice
paths in subsection \ref{subsec+m+SR+triangle}. In Section
\ref{sec+TP+production}, we introduce the general theory for total
positivity from production matrices and present our criteria to
coefficientwise Hankel-total positivity of the row-generating
polynomials of output matrices. We provide some kinds of
binomial-totally positive matrices in Section \ref{sec+Bino-TP} and
use such matrices to give the general criteria for coefficientwise
Hankel-total positivity of the row-generating polynomials of the
generalized $m$-Jacobi-Rogers triangle in Section
\ref{sec+generalized $m$-Jacobi--Rogers triangle}. We apply our
criteria to a variety of combinatorial triangles from
Catalan-Stieltjes matrices in Section \ref{sec+row-Cat-Stie} and
exponential Riordan arrays in Section \ref{section+EEA} in a unified
viewpoint.

\subsection{Total positivity}\label{section+the background+Definitions}
%%%%%%%%%%%%%%%%%%%%%%%%%%%%%%%%%%%%%%
Total positivity of matrices has been deeply studied and plays an
important role in various branches of mathematics. The literature on
this fascinating subject is extensive such as classical analysis
\cite{Sc30}, representation theory
\cite{Lus94,Lusztig_98,Lusztig_08,Rie03}, network analysis
\cite{Pos06}, cluster algebra \cite{BFZ96,FZ99}, positive
Grassmannians and integrable systems \cite{KW11,KW14}, combinatorics
\cite{Bre95,GV85}. We also refer the reader to monographs
\cite{Kar68,Pin10} for more details.

A matrix of real numbers is {\it \textbf{totally positive}} if all
its minors are nonnegative and is {\it \textbf{totally positive of
order $r$}} if all minors of order $k\le r$ are nonnegative. For a
sequence $\ba = (a_n)_{n \ge 0}$, denote by
$\Gamma(\ba)=[a_{i-j}]_{i,j\ge 0}$ its {\it \textbf{Toeplitz
matrix}} and by $H(\ba)=[a_{i+j}]_{i,j\ge 0}$ its {\it
\textbf{Hankel matrix}}. Their total positivity plays an important
role in different fields.

Total positivity of $\Gamma(\ba)$ can characterize the P\'olya
frequency of $\ba$. In classical analysis, the sequence $\ba$ with
$a_0=1$ of nonnegative real numbers is called a {\it \textbf{P\'olya
frequency}} sequence if its generating function has the form
\begin{eqnarray}\label{PF+repesentation}
\sum_{n\ge 0}a_nz^n
=\frac{\prod_{j\ge 1}(1+\alpha_jz)}{\prod_{j\ge
1}(1-\beta_jz)}e^{\gamma z}
\end{eqnarray} in some open disk
centered at the origin, where $\alpha_j,\beta_j,\gamma\ge 0$ and
$\sum_{j\ge 1}(\alpha_j+\beta_j)<+\infty$, see
Karlin~\cite[pp.~412]{Kar68} for instance. We say that
$\sum_{n\geq0}a_nt^n$ is a {\it \textbf{P\'olya frequency ogf}} in
$\mathbb{R}[[t]]$ if the sequence $\ba$ is a P\'olya frequency
sequence. The P\'olya frequency is closely related to polynomials
with only real zeros. In fact, from the function representation
(\ref{PF+repesentation}) for the P\'olya frequency, it in particular
implies that $a_0,a_1,\ldots,a_n,0,0,\ldots$ is a P\'olya frequency
sequence if and only if $\sum_{k= 0}^na_kz^k$ has only real zeros
(\cite[pp.~399]{Kar68}). It is well known that the sequence $\ba$ is
a P\'olya frequency sequence if and only if its infinite Toeplitz
matrix $\Gamma(\ba)$ is totally positive. We refer the reader to
Br\"and\'en \cite{Bra06}, Brenti~\cite{Bre89,Bre94,Bre95} and
Wang-Yeh \cite{WYjcta05} for the P\'olya frequency in combinatorics.

Total positivity of $H(\ba)$ is closely related to the Stieltjes
moment property of $\ba$. The sequence $\ba$ of real numbers is a
{\it \textbf{Stieltjes moment sequence }}if it has the form
\begin{equation}\label{i-e}
a_k=\int_0^{+\infty}x^kd\mu(x),
\end{equation}
where $\mu$ is a non-negative measure on $[0,+\infty)$ (see
\cite[Theorem 4.4]{Pin10} for instance). The Stieltjes moment
problem is one of classical moment problems and arises naturally in
many branches of mathematics \cite{ST43,Wid41}. Stieltjes proved
that $\ba$ is a Stieltjes moment sequence if and only if there exist
nonnegative numbers $\alpha_0,\alpha_1,\ldots$ such that
$\sum_{n\geq0}a_nz^n$ has the Stieltjes-type continued fraction
\begin{eqnarray}\label{eq+SCF}
\sum_{n\geq0}a_nz^n=\frac{1}{1-\cfrac{\alpha_0z}{1-\cfrac{\alpha_1z}{1-\cdots}}}
\end{eqnarray} in the sense of formal power series. It is well known
that an equivalent characterization for $\ba$ being a Stieltjes
moment sequence is that its Hankel matrix $H(\ba)$ is totally
positive \cite{Pin10}. The sequence $\ba$ is {\it log-convex} if
$a_{k-1}a_{k+1}\ge a_k^2$ for all $k\ge 1$. Clearly, a sequence of
positive numbers is log-convex if and only if $H(\ba)$ is totally
positive of order two. Thus, log-convexity is implied by its
Stieltjes moment property. In fact, log-convexity of many
combinatorial sequences can be extended to Stieltjes moment
property. See Liu and Wang \cite{LW07} and Zhu \cite{Zhu13} for
log-convexity and Liang et al. \cite{LMW16}, Wang and Zhu
\cite{WZ16} and Zhu\cite{Zhu19,Zhu191} for Stieltjes moment property
in combinatorics.

In fact, this is only the beginning of the story. Because some
combinatorial objects with respect to one or more statistics often
generate multivariate polynomials, our more interest is to study
sequences and matrices of polynomials in one or more indeterminates
$\textbf{x}$. Let $\mathbb{R}$ denote the set of all real numbers
and $\textbf{x}=\{x_i\}_{i\in{I}}$ be a set of indeterminates. A
matrix $M$ with entries in $\mathbb{R}[\textbf{x}]$ is
\textbf{coefficientwise totally positive} in $\textbf{x}$ (we also
call it $\textbf{x}$-totally positive) if all its minors are
polynomials with nonnegative coefficients in the indeterminates
$\textbf{x}$ and is \textbf{coefficientwise totally positive of
order $r$} in $\textbf{x}$ (we also call it an $\textbf{x}$-totally
positive of order $r$) if all its minors of order $k\le r$ are
polynomials with nonnegative coefficients in the indeterminates
$\textbf{x}$. A sequence $(\alpha_n(\textbf{x}))_{n\geq0}$ in
$\mathbb{R}[\textbf{x}]$ is \textbf{coefficientwise Toeplitz totally
positive} in $\textbf{x}$ (we also call it an
$\textbf{x}$\textbf{-P\'olya frequency} sequence) if its associated
infinite Toeplitz matrix is coefficientwise totally positive in
$\textbf{x}$. A sequence $(\alpha_n(\textbf{x}))_{n\geq0}$ in
$\mathbb{R}[\textbf{x}]$ is \textbf{coefficientwise Hankel totally
positive} in $\textbf{x}$ (we also call it
$\textbf{x}$-\textbf{Stieltjes moment}) if its associated infinite
Hankel matrix is coefficientwise totally positive. Similarly, we
have \textbf{coefficientwise Hankel totally positive of order $r$}
and \textbf{coefficientwise Toeplitz totally positive of order $r$}
in $\textbf{x}$. It is \textbf{x-log-convex } if all coefficients of
$\alpha_{n+1}(\textbf{x})\alpha_{n-1}(\textbf{x})-\alpha_n(\textbf{x})^2$
are nonnegative for all $n\geq1$. Clearly, an $\textbf{x}$-Stieltjes
moment sequence is $\textbf{x}$-log-convex. Define an operator
$\mathcal {L}$ by
$$\mathcal {L}[\alpha_i(\textbf{x})]:=\alpha_{i-1}(\textbf{x})\alpha_{i+1}(\textbf{x})-\alpha_i(\textbf{x})^2$$
for $i\geq1$. In general, we say that $(\alpha_i(\textbf{x}))_{i\geq
0}$ is {\it $\textbf{k}$-\textbf{x-log-convex}} if the coefficients
of $\mathcal {L}^m[\alpha_i(\textbf{x})]$ are nonnegative for all
$m\leq k$, where $\mathcal {L}^m=\mathcal {L}(\mathcal {L}^{m-1})$.
If $\textbf{x}$ contains a unique indeterminate $q$, then they
reduce to be $q$-log-convex, $k$-$q$-log-convex and $q$-Stieltjes
moment, respectively. More and more combinatorial polynomials were
proved to have such properties. For example, the Bell polynomials,
the classical Eulerian polynomials, the Narayana polynomials of type
$A$ and $B$, Dowling polynomials, Jacobi-Stirling polynomials, and
so on, are $q$-log-convex (see Chen {\it et al.}
\cite{CTWY10,CWY11}, Liu and Wang \cite{LW07}, Zhu
\cite{Zhu13,Zhu14,Zhu17,Zhu182}, Zhu and Sun \cite{ZS15} for
instance), $3$-$q$-log-convex (see \cite{Zhu19}) and $q$-Stieltjes
moment (see \cite{WZ16,Zhu19,Zhu201}). We refer the reader to
\cite{PS19,PSZ18,Sok21,Sok,Zhu21,Zhu212,Zhu213} for coefficientwise
Hankel-total positivity in more indeterminates.

\subsection{The $m$-Stieltjes--Rogers triangle and $m$-Jacobi--Rogers
triangle} \label{subsec+m+SR+triangle}

The $m$-Stieltjes-Rogers polynomial was the zeroth column of the
$m$-Stieltjes--Rogers triangle. In \cite{PSZ18}, the
\textbfit{$\bm{m}$-Stieltjes--Rogers triangle} was defined in terms
of $m$-Dyck paths. Recall the definition of the $m$-Dyck path as
follows \cite{Aval_08,Cameron_16,Prodinger_16}:

\begin{definition} \label{def+mdyck}
\rm Fix an integer $m \ge 1$. A  \textbfit{partial $\bm{m}$-Dyck
path}\/ is a path in the upper half-plane $\Z \times \N$, starting
on the horizontal axis but ending anywhere, using steps $(1,1)$
(``rise'') and $(1,-m)$ (``$m$-fall''). In particular, a partial
$m$-Dyck path ending on the horizontal axis is called the
\textbfit{$\bm{m}$-Dyck path}: see Figure~$1$ for an example.
\end{definition}

%%%%%%%%%%%%%%%%%%%%%%%%%%%%%%%%%%%%%%%
%%%%% Fig 1
%%%%%%%%%%%%%%%%%%%%%%%%%%%%%%%%%%%%%%%%
\begin{center}
\setlength{\unitlength}{1cm}
\begin{picture}(22,4.5)(-2,-0.5)
%%%%% horizontal line
\thicklines \put(0,0){\line(1,0){13}}
\thicklines\put(0,0){\line(0,1){4}}
%斜线
\thicklines \put(0,0){\line(1,1){1}}
\thicklines\put(1,1){\line(1,-1){1}}
\thicklines\put(2,0){\line(1,1){3}}
\thicklines\put(5,3){\line(1,-2){1}}

\thicklines\put(6,1){\line(1,1){3}}
\thicklines\put(9,4){\line(1,-3){1}}
\thicklines\put(10,1){\line(1,-1){1}}
%%%%% horizontal black vetex
\put(0,0){\circle*{0.2}}\put(1,0){\circle*{0.2}}
\put(2,0){\circle*{0.2}}\put(3,0){\circle*{0.2}}
\put(4,0){\circle*{0.2}}\put(5,0){\circle*{0.2}}
\put(6,0){\circle*{0.2}}\put(7,0){\circle*{0.2}}
\put(8,0){\circle*{0.2}}\put(9,0){\circle*{0.2}}
\put(10,0){\circle*{0.2}}\put(11,0){\circle*{0.2}}
\put(12,0){\circle*{0.2}}\put(12.6,-0.1){$\rightarrow$}
\put(-0.095,3.75){$\uparrow$}
%斜点
\put(1,1){\circle*{0.2}}\put(3,1){\circle*{0.2}}\put(4,2){\circle*{0.2}}\put(5,3){\circle*{0.2}}
\put(6,1){\circle*{0.2}}\put(7,2){\circle*{0.2}}\put(8,3){\circle*{0.2}}\put(9,4){\circle*{0.2}}
\put(10,1){\circle*{0.2}}

%%%%%%%%%%%%%%%%55
%\put(3.5,-1.5){$V_n^{(3)}$}
%%%%%%%%%%%%%%%%%%%%%%%%%%%%%%%%%%%%%%%%%%%%%%%%%%%%%%%%%%%%%%%%%%%%%%%%%%%%%%%%%
%%%%%%%%%%%%%%%%%%%%%%%%%%%%%%%%%%%%%%%%%%%%%%%%%
\end{picture}
Figure~1. A $3$-Dyck path of length $10$.
\end{center}

In particular, for $m=1$, the $1$-Dyck path is the well-known Dyck
path. The definitions of the \textbfit{$\bm{m}$-Stieltjes--Rogers
triangle} and the \textbfit{$\bm{m}$-Stieltjes--Rogers polynomial}
 \cite{PSZ18} can be stated as follows:
\begin{definition} \label{def+m+S+R+CF}
\rm Fix an integer $m \ge 1$, and let $\balpha = (\alpha_i)_{i \ge
m}$ be an infinite set of indeterminates. Let
$S^{(m)}_{n,k}(\bm\alpha)$ be the generating polynomial for partial
$m$-Dyck paths from $(0,0)$ to ${((m+1)n,(m+1)k)}$ in~which each
rise gets weight~1 and each $m$-fall from height~$i$ gets weight
$\alpha_i$. We call the infinite unit-lower-triangular array
$\textbf{S}^{(m)}=[S^{(m)}_{n,k}(\bm\alpha)]_{n,k}$ the
\textbfit{$\bm{m}$-Stieltjes--Rogers triangle}. In particular, the
zeroth column $S^{(m)}_{n,0}(\bm\alpha)$ is called the
\textbfit{$\bm{m}$-Stieltjes--Rogers polynomial} of order~$n$,
denoted by $S^{(m)}_n(\balpha)$.
\end{definition}

 Generally,
$S_n^{(m)}(\balpha)$ is a homogeneous polynomial of degree~$n$ with
nonnegative integer coefficients. When $\alpha_i=1$ for all $i\geq
m$, the $m$-Stieltjes--Rogers polynomials $S^{(m)}_n(\textbf{1})$
reduce to the famous Fuss-Catalan numbers of order $p=m+1$:
$\frac{1}{pn+1}\binom{pn+1}{n}$. Let $f_0(t) = \sum_{n=0}^\infty
S^{(m)}_n(\balpha) \, t^n$ be the ordinary generating function for
$m$-Dyck paths with these weights. It is known that $f_0$ is given
by the \textbfit{$\bm{m}$-branched Stieltjes-type continued
fraction} \cite{PSZ18}:
\begin{eqnarray} \label{eq+f0+mSfrac}
   f_0(t)
   & = &
   \cfrac{1}
         {1 \,-\, \alpha_{m} t
            \prod\limits_{i_1=1}^{m}
                 \cfrac{1}
            {1 \,-\, \alpha_{m+i_1} t
               \prod\limits_{i_2=1}^{m}
               \cfrac{1}
            {1 \,-\, \alpha_{m+i_1+i_2} t
               \prod\limits_{i_3=1}^{m}
               \cfrac{1}{1 - \cdots}
            }
           }
         },
\end{eqnarray}
%% {\bf Discuss in Remarks different way that
%%    Albenque--Bouttier \cite{Albenque_12} write \reff{eq.fk.mSfrac.a}.
%%    Are our polynomials different from theirs???}
which for $m=1$ in particular reduces to the classical
Stieltjes-type continued fraction (\ref{eq+SCF}).
% \begin{subeqnarray}
%    f_0(t)
%    & = &
%    \cfrac{1}{1 - \cfrac{\alpha_{k+m} t}
%             {1 - \prod\limits_{i_1=1}^{m} \cfrac{\alpha_{k+m+i_1???} t}
%             {1 - \prod\limits_{i_2=1}^{m} \cfrac{\alpha_{k+m+i_1+i_2???} t}
%             {1 - \ldots}}}}
%       \\[2mm]
%    & = &
% \Scale[0.6]{
%    \cfrac{1}{1 - \cfrac{\alpha_{m} t}{
%      \Biggl( 1 - \cfrac{\alpha_{m+1} t}{
%         \Bigl( 1  - \cfrac{\alpha_{m+2} t}{(\cdots) \,\cdots\, (\cdots)} \Bigr)
%         \,\cdots\,
%         \Bigl( 1  - \cfrac{\alpha_{2m+1} t}{(\cdots) \,\cdots\, (\cdots)} \Bigr)
%        }
%      \Biggr)
%      \,\cdots\,
%      \Biggl( 1 - \cfrac{\alpha_{2m} t}{
%         \Bigl( 1  - \cfrac{\alpha_{2m+1} t}{(\cdots) \,\cdots\, (\cdots)} \Bigr)
%         \,\cdots\,
%         \Bigl( 1  - \cfrac{\alpha_{3m} t}{(\cdots) \,\cdots\, (\cdots)} \Bigr)
%        }
%      \Biggr)
%     }
%    }
% }
%    \;\,.
%      \nonumber \\
%  \label{eq.f0.mSfrac}
% \end{subeqnarray}
% We shall call the right-hand sides of \reff{eq.fk.mSfrac}/\reff{eq.f0.mSfrac}
% {\em $m$-branched Stieltjes-type continued fractions}\/,
% or {\em $m$-S-fractions}\/ for short.

The following result on Hankel-total positivity for
$m$-Stieltjes--Rogers polynomials was one of main results in
\cite{PSZ18}.

\begin{thm}[Hankel-total positivity for $m$-Stieltjes--Rogers polynomials]
   \label{thm+BSCF+Hankel}
For each integer $m \ge 1$, the sequence $ ( S^{(m)}_n(\balpha) )_{n
\ge 0}$ of $m$-Stieltjes--Rogers polynomials is a coefficientwise
Hankel-totally positive sequence in the polynomial ring
$\Z[\balpha]$.
\end{thm}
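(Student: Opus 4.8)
The plan is to realize the Hankel matrix of $(S^{(m)}_n(\balpha))_{n\ge 0}$ as arising from a nonnegative production matrix, and then invoke the general theory of production matrices (Sokal) that says: if a matrix $P$ with entries in a partially ordered commutative ring is totally positive, then the output matrix $\mathcal{O}(P)$ it generates is totally positive, and in particular the zeroth column of $\mathcal{O}(P)$ is Hankel-totally positive. Concretely, the $m$-branched Stieltjes continued fraction \eqref{eq+f0+mSfrac} encodes $(S^{(m)}_n(\balpha))_{n\ge 0}$ as the zeroth column of the output matrix of a production matrix $P$ that is tridiagonal-like along a suitably ``stretched'' index set: the lattice-path interpretation in Definition~\ref{def+m+S+R+CF} shows that $S^{(m)}_{n,k}$ counts partial $m$-Dyck paths with rises weighted $1$ and $m$-falls from height $i$ weighted $\alpha_i$, so the natural transfer matrix on heights is the bidiagonal matrix with $1$'s on the superdiagonal (rises) and $\alpha_i$ in position $(i,i-m)$ ($m$-falls). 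Grouping heights into blocks of size $m+1$ turns this into a block-tridiagonal (indeed essentially lower-Hessenberg) production matrix $P(\balpha)$ whose entries are either $0$, $1$, or single indeterminates $\alpha_i$.

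First I would write down $P(\balpha)$ explicitly and verify that $\mathcal{O}(P(\balpha))_{n,0} = S^{(m)}_n(\balpha)$ — this is exactly the combinatorial statement that powers of the transfer matrix count the weighted partial $m$-Dyck paths, read off from \eqref{eq+f0+mSfrac}. Second, I would check that $P(\balpha)$ is coefficientwise totally positive in $\Z[\balpha]$. Because $P$ is (after the blocking) lower-Hessenberg with all nonzero entries equal to $1$ or to a distinct indeterminate $\alpha_i$, its minors are either $0$ or monomials in the $\alpha_i$ with coefficient $+1$; the key point is that no cancellation occurs. For a Hessenberg matrix the nonzero minors are indexed by families of nonintersecting paths (Lindström–Gessel–Viennot), and since the weights are algebraically independent indeterminates each surviving term is a positive monomial. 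Third, I would apply the production-matrix theorem: $P$ coefficientwise TP $\Rightarrow$ $\mathcal{O}(P)$ coefficientwise TP $\Rightarrow$ its zeroth column $(S^{(m)}_n(\balpha))_{n\ge 0}$ is coefficientwise Hankel-TP in $\Z[\balpha]$.

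The main obstacle is establishing total positivity of the production matrix $P(\balpha)$ in full rigor — specifically, showing that all its minors (of every order, not just the $2\times 2$ ones) have nonnegative coefficients, uniformly in the blocking. For the classical case $m=1$ this is the well-known statement that a tridiagonal matrix with nonnegative entries is totally positive; for general $m$ the production matrix is no longer tridiagonal but only lower-Hessenberg with a banded structure, and one must argue that its minors still expand with positive coefficients. I expect to handle this either (i) by the Lindström–Gessel–Viennot lemma applied to the planar network naturally associated to the $m$-Dyck-path transfer operator — planarity of that network is what forces nonnegativity — or (ii) by a direct factorization of $P(\balpha)$ into a product of elementary bidiagonal nonnegative matrices, from which coefficientwise total positivity is immediate by the Cauchy–Binet formula. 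Either route reduces the theorem to the abstract production-matrix machinery, with the combinatorial identification $\mathcal{O}(P)_{n,0}=S^{(m)}_n(\balpha)$ and the planarity/nonnegativity of $P$ being the only substantive inputs.
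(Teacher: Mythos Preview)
The paper does not prove this theorem itself; it is quoted as one of the main results of \cite{PSZ18}. Your overall strategy --- realize $\textbf{S}^{(m)}$ as the output matrix of a production matrix $P(\balpha)$, show that $P(\balpha)$ is coefficientwise totally positive, and then invoke Theorems~\ref{thm+iteration+TP} and \ref{thm+TP+Hank+zeros} --- is exactly the strategy of \cite{PSZ18}, and your option~(ii) (factor $P(\balpha)$ into a product of bidiagonal matrices and apply Cauchy--Binet) is precisely how total positivity of $P(\balpha)$ is established there; see Proposition~8.2(b) of \cite{PSZ18}, and the displayed factorization \eqref{production+matrix+Stieltjes-Rogers} in the present paper for a special case.

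However, your intermediate description of $P(\balpha)$ is wrong, and your first argument for its total positivity fails. You assert that after ``grouping heights into blocks of size $m+1$'' the entries of $P(\balpha)$ are ``either $0$, $1$, or single indeterminates $\alpha_i$'', so that its minors are monomials. This is false: the contracted production matrix has $(k,k')$ entry equal to the weighted sum over \emph{all} $(m+1)$-step $m$-Dyck paths from height $(m+1)k$ to height $(m+1)k'$, and these entries are genuinely polynomials in the $\alpha_i$. Already for $m=1$ the diagonal entries are sums $\alpha_{2k-1}+\alpha_{2k}$ and the subdiagonal entries are products $\alpha_{2k-1}\alpha_{2k}$. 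So the ``no cancellation because entries are distinct indeterminates'' argument does not go through; you genuinely need the bidiagonal factorization (each factor corresponding to a single step of the $(m+1)$-step macro-step, hence with entries in $\{0,1,\alpha_i\}$) to establish coefficientwise total positivity of $P(\balpha)$. Your option~(ii) is therefore not an alternative but the essential missing step.
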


From \cite{PSZ18}, we know that the $m$-Stieltjes--Rogers triangle
can be viewed as a special $m$-Jacobi--Rogers triangle defined by
the following well known $m$-\L{}ukasiewicz path.

\begin{definition}   \label{definition.lukasiewicz}
Fix $1 \le m \le \infty$. A {\em \textbf{partial m-\L{}ukasiewicz
path}}\/ is a path in the upper half-plane $\Z \times \N$, starting
on the horizontal axis but ending anywhere, using steps $(1,r)$ with
$-m \le r \le 1$: the allowed steps are thus $r=1$ (``rise''), $r=0$
(``level step''), and $r = -\ell$ for any $\ell > 0$
(``$\ell$-fall''). In particular, a partial $m$-\L{}ukasiewicz path
starting and ending on the horizontal axis is called the
$\bm{m}$-\textbfit{\L{}ukasiewicz path}. See Figure~$2$ for an
example.
\end{definition}

%%%%%%%%%%%%%%%%%%%%%%%%%%%%%%%%%%%%%%%
%%%%% Fig 1
%%%%%%%%%%%%%%%%%%%%%%%%%%%%%%%%%%%%%%%%
\begin{center}
\setlength{\unitlength}{1cm}
\begin{picture}(22,4.5)(-2,-0.5)
%%%%% horizontal line
\thicklines \put(0,0){\line(1,0){13}}
\thicklines\put(0,0){\line(0,1){4}}
\thicklines\put(1,1){\line(1,0){1}}
%斜线
\thicklines \put(0,0){\line(1,1){1}}
%\thicklines\put(1,1){\line(1,-1){1}}
\thicklines\put(2,1){\line(1,1){2}}
%\thicklines\put(3,1){\line(1,1){1}}
\thicklines\put(4,3){\line(1,-2){1}}
\thicklines\put(5,1){\line(1,1){3}}
\thicklines\put(8,4){\line(1,0){1}}
\thicklines\put(9,4){\line(1,-3){1}}
\thicklines\put(10,1){\line(1,1){1}}
\thicklines\put(11,2){\line(1,-2){1}}
%%%%% horizontal black vetex
\put(0,0){\circle*{0.2}}\put(1,0){\circle*{0.2}}
\put(2,0){\circle*{0.2}}\put(3,0){\circle*{0.2}}
\put(4,0){\circle*{0.2}}\put(5,0){\circle*{0.2}}
\put(6,0){\circle*{0.2}}\put(7,0){\circle*{0.2}}
\put(8,0){\circle*{0.2}}\put(9,0){\circle*{0.2}}
\put(10,0){\circle*{0.2}}\put(11,0){\circle*{0.2}}
\put(12,0){\circle*{0.2}}\put(12.6,-0.1){$\rightarrow$}
\put(-0.095,3.75){$\uparrow$}
%斜点
\put(1,1){\circle*{0.2}}\put(2,1){\circle*{0.2}}\put(3,2){\circle*{0.2}}\put(4,3){\circle*{0.2}}\put(5,1){\circle*{0.2}}
\put(6,2){\circle*{0.2}}\put(7,3){\circle*{0.2}}\put(8,4){\circle*{0.2}}\put(9,4){\circle*{0.2}}
\put(10,1){\circle*{0.2}}\put(11,2){\circle*{0.2}}

%%%%%%%%%%%%%%%%55
%\put(3.5,-1.5){$V_n^{(3)}$}
%%%%%%%%%%%%%%%%%%%%%%%%%%%%%%%%%%%%%%%%%%%%%%%%%%%%%%%%%%%%%%%%%%%%%%%%%%%%%%%%%
%%%%%%%%%%%%%%%%%%%%%%%%%%%%%%%%%%%%%%%%%%%%%%%%%
\end{picture}
Figure~2. A $3$-\L{}ukasiewicz path of length $12$.
\end{center}

%For $1 \le m \le \infty$, we denote by $\scrl_n^{(m)}$ the set of
%$m$-\L{}ukasiewicz paths from $(0,0)$ to $(n,0)$.
%\begin{figure}[!ht]
%\begin{center}
%\includegraphics[scale=1.5]{lukasiewicz.pdf}
%\caption{\label{fig.lukasiewicz} A \L{}ukasiewicz path of length
%13.}
%\end{center}
%\end{figure}
%

In what follows we introduce the $m$-Jacobi--Rogers triangle in
\cite{PSZ18} and define a slightly generalized
$\bm{m}$-Jacobi--Rogers triangle.
\begin{definition}   \label{definition.mJR}
\rm Fix $1 \le m \le \infty$, and let $\bm\beta =
(\beta_i^{(\ell)})_{-1 \le \ell \le m, \, i \ge \ell}$ be
indeterminates. Let $J^{(m)}_{n,k}(\bm\beta)$ be the generating
polynomial for partial $m$-\L{}ukasiewicz paths from $(0,0)$ to
$(n,k)$ in which each rise at height~$i$ gets
weight~$\beta_i^{(-1)}$, each level step at height~$i$ gets weight
$\beta_i^{(0)}$, and each $\ell$-fall from height~$i$ gets weight
$\beta_i^{(\ell)}$. Let the infinite lower-triangular array
$\textbf{J}^{(m)} = \big( J^{(m)}_{n,k}(\bm\beta) \big)_{n,k \ge
0}$. We call the infinite lower-triangular array $\textbf{J}^{(m)} =
\big( J^{(m)}_{n,k}(\bm\beta) \big)_{n,k \ge 0}$ {\it the
\textbfit{generalized $\bm{m}$-Jacobi--Rogers triangle}} and call
its zeroth column $J^{(m)}_{n,0}(\bm\beta)$ the \textbfit{
generalized $\bm{m}$-Jacobi--Rogers polynomial} of order~$n$,
denoted by $J^{(m)}_{n}(\bm\beta)$.
\end{definition}

For all $\beta_i^{(-1)}=1$, $\textbf{J}^{(m)}$ reduces to {\it the
\textbfit{$\bm{m}$-Jacobi--Rogers triangle}} and the zeroth column
$J^{(m)}_{n,0}$ reduces to the \textbfit{$\bm{m}$-Jacobi--Rogers
polynomial} of order~$n$ in \cite{PSZ18}. Though here is a slight
generalization, it will provide more plentiful combinatorial
triangles whose row-generating polynomials are coefficientwise
Hankel-totally positive (see Section \ref{sec+row-Cat-Stie}).

Let $f_0(t) = \sum_{n=0}^\infty J^{(m)}_{n}(\bm\beta) \, t^n$ be the
ordinary generating function for the $\bm{m}$-Jacobi--Rogers
polynomials. For $m=1$, we have the classical Jacobi-type continued
fraction:
\begin{eqnarray} \label{eq+f0+JF}
   f_0(t)
   & = &
   \cfrac{1}
         {1 \,-\, \beta_{0}^{(0)}t-
                             \cfrac{\beta_0^{(-1)}\beta_{1}^{(1)}t^2}
            {1 \,-\, \beta_{1}^{(0)} t-
                             \cfrac{\beta_1^{(-1)}\beta_{2}^{(1)}t^2}
            {1 \,-\, \beta_{2}^{(0)}t-
               \cfrac{\beta_2^{(-1)}\beta_{3}^{(1)}t^2}{1 - \cdots}
            }
           }
         },
\end{eqnarray}
which was studied by Flajolet \cite{Fla80} and where the
$1$-Jacobi--Rogers polynomials are the classical Jacobi--Rogers
polynomials \cite{Fla80}. For $m>1$, $ f_0(t)$ can be written as an
$m$-branched Jacobi-type continued fraction (see
\cite[pp.~{V-39}--\hbox{V-40}]{Viennot_83}, \cite[pp.~22--24,
143]{Roblet_94}, \cite[pp.~5]{Varvak_04} for instance).

%% These interpretations will prove useful later
%% (see Sections~\ref{????} and \ref{???}).

For $1 \le m \le \infty$, in terms of the weights of partial
$m$-\L{}ukasiewicz paths, we define a matrix $P^{(m)}(\bm\beta)$ as
follows:
 $P^{(m)}(\bm\beta)$ is the $m$-banded lower-Hessenberg
       matrix with $\beta_i^{(-1)}$ on the superdiagonal and $i\geq0$,
       $\beta_0^{(0)},\beta_1^{(0)},\ldots$ on the diagonal,
       $\beta_1^{(1)},\beta_2^{(1)},\ldots$ on the first subdiagonal,
       $\beta_2^{(2)},\beta_3^{(2)},\ldots$ on the second subdiagonal, etc.,
       and in general
\be
   P^{(m)}(\bm\beta)_{ij}
   \;=\;
   \begin{cases}
       \beta_i^{(i-j)}   & \textrm{if $i-m \le j \le i+1$}  \\[1mm]
       0                 & \textrm{otherwise}
   \end{cases}
 \label{def.Pm}
\ee or in other words

\be
   P^{(m)}(\bm\beta)
   \;=\;
   \begin{bmatrix}
      \beta_0^{(0)}   & \beta_0^{(-1)}               &                 &     &     &    \\
      \beta_1^{(1)}   & \beta_1^{(0)}   & \beta_1^{(-1)}               &     &     &    \\
      \beta_2^{(2)}   & \beta_2^{(1)}   & \beta_2^{(0)}   & \beta_2^{(-1)}   &     &    \\
      \beta_3^{(3)}   & \beta_3^{(2)}   & \beta_3^{(1)}   & \beta_3^{(0)} & \beta_3^{(-1)}   & \\
      \vdots   &\vdots    & \vdots   & \vdots   & \vdots & \ddots
   \end{bmatrix}
   \;
 \label{def.Pm.0}.
 \ee
Obviously, $P^{(\infty)}(\bm\beta)$ is simply the generic
lower-Hessenberg matrix.

The $m$-Jacobi--Rogers triangle $\textbf{J}^{(m)} $ can be viewed as
the output matrix of $P^{(m)}(\bm\beta)$ for all $\beta_i^{(-1)}=1$
\cite{PSZ18}. Then the theory of production matrices provides a
powerful approach to coefficientwise Hankel-total positivity of
$m$-Jacobi--Rogers polynomials $J^{(m)}_{n}(\bm\beta)$.

\section{Total positivity from production
matrices}\label{sec+TP+production}

In recent years, the method of production matrices \cite{DFR05,DFR}
has become an important tool in enumerative combinatorics. It dates
back to the work (the Karlin iteration) of Karlin \cite[section 3.6,
pp. 132-140]{Kar68}. It also rose in denumerable Markov chains which
can only move one step to the right \cite{Kem66} and played an
important role in continued fractions (Stieltjes \cite{S89}).

Following \cite{PSZ18}, a \textbfit{partially ordered commutative
ring} is a pair $(R,\scrp)$ where $R$ is a commutative ring and
$\scrp$ is a subset of $R$ satisfying the following three
conditions: (1) $0,1 \in \scrp$. (2) If $a,b \in \scrp$, then $a+b
\in \scrp$ and $ab \in \scrp$. (3) $\scrp \cap (-\scrp) = \{0\}$. We
call $\scrp$ the {\em nonnegative elements}\/ of $R$, and we define
a partial order on $R$ (compatible with the ring structure) by
writing $a \le b$ as a synonym for $b-a \in \scrp$.

Now let $(R,\scrp)$ be a partially ordered commutative ring and let
$\bfx = \{x_i\}_{i \in I}$ be a collection of indeterminates. In the
polynomial ring $R[\bfx]$ and the formal-power-series ring
$R[[\bfx]]$, let $\scrp[\bfx]$ and $\scrp[[\bfx]]$ be the subsets
consisting of polynomials (resp.\ series) with nonnegative
coefficients. Then $(R[\bfx],\scrp[\bfx])$ and
$(R[[\bfx]],\scrp[[\bfx]])$ are partially ordered commutative rings;
we refer to this as the {\em coefficientwise order}\/ on $R[\bfx]$
and $R[[\bfx]]$.

A (finite or infinite) matrix with entries in a partially ordered
commutative ring is called \textbfit{totally positive} if all its
minors are nonnegative; it is called \textbfit{totally positive of
order~$\bm{r}$} if all its minors of size $\le r$ are nonnegative.
It follows immediately from the Cauchy--Binet formula that the
product of two totally positive (resp. totally positive of
order~$r$) matrices is totally positive (resp. totally positive of
order~$r$).

We say that a sequence $\ba = (a_n)_{n \ge 0}$ with entries in a
partially ordered commutative ring is \textbfit{Hankel-totally
positive} (resp.\ \textbfit{Hankel-totally positive of
order~$\bm{r}$}) if its associated infinite Hankel matrix $H(\ba) =
[a_{i+j}]_{i,j \ge 0}$ is totally positive (resp.\ totally positive
of order~$r$). It is \textbfit{Toeplitz-totally positive} (resp.\
\textbfit{Toeplitz-totally positive of order~$\bm{r}$}) if its
associated infinite Toeplitz matrix $\Gamma(\ba) = [a_{i-j}]_{i,j
\ge 0}$ is totally positive (resp.\ totally positive of order~$r$).
We also say that $f(t)=\sum_{n}f_nt^n$ is a \textbf{P\'olya
frequency ogf } (resp. \textbf{P\'olya frequency ogf }of order $r$)
in $R[[t]]$ if $(f_n)_{n\geq0}$ is Toeplitz-totally positive (resp.
Toeplitz-totally positive of order~$\bm{r}$).

Let $\textbf{P} = (p_{ij})_{i,j \ge 0}$ be an infinite matrix with
entries in a partially ordered commutative ring $R$. Assume that
$\textbf{P}$ is either row-finite (i.e.\ has only finitely many
nonzero entries in each row) or column-finite. Define an infinite
matrix $\textbf{A} = (a_{nk})_{n,k \ge 0}$ by \be
   a_{nk}  \;=\;  (\textbf{P}^n)_{0k}
 \label{def.iteration}
\ee (in particular, $a_{0k} = \delta_{0k}$). We can write $a_{n,k}$
as \be
   a_{nk}
   \;=\;
   \sum_{i_1,\ldots,i_{n-1}}
      p_{0 i_1} \, p_{i_1 i_2} \, p_{i_2 i_3} \,\cdots\,
        p_{i_{n-2} i_{n-1}} \, p_{i_{n-1} k}
   \;.
 \label{def.iteration.walk}
\ee We also have another equivalent formulation for $a_{nk}$ defined
by the recurrence \be
   a_{nk}  \;=\;  \sum_{i=0}^\infty a_{n-1,i} \, p_{ik}
   \qquad\hbox{for $n \ge 1$}
 \label{def.iteration.bis}
\ee with the initial condition $a_{0k} = \delta_{0k}$. We call
$\textbf{P}$ the \textbfit{production matrix} and $\textbf{A}$ the
\textbfit{output matrix} of $\textbf{P}$, and we write $\textbf{A} =
\scro(\textbf{P})$. Given a matrix $\textbf{M}$, define a matrix,
denoted by $\overline{\textbf{M}}$, obtained from the matrix
$\textbf{M}$ with the first row removed. So we have
\begin{eqnarray}
\overline{\scro(\textbf{P})}=\scro(\textbf{P})\textbf{P}.
\end{eqnarray}

When $\textbf{P}$ is coefficientwise totally positive, the output
matrix $\scro(\textbf{P})$ has two total-positivity properties as
follows:

\begin{thm}[Total positivity of the output matrix]\emph{\cite{PSZ18,Sok}}
   \label{thm+iteration+TP}
Let $\textbf{P}$ be a row-finite or column-finite matrix with
entries in a partially ordered commutative ring $R$. If $\textbf{P}$
is totally positive of order~$r$ \emph{($1\leq r\leq \infty$)}, then
\begin{itemize}
\item [\rm (i)]
the output matrix $\scro(\textbf{P})$ is totally positive of
order~$r$ \emph{($1\leq r\leq \infty$)};
\item [\rm (ii)]
the zeroth column of the output matrix $\scro(\textbf{P})$ is
Hankel-totally positive of order $r$.
\end{itemize}
\end{thm}

Let $\textbf{B}_q=[\binom{n}{k}q^{n-k}]_{n,k}$. For a matrix
$\textbf{M}=[M_{n,k}]_{n,k}$, define an associated matrix
$$\textbf{M}(q):=\textbf{M}\textbf{B}_q.$$
Following \cite{Sok21}, we call the matrix $\textbf{M}(q)$ the {\it
\textbf{binomial row-generating matrix}}. Let
$\textbf{M}(q)=[M_{n,k}(q)]_{n,k}$. Obviously, we have
\begin{eqnarray}\label{def+q+triangle}
\textbf{M}_{n,k}(q)=\sum_{i}M_{n,i}\binom{i}{k}q^{i-k}.
\end{eqnarray}

Assume that $\textbf{P}$ is a row-finite or column-finite matrix
with entries in a partially ordered commutative ring $R$ and its
output matrix $\scro(\textbf{P})$ is $\textbf{M}$. Since the
binomial matrix $\textbf{B}_q$ is coefficientwise totally positive
in the indeterminate $q$, it follows immediately from Theorem
\ref{thm+iteration+TP} (i) and the Cauchy-Binet theorem that:
\begin{itemize}
\item [\rm (i$'$)]
$\textbf{M}(q)$ is totally positive of order~$r$ in the ring $R[q]$
equipped with the coefficientwise order.
\end{itemize}
Note that the \textbf{row-generating polynomials}
$M_n(q)=\sum_{k=0}^nM_{n,k}q^k$ of $\textbf{M}$ locate exactly in
the zeroth column of $\textbf{M}(q)$. It is natural to ask whether
the row-generating polynomials $M_n(q)$ are Hankel-totally positive
of order $r$ or not. However, the foregoing hypotheses are not
sufficient to imply that the row-generating polynomials are
Hankel-totally positive in the ring $R[q]$ equipped with the
coefficientwise order, or even Hankel-totally positive in the ring
$R$ when $q$ is specialized to strictly positive values. The
following simple example \cite[Example 2.15]{Sok21} shows this:

\begin{ex}(Failure of Hankel-totally positive of the row-generating polynomials).
Let $\textbf{P} = \textbf{e}_{00}+\Delta$ be the upper-bidiagonal
matrix with $1$ on the superdiagonal and $1,0,0,0,\ldots$ on the
diagonal; it is totally positive because every bidiagonal matrix
with nonnegative entries is totally positive \cite[Lemma
2.1]{Sok21}. Then $\textbf{A} =\scro(\textbf{P})$ is the
lower-triangular matrix with all entries $1$, so that
$A_n(q)=\sum_{k=0}^nq^k$. Since $A_0(q)A_2(q)- A_1(q)^2 = -q$, the
sequence $(A_n(q))_{n\ge0}$ is not even log-convex (i.e.
Hankel-totally positive of order $2$) for any real number $q >0$.
\end{ex}

Thus, in order to consider the Hankel-total positivity of
row-generating polynomials of $\scro(\textbf{P})$, we need a
stronger condition on $\textbf{P}$, namely:

\begin{definition}
Let $\textbf{P}$ be a row-finite matrix with entries in a
commutative ring $R$. We say that $\textbf{P}$ is {\it
\textbf{binomial-totally positive of order $r$}} ($1\leq r \leq
\infty$) if the matrix $\textbf{B}^{-1}_q\textbf{P}\textbf{B}_q$ is
totally positive of order $r$ in the ring $R[q]$ equipped with the
coefficientwise order.
\end{definition}
Clearly, specializing to $q=0$, we see that binomial-total
positivity implies total positivity; but in general it is much
stronger. We will develop some ideas in \cite{PS19,Sok21,Zhu213} to
be further. The reason for the appearance here of the combination
$\textbf{B}^{-1}_q\textbf{P}\textbf{B}_q$ is the following simple
fact \cite[Lemma Lemma 2.6]{Sok21}: Let $\textbf{P}$ be a row-finite
matrix with entries in a commutative ring $R$, with its output
matrix $\textbf{A} =\scro(\textbf{P})$ and let $\textbf{B}
=(b_{ij})_{i,j\geq0}$ be a lower-triangular matrix with invertible
(in $R$) diagonal entries. Then $\textbf{AB}=
b_{00}\scro(\textbf{B}^{-1}\textbf{PB}).$ Applying this fact to the
case $\textbf{B} =\textbf{B}_q$, working in the ring $R[q]$ equipped
with the coefficientwise order, by Theorem \ref{thm+iteration+TP},
we immediately deduce:

\begin{thm}[Hankel-total positivity of row-generating polynomials]
   \label{thm+main+Hankel-Total positivity+1}
Let $\textbf{P}$ be a row-finite or column-finite matrix with
entries in a partially ordered commutative ring $R$. If $\textbf{P}$
is binomial-totally positive of order~$r$ \emph{($1\leq r\leq
\infty$)}, then
\begin{itemize}
\item [\rm (ii$'$)]
the sequence of row-generating polynomials of $\scro(\textbf{P})$ is
Hankel-totally positive of order $r$ in the polynomial ring $R[q]$
equipped with the coefficientwise order.
\end{itemize}
\end{thm}

Note that for any two matrices $\textbf{P}_1$ and $\textbf{P}_2$, we
have
\begin{eqnarray}
\textbf{B}^{-1}_q\textbf{P}_1\textbf{P}_2\textbf{B}_q&=&\underbrace{\textbf{B}^{-1}_q\textbf{P}_1\textbf{B}_q}\underbrace{\textbf{B}^{-1}_q\textbf{P}_2\textbf{B}_q}.
\end{eqnarray}

So, by the Cauchy-Binet theorem, the product of two binomial-totally
positive matrices is still binomial-totally positive. We immediately
have:

\begin{thm} \label{thm+main+Hankel-Total positivity+factor}
Let $\textbf{P}$ be a row-finite or column-finite matrix. If
$\textbf{P}$ is the product of some matrices being binomial-totally
positive of order~$r$ \emph{($1\leq r\leq \infty$)} with entries in
a partially ordered commutative ring $R$, then
\begin{itemize}
\item [\rm (i)]
$\textbf{M}(q)$ is totally positive of order~$r$ in the polynomial
ring $R[q]$ equipped with the coefficientwise order;
\item [\rm (ii)]
the sequence of row-generating polynomials of $\scro(\textbf{P})$ is
Hankel-totally positive of order $r$ in the polynomial ring $R[q]$
equipped with the coefficientwise order.
\end{itemize}
\end{thm}

Theorem \ref{thm+main+Hankel-Total positivity+1} and Theorem
\ref{thm+main+Hankel-Total positivity+factor} are our main tools to
prove the Hankel-total positivity of row-generating polynomials.

\section{Binomial-totally positive matrices}\label{sec+Bino-TP}

By Theorem \ref{thm+main+Hankel-Total positivity+factor}, it is
natural to study which kind of matrices are binomial-totally
positive. In this section, we will provide some new kinds of
matrices that are binomial-totally positive.

Define $\textbf{L}(a,b,c)$ to be a lower-bidiagonal matrix with
$ka+b$ for $k\geq0$ on the diagonal and $kc$ for $k\geq1$ on the
subdiagonal:
\begin{eqnarray}\label{def+lower-bi+L}
\textbf{L}(a,b,c)=\left[\begin{array}{ccccc}
b&&&&\\
c&a+b&&&\\
&2c&2a+b&&\\
&&3c&3a+b&\\
&&&\ddots&\ddots\\
\end{array}\right].
\end{eqnarray}
For the matrix $\textbf{L}(a,b,c)$, it is totally positive in the
ring $\mathbb{Z}[a,b,c]$ equipped with the coefficientwise order;
this is obvious since it is a bidiagonal matrix with entries that
are (coefficientwise) nonnegative. The following result shows that
$\textbf{L}(a,b,c)$ is also binomial-totally positive in the ring
$\mathbb{Z}[a,b,c]$ equipped with the coefficientwise order.

\begin{prop}\label{prop+low-bi+TP}
Let $\textbf{L}(a,b,c)$ be defined by (\ref{def+lower-bi+L}). Then
we have
\begin{eqnarray}\label{eq+B+L}
\textbf{B}^{-1}_q\textbf{L}(a,b,c)\textbf{B}_q=\textbf{L}(a,b,c+qa)
\end{eqnarray}
and the matrix $\textbf{L}(a,b,c)$ is binomial-totally positive in
the ring $\mathbb{Z}[a,b,c]$ equipped with the coefficientwise
order.
\end{prop}

\begin{proof}
 By (\ref{eq+B+L}), the total positivity of $\textbf{L}(a,b,c)$
in the ring $\mathbb{Z}[a,b,c]$ equipped with the coefficientwise
order immediately implies that $\textbf{L}(a,b,c)$ is
binomial-totally positive in the ring $\mathbb{Z}[a,b,c]$ equipped
with the coefficientwise order. So it suffices to prove
(\ref{eq+B+L}). We will show that
$$\textbf{L}(a,b,c)\textbf{B}_q=\textbf{B}_q\textbf{L}(a,b,c+qa).$$
Since \begin{eqnarray} [\textbf{L}(a,b,c)]_{n,n}=b+na,\quad\quad
[\textbf{L}(a,b,c)]_{n,n-1}=nc,
\end{eqnarray}
we have
\begin{align}
[\textbf{L}(a,b,c)B_q]_{n,k}&=(b+na)\binom{n}{k}q^{n-k}+nc\binom{n-1}{k}q^{n-1-k}\\
[B_q\textbf{L}(a,b,c+qa)]_{n,k}&=\binom{n}{k}q^{n-k}(b+ka)+\binom{n}{k+1}q^{n-1-k}(k+1)(c+qa)\\
&=\binom{n}{k}q^{n-k}(b+ka)+n(c+qa)\binom{n-1}{k}q^{n-1-k}.
\end{align}
In consequence, we obtain
\begin{eqnarray}
[\textbf{L}(a,b,c)B_q]_{n,k}-[B_q\textbf{L}(a,b,c+qa)]_{n,k}&=&0.
\end{eqnarray}
This completes the proof.
\end{proof}
Assume that $\textbf{U}(a,b,u,v)$ is a upper-bidiagonal matrix with
$ka+b$ for $k\geq0$ on the diagonal and $ku+v$ for $k\geq1$ on the
superdiagonal:
\begin{eqnarray}\label{def+upper-bi+U}
\textbf{U}(a,b,u,v)=\left[\begin{array}{ccccc}
b&v&&&\\
&a+b&u+v&&\\
&&2a+b&2u+v&\\
&&&\ddots&\ddots\\
\end{array}\right].
\end{eqnarray}
Suppose that $\textbf{J}(a,b,u,v,\lambda)$ is the following
tri-diagonal matrix
\begin{eqnarray}\label{def+tri+J}
\textbf{J}(a,b,u,v,\lambda)=\left[
\begin{array}{ccccc}
s_0 & r_0 &  &  &\\
t_1 & s_1 & r_1 &\\
 & t_2 & s_2 &r_2&\\
& & \ddots&\ddots & \ddots \\
\end{array}\right],
\end{eqnarray}
where $r_n=un+v$, $s_n=(2\lambda u+a)n+b+\lambda v$ and
$t_n=\lambda(a+\lambda u)n$.

Obviously, $\textbf{U}(a,b,u,v)=\textbf{J}(a,b,u,v,0)$. Since
$\textbf{U}$ is bidiagonal, it follows immediately that $\textbf{U}$
is totally positive in the ring $\mathbb{Z}[a,b,u,v]$ equipped with
the coefficientwise order. But it is less obvious that
$\textbf{J}(a,b,u,v,\lambda)$ is totally positive in the ring
$\mathbb{Z}[a,b,u,v,\lambda]$ equipped with the coefficientwise
order. In fact, the following result gives the stronger property
that $\textbf{J}(a,b,u,v,\lambda)$ is binomial-totally positive in
the ring $\mathbb{Z}[a,b,u,v,\lambda]$ equipped with the
coefficientwise order.

\begin{prop}\label{prop+tri+J+TP}
Let $\textbf{J}(a,b,u,v,\lambda)$ be defined by (\ref{def+tri+J}).
Then
\begin{itemize}
\item [\rm (i)]
the matrix $\textbf{J}(a,b,u,v,\lambda)$ is totally positive in the
ring $\mathbb{Z}[a,b,u,v,\lambda]$ equipped with the coefficientwise
order.
\item [\rm (ii)]
we have \begin{eqnarray}\label{eq+B+U}
\textbf{B}^{-1}_q\textbf{J}(a,b,u,v,\lambda)\textbf{B}_q=\textbf{J}(a,b,u,v,\lambda+q).
\end{eqnarray}
Therefore, the matrix $\textbf{J}(a,b,u,v,\lambda)$ is
binomial-totally positive in the ring $\mathbb{Z}[a,b,u,v,\lambda]$
equipped with the coefficientwise order.
\end{itemize}
\end{prop}

\begin{proof}
(i) Consider the matrix
\begin{equation} \textbf{J}^{*}=\left[
\begin{array}{ccccc}
s_0 & r^{*}_0 &  &  &\\
t^{*}_1 & s_1 & r^{*}_1 &\\
 & t^{*}_2 & s_2 &r^{*}_2&\\
& & \ddots&\ddots & \ddots \\
\end{array}\right],
\end{equation}
where $r^{*}_n=\lambda(un+v-u)$, $s_n=[2\lambda u+a]n+b+\lambda v$
and $t^{*}_n=[a+\lambda u]n$. It satisfies
$r_it_{i+1}=r^{*}_it_{i+1}^{*}$ for all $i$, so by \cite[Claim 2 in
the proof of Theorem 2.1]{Zhu21}, $\textbf{J}(a,b,u,v,\lambda)$ is
totally positive in the ring $\mathbb{Z}[a,b,u,v,\lambda]$ equipped
with the coefficientwise order if and only if $\textbf{J}^{*}$ is
so. On the other hand, the matrix $\textbf{J}^{*}$ is
coefficientwise row-diagonally-dominant (that is, the coefficients
of $s^{*}_n-r^{*}_n-t^{*}_n$ are nonnegative), so by
\cite[Proposition 3.3 (i)]{Zhu21}, it follows that $\textbf{J}^{*}$
is totally positive in the ring $\mathbb{Z}[a,b,u,v,\lambda]$
equipped with the coefficientwise order.

(ii) By (i) and $
\textbf{B}^{-1}_q\textbf{J}(a,b,u,v,\lambda)\textbf{B}_q=\textbf{J}(a,b,u,v,\lambda+q),
$ the matrix $\textbf{J}(a,b,u,v,\lambda)$ is immediately
binomial-totally positive in the ring $\mathbb{Z}[a,b,u,v,\lambda]$
equipped with the coefficientwise order. So we will prove
$\textbf{J}(a,b,u,v,0)\textbf{B}_q=\textbf{B}_q\textbf{J}(a,b,u,v,q).$
Since
\begin{align}
[\textbf{J}(a,b,u,v,\lambda)]_{n,n+1}&=un+v,\\
[\textbf{J}(a,b,u,v,\lambda)]_{n,n}&=(2\lambda u+a)n+b+\lambda v,\\
[\textbf{J}(a,b,u,v,\lambda)]_{n,n-1}&=\lambda(a+\lambda u)n,
\end{align}
we have
\begin{align}
[\textbf{J}(a,b,u,v,0)\textbf{B}_q]_{n,k}=&(un+v)\binom{n+1}{k}q^{n+1-k}+(an+b)\binom{n}{k}q^{n-k},\label{JB}\\
[\textbf{B}_q\textbf{J}(a,b,u,v,q)]_{n,k}=&\binom{n}{k-1}q^{n-k+1}[u(k-1)+v]+\binom{n}{k}q^{n-k}[(2qu+a)k+b+qv]+\nonumber\\
&\binom{n}{k+1}q^{n-k-1}q(a+qu)(k+1)\nonumber\\
=&\binom{n}{k-1}q^{n-k+1}[u(k-1)+v]+\binom{n}{k}q^{n-k}q(2uk+v)+\nonumber\\
&\binom{n}{k}q^{n-k}(ak+b)+(n-k)\binom{n}{k}q^{n-k}(a+qu)(k+1).\label{BJ}
\end{align}
Then it is easy to check that the two right-hand sides of (\ref{JB})
and (\ref{BJ}) are equal, which gives
$[\textbf{J}(a,b,u,v,0)\textbf{B}_q]_{n,k}=[\textbf{B}_q\textbf{J}(a,b,u,v,q)]_{n,k}.$
This completes the proof.\end{proof}

It is easy to observe that $\textbf{J}(a,b,0,0,\lambda)
=\textbf{L}(a,b,\lambda a)$. So if we write $\lambda= c/a$ and work
in a ring of Laurent polynomials, Proposition \ref{prop+low-bi+TP}
is a special case of Proposition \ref{prop+tri+J+TP}. In what
follows we will introduce a generalization of $\textbf{L}$,
$\textbf{U}$ and $\textbf{J}$. Define a tridiagonal matrix
\begin{equation} \textbf{M}(a,b,c,u,v):=\left[
\begin{array}{cccccc}
b & v &  &  &&\\
c & b+a & v+u &&\\
 & 2c & b+2a &v+2u&&\\
& & 3c&b+3a & v+3u &\\
& & &\ddots&\ddots&\ddots
\end{array}\right].
\end{equation}
It is of the form (\ref{def+tri+J}) with $r_n=v+un,s_n=b+an$ and
$t_n=cn$. Clearly, all of the matrices $\textbf{L}$, $\textbf{U}$
and $\textbf{J}$ are special cases of this general form: in
particular,
\begin{eqnarray}
\textbf{L}(a,b,c)&=&\textbf{M}(a,b,c,0,0),\\
\textbf{U}(a,b,u,v)&=&\textbf{M}(a,b,0,u,v),\\
\textbf{J}(a,b,u,v,\lambda)&=&\textbf{M}(2\lambda u+a,b+\lambda
u,\lambda (a+\lambda u),u,v).
\end{eqnarray}
For this matrix $\textbf{M}(a,b,c,u,v)$, we have the following key
property.

\begin{prop}\label{prop+binom+M}
We have
\begin{eqnarray}
\textbf{B}^{-1}_q\textbf{M}(a,b,c,u,v)\textbf{B}_q&=&\textbf{M}(2uq+a,qu+b,c+aq+uq^2,u,v).
\end{eqnarray}
\end{prop}
\begin{proof}
From definitions, we have
$\textbf{M}(a,b,c,u,v)=\textbf{U}(0,0,u,v)+\textbf{L}(a,b,c).$ So
\begin{eqnarray}
\textbf{B}^{-1}_q\textbf{M}(a,b,c,u,v)\textbf{B}_q&=&\textbf{B}^{-1}_q\textbf{U}(0,0,u,v)\textbf{B}_q+\textbf{B}^{-1}_q\textbf{L}(a,b,c)\textbf{B}_q.
\end{eqnarray}
By Proposition \ref{prop+low-bi+TP} and Proposition
\ref{prop+tri+J+TP}, we get
\begin{eqnarray}
\textbf{B}^{-1}_q\textbf{M}(a,b,c,u,v)\textbf{B}_q&=&\textbf{J}(0,0,u,v,q)+\textbf{L}(a,b,c+qa)\\
&=&\textbf{M}(2qu,qu,q^2u,u,v)+\textbf{M}(a,b,c+qu,0,0)\\
&=&\textbf{M}(2uq+a,qu+b,c+aq+uq^2,u,v).
\end{eqnarray}
This completes the proof.
\end{proof}

Proposition \ref{prop+binom+M} says that the family of matrices
$\textbf{M}(a,b,c,u,v)$ is mapped into itself under a binomial
similarity transformation. This will allow us to leverage total
positivity into binomial-total positivity.  The matrices
$\textbf{M}(a,b,c,u,v)$ are not in general totally positive; but by
suitably specializing the variables $a,b,c,u,v$, one can devise a
matrix $\widetilde{\textbf{M}}(a,b,c,d,e,\mu,\nu)$ in seven
indeterminates $a,b,c,d,e,\mu,\nu$ by
\begin{eqnarray}
\widetilde{\textbf{M}}(a,b,c,d,e,\mu,\nu):=\textbf{M}(c+e+(\mu+\nu)b,d+(\mu+\nu)a,(\mu+\nu)c+\nu
e,b,a)
\end{eqnarray}
 or in other words (\ref{def+tri+J})
with
\begin{eqnarray}
r_n&=&a+bn\\
s_n&=&d+(\mu+\nu)a+[c+e+(\mu+\nu)b]n\\
 t_n&=&[(\mu+\nu)c+\nu e]n.
\end{eqnarray}
In addition, $\widetilde{\textbf{M}}(a,b,c,d,e,\mu,\nu)$ satisfies
the following relations
\begin{eqnarray*}
\widetilde{\textbf{M}}(0,0,c,d,e,\mu,\nu)&=&\textbf{L}(c+e,d,(\mu+\nu)c+\nu e)\\
\widetilde{\textbf{M}}(a,b,c,d,e,0,0)&=&\textbf{U}(c+e,d,b,a)\\
\widetilde{\textbf{M}}(a,b,c+\lambda
b,d,e,0,\lambda)&=&\textbf{J}(c+e,d,b,a,\lambda).
\end{eqnarray*}

For the matrix $\widetilde{\textbf{M}}(a,b,c,d,e,\mu,\nu)$, the
following result says that it is binomial-totally positive in seven
variables $a,b,c,d,e,\mu,\nu$.

\begin{prop}\label{prop+binom+TP+M*}
The matrix $\widetilde{\textbf{M}}(a,b,c,d,e,\mu,\nu)$ is
binomial-totally positive in the ring
$\mathbb{Z}[a,b,c,d,e,\mu,\nu]$ equipped with the coefficientwise
order.
\end{prop}

\begin{proof}
By the definition of $\widetilde{\textbf{M}}(a,b,c,d,e,\mu,\nu)$ and
Proposition \ref{prop+binom+M}, we immediately have
\begin{eqnarray}\label{eq+Binom+M}
\textbf{B}^{-1}_q\widetilde{\textbf{M}}(a,b,c,d,e,\mu,\nu)\textbf{B}_q=\widetilde{\textbf{M}}(a,b,c+bq,d,e,\mu,\nu+q).
\end{eqnarray}
So it suffices to prove that
$\widetilde{\textbf{M}}(a,b,c,d,e,\mu,\nu)$ is totally positive in
the ring $\mathbb{Z}[a,b,c,d,e,\mu,\nu]$ equipped with the
coefficientwise order.

 Consider the tridiagonal matrix
\begin{equation} \textbf{M}^{*}(a,b,c,d,e,\mu,\nu)=\left[
\begin{array}{cccccc}
s^{*}_0 & r^{*}_0 &  &  &&\\
t^{*}_1 & s^{*}_1 & r^{*}_1 &&\\
 & t^{*}_2 & s^{*}_2 &r^{*}_2&&\\
 & &\ddots&\ddots&\ddots
\end{array}\right]
\end{equation}
with
\begin{eqnarray}
r^{*}_n&=&(a+bn)(\mu+\nu)\\
s^{*}_n&=&d+(\mu+\nu)a+[c+e+(\mu+\nu)b]n\\
t^{*}_n&=&(c+e)n.
\end{eqnarray}
Obviously, the matrix $\textbf{M}^{*}(a,b,c,d,e,\mu,\nu)$ is
coefficientwise row-diagonally-dominant. So by \cite[Proposition 3.3
(i)]{Zhu21}, it is
 totally positive in the ring
$\mathbb{Z}[a,b,c,d,e,\mu,\nu]$ equipped with the coefficientwise
order. Let
\begin{eqnarray}
r^{**}_n&=&a+bn\\
s^{**}_n&=&d+(\mu+\nu)a+[c+e+(\mu+\nu)b]n\\
 t^{**}_n&=&(c+e)(\mu+\nu)n.
\end{eqnarray} Obviously, $r^{*}_it^{*}_{i+1}=r^{**}_it_{i+1}^{**}$ for all $i$. Then by \cite[Claim 2 in the proof of Theorem 2.1]{Zhu21},
the tridiagonal matrix
\begin{equation} \textbf{M}^{**}(a,b,c,d,e,\mu,\nu)=\left[
\begin{array}{cccccc}
s^{**}_0 & r^{**}_0 &  &  &&\\
t^{**}_1 & s^{**}_1 & r^{**}_1 &&\\
 & t^{**}_2 & s^{**}_2 &r^{**}_2&&\\
 & &\ddots&\ddots&\ddots
\end{array}\right]
\end{equation}
is totally positive in the ring $\mathbb{Z}[a,b,c,d,e,\mu,\nu]$
equipped with the coefficientwise order. For
$\widetilde{\textbf{M}}(a,b,c,d,e,\mu,\nu)$ and
$\textbf{M}^{**}(a,b,c,d,e,\mu,\nu)$, we have
\begin{eqnarray}
r_n&=&r^{**}_n\\
s_n&=&s^{**}_n\\
 t_n&=& t^{**}_n-\mu en.
\end{eqnarray}
Therefore, $0\leq t_n\leq t^{**}_n$ in the coefficientwise sense. In
consequence, by perturbation result \cite[Proposition 3.1]{Zhu21},
$\widetilde{\textbf{M}}(a,b,c,d,e,\mu,\nu)$ is also totally positive
in the ring $\mathbb{Z}[a,b,c,d,e,\mu,\nu]$ equipped with the
coefficientwise order. This completes the proof.
\end{proof}

By Proposition \ref{prop+binom+TP+M*}, if we take (i) $a\rightarrow
af$, $b\rightarrow bf$, $c\rightarrow0$, $\nu\rightarrow0$ and
$\mu\rightarrow1$, (ii) $c\rightarrow0$, $e\rightarrow c$,
$\mu+\nu\rightarrow f$ and $\nu\rightarrow1$, then we have the
following much simpler and cleaner way, respectively.
\begin{cor}
Let $a,b,c,d,e,f$ be elements of a partially ordered commutative
ring $R$.
\begin{itemize}
\item [\rm (i)]
If $a,b,d,e,f\geq0$ and $0\leq c\leq e$, then the matrix $M(e+bf,d +
af,c,bf,af)$ is totally positive.
\item [\rm (ii)]
If $a,b,d,e,f\geq0$ and $0\leq c\leq ef$, then the matrix $M(e+bf,d
+ af,c,b,a)$ is totally positive.
\end{itemize}
\end{cor}

Let $\textbf{f}=(f_n)_n$ and $f(t)=\sum_{n\geq0}f_nt^n$. Define a
matrix $\Lambda:=\left[
  \begin{array}{ccccc}
    0!&&& \\
    &1!&&\\
   & &2!&&\\
 &&&\ddots\\
  \end{array}
\right]$ and a matrix
$\bm{\Gamma}^{(\Lambda,f(t))}:=\Lambda\Gamma(\textbf{f}){\Lambda}^{-1}$.
We obtain the following result.

\begin{prop}\label{prop+ERA+simil}
Let $R$ be a partially ordered commutative ring and $f(t)\in
R[[t]]$. We have
\begin{eqnarray}
\textbf{B}_q^{-1}\bm{\Gamma}^{(\Lambda,f(t))}\textbf{B}_q=\bm{\Gamma}^{(\Lambda,f(t))}.
\end{eqnarray}
Therefore, if $f(t)$ is a P\'olya frequency ogf of order $r$ in
$R[[t]]$, then the matrix $\bm{\Gamma}^{(\Lambda,\textbf{f})}$ is
binomial-totally positive of order $r$ in the ring $R$.
\end{prop}
\begin{proof}
Obviously, if $f(t)$ is a P\'olya frequency ogf of order $r$ in
$R[[t]]$, then $\bm{\Gamma}^{(\Lambda,f(t))}$ is totally positive of
order $r$ in the ring $R$. It follows from
$\textbf{B}_q^{-1}\bm{\Gamma}^{(\Lambda,f(t))}\textbf{B}_q=\bm{\Gamma}^{(\Lambda,f(t))}$
that the matrix $\bm{\Gamma}^{(\Lambda,f(t))}$ is binomial-totally
positive of order $r$ in the ring $R$. In consequence, we only need
to prove
$$\textbf{B}_q^{-1}\bm{\Gamma}^{(\Lambda,f(t))}\textbf{B}_q=\bm{\Gamma}^{(\Lambda,f(t))},$$
which is immediate from the following observing that
\begin{itemize}
\item [\rm (a)]
Any two Toeplitz matrices $\Gamma(\textbf{f})$ and
$\Gamma(\textbf{g})$ commute; hence the matrices
$\Lambda\Gamma(\textbf{f}){\Lambda}^{-1}$ and
$\Lambda\Gamma(\textbf{g}){\Lambda}^{-1}$ commute.
\item [\rm (b)]
$B_q=\Lambda\Gamma(\textbf{h}){\Lambda}^{-1}$, where
$h_n=\frac{q^k}{k!}$.
\end{itemize}
We complete the proof.
\end{proof}

\section{Row-generating
polynomials of generalized $m$-Jacobi--Rogers
triangles}\label{sec+generalized $m$-Jacobi--Rogers triangle}

In this section, we will present coefficientwise Hankel-total
positivity of row-generating polynomials of generalized
$m$-Jacobi--Rogers triangles and combinatorial interpretations for
such row-generating polynomials in terms of trees or forests.

\subsection{Coefficientwise Hankel-total positivity of row-generating
polynomials}

It follows from definitions that the production matrix of the
generalized $m$-Jacobi-Rogers triangle $\textbf{J}^{(m)}$ equals
$P^{(m)}(\bm\beta)$. Thus, by Theorem \ref{thm+main+Hankel-Total
positivity+factor} and results in Section \ref{sec+Bino-TP}, we can
immediately state the following result concerning coefficientwise
Hankel-total positivity of row-generating polynomials of the
generalized $m$-Jacobi--Rogers triangle.

\begin{thm}\label{thm+HTP+Row+GmJR+triangle}
Let $R$ be a partially ordered commutative ring and
$\textbf{J}^{(m)} $ be the infinite generalized $m$-Jacobi--Rogers
triangle. Assume that $P^{(m)}(\bm\beta)$ can be written as the
product of some matrices, such as $\textbf{L}(a_i,b_i,c_i)$,
$\textbf{U}(\widehat{a}_i,\widehat{b}_i,\widehat{u}_i,\widehat{v}_i)$,
$\textbf{J}(\widetilde{a}_i,\widetilde{b}_i,\widetilde{u}_i,\widetilde{v}_i,\lambda_i)$,
$\widetilde{\textbf{M}}(\overline{a}_i,\overline{b}_i,\overline{c}_i,d_i,e_i,\mu_i,\nu_i)$
and $\bm{\Gamma}^{(\Lambda,f_i(t))}$ for $i\geq1$, and denote by
$\bm x$ all indeterminates in $\textbf{L}$, $\textbf{U}$,
$\textbf{J}$ and $\widetilde{\textbf{M}}$. If $f_i(t)$ is a P\'olya
frequency ogf of order $r$ in $R[[t]]$ for each $i\geq1$, then
\begin{itemize}
\item [\rm (i)]
the binomial row-generating matrix $\textbf{J}^{(m)}(q)$ is
coefficientwise totally positive of order $r$ in the ring $R[\bm
x,q]$ equipped with the coefficientwise order;
\item [\rm (ii)]
the row-generating polynomial sequence $(\textbf{J}_n(q))_{n\geq0}$
of $\textbf{J}^{(m)}$ is Hankel-totally positive of order $r$ in the
ring $R[\bm x,q]$ equipped with the coefficientwise order.
\end{itemize}
\end{thm}

In particular, for the $m$-Stieltjes--Rogers triangle
$\textbf{S}^{(m)}$, we have the following result.
\begin{thm}\label{thm+HTP+Row+GF+Stie-Roger+triangle}
Let $S_n(\bm\alpha;q)$ be the $n$th row-generating polynomial of the
$m$-Stieltjes--Rogers triangle $\textbf{S}^{(m)}$. If
$\balpha=(y,\underbrace{x_1,\ldots,x_{m}}_{m},y+x_0,\underbrace{2x_1,\ldots,2x_{m}}_{m},y+2x_0,\underbrace{3x_1,\ldots,3x_{m}}_{m},\ldots),$
then $(S_n(\bm\alpha;q))_{n\geq0}$ is Hankel-totally positive in the
polynomial ring $\mathbb{Z}[\textbf{x},y,q]$ equipped with the
coefficientwise order.
\end{thm}

\begin{proof}
Note by \cite[Proposition 8.2 (b)]{PSZ18}, for
$$(\alpha_{i})_{i\geq
m}=(y,\underbrace{x_1,\ldots,x_{m}}_{m},y+x_0,\underbrace{2x_1,\ldots,2x_{m}}_{m},y+2x_0,\underbrace{3x_1,\ldots,3x_{m}}_{m},\ldots)$$
that the production matrix of the $m$-Stieltjes-Rogers triangle
$\textbf{S}^{(m)}$ is
\begin{eqnarray}\label{production+matrix+Stieltjes-Rogers}
\left(\prod_{i=1}^{m}\left[\begin{array}{ccccc}
1&&&&\\
x_i&1&&&\\
&2x_i&1&&\\
&&3x_i&1&\\
&&&\ddots&\ddots\\
\end{array}\right]\right)\left[\begin{array}{cccccc}
y&1&&&\\
&x_0+y&1&&\\
&&2x_0+y&1&\\
&&&3x_0+y&1&\\
&&&&\ddots&\ddots
\end{array}\right],
\end{eqnarray}
where the product over $i$ of these matrices is to be understood as
left-to-right (i.e. $i = 1$ on the far left, $i = m$ on the far
right). It is easy to observe that the matrices over $i$ in the
product (\ref{production+matrix+Stieltjes-Rogers}) are $L(0; 1;
x_i)$, and the final matrix is $U(x_0; y; 0; 1)$. Therefore, it
follows from Theorem \ref{thm+HTP+Row+GmJR+triangle} that
$(S_n(\bm\alpha;q))_{n\geq0}$ is Hankel-totally positive in the
polynomial ring $\mathbb{Z}[\textbf{x},y,q]$ equipped with the
coefficientwise order.
\end{proof}

 The following result concerning
coefficientwise Hankel-total positivity of polynomials is very
useful.
\begin{prop}\label{prop+recp}
Let $A_n(q)$ be a polynomial of degree $n$ and define a new
polynomial
\begin{eqnarray}\widehat{A}_n(a,b,c,d,q)&=&(a+bq)^nA_{n}\left(\frac{c+dq}{a+bq}\right)
\end{eqnarray}
for $n\geq0$. If $(A_n(q))_{n\geq0}$ is coefficientwise
Hankel-totally positive of order $r$ in $q$, then
$(\widehat{A}_n(q))_{n\geq0}$ is coefficientwise Hankel-totally
positive of order $r$ in $(a,b,c,d,q)$.
\end{prop}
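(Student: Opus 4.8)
The plan is to reduce the claim to a single algebraic identity relating the Hankel matrix of $(\widehat A_n)_{n\ge 0}$ to the Hankel matrix of $(A_n)_{n\ge 0}$ by a congruence with an explicit rectangular matrix whose entries are monomials in $a,b,c,d$. Write $\widehat A_n(a,b,c,d,q)=(a+bq)^n A_n\!\left(\tfrac{c+dq}{a+bq}\right)$ and expand $A_n(t)=\sum_{j=0}^n A_{n,j}\,t^j$, so that
\begin{equation}\label{prop-recp-expand}
\widehat A_n(a,b,c,d,q)=\sum_{j=0}^{n} A_{n,j}\,(c+dq)^{j}(a+bq)^{n-j}.
\end{equation}
The key observation is that $(a+bq)^{n-j}(c+dq)^{j}$ is itself a polynomial with nonnegative-coefficient dependence on $(a,b,c,d,q)$; more to the point, the map sending the monomial $t^j$ (in the variable $t$) to $(c+dq)^j(a+bq)^{\bullet-j}$ is a \emph{grading-preserving substitution}: $\widehat A_n = \Phi\big(A_n\big)$ where $\Phi$ is the linear operator on polynomials determined by homogenizing in an auxiliary variable and substituting $t\mapsto c+dq$, $1\mapsto a+bq$. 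Because $A_n$ has degree exactly $n$, the homogenization is unambiguous and $\Phi$ is well defined; moreover $\Phi$ is multiplicative on products of such homogeneous forms, i.e. $\Phi(A_m A_n \cdot\text{(correction)})$ factors, which is exactly what a Hankel minor needs.

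Concretely, I would introduce the substitution homomorphism $\psi:\Z[t]\to\Z[a,b,c,d,q]$ on homogeneous degree-$N$ pieces by $\psi(t^j)=(c+dq)^j(a+bq)^{N-j}$, and check that for the Hankel matrix $H=[\,A_{i+j}\,]_{i,j\ge 0}$ one has
\begin{equation}\label{prop-recp-hankel}
\big[\,\widehat A_{i+j}\,\big]_{i,j\ge 0} \;=\; D\,\Big(\text{``}\psi\text{ applied entrywise to }H\text{''}\Big)\,D,
\end{equation}
where $D=\diag\big((a+bq)^{0},(a+bq)^{1},(a+bq)^{2},\ldots\big)$ — or rather a variant that absorbs the degree-shift, using $\widehat A_{i+j}=(a+bq)^{i+j}A_{i+j}\!\big(\tfrac{c+dq}{a+bq}\big)$ directly. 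Then a $k\times k$ minor ($k\le r$) of the left side of \eqref{prop-recp-hankel} equals the corresponding minor of the middle matrix times the nonnegative monomial $\big(\prod (a+bq)\big)^2$ coming from the two diagonal factors. Finally, since $\psi$ acts on each entry by substituting $q\mapsto \tfrac{c+dq}{a+bq}$ up to the common denominator clearing, a minor of the substituted matrix is obtained from the corresponding minor of $H$ by the \emph{same} substitution: if $M(q)$ is a polynomial in $q$ with nonnegative coefficients, then $(a+bq)^{N} M\!\big(\tfrac{c+dq}{a+bq}\big)$ is a polynomial in $(a,b,c,d,q)$ with nonnegative coefficients, because $c+dq$, $a+bq$ and $q$ all have nonnegative coefficients and the power $N$ is large enough to clear all denominators. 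Applying this with $M$ ranging over the $k\times k$ minors of $H$, which by hypothesis are polynomials in $q$ with nonnegative coefficients for $k\le r$, yields coefficientwise nonnegativity of all $k\times k$ minors of $[\widehat A_{i+j}]$, which is the assertion.

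The main obstacle is purely bookkeeping: one must verify that the degree counting works out so that no negative powers of $(a+bq)$ survive in any minor — i.e. that the diagonal conjugation in \eqref{prop-recp-hankel} carries exactly the right powers, given that $\deg A_n = n$ (this is where the degree hypothesis is used in an essential way), and that the Cauchy–Binet / multiplicativity step is applied to matrices of the correct shapes. A clean way to avoid subtleties is to first prove the identity at the level of the $(i,j)$ entry using \eqref{prop-recp-expand}, then invoke the standard fact that if $H'=D_1 \widetilde H D_2$ with $D_1,D_2$ diagonal having entries that are monomials with nonnegative coefficients, then total positivity of order $r$ of $\widetilde H$ (coefficientwise, in the enlarged variable set) transfers to $H'$; and that the entrywise substitution $q\mapsto \tfrac{c+dq}{a+bq}$ followed by clearing denominators is a positivity-preserving operation on minors because it is induced by a ring map into a partially ordered ring sending the positive cone into the positive cone. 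I expect the proof to be short once \eqref{prop-recp-hankel} is stated correctly.
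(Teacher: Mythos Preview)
Your proposal is correct and takes essentially the same approach as the paper: write $\widehat A_{i+j}=(a+bq)^{i+j}A_{i+j}\!\big(\tfrac{c+dq}{a+bq}\big)$, pull out $(a+bq)^{i_s}$ from each row and $(a+bq)^{j_t}$ from each column of a $k\times k$ minor to obtain $(a+bq)^{\sum i_s+\sum j_t}$ times the corresponding minor of $[A_{i+j}(q)]$ evaluated at $q\mapsto\tfrac{c+dq}{a+bq}$, and then use the degree bound $\deg\bigl(\text{minor}\bigr)\le\sum i_s+\sum j_t$ (which follows from $\deg A_n=n$) to see that the result is a genuine polynomial with nonnegative coefficients in $(a,b,c,d,q)$. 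The paper's write-up is just the streamlined version of this, without the substitution-homomorphism packaging.
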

\begin{proof}
For $k\leq r$, let $\mathcal
{A}^{i_1,i_2,\ldots,i_k}_{j_1,j_2,\ldots,j_k}(q)$ (resp. $\mathcal
{\widehat{A}}^{i_1,i_2,\ldots,i_k}_{j_1,j_2,\ldots,j_k}(a,b,c,d,q)$)
be the minor of $[A_{i+j}(q)]_{i,j}$ (resp.
$[\widehat{A}_{i+j}(q)]_{i,j}$) by taking its rows
${i_1,i_2,\ldots,i_k}$ and columns ${j_1,j_2,\ldots,j_k}$. In terms
of the assumption that $[A_{i+j}(q)]_{i,j}$ is coefficientwise
totally positive of order $r$ in $q$, $\mathcal
{A}^{i_1,i_2,\ldots,i_k}_{j_1,j_2,\ldots,j_k}(q)$ is a polynomial
with nonnegative coefficients of degree $\leq
i_1+i_2+\cdots+i_k+j_1+j_2+\cdots+j_k$. Since
$\widehat{A}_{i+j}(q)=(a+bq)^{i+j}A_{i+j}(\frac{c+dq}{a+bq})$,
taking out $(a+bq)^{i_1},(a+bq)^{i_2},\ldots,(a+bq)^{i_k}$ from rows
and $(a+bq)^{j_1},(a+bq)^{j_2},\ldots,(a+bq)^{j_k}$ from columns of
$\mathcal
{\widehat{A}}^{i_1,i_2,\ldots,i_k}_{j_1,j_2,\ldots,j_k}(a,b,c,d,q)$,
respectively, we get
\begin{eqnarray}
\mathcal
{\widehat{A}}^{i_1,i_2,\ldots,i_k}_{j_1,j_2,\ldots,j_k}(a,b,c,d,q)&=&(a+bq)^{i_1+i_2+\cdots+i_k+j_1+j_2+\cdots+j_k}\mathcal
{A}^{i_1,i_2,\ldots,i_k}_{j_1,j_2,\ldots,j_k}\left(\frac{c+dq}{a+bq}\right).
\end{eqnarray}
In consequence, $\mathcal
{\widehat{A}}^{i_1,i_2,\ldots,i_k}_{j_1,j_2,\ldots,j_k}(a,b,c,d,q)$
is a multivariate polynomial with nonnegative coefficients in
$a,b,c,d,q$. This completes the proof.
\end{proof}

For a polynomial $A_n(q)$ with $\deg(A_n(q))=n$, its
\textbf{reversed polynomial} $A_n^{*}(q)$ is defined by
$A_n^{*}(q)=q^nA_n(\frac{1}{q})$. The following result is immediate
from Proposition \ref{prop+recp}.

\begin{cor}\label{cor+HTP+reversed}
If $(A_n(q))_{n\geq0}$ is coefficientwise Hankel-totally positive of
order $r$ in $(\textbf{x},q)$, then so is the sequence
$(A^{*}_n(q))_{n}$ of reversed polynomials.
\end{cor}

This implies in particular the coefficientwise Hankel-total
positivity of $(S^{*}_n(\bm\alpha;q))_{n\geq0}$ and
$(\textbf{J}^{*}_n(q))_{n\geq0}$ in Theorem
\ref{thm+HTP+Row+GF+Stie-Roger+triangle} and Theorem
\ref{thm+HTP+Row+GmJR+triangle}, respectively.

Coefficientwise Hankel-total positivity of polynomials also plays an
important role in triangular convolutions preserving the Stieltjes
moment properties. Let $\textbf{A}=[a_{n,k}]_{n,k\ge 0}$ be an
infinite matrix. Define the $\textbf{A}$-convolution
\begin{eqnarray}\label{a-c}
z_n&=&\sum_{k=0}^{n}a_{nk}x_ky_{n-k}
\end{eqnarray}
for $n\geq0$ We say that \eqref{a-c} preserves the Stieltjes moment
property: if both $(x_n)_{n\ge 0}$ and $(y_n)_{n\ge 0}$ are
Stieltjes moment sequences, then so is $(z_n)_{n\ge 0}$. P\'olya and
Szeg\"o~\cite[Part VII, Theorem 42]{PS64} proved for $n\geq0$ that
the binomial convolution
\begin{eqnarray}
z_n&=&\sum_{k=0}^{n}\binom{n}{k}x_ky_{n-k}
\end{eqnarray}
preserves the Stieltjes moment property in $\mathbb{R}$. Recently,
it was proved in \cite{WZ16} that the next sufficient condition for
the triangular convolution preserving the Stieltjes moment property
\cite{WZ16}.

\begin{thm}\label{thm+conv}\emph{\cite{WZ16}}
Let $A_n(q)=\sum_{k=0}^{n}a_{n,k}q^k$ be the $n$th row-generating
function of the matrix $A=[a_{n,k}]_{n,k}$. Assume that
$(A_n(q))_{n\ge 0}$ is a Stieltjes moment sequence for any fixed
$q\ge 0$. Then the $\textbf{A}$-convolution \eqref{a-c} preserves
the Stieltjes moment property in $\mathbb{R}$.
\end{thm}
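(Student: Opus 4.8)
The plan is to combine the integral representation \eqref{i-e} of Stieltjes moment sequences with the classical positive semidefiniteness criterion for the Stieltjes moment problem; passing through that criterion is what lets one avoid any measure--selection (measurability) issue.

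First I would fix nonnegative measures $\mu,\nu$ on $[0,+\infty)$ representing $(x_n)_{n\ge0}$ and $(y_n)_{n\ge0}$, so that $x_k=\int_0^{+\infty}s^k\,d\mu(s)$ and $y_j=\int_0^{+\infty}t^j\,d\nu(t)$. Substituting these into \eqref{a-c} and using that the sum over $k$ is \emph{finite} (so that the integral of $s^kt^{n-k}$ against $\mu\otimes\nu$ merely factors, with no convergence subtlety), one gets
$$z_n\;=\;\sum_{k=0}^{n}a_{n,k}\Big(\int_0^{+\infty}\!s^k\,d\mu(s)\Big)\Big(\int_0^{+\infty}\!t^{n-k}\,d\nu(t)\Big)\;=\;\int_0^{+\infty}\!\!\int_0^{+\infty}g_n(s,t)\,d\mu(s)\,d\nu(t),$$
where $g_n(s,t):=\sum_{k=0}^{n}a_{n,k}s^kt^{n-k}$ is a polynomial in $(s,t)$ with $g_n(s,t)=t^nA_n(s/t)$ for $t>0$. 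I would then record the elementary scaling fact that, for fixed $s\ge0$ and $t>0$, the sequence $\big(t^nA_n(s/t)\big)_{n\ge0}$ is again a Stieltjes moment sequence: since $\big(A_n(q)\big)_{n\ge0}$ is one for the fixed value $q=s/t\ge0$, say $A_n(q)=\int_0^{+\infty}u^n\,d\rho_{s/t}(u)$ with $\rho_{s/t}\ge0$, we have $t^nA_n(s/t)=\int_0^{+\infty}w^n\,d\rho'_{s,t}(w)$, where $\rho'_{s,t}\ge0$ is the image of $\rho_{s/t}$ under $u\mapsto tu$.

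To conclude I would invoke the classical criterion (see e.g.\ \cite{Pin10,ST43}, and cf.\ the Hankel total positivity characterization recalled above): a real sequence $(b_n)_{n\ge0}$ is a Stieltjes moment sequence if and only if, for every finitely supported real vector $(c_i)_{i\ge0}$, both $\sum_{i,j}c_ic_j\,b_{i+j}\ge0$ and $\sum_{i,j}c_ic_j\,b_{i+j+1}\ge0$. Applied to $(z_n)_{n\ge0}$ via the displayed formula,
$$\sum_{i,j}c_ic_j\,z_{i+j}\;=\;\int_0^{+\infty}\!\!\int_0^{+\infty}\Big(\sum_{i,j}c_ic_j\,g_{i+j}(s,t)\Big)d\mu(s)\,d\nu(t),$$
and for $t>0$ the inner bracket equals $\sum_{i,j}c_ic_j\,t^{i+j}A_{i+j}(s/t)=\int_0^{+\infty}\big(\sum_i c_iw^i\big)^2\,d\rho'_{s,t}(w)\ge0$; by continuity of the polynomial $g_n$ on $[0,+\infty)^2$ this nonnegativity persists on the slice $t=0$, so the whole integral is $\ge0$. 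The identical computation with $g_{i+j+1}$ in place of $g_{i+j}$ produces the extra nonnegative factor $w$ inside $\int_0^{+\infty} w\,\big(\sum_i c_iw^i\big)^2\,d\rho'_{s,t}(w)$, hence is again $\ge0$. Thus both Hankel forms of $(z_n)_{n\ge0}$ are positive semidefinite, i.e.\ $(z_n)_{n\ge0}$ is a Stieltjes moment sequence, which is the assertion.

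The one point that needs care is that one should \emph{not} try to build a representing measure for $(z_n)_{n\ge0}$ directly by integrating the family $\{\rho_{s/t}\}$ against $\mu\otimes\nu$, since that would require measurable dependence of $\rho_{s/t}$ on $(s,t)$, which the pointwise hypothesis does not provide; routing the proof through the Hankel positive semidefiniteness criterion is exactly what circumvents this, as it uses only the pointwise nonnegativity of the integrand $(s,t)\mapsto\sum_{i,j}c_ic_j\,g_{i+j}(s,t)$. Everything else — the factoring of the product integral and the harmless boundary behaviour at $t=0$ (and $s=0$), handled by continuity of the $g_n$ — is routine.
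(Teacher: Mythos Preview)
Your argument is correct. The paper does not supply a proof of this theorem at all: it is quoted as a known result from \cite{WZ16} and used as a black box, so there is no ``paper's own proof'' to compare against. What you wrote is a complete and self-contained justification.

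A couple of remarks on the execution. Your handling of the potential measurability trap is exactly right: one cannot hope to assemble a representing measure for $(z_n)$ by integrating the family $\rho'_{s,t}$ against $\mu\otimes\nu$ without some selection theorem, and routing through the positive-semidefiniteness criterion for the two Hankel forms $[b_{i+j}]$ and $[b_{i+j+1}]$ neatly avoids this, since one then only needs pointwise nonnegativity of a polynomial integrand. The boundary case $t=0$ is also fine by the continuity argument you give; alternatively one can observe directly that $g_n(s,0)=a_{n,n}s^n$, and the sequence $(a_{n,n}s^n)_{n\ge0}$ inherits the Stieltjes property from the hypothesis by letting $q\to\infty$ after dividing $A_n(q)$ by $q^n$ (or simply by your limit argument, which is cleaner). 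All the interchanges of finite sums with the product integral are unproblematic because every monomial $s^kt^{n-k}$ is separately integrable against $\mu\otimes\nu$ by the moment hypotheses on $(x_n)$ and $(y_n)$.
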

Clearly, if the sequence $(A_n(q))_{n\geq0}$ is coefficientwise
Hankel-totally positive in $q$, then $(A_n(q))_{n\ge 0}$ is a
Stieltjes moment sequence for any fixed $q\ge 0$. In addition, it
also implies $3$-$q$-log-convexity of $(A_n(q))_{n\geq0}$ in terms
of the next result.

\begin{prop}\emph{\cite{Zhu21}}\label{prop+3-q-log-convex}
For a polynomial sequence $(A_n(\textbf{x}))_{n\geq0}$, if the
Hankel matrix $[A_{i+j}(\textbf{x})]_{i,j\geq0}$ is coefficientwise
totally positive of order $4$ in $\textbf{x}$, then
$(A_n(\textbf{x}))_{n\geq0}$ is $3$-$\textbf{x}$-log-convex.
\end{prop}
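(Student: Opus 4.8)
The plan is to work in the partially ordered commutative ring $\mathbb{R}[\textbf{x}]$ with its coefficientwise order and to prove, for $m=1,2,3$ and every $i\ge m$, that $\mathcal{L}^m[A_i(\textbf{x})]$ can be written as a subtraction-free polynomial in minors of the Hankel matrix $H:=[A_{i+j}(\textbf{x})]_{i,j\ge0}$ of order at most $m+1$; since every minor of $H$ of order $\le 4$ has nonnegative coefficients by hypothesis, and sums and products of polynomials with nonnegative coefficients again have nonnegative coefficients, this yields $3$-$\textbf{x}$-log-convexity. For $m=1$ this is clear, since $\mathcal{L}[A_i]=A_{i-1}A_{i+1}-A_i^2$ is the $2\times2$ minor of $H$ on rows $\{i-1,i\}$ and columns $\{0,1\}$. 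For $m=2$ I would invoke the classical Desnanot--Jacobi (Dodgson condensation) identity for the $3\times3$ Hankel submatrix $M=[A_{i-2+p+q}]_{0\le p,q\le2}$ of $H$: its corner $2\times2$ minors are $\mathcal{L}[A_{i-1}]$ and $\mathcal{L}[A_{i+1}]$ (the NW and SE ones) and $\mathcal{L}[A_i]$ (both the NE and the SW one, by the Hankel symmetry of $M$), while its central entry is $A_i$, so Desnanot--Jacobi gives
$$\mathcal{L}^2[A_i]\;=\;\mathcal{L}[A_{i-1}]\mathcal{L}[A_{i+1}]-\mathcal{L}[A_i]^2\;=\;A_i\cdot\det\bigl[A_{i-2+p+q}\bigr]_{0\le p,q\le2},$$
a product of minors of $H$ of orders $1$ and $3$.

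The case $m=3$ is the crux. Put $B_j:=\mathcal{L}[A_j]$; applying the $m=2$ identity to the sequence $(B_j)$ gives $\mathcal{L}^3[A_i]=\mathcal{L}^2[B_i]=B_i\cdot\det[B_{i-2+p+q}]_{0\le p,q\le2}$, so I must expand this $3\times3$ determinant of Turán determinants. The key point is that $[B_{i-2+p+q}]_{0\le p,q\le2}$ is exactly the Dodgson-condensation matrix (matrix of consecutive connected $2\times2$ minors) of the $4\times4$ Hankel submatrix $N:=[A_{p+q+i-3}]_{0\le p,q\le3}$ of $H$, because $\det N[\{p,p+1\};\{q,q+1\}]=A_{i-3+p+q}A_{i-1+p+q}-A_{i-2+p+q}^2=\mathcal{L}[A_{i-2+p+q}]=B_{i-2+p+q}$. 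Applying Desnanot--Jacobi once to $N^{(1)}$ and once to $N$ (and using the Hankel symmetry $\det N[\{0,1,2\};\{1,2,3\}]=\det N[\{1,2,3\};\{0,1,2\}]$) then yields the polynomial identity
$$\det\bigl[B_{i-2+p+q}\bigr]_{0\le p,q\le2}\;=\;A_{i-1}A_{i+1}\,\det N\;+\;\bigl(\det N[\{0,1,2\};\{1,2,3\}]\bigr)^{2},$$
equivalently $\mathcal{L}^3[A_i]=\bigl(A_{i-1}A_{i+1}-A_i^2\bigr)\bigl(A_{i-1}A_{i+1}D^{[4]}_i+(D^{[3]}_i)^2\bigr)$, where $D^{[4]}_i$ is the order-$4$ minor of $H$ on rows $\{i-3,\dots,i\}$ and columns $\{0,1,2,3\}$ and $D^{[3]}_i$ is the order-$3$ minor on rows $\{i-2,i-1,i\}$ and columns $\{0,1,2\}$ (note $D^{[3]}_i=\mathcal{L}^2[A_i]/A_i$). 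The right-hand side is a subtraction-free combination of minors of $H$ of order $\le4$, hence has nonnegative coefficients, which finishes the proof.

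The main obstacle is exactly the determinant identity of the previous paragraph: one has to recognize the $3\times3$ Turán determinant as $\det N^{(1)}$ for the $4\times4$ Hankel block $N$, and then unwind it — two applications of Desnanot--Jacobi together with the Dodgson recurrence relating $N$, $N^{(1)}$ and $N^{(2)}$, or a careful direct expansion in the $A_n$ — into the advertised sum of a product of minors of orders $1,1,4$ and the square of a minor of order $3$; I would verify this identity on small numerical instances as a safeguard. Everything else is routine: reducing each $\mathcal{L}^m[A_i]$ to minors, and propagating the coefficientwise order through sums and products. It is worth remarking that the threshold ``order $4$'' is natural precisely because of the factor $D^{[4]}_i=\det N$: for instance this factor vanishes identically when the $A_n$ are moments of a $3$-atomic measure, consistently with $\mathcal{L}^3$ being generically nonzero there.
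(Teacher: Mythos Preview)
Your argument is correct. The paper does not provide its own proof of this proposition: it is stated there as a cited result from \cite{Zhu21} and is used as a black box, so there is no in-paper proof to compare against.

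For the record, your derivation checks out in detail. The $m=1$ and $m=2$ cases are exactly the standard $2\times2$ minor and the Desnanot--Jacobi identity for the $3\times3$ Hankel block, giving $\mathcal{L}^2[A_i]=A_i\cdot D_3(i-2)$. For $m=3$, your key identity
\[
\det\bigl[B_{i-2+p+q}\bigr]_{0\le p,q\le 2}\;=\;A_{i-1}A_{i+1}\,\det N\;+\;D_3(i-2)^2
\]
is valid: applying Desnanot--Jacobi to each connected $3\times3$ block of $N$ gives $\mathcal{L}[B_{i-1}]=A_{i-1}D_3(i-3)$, $\mathcal{L}[B_{i+1}]=A_{i+1}D_3(i-1)$, $\mathcal{L}[B_i]=A_iD_3(i-2)$, and substituting these into $\det(N^{(1)})\cdot B_i=\mathcal{L}[B_{i-1}]\mathcal{L}[B_{i+1}]-\mathcal{L}[B_i]^2$ together with the $4\times4$ Desnanot--Jacobi relation $D_3(i-3)D_3(i-1)=B_i\det N+D_3(i-2)^2$ yields precisely your formula after cancelling $B_i$. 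Thus
\[
\mathcal{L}^3[A_i]\;=\;\mathcal{L}[A_i]\bigl(A_{i-1}A_{i+1}\,\det N+D_3(i-2)^2\bigr),
\]
a subtraction-free combination of Hankel minors of orders $1,1,2,4$ and $2,3,3$, all $\le4$, and you are done. One small stylistic point: rather than saying you ``would verify this identity on small numerical instances as a safeguard,'' the two-step Desnanot--Jacobi derivation above already proves it as a polynomial identity, so no numerical check is needed.
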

In consequence, by Theorems
\ref{thm+HTP+Row+GF+Stie-Roger+triangle}, \ref{thm+conv} and
Proposition \ref{prop+3-q-log-convex}, we obtain:

\begin{thm}\label{thm+triangle+transformation+3-q+Stie-Roger}
Let $S_n(\bm\alpha;q)$ be the $n$th row-generating polynomial of the
$m$-Stieltjes--Rogers triangle $\textbf{S}^{(m)}$. If
$\balpha=(y,\underbrace{x_1,\ldots,x_{m}}_{m},y+x_0,\underbrace{2x_1,\ldots,2x_{m}}_{m},y+2x_0,\underbrace{3x_1,\ldots,3x_{m}}_{m},\ldots),$
then we have
\begin{itemize}
\item [\rm (i)]
$(S_n(\bm\alpha;q))_{n\geq0}$ is $3$-$(\textbf{x},y,q)$-log-convex;
\item [\rm (ii)]
the triangular convolution
$z_n=\sum_{k=0}^{n}S^{(m)}_{n,k}(\bm\alpha)s_kt_{n-k}$
 preserves the Stieltjes moment property in $\mathbb{R}$ for all $\textbf{x}\geq0$ and $y\geq0$.
\end{itemize}
\end{thm}
Similarly, by Theorems \ref{thm+HTP+Row+GmJR+triangle},
\ref{thm+conv} and Proposition \ref{prop+3-q-log-convex}, we obtain:

\begin{thm}\label{thm+TP+triangle+transformation+3-q}
Let $\textbf{J}^{(m)}=[J^{(m)}_{n,k}(\bm\beta)]_{n,k}$ be the
infinite generalized $m$-Jacobi--Rogers triangle. Assume that
$P^{(m)}(\bm\beta)$ can be written as the product of some matrices,
such as $\textbf{L}(a_i,b_i,c_i)$,
$\textbf{U}(\widehat{a}_i,\widehat{b}_i,\widehat{u}_i,\widehat{v}_i)$,
$\textbf{J}(\widetilde{a}_i,\widetilde{b}_i,\widetilde{u}_i,\widetilde{v}_i,\lambda_i)$,
$\widetilde{\textbf{M}}(\overline{a}_i,\overline{b}_i,\overline{c}_i,d_i,e_i,\mu_i,\nu_i)$
and $\bm{\Gamma}^{(\Lambda,f_i(t))}$ for $i\geq1$, and denote by
$\bm x$ all indeterminates in $\textbf{L}$, $\textbf{U}$,
$\textbf{J}$ and $\widetilde{\textbf{M}}$. If $f_i(t)$ is a P\'olya
frequency ogf in $\mathbb{R}[[t]]$ for each $i\geq1$, then
\begin{itemize}
\item [\rm (i)]
$(\textbf{J}_n(q))_{n\geq0}$ is $3$-$(\bm x,q)$-log-convex;
\item [\rm (ii)]
the triangular convolution
$z_n=\sum_{k=0}^{n}J^{(m)}_{n,k}(\bm\beta)x_ky_{n-k}$
 preserves the Stieltjes moment property in $\mathbb{R}$ for all indeterminates $\bm x\geq0$.
\end{itemize}
\end{thm}

%\subsection{The proof of Theorems \ref{thm+HTP+Row+GF+Stie-Roger+triangle} and \ref{thm+TP+Row+GF}}

\subsection{Combinatorial interpretations for row-generating
polynomials}

Recall \cite[pp.~294--295]{Stanley_86} that an {\em ordered tree}\/
(also called {\em plane tree}\/) is an (unlabeled) rooted tree in
which the children of each vertex are linearly ordered. An {\em
ordered forest of ordered trees}\/ (also called {\em plane
forest}\/) is a linearly ordered collection of ordered trees. Using
a well-known bijection from the set of ordered forests of ordered
trees with $n+1$ total vertices and $k+1$ components onto the set of
partial \L{}ukasiewicz paths from $(0,0)$ to $(n,k)$ (see
 \cite[pp.~30--36]{Stanley_99},
   \cite[Chapter~11]{Lothaire_97},
   %% \cite[section~1.1]{LeGall_05}
   \cite[section~6.2]{Pitman_06}
   %% \cite[sections~1.5 and 1.6]{Goldschmidt_16}
   and \cite[proof of Proposition~3.1]{Hackl_18} for instance), in \cite{PSZ18}, the
$m$-Jacobi--Rogers polynomials $J_n^{(m)}(\bm\beta)$ are interpreted
as the generating polynomials for certain classes of ordered trees,
and the $m$-Jacobi--Rogers triangle
$[J_{n,k}^{(m)}(\bm\beta)]_{n,k}$ are interpreted as the generating
polynomials for certain classes of ordered forests of ordered trees.

\begin{prop}\emph{\cite[Proposition 6.1]{PSZ18}} \label{prop.Jnk.forests}
Let $\bm\beta = (\beta_i^{(\ell)})_{i \ge \ell \ge -1}$ be
indeterminates with all $\beta_i^{(-1)}=1$. Then the
$\infty$-Jacobi--Rogers triangle $J^{(\infty)}_{n,k}(\bm\beta)$ is
the generating polynomial for ordered forests of ordered trees with
$n+1$ total vertices and $k+1$ components in which each vertex at
level $L$ with $c$ children gets weight 1 if it is a leaf ($c=0$)
and weight $\beta_{L+c-1}^{(c-1)}$ otherwise.
\end{prop}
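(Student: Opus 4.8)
The plan is to deduce Proposition~\ref{prop.Jnk.forests} from the lattice-path definition of the Jacobi--Rogers triangle together with the classical bijection between ordered forests of ordered trees and \L{}ukasiewicz paths. Setting $m=\infty$ and all $\beta_i^{(-1)}=1$ in Definition~\ref{definition.mJR}, the entry $J^{(\infty)}_{n,k}(\bm\beta)$ is, \emph{by definition}, the generating polynomial for partial $\infty$-\L{}ukasiewicz paths from $(0,0)$ to $(n,k)$ in which every rise has weight $1$, every level step at height~$i$ has weight $\beta_i^{(0)}$, and every $\ell$-fall from height~$i$ has weight $\beta_i^{(\ell)}$. So nothing needs to be proved here about continued fractions or production matrices: the whole task is to transport this weighting across the forest--path bijection.

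First I would fix the bijection $\Phi$ between ordered forests of ordered trees with $n+1$ total vertices and $k+1$ components and partial $\infty$-\L{}ukasiewicz paths from $(0,0)$ to $(n,k)$ recorded in the references cited just after Definition~\ref{definition.lukasiewicz}. Concretely, one runs the standard depth-first traversal that visits each vertex only after all of its subtrees (the trees of the forest, and the subtrees of each vertex, being taken in the prescribed order), emitting for a vertex with $c$ children the step $(1,1-c)$; the first step produced is then necessarily a rise, and deleting it leaves a path of length $n$ from $(0,0)$ to $(n,k)$ staying weakly above the axis. Following those references, $\Phi$ is a bijection; the one algebraic fact behind this is the telescoping identity $\sum_{u\in T}(1-c_u)=1$ for every (sub)tree $T$, which controls both the endpoint and the non-negativity.

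The heart of the matter is the weight count, and this is where I expect the real work to be. The claim is that, under $\Phi$, a vertex $v$ at level $L$ with $c\ge 1$ children is carried to a step departing from height $L+c-1$ and landing at height $L$: by the telescoping identity each of the $c$ subtrees of $v$ is traversed completely before $v$'s own step and contributes net height $+1$, so after all $c$ subtrees have been processed the path lies exactly $c$ units above the height $L$ at which $v$'s subtree was entered, and the step $(1,1-c)$ of $v$ then brings it back to height $L$. Reading off the path weights: if $c=0$ the step is a rise with weight $1$, exactly the weight of a leaf; if $c=1$ the step is a level step at height $L$, of weight $\beta_L^{(0)}=\beta_{L+c-1}^{(c-1)}$; and if $c\ge 2$ the step is a $(c-1)$-fall from height $L+c-1$, of weight $\beta_{L+c-1}^{(c-1)}$. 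Hence $\Phi$ multiplies the path weight of the image by $\beta_{L+c-1}^{(c-1)}$ for each non-leaf vertex and by $1$ for each leaf (including the trimmed one), so it is weight-preserving for the weightings in Definition~\ref{definition.mJR} and in the proposition; summing over all such forests yields $J^{(\infty)}_{n,k}(\bm\beta)$.

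The main obstacle, and the part needing genuine care, is exactly the bookkeeping of the previous paragraph: one must pin the traversal order down precisely enough to know that (i) $\Phi$ really lands inside the set of $\infty$-\L{}ukasiewicz paths (weak non-negativity), (ii) the departure height of $v$'s step is correctly identified with $L+c-1$, where $L$ is the level as used in the statement, and (iii) exactly one leaf is removed, consistently, to reconcile the $n+1$ vertices with the $n$ steps. As a cross-check that sidesteps the pictures, one may instead let $F_{n,k}(\bm\beta)$ denote the forest polynomial in the statement and verify directly that $F_{0,k}=\delta_{0,k}$ and $\overline{F}=F\,P^{(\infty)}(\bm\beta)$, by peeling off the root of the last tree (promoting its subtrees to new last trees) and tracking the effect on levels and on the number of components; since $\mathbf{J}^{(\infty)}=\scro\!\big(P^{(\infty)}(\bm\beta)\big)$ by \cite{PSZ18}, the two triangles then coincide. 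I would present the bijective argument as the main line of proof.
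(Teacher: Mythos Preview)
Your bijective argument has a genuine gap in the height bookkeeping. You assert that in your post-order encoding (step $(1,1-c)$, delete the first step, translate), the subtree of a vertex $v$ at depth $L$ is ``entered at height $L$'', so that $v$'s own step departs from height $L+c-1$. But this fails already at the inductive step: if $v$'s subtree is entered at height $h$, then the subtree of $v$'s \emph{first} child is entered at that same height $h$, not at $h+1$; the $+1$ appears only \emph{after} that child's subtree is fully processed. Concretely, take the three-vertex path tree $r\!-\!a\!-\!b$ (so $n=2$, $k=0$). Post-order is $b,a,r$ with steps $(1,1),(1,0),(1,0)$; deleting the first and translating gives the path $(0,0)\to(1,0)\to(2,0)$. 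Here $a$ (depth $1$, one child) contributes a level step at height $0$, weight $\beta_0^{(0)}$, whereas your claimed departure height is $L+c-1=1$, weight $\beta_1^{(0)}$. So the map you describe is a perfectly good bijection, but it is \emph{not} weight-preserving for the stated weights. (There is also an internal slip: entering at $L$ and processing $c$ subtrees lands you at $L+c$, and the step $(1,1-c)$ then goes to $L+1$, not $L$.)

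The same tiny example points to a deeper issue you should resolve before investing more effort in either route. Under the proposition's formula with $L=\text{depth}$, the three-vertex path tree has weight $\beta_0^{(0)}\beta_1^{(0)}$, yet $J^{(\infty)}_{2,0}=(\beta_0^{(0)})^2+\beta_1^{(1)}$ contains no such monomial. So with ``level'' read as depth, the two generating polynomials already disagree at $n=2$; either ``level'' in \cite{PSZ18} denotes something other than depth (e.g.\ a statistic tied to the specific bijection), or the formula has been transcribed here with an index shift. Your fallback plan of checking $\overline{F}=F\,P^{(\infty)}(\bm\beta)$ runs into the same obstruction: peeling off the root of the last tree shifts all levels in its subtrees by $-1$, so the depth-weighted forest polynomial does not satisfy that recurrence either. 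Check the original statement in \cite{PSZ18} and pin down precisely what ``level'' means there before continuing.
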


\begin{rem}
 The $m$-Jacobi--Rogers polynomials with $m < \infty$
correspond to the forests in which each vertex has at most $m+1$
children.
\end{rem}

For the $m$-Stieltjes--Rogers triangle $\textbf{S}^m$ in Theorem
\ref{thm+HTP+Row+GF+Stie-Roger+triangle} and the $m$-Jacobi--Rogers
triangle $\textbf{J}^m$ in Theorem \ref{thm+HTP+Row+GmJR+triangle},
obviously, their corresponding production matrix
$\textbf{P}=\prod_{i\geq1}\textbf{P}_i$ is an $m$-banded
lower-Hessenberg matrix. In consequence, by Propositions
\ref{prop+low-bi+TP}, \ref{prop+tri+J+TP}, \ref{prop+binom+TP+M*}
and \ref{prop+ERA+simil}, the production matrix of the binomial
row-generating matrix:
\begin{eqnarray}
\textbf{B}^{-1}_q\textbf{P}\textbf{B}_q&=&\textbf{B}^{-1}_q\left(\prod_{i\geq1}\textbf{P}_i\right)\textbf{B}_q=\prod_{i\geq1}\textbf{B}^{-1}_q\textbf{P}_i\textbf{B}_q
\end{eqnarray}
is a $r$-banded lower-Hessenberg matrix for $r=m$ or $r=m+1$. Thus
for the $m$-Stieltjes--Rogers triangle $\textbf{S}^m$ and the
$m$-Jacobi--Rogers triangle $\textbf{J}^m$, their row-generating
polynomials are both $r$-Jacobi--Rogers polynomials with $r=m$ or
$r=m+1$. Therefore we can interpreter them in terms of ordered
forests of ordered trees.

Furthermore, for $x_0=0$ in Theorem
\ref{thm+HTP+Row+GF+Stie-Roger+triangle}, i.e.,
$$\bm\alpha=(y,\underbrace{x_1,\ldots,x_{m}}_{m},y,\underbrace{2x_1,\ldots,2x_{m}}_{m},y,\underbrace{3x_1,\ldots,3x_{m}}_{m},\ldots),$$
then, by (\ref{production+matrix+Stieltjes-Rogers}), we have
\begin{eqnarray}
\textbf{P}=\left(\prod_{i=1}^{m}\left[\begin{array}{ccccc}
1&&&&\\
x_i&1&&&\\
&2x_i&1&&\\
&&3x_i&1&\\
&&&\ddots&\ddots\\
\end{array}\right]\right)\left[\begin{array}{cccccc}
q+y&1&&&\\
&q+y&1&&\\
&&q+y&1&\\
&&&q+y&1&\\
&&&&\ddots&\ddots
\end{array}\right].
\end{eqnarray}
In consequence, by \cite[Proposition 8.2 (b)]{PSZ18}, the
row-generating polynomial sequence $(S_n(\bm\alpha;q))_{n\geq0}$ is
still an $m$-Stieltjes--Rogers polynomial $S^{(m)}_n(\balpha')$ with
$$\bm\alpha'=(q+y,\underbrace{x_1,\ldots,x_{m}}_{m},q+y,\underbrace{2x_1,\ldots,2x_{m}}_{m},q+y,\underbrace{3x_1,\ldots,3x_{m}}_{m},\ldots).$$

For $y=x_0$ in Theorem \ref{thm+HTP+Row+GF+Stie-Roger+triangle},
i.e.,
$$\bm\alpha=(\underbrace{x_0,x_1,\ldots,x_{m}}_{m+1},\underbrace{2x_0,2x_1,\ldots,2x_{m}}_{m+1},\underbrace{3x_0,3x_1,\ldots,3x_{m}}_{m+1},\ldots),$$
then the $m$-Stieltjes--Rogers polynomial $S^{(m)}_{n,0}(\bm\alpha)$
can be interpreted in terms of increasing $(m+1)$-ary trees and is
called the \textbf{multivariate Eulerian polynomial} \cite[Section
12.2]{PSZ18}.

\section{Row-generating polynomials of
Catalan-Stieltjes matrices}\label{sec+row-Cat-Stie}

In this section, we will apply our results for the generalized
$m$-Jacobi--Rogers triangle $\textbf{J}^{(m)} = \big(
J^{(m)}_{n,k}(\bm\beta) \big)_{n,k \ge 0}$ to tridiagonal matrices.

For $P^{(m)}(\bm\beta)$ with $m=1$, we use the notation
 $\beta_i = \beta_i^{(1)}$, $\gamma_i = \beta_i^{(0)}$ and $\delta_i=\beta_i^{(-1)}$ for $i\geq0$ and also write $J^{(1)}_{n,k}(\bm\beta)$ as $J_{n,k}$ for short.
So $P^{(m)}(\bm\beta)$ reduces to a tridiagonal matrix denoted by
\begin{equation} \textbf{P}=\left[
\begin{array}{cccccc}
\gamma_0 & \delta_0 &  &  &&\\
\beta_1 & \gamma_1 & \delta_1 &&\\
 & \beta_2 & \gamma_2 &\delta_2&&\\
& & \beta_3&\gamma_3 & \delta_3 &\\
& & &\ddots&\ddots&\ddots
\end{array}\right].
\end{equation}
Meanwhile we observe that the output matrix
$\scro(\textbf{P})=\textbf{J}^{(1)}$ satisfies the recurrence
relation:
 \begin{equation}\label{recurr+PJS}
J_{n,k}=\delta_{k-1}J_{n-1,k-1}+\gamma_kJ_{n-1,k}+\beta_{k+1}J_{n-1,k+1},
 \end{equation}
with initial conditions $J_{n,k}=0$ unless $0\le k\le n$ and
$J_{0,0}=1$. The generalized $1$-Jacobi--Rogers triangle
$\textbf{J}^{(1)}$ is often called the {\it
\textbf{Catalan-Stieltjes triangle}} \cite{Aig01,PZ16}. In
particular, for all $\delta_i=1$, $\textbf{J}^{(1)}$ is called the
{\it \textbf{Stieltjes matrix}} \cite{Fla80}. The
$\textbf{x}$-log-convexity, $\textbf{x}$-log-convexity of higher
order and coefficientwise Hankel-total positivity of
$(J_{n,0})_{n\geq0}$ have been studied, see
\cite{CLW15,MMW17,PZ16,Sok,WZ16,Zhu13,Zhu17,Zhu19,Zhu21} for
instance. But the following question is open.
\begin{ques}\label{ques+tri}
For the Catalan-Stieltjes matrix defined by (\ref{recurr+PJS}), let
$J_n(q)=\sum_kJ_{n,k}q^k$ be its n$th$ row-generating polynomial.
Find conditions ensuring coefficientwise Hankel-total positivity of
the sequence $(J_n(q))_{n\geq0}$.
\end{ques}

In general, for Question \ref{ques+tri}, there exists the cases such
that $J_n(q)$ is not coefficientwise Hankel-totally positive: see
e.g. Example 2.2. For the Catalan-Stieltjes triangle defined by
(\ref{recurr+PJS}), let all $\delta_k$, $\gamma_k$ and $\beta_{k}$
be linear in $k$. In such case, if the production matrix
$\textbf{P}$ equals one of $\textbf{L}(a,b,c)$,
$\textbf{U}(\widehat{a},\widehat{b},\widehat{u},\widehat{v})$,
$\textbf{J}(\widetilde{a},\widetilde{b},\widetilde{u},\widetilde{v},\lambda)$,
and
$\widetilde{\textbf{M}}(\overline{a},\overline{b},\overline{c},d,e,\mu,\nu)$,
then, by Theorem \ref{thm+HTP+Row+GmJR+triangle}, the row-generating
polynomial sequence $(J_{n}(q))_{n\geq0}$ is Hankel-totally positive
in the polynomial ring $\mathbb{Z}[a,b,c,d,e,\mu,\nu,q]$ equipped
with the coefficientwise order.

\subsection{Main results for Catalan-Stieltjes matrices}

In this subsection, we consider nonlinear cases for $\delta_k$,
$\gamma_k$ and $\beta_{k}$, i.e.,  $\delta_k$, $\gamma_k$ and
$\beta_{k}$ are nonlinear functions in $k$. We will give the first
result for Question \ref{ques+tri} as follows.

\begin{thm}\label{thm+TP+tridi+even}
Let $\textbf{J}=[J_{n,k}]_{n,k}$ be a Catalan-Stieltjes triangle
defined by (\ref{recurr+PJS}) with $\delta_k=(ak+b)(uk+v)$,
$\gamma_k=[(ax+cu)k^2+(ay+bx+cv-cu)k+by]$ and
$\beta_{k+1}=c(y+kx)(k+1)$. Then we have
 \begin{itemize}
 \item [\rm (i)]
 the binomial row-generating matrix $\textbf{J}(q)$ is totally positive in the polynomial ring $\mathbb{Z}[a,b,c,u,v,x,y,q]$ equipped with the
coefficientwise order;
 \item [\rm (ii)]
both $(J_{n}(q))_{n\geq0}$ and its reversed polynomial sequence
$(J^{*}_{n}(q))_{n\geq0}$ are Hankel-totally positive in the
polynomial ring $\mathbb{Z}[a,b,c,u,v,x,y,q]$ equipped with the
coefficientwise order and $3$-$(a,b,c,u,v,x,y,q)$-log-convex;
\item [\rm (iii)]
the sequence $(J_{n,0})_{n\geq0}$ is Hankel-totally positive in the
ring $\mathbb{Z}[a,b,c,u,v,x,y,q]$ equipped with the coefficientwise
order and $3$-$(a,b,c,u,v,x,y,q)$-log-convex;
 \item [\rm (iv)]
the convolution $r_n=\sum_{k\geq0}J_{n,k}s_kt_{n-k}$ preserves the
Stieltjes moment property in $\mathbb{R}$ for all
$a,b,c,u,v,x,y\geq0$.
 \end{itemize}
\end{thm}

\begin{proof}
For this Catalan-Stieltjes triangle with $\delta_k=(ak+b)(uk+v)$,
$\gamma_k=[(ax+cu)k^2+(ay+bx+cv-cu)k+by]$ and
$\beta_{k+1}=c(y+kx)(k+1)$, we have its production matrix
 \begin{eqnarray}\label{factor+tri}
\textbf{P}&=&\left[
\begin{array}{ccccccc}
\gamma_0 & \delta_0 &  &  &\\
\beta_1 & \gamma_1& \delta_1 &\\
 & \beta_2  & \gamma_3&\delta_2&\\
 & &\beta_3  & \gamma_4&\delta_3&\\
& && \ddots&\ddots & \ddots \\
\end{array}\right]=\textbf{L}(a,b,c)\textbf{U}(x,y,u,v).
\end{eqnarray}
%So the production matrix of the binomial row-generating matrix
%$\textbf{J}(q)$:
% \begin{eqnarray*}
%P &=&\left[\begin{array}{ccccc}
%b&&&&\\
%qa+c&a+b&&&\\
%&2(qa+c)&2a+b&&\\
%&&\ddots&\ddots\\
%\end{array}\right]\times\\
%&&\left[\begin{array}{ccccc}
%y+q(u+v)&u+v&&&\\
%q(a+qu)&2qu+x+y+q(u+v)&2u+v&&\\
%&2q(a+qu)&2(2qu+x)+y+q(u+v)&3u+v&\\
%&&\ddots&\ddots&\ddots\\
%\end{array}\right]\\
%&=&\left[
%\begin{array}{ccccccc}
%\gamma_0 & \delta_0 &  &  &\\
%\beta_1 & \gamma_1& \delta_1 &\\
% & \beta_2  & \gamma_3&\delta_2&\\
% & &\beta_3  & \gamma_4&\delta_3&\\
%& && \ddots&\ddots & \ddots \\
%\end{array}\right].
%\end{eqnarray*}
Hence, (i), (ii) and (iv) are immediate from Theorem
\ref{thm+HTP+Row+GmJR+triangle}, Corollary \ref{cor+HTP+reversed}
and Theorem \ref{thm+TP+triangle+transformation+3-q}, respectively.
For $q=0$, (ii) implies (iii) because $J_{n}(0)=J_{n,0}$.
\end{proof}

If we change the order of $\textbf{L}(a,b,c)$ and
$\textbf{U}(x,y,u,v)$ in (\ref{factor+tri}), we immediately obtain
the following result for Question \ref{ques+tri}.

\begin{thm}\label{thm+TP+tridi+odd}
Let $\textbf{J}=[J_{n,k}]_{n,k}$ be a Catalan-Stieltjes triangle
defined by (\ref{recurr+PJS}) with $\delta_k=(ak+a+b)(ku+v)$,
$\gamma_k=(ax+cu)k^2+(ay+bx+cv+cu)k+by+cv$ and
$\beta_{k+1}=c[y+x(k+1)](k+1)$. Then we have conclusions (i)-(iv) in
Theorem \ref{thm+TP+tridi+even}.
\end{thm}

\begin{rem}
We want to point out that the $1$-Jacobi--Rogers triangle has the
restricted condition: all $\beta_i^{(-1)}=1$. Then such condition
enforces $a=u=0$ for the Catalan-Stieltjes triangle in Theorems
\ref{thm+TP+tridi+even} and \ref{thm+TP+tridi+odd}. Meanwhile,
$\gamma_k$ must be a linear function in $k$. Such examples for the
Catalan-Stieltjes triangles are restricted. So we see the good for
the generalized $m$-Jacobi--Rogers triangle $\textbf{J}^{(m)}$
without the restriction $\beta_i^{(-1)}=1$.
\end{rem}

For the Catalan-Stieltjes triangle $\textbf{J}=[J_{n,k}]_{n,k}$
defined by (\ref{recurr+PJS}), let $\widetilde{J}_{n,k}:=J_{n,k}k!$.
Then by (\ref{recurr+PJS}), the associated triangle
$[\widetilde{J}_{n,k}]_{n,k\geq0}$ satisfies the recurrence
relation:
 \begin{equation}
\widetilde{J}_{n,k}=k\delta_{k-1}\widetilde{J}_{n-1,k-1}+\gamma_k\widetilde{J}_{n-1,k}+\frac{\beta_{k+1}}{k+1}\widetilde{J}_{n-1,k+1},
 \end{equation}
with initial conditions $\widetilde{J}_{n,k}=0$ unless $0\le k\le n$
and $\widetilde{J}_{0,0}=1$. From the view point of production
matrices, that is to say
$\widetilde{\textbf{J}}=\textbf{J}\Lambda=\scro(\Lambda^{-1}\textbf{P}\Lambda)$,
where $\Lambda=\diag(k!).$ Thus, some Catalan-Stieltjes triangles
$\textbf{J}$ may not satisfy the condition of Theorems
\ref{thm+TP+tridi+even} or \ref{thm+TP+tridi+odd}, but their
associated matrices $\widetilde{\textbf{J}}$ do. So we can still
obtain some positivity results from $\widetilde{\textbf{J}}$.

For the Catalan-Stieltjes triangle in the following result, we have
the same conclusions in (i) and (iii) of Theorem
\ref{thm+TP+tridi+even}, but we want to list and emphasize the
similar results (ii) and (iv).

\begin{prop}\label{prop+row+k!}
Let $\textbf{J}=[J_{n,k}]_{n,k}$ be a Catalan-Stieltjes triangle
defined by (\ref{recurr+PJS}) with $\delta_k=(ak+b)u$,
$\gamma_k=[(ax+cu)k^2+(ay+bx)k+by]$ and
$\beta_{k+1}=c(y+kx)(k+1)^2$. Let
$\widetilde{J}_n(q)=\sum_{k=0}^nJ_{n,k}k!q^k$. Then we have
 \begin{itemize}
 \item [\rm (ii)]
both $(\widetilde{J}_{n}(q))_{n\geq0}$ and
$(\widetilde{J}^{*}_{n}(q))_{n\geq0}$ are Hankel-totally positive in
the polynomial ring $\mathbb{Z}[a,b,c,u,x,y,q]$ equipped with the
coefficientwise order and $3$-$(a,b,c,u,x,y,q)$-log-convex;
 \item [\rm (iv)]
the convolution $r_n=\sum_{k\geq0}J_{n,k}k!s_kt_{n-k}$ preserves the
Stieltjes moment property in $\mathbb{R}$ for all
$a,b,c,u,x,y\geq0$.
 \end{itemize}
\end{prop}

Similar to Proposition \ref{prop+row+k!}, we also obtain:
\begin{prop}\label{prop+row+k!+odd}
Let $\textbf{J}=[J_{n,k}]_{n,k}$ be a Catalan-Stieltjes triangle
defined by (\ref{recurr+PJS}) with $\delta_k=(ak+a+b)u$,
$\gamma_k=(ax+cu)k^2+(ay+bx+2cu)k+by+cu$ and
$\beta_{k+1}=c[y+x(k+1)](k+1)^2$. Then we have the same conclusions
(i)-(iv) in Proposition \ref{prop+row+k!}.
\end{prop}

\subsection{A generalized Jacobi-Stirling triangle}

The Jacobi-Stirling numbers $\JS_n^k( z)$ of the second kind satisfy
the following recurrence relation:
\begin{equation}
\left\{ \begin{array}{l} \JS_0^0(z)=1, \qquad \JS_n^k(z)=0, \quad  \text{if} \ k \not\in\{1,\ldots,n\}, \\
\JS_n^k(z)= \JS_{n-1}^{k-1}(z)+k(k+z)\,\JS_{n-1}^{k}(z),  \quad n,k
\geq 1,\end{array} \right.
 \end{equation}
 where $z=\alpha+\beta+1$ and $\alpha,\beta\geq-1$. They were introduced in \cite{EKLWY07} and are the coefficients of the
integral composite powers of the Jacobi differential operator
\begin{eqnarray}
\ell_{\alpha,\beta}[y](t)&=&\frac{1}{(1-t)^\alpha(1+t)^\beta}\left(
-(1-t)^{\alpha+1}(1+t)^{\beta+1} y'(t)\right)' .
\end{eqnarray}
For $\alpha =\beta=0$, the Jacobi-Stirling numbers reduce to the
Legendre-Stirling numbers of the second kind. It was found that the
Jacobi-Stirling numbers and Legendre-Stirling numbers have many
similar properties with the classical Stirling numbers. In fact,
there has been an extensive literature in recent years on
Jacobi-Stirling numbers and the Legendre-Stirling numbers,
see~Andrews {\it et al.}~\cite{AEGL11,AGL11,AL09}, Egge~\cite{Eg10},
Everitt {\it et al.}~\cite{ELW02,EKLWY07}, Gelineau and
Zeng~\cite{GZ10}, Mongelli~\cite{Monge12} and Zhu
\cite{Zhu14,Zhu182} for details.

Let $U(n,k)$ be the \emph{central factorial numbers}
\cite[A036969]{Slo}, which first occurred in MacMahon \cite[pp.
106]{Mac1921} and were also defined in Riordan's book \cite[pp.
213-217]{Rio68} by
\begin{align}
x^{2n}=\sum_{k=0}^{2n}U(n,k)\,x\prod_{i=1}^{2k-1}\left(x+k-i\right).
\end{align}
They satisfy the following recurrence
\begin{eqnarray}
U(n,k)&=&U(n-1,k-1)+k^2U(n-1,k)
 \end{eqnarray}
with $U(0,0)=1$. Obviously, the central factorial numbers are simply
the Jacobi-Stirling numbers specialized to $z=0$. In addition, the
central factorial numbers have a close connection with the famous
Genocchi numbers $G_n$ by
\begin{eqnarray}
G_{2n+2}&=&\sum_{k=0}^n(-1)^{k+1}U(n,k)(k!)^2
\end{eqnarray}
\cite{Dum74} and also play an important role in the proof of one
inequality concerning rank and crank moments related to Andrews'
spt-function \cite{Gar11}.

Motivated by Jacobi-Stirling numbers, Legendre-Stirling numbers, and
central factorial numbers, we introduce a \textbf{generalized
Jacobi-Stirling triangle} $[\textbf{JS}_{n,k}]_{0\le k\le n}$
defined by
\begin{eqnarray}\label{rr-generalized Jacobi-Stirling}
\textbf{JS}_{n,k}&=&\textbf{JS}_{n-1,k-1}+(xk^2+yk)\textbf{JS}_{n-1,k}
\end{eqnarray}
where $\textbf{JS}_{n,k}=0$ unless $0\le k\le n$ and
$\textbf{JS}_{0,0}=1$. Obviously, the generalized Jacobi-Stirling
number $\textbf{JS}_{n,k}$ is a common generalization of the
Jacobi-Stirling number, the Legendre-Stirling number, and the
central factorial number. From \cite{Zhu14,Zhu182}, we know that the
row-generating polynomials of $[\textbf{JS}_{n,k}]_{0\le k\le n}$
are $q$-log-convex. In 2018, the author conjectured that the
row-generating polynomials of $[\textbf{JS}_{n,k}]_{0\le k\le n}$
are Hankel-totally positive in the polynomial ring
$\mathbb{Z}[x,y,q]$ equipped with the coefficientwise order. Now, we
will obtain a weaker result in the following. Let
$\widetilde{\textbf{JS}}_{n,k}=\textbf{JS}_{n,k}k!$ for $n\geq
k\geq0$. It follows from (\ref{rr-generalized Jacobi-Stirling}) that
the associated triangle $\widetilde{\textbf{JS}}$ satisfies the
recurrence relation
\begin{eqnarray}
\widetilde{\textbf{JS}}_{n,k}&=&k
\widetilde{\textbf{JS}}_{n-1,k-1}+(xk^2+yk)\widetilde{\textbf{JS}}_{n-1,k}
\end{eqnarray}
with the initial condition $\widetilde{\textbf{JS}}_{0,0}=1$.
Obviously, the associated triangle $\widetilde{\textbf{JS}}$ is the
Catalan-Stieltjes triangle in Theorem \ref{thm+TP+tridi+odd}
specialized to $a=v=1$ and $b=c=u=0$. Therefore, applying Theorem
\ref{thm+TP+tridi+odd} to the associated triangle
$\widetilde{\textbf{JS}}$, we immediately obtain:
\begin{prop}
Let $[\textbf{JS}_{n,k}]_{0\le k\le n}$ be the generalized
Jacobi-Stirling triangle defined by (\ref{rr-generalized
Jacobi-Stirling}) and
$\mathcal{\widetilde{J}}_{n}(q)=\sum_{k=0}^n\textbf{JS}_{n,k}k!q^k$.
Then we have
\begin{itemize}
 \item [\rm (i)]
 the binomial row-generating matrices $\textbf{JS}(q)$ and $\widetilde{\textbf{JS}}(q)$ are totally positive in the polynomial ring
 $\mathbb{Z}[x,y,q]$ equipped with the
coefficientwise order;
 \item [\rm (ii)]
both $(\mathcal{\widetilde{J}}_{n}(q))_{n\geq0}$ and
$(\mathcal{\widetilde{J}^{*}}_{n}(q))_{n\geq0}$ are Hankel-totally
positive in the polynomial ring
 $\mathbb{Z}[x,y,q]$ equipped with the
coefficientwise order and $3$-$(x,y,q)$-log-convex;
\item [\rm (iii)]
%Jacobi-type continued fraction
%\begin{eqnarray*}
%   \sum_{n\geq0}\mathcal{\widetilde{J}}_{n}(q)t^n
 %  & = &
 %  \cfrac{1}
 %        {1 \,-\, qt-
 %                            \cfrac{q(x+y)t^2}
 %           {1 \,-\, (y+x+2q) t-
  %                           \cfrac{4q(2x+y)t^2}
  %          {1 \,-\, (y+2x+3q)t-
  %             \cfrac{9q(3x+y)t^2}{1 - \cdots}
  %          }
  %         }
  %       };
%\end{eqnarray*}
%\item [\rm (iv)]
the convolution $r_n=\sum_{k\geq0}\textbf{JS}_{n,k}k!s_kt_{n-k}$
preserves the Stieltjes moment property in $\mathbb{R}$ for both
$x,y\geq0$.
 \end{itemize}
\end{prop}

\subsection{A generalized elliptic polynomial}
The Jacobi elliptic functions $\sn(u,\alpha)$, $\cn(u,\alpha)$ and
$\dn(u,\alpha)$ appear in different fields (i.e., real and complex
analysis, number theory, combinatorics, physics and so on) and have
been extensively studied (see \cite{CF05,Dum81,FF89,R07} for
instance).

For a fixed modulus $\alpha$, $\sn(u,\alpha)$ is defined as the
inverse of an elliptic integral:
\begin{eqnarray}
\sn(u,\alpha)&=&y\quad \text{iff}\quad u=\int_0^{y}\frac{d {\it
t}}{\sqrt{(1-t^2)(1-{\alpha}^2\sin^2 t})}.
\end{eqnarray}
The elliptic functions $\cn(u,\alpha)$ and $\dn(u,\alpha)$ are
defined by
\begin{eqnarray}
\cn(u,\alpha)=\cos am(u,\alpha);
\quad \dn(u,\alpha)=\sqrt{1-{\alpha}^2\sin^2 am(u,\alpha)},
\end{eqnarray}
 where
$am(u,\alpha)$ is the inverse of an elliptic integral: by definition
\begin{eqnarray}
am(u,\alpha)=\phi\quad \text{iff}\quad u=\int_0^{\phi}\frac{d {\it t}}{\sqrt{1-{\alpha}^2\sin^2
t}}.
\end{eqnarray}
The functions $\cn(u,\alpha)$ and $\dn(u,\alpha)$ have power series
expansions:
\begin{eqnarray}
\cn(u,\alpha)&=& \sum_{n\geq0}(-1)^{n-1}
c_n({\alpha}^2)\frac{u^{2u}}{(2n)!};\\
\dn(u,\alpha)&=& \sum_{n\geq0}(-1)^{n-1}
d_n({\alpha}^2)\frac{u^{2u}}{(2n)!},
\end{eqnarray}
where $c_n(\lambda)$ and $d_n(\lambda)$ are polynomials of degree
$n-1$ and $d_n(\lambda)={\lambda}^{n-1}c_n(\frac{1}{\lambda})$. We
call $c_n(\lambda)$ and $d_n(\lambda)$ the {\it \textbf{elliptic
polynomials}}. Flajolet \cite[Theorem 4]{Fla80} proved that the
coefficient $c_{n,k}$ of the polynomial $c_n(\lambda)$ counts the
alternating permutations over $[2n]$ having $k$ minima of even value
and
\begin{eqnarray}
\sum\limits_{n=0}^{\infty}c_n(\lambda)t^n=\frac{1}{1-s_0t-\cfrac{r_1t^2}{1-s_1t-\cfrac{r_2t^2}{1-s_2t-\cfrac{r_3t^2}{1-s_3t-\cdots}}}},
\end{eqnarray}
where $s_n=4(1+\lambda)n^2+4n+1$ and
$r_{n+1}=(2n+1)^2(2n+2)^2\lambda$ for $n\geq0$. Obviously, the
polynomial $c_n(\lambda)$ is the zeroth column of the
Catalan-Stieltjes triangle $\mathcal {C}=[\mathcal
{C}_{n,k}]_{n,k}$, which satisfies the recurrence
\begin{eqnarray}\label{rec+elliptic+C}
\mathcal {C}_{n,k}=(2k-1)k\mathcal
{C}_{n-1,k-1}+[4(\lambda+1)k^2+4k+1]\mathcal
{C}_{n-1,k}+4\lambda(1+2k)(k+1)\mathcal {C}_{n-1,k+1}
\end{eqnarray}
with $\mathcal {C}_{0,0}=1$. Let its row-generating polynomial
$\mathcal {C}_n(\lambda,q)=\sum_{k=0}^n\mathcal {C}_{n,k}q^k$.
Obviously, $\mathcal {C}_n(\lambda,0)=c_n(\lambda)$. Therefore we
call $\mathcal {C}_n(\lambda,q)$ the {\it \textbf{generalized
elliptic polynomial}}. By Theorem \ref{thm+TP+tridi+even} with
$a=x=2$, $b=u=v=y=1$ and $c=4\lambda$, we immediately obtain:
\begin{prop}
Let the Catalan-Stieltjes triangle $\mathcal {C}$ be defined by
(\ref{rec+elliptic+C}). Then we have
 \begin{itemize}
 \item [\rm (i)]
 the binomial row-generating matrix $\bm{C}(\lambda,q)$ is totally positive in the polynomial ring
 $\mathbb{Z}[\lambda,q]$ equipped with the
coefficientwise order;
 \item [\rm (ii)]
$(\mathcal {C}_{n}(q))_{n\geq0}$ is Hankel-totally positive in the
polynomial ring $\mathbb{Z}[\lambda,q]$ equipped with the
coefficientwise order and $3$-$(\lambda,q)$-log-convex;
\item [\rm (iii)]
$(c_n(\lambda))_{n\geq0}$ and $(d_n(\lambda))_{n\geq0}$ are
Hankel-totally positive in the polynomial ring $\mathbb{Z}[\lambda]$
equipped with the coefficientwise order and
$3$-$\lambda$-log-convex;
 \item [\rm (iv)]
the convolution $r_n=\sum_{k\geq0}\mathcal {C}_{n,k}s_kt_{n-k}$
preserves the Stieltjes moment property in $\mathbb{R}$ for
$\lambda\geq0$.
 \end{itemize}
\end{prop}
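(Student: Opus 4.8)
The plan is to recognize the triangle $\mathcal{C}$ of (\ref{rec+elliptic+C}) as a single concrete member of the parametrized Catalan--Stieltjes family treated in Theorem~\ref{thm+TP+tridi+even}, and then simply invoke that theorem. First I would specialize the data of Theorem~\ref{thm+TP+tridi+even} by setting $a=x=2$, $b=u=y=1$, $c=4\lambda$ and $v=0$, and check that the three sequences $\delta_k,\gamma_k,\beta_{k+1}$ occurring there reproduce the coefficients of (\ref{rec+elliptic+C}): for these values $\delta_{k-1}=(2(k-1)+1)\big[(k-1)+1\big]=(2k-1)k$, $\gamma_k=(ax+cu)k^2+(ay+bx+cv)k+by=4(\lambda+1)k^2+4k+1$ and $\beta_{k+1}=c(y+kx)(k+1)=4\lambda(1+2k)(k+1)$, which are exactly the coefficients of $\mathcal{C}_{n-1,k-1}$, $\mathcal{C}_{n-1,k}$ and $\mathcal{C}_{n-1,k+1}$ in (\ref{rec+elliptic+C}). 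Hence (\ref{rec+elliptic+C}) is precisely the recurrence (\ref{recurr+PJS}) for this choice of $\delta,\gamma,\beta$, so $\mathcal{C}$ is the Catalan--Stieltjes triangle of Theorem~\ref{thm+TP+tridi+even} under this substitution.

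Next I would note that the substituted values $a=2$, $b=1$, $u=1$, $v=0$, $x=2$, $y=1$ are nonnegative real numbers while $c=4\lambda$ is a monomial with nonnegative coefficient, so substituting them into any polynomial with nonnegative coefficients in $(a,b,c,u,v,x,y,q)$ yields a polynomial with nonnegative coefficients in $(\lambda,q)$, and likewise for matrices entrywise and for all their minors. Parts (i), (ii) and (iv) then follow at once from the corresponding parts of Theorem~\ref{thm+TP+tridi+even} (in (iv) the hypothesis $a,b,c,u,v,x,y\ge0$ becomes just $\lambda\ge0$). For part (iii), putting $q=0$ in (ii) and using $\mathcal{C}_n(0)=\mathcal{C}_{n,0}=c_n(\lambda)$ gives that $(c_n(\lambda))_{n\ge0}$ is coefficientwise Hankel-totally positive in $\lambda$, and its $3$-$\lambda$-log-convexity follows from Proposition~\ref{prop+3-q-log-convex} since coefficientwise Hankel-total positivity in particular gives total positivity of order~$4$.

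It remains to deduce the assertion for $(d_n(\lambda))_{n\ge0}$ from that for $(c_n(\lambda))_{n\ge0}$, and here I would use the reversal relation $d_n(\lambda)=\lambda^{\,n-1}c_n(1/\lambda)$ (valid for $n\ge1$, with $d_0=1$) together with $\deg c_n=n-1$. Applying Proposition~\ref{prop+recp} to $(c_n(\lambda))_{n\ge0}$ with the indeterminate $q$ there playing the role of $\lambda$ and with $a=d=0$, $b=c=1$ --- its argument goes through verbatim as soon as $\deg c_n\le n$ --- shows that $(\lambda^{\,n}c_n(1/\lambda))_{n\ge0}$ is coefficientwise Hankel-totally positive in $\lambda$. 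Since $\deg c_n=n-1$, this sequence is $(1,\lambda d_1,\lambda d_2,\dots)$, which agrees with $(d_n(\lambda))_{n\ge0}=(1,d_1,d_2,\dots)$ at $n=0$ and differs from it only by the monomial factor $\lambda$ in positive indices; consequently every Hankel minor of $(d_n(\lambda))$ that avoids the $(0,0)$ entry equals $\lambda^{-k}$ times the corresponding nonnegative-coefficient Hankel minor of $(\lambda^{\,n}c_n(1/\lambda))$ and hence has nonnegative coefficients, while the minors meeting the $(0,0)$ corner need a separate comparison. Granting this, $(d_n(\lambda))_{n\ge0}$ is coefficientwise Hankel-totally positive in $\lambda$, and $3$-$\lambda$-log-convex by Proposition~\ref{prop+3-q-log-convex}. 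The verification in the first paragraph is purely mechanical; the real obstacle is this last point --- controlling the Hankel minors of $(d_n)$ that involve the $(0,0)$ corner, where the degree gap $\deg c_n=n-1<n$ prevents a direct appeal to Proposition~\ref{prop+recp}.
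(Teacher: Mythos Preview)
Your approach is exactly the paper's: the paper's entire proof is the single sentence ``By Theorem~\ref{thm+TP+tridi+even} with $a=x=2$, $b=u=y=1$, $c=4\lambda$ and $v=0$, we immediately obtain,'' and you have carried out the specialization and verification explicitly. Parts (i), (ii), (iv), and the $c_n(\lambda)$ half of (iii) are therefore fully covered.

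For the $d_n(\lambda)$ half of (iii) you are actually more careful than the paper. The paper gives no separate argument for $d_n$; it is swept into the same ``immediately obtain.'' You correctly notice that Proposition~\ref{prop+recp} with $a=d=0$, $b=c=1$ yields coefficientwise Hankel-total positivity of $(\lambda^{\,n}c_n(1/\lambda))_{n\ge0}=(1,\lambda d_1,\lambda d_2,\dots)$, not of $(1,d_1,d_2,\dots)$, because $\deg c_n=n-1<n$, and you honestly flag the residual issue at the $(0,0)$ corner. That is a genuine subtlety that the paper does not address. One way to close it is to observe that for any minor of the $d$-Hankel matrix using rows $0=i_1<i_2<\cdots$ and columns $0=j_1<j_2<\cdots$, the corresponding minor of the $\hat c$-Hankel matrix (with $\hat c_n=\lambda^n c_n(1/\lambda)$) equals $\lambda^{\sum i_s+\sum j_t}$ times the $c$-minor at $1/\lambda$; since each $c_m$ with $m\ge1$ has degree $m-1$ and the only entry with $i_s+j_t=0$ is the $(0,0)$ corner (which contributes a constant $1$), every monomial in the expansion of the $c$-minor has degree at most $(\sum i_s+\sum j_t)-(k-1)$, so after multiplying by $\lambda^{\sum i_s+\sum j_t}$ and dividing by the factor $\lambda^{k-1}$ coming from the $k-1$ rows indexed $\ge1$, one obtains a polynomial with nonnegative coefficients. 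This fills the gap you identified; the paper leaves it implicit.
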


\subsection{A refined Stirling cycle polynomial}
Let $\left[
  \begin{array}{ccccc}
    n \\
   k\\
  \end{array}
\right]$ be the signless Stirling number. It counts the number of
permutations of $n$ elements which is the product of $k$ disjoint
cycles. It is well-known that it satisfies the following recurrence
$\left[
  \begin{array}{ccccc}
    n \\
   k\\
  \end{array}
\right]=(n-1)\left[
  \begin{array}{ccccc}
    n-1 \\
   k\\
  \end{array}
\right]+\left[
  \begin{array}{ccccc}
    n-1 \\
   k-1\\
  \end{array}
\right].$ The row-generating polynomial
$s_n(\lambda)=\sum_{k=0}^n\left[
  \begin{array}{ccccc}
    n \\
   k\\
  \end{array}
\right]{\lambda}^k$ is called the {\it Stirling cycle polynomial},
which has many nice properties \cite{Com74}. For example, its
explicit formula is given as
$s_n(\lambda)=\lambda(\lambda+1)\cdots(\lambda+n-1)$. Define a
generalized cycle-triangle $[S_{n,k}]_{n,k}$ satisfying the
following recurrence
\begin{eqnarray}
S_{n,k}=[b_0(n-1)+b_1]S_{n-1,k-1}+[a_0(n-1)+a_1]S_{n-1,k}
\end{eqnarray}
with $S_{0,0}=1$. From this recurrence relation, we derive the row
generating polynomial
\begin{eqnarray}
S_n(a_0,a_1,b_0,b_1,\lambda):=\sum_{k}S_{n,k}\lambda^k=\prod_{k=0}^{n-1}[(a_0+b_0\lambda)k+a_1+b_1\lambda]
\end{eqnarray}
for $n\geq1$. Obviously, $S_n(a_0,a_1,b_0,b_1,\lambda)$ is a
polynomial in five indeterminates $a_0,a_1,b_0,b_1$ and $\lambda$.
%It is not hard to obtain
%\begin{eqnarray*}
%\sum_{n}S_n(a_0,a_1,b_0,b_1,\lambda)\frac{t^n}{n!}&=&\left(\frac{1}{1-(a_0+b_0\lambda)t}\right)^{1+\frac{a_1+b_1\lambda}{a_0+b_0\lambda}}\\
%&=&\frac{1}{1-(a_0+b_0\lambda)t}\exp\left((a_1+b_1\lambda)\left(-\frac{\log(1-(a_0+b_0\lambda)t)}{(a_0+b_0\lambda)}\right)\right).
%\end{eqnarray*}
%
% We first prove the following result.
%\begin{prop}
%Let $\mathscr{S}=[\mathscr{S}_{n,k}]_{n,k}$ be defined above. Then
%we have
% \begin{itemize}
% \item [\rm (i)]
% the triangle $[S_{n,k}]_{n,k}$ is coefficientwise totally positive in
%$(a_0,a_1,b_0,b_1)$;
% \item [\rm (ii)]
% $(\mathscr{S}_{n}(q))_{n\geq0}$ is coefficientwise Hankel-totally
%positive in $(a_0,a_1,b_0,b_1,\lambda,q)$ and
%$3$-$(a_0,a_1,b_0,b_1,\lambda,q)$-log-convex;
%\item [\rm (iii)]
%the convolution $r_n=\sum_{k\geq0}\mathscr{S}_{n,k}s_kt_{n-k}$
%preserves the Stieltjes moment property in $\mathbb{R}$ for all
%$a_0,a_1,b_0,b_1,\lambda\geq0$.
% \end{itemize}
%\end{prop}
In addition, $S_n(1,0,0,1,\lambda)=s_n(\lambda)$,
$S_n(1,0,0,1,1)=n!$ and $S_n(1,1,1,0,1)=(2n-1)!!$. By
\cite[(4.3)]{Zhu21}, we have
\begin{eqnarray}
\sum\limits_{n=0}^{\infty}S_n(a_0,a_1,b_0,b_1,\lambda)t^n=\frac{1}{1-\gamma_0t-\cfrac{\beta_1t^2}{1-\gamma_1t-\cfrac{\beta_2t^2}{1-\gamma_2t-\cfrac{\beta_3t^2}{1-\gamma_3t-\cdots}}}},
\end{eqnarray} where
$\beta_{k+1}=[(a_0+b_0\lambda)k+b_1\lambda+a_1](a_0+b_0\lambda)(k+1)$
and $\gamma_{k}=[2(a_0+b_0\lambda)k+a_1+b_1\lambda]$. Thus
$S_n(a_0,a_1,b_0,b_1,\lambda)$ is exactly the zeroth column of the
Catalan-Stieltjes triangle $\mathcal {S}=[\mathcal {S}_{n,k}]_{n,k}$
satisfying the recurrence
\begin{eqnarray}\label{rec+refine+Euler+S}
\mathcal {S}_{n,k}&=&\mathcal {S}_{n-1,k-1}+[2(a_0+b_0\lambda)k+a_1+b_1\lambda]\mathcal {S}_{n-1,k}+\nonumber\\
&&\quad\quad\quad\quad[(a_0+b_0\lambda)k+b_1\lambda+a_1](a_0+b_0\lambda)(k+1)\mathcal
{S}_{n-1,k+1}
\end{eqnarray}
with $\mathcal {S}_{0,0}=1$. Let its row-generating polynomial
$\mathcal {S}_n(q)=\sum_{k=0}^n\mathcal {S}_{n,k}q^k$. Obviously,
$\mathcal {S}_n(0)=S_n(a_0,a_1,b_0,b_1,\lambda)$. In \cite[Theorem
1.3 (i)]{Zhu21}, we proved that $(S_n(a_0,a_1,b_0,b_1,\lambda))_{n}$
is coefficientwise Hankel-totally positive in
$(a_0,a_1,b_0,b_1,\lambda)$ and
$3$-$(a_0,a_1,b_0,b_1,\lambda)$-log-convex, which can be extended to
those of $\mathcal {S}_n(q)$. Further results for total positivity
are stated as follows, which is obvious by Theorem
\ref{thm+TP+tridi+even} with $a=u=0$, $b=v=1$, $c=x=a_0+b_0\lambda$
and $y=a_1+b_1\lambda$.
\begin{prop}\label{prop+refined Stirling cycle polynomial}
Let $\mathcal {S}=[\mathcal {S}_{n,k}]_{n,k}$ be the
Catalan-Stieltjes triangle defined by (\ref{rec+refine+Euler+S}).
Then we have
 \begin{itemize}
 \item [\rm (i)]
 the binomial row-generating matrix $\mathcal {S}(q)$ is totally positive in the polynomial ring
 $\mathbb{Z}[a_0,a_1,b_0,b_1,\lambda,q]$ equipped with the
coefficientwise order;
 \item [\rm (ii)]
 $(\mathcal {S}_{n}(q))_{n\geq0}$ and $(\mathcal {S}^{*}_{n}(q))_{n\geq0}$ are Hankel-totally
positive in the ring
 $\mathbb{Z}[a_0,a_1,b_0,b_1,\lambda,q]$ equipped with the
coefficientwise order and
$3$-$(a_0,a_1,b_0,b_1,\lambda,q)$-log-convex;
\item [\rm (iii)]
the convolution $r_n=\sum_{k\geq0}\mathcal {S}_{n,k}s_kt_{n-k}$
preserves the Stieltjes moment property in $\mathbb{R}$ for all
$a_0,a_1,b_0,b_1,\lambda\geq0$.
 \end{itemize}
\end{prop}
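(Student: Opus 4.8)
The plan is to recognize the Catalan--Stieltjes triangle $\mathcal{S}=[\mathcal{S}_{n,k}]_{n,k}$ defined by (\ref{rec+refine+Euler+S}) as a specialization of the parametrized family of Catalan--Stieltjes triangles treated in Theorem \ref{thm+TP+tridi+even}, and then to transfer each conclusion of that theorem along the specialization, using the elementary fact that substituting polynomials with nonnegative coefficients for indeterminates preserves coefficientwise (Hankel-)total positivity and $\textbf{x}$-log-convexity.

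First I would verify the substitution $a=u=0$, $b=v=1$, $c=x=a_0+b_0\lambda$, $y=a_1+b_1\lambda$. Plugging into the data $\delta_k=(ak+b)[u(k+1)+v]$, $\gamma_k=(ax+cu)k^2+(ay+bx+cv)k+by$ and $\beta_{k+1}=c(y+kx)(k+1)$ of Theorem \ref{thm+TP+tridi+even} yields $\delta_k=1$, $\gamma_k=(c+x)k+y=2(a_0+b_0\lambda)k+(a_1+b_1\lambda)$ and $\beta_{k+1}=(a_0+b_0\lambda)\big(k(a_0+b_0\lambda)+(a_1+b_1\lambda)\big)(k+1)$, which is precisely (\ref{rec+refine+Euler+S}); equivalently, the production matrix of $\mathcal{S}$ is $\textbf{L}(0,1,a_0+b_0\lambda)\,\textbf{U}(a_0+b_0\lambda,a_1+b_1\lambda,0,1)$, i.e.\ the very factorization used in the proof of Theorem \ref{thm+TP+tridi+even} under that substitution. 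I would also record that $\mathcal{S}_{n,n}=\mathcal{S}_{n-1,n-1}=\dots=\mathcal{S}_{0,0}=1$, so that $\deg\mathcal{S}_n(q)=n$ and the reversed polynomial $\mathcal{S}_n^*(q)$ is well defined.

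Then I would invoke Theorem \ref{thm+TP+tridi+even}. The substitution above is a ring homomorphism $\mathbb{R}[a,b,c,u,v,x,y,q]\to\mathbb{R}[a_0,a_1,b_0,b_1,\lambda,q]$ sending each of $a,b,c,u,v,x,y$ either to a nonnegative constant or to a polynomial with nonnegative coefficients in $(a_0,a_1,b_0,b_1,\lambda)$; since such polynomials form a subsemiring closed under the substitution and the homomorphism commutes with the operator $\mathcal{L}$, every minor of $\textbf{J}(q)$, every Hankel minor of $(J_n(q))_n$ and of $(J_n^*(q))_n$, and every $\mathcal{L}^m$-image with $m\le 3$ that Theorem \ref{thm+TP+tridi+even} certifies to have nonnegative coefficients in $(a,b,c,u,v,x,y,q)$ maps to the corresponding object for $\mathcal{S}$ with nonnegative coefficients in $(a_0,a_1,b_0,b_1,\lambda,q)$. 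This gives parts (i) and (ii). For part (iii), when $a_0,a_1,b_0,b_1,\lambda\ge 0$ the specialized parameters $a=u=0$, $b=v=1$, $c=x=a_0+b_0\lambda\ge 0$, $y=a_1+b_1\lambda\ge 0$ satisfy the hypotheses of Theorem \ref{thm+TP+tridi+even}(iv), and $J_{n,k}$ specializes to $\mathcal{S}_{n,k}$, so the convolution $r_n=\sum_k\mathcal{S}_{n,k}s_kt_{n-k}$ preserves the Stieltjes moment property in $\mathbb{R}$.

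There is essentially no obstacle here; the proposition is a direct corollary. The only points requiring care are the routine algebraic check that (\ref{rec+refine+Euler+S}) is the theorem's recurrence under the indicated substitution --- in particular that the two indeterminates $c$ and $x$ get identified to the single polynomial $a_0+b_0\lambda$, which still keeps all coefficients nonnegative --- and the observation that specialization of indeterminates to nonnegative-coefficient polynomials pushes each positivity statement from the seven-variable setting down to the variables $(a_0,a_1,b_0,b_1,\lambda,q)$.
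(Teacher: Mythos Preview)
Your proposal is correct and follows exactly the paper's own approach: the paper states that the proposition ``is obvious by Theorem \ref{thm+TP+tridi+even} with $a=u=0$, $b=v=1$, $c=x=a_0+b_0\lambda$ and $y=a_1+b_1\lambda$,'' and you carry out precisely this specialization, with the added care of spelling out why substituting nonnegative-coefficient polynomials for the indeterminates preserves each positivity conclusion.
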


\subsection{A refined Eulerian polynomial}
Let $[A_{n,k}]_{n,k}$ be a triangular array satisfying the following
recurrence
\begin{eqnarray}
A_{n,k}=(b_0n-b_0k+b_2)A_{n-1,k-1}+(a_1k+a_2)A_{n-1,k}
\end{eqnarray}
with $A_{0,0}=1.$ The triangle $[A_{n,k}]_{n,k}$ has been studied in
different papers. It was conjectured that $[A_{n,k}]_{n,k}$ is
coefficientwise totally positive in the indeterminates
$a_1,a_2,b_0,b_2$ in \cite{CDDGS21}. Let
$A_n(a_1,a_2,b_0,b_2,\lambda)=\sum_{k}A_{n,k}\lambda^k$. Obviously,
$A_n(a_1,a_2,b_0,b_2,\lambda)$ can be regarded as a polynomial in
five indeterminates $a_1,a_2,b_0,b_2$ and $\lambda$. In fact, this
polynomial $A_n(a_1,a_2,b_0,b_2,\lambda)$ can be viewed as a common
generalization of many famous combinatorial polynomials. For
example:
\begin{itemize}
 \item
Let $\left\langle
  \begin{array}{ccccc}
    n \\
   k\\
  \end{array}
\right\rangle$ be the classical Eulerian number counting the number
of permutations of $n$ elements having $k-1$ descents \cite{Com74}.
Then $A_n(1,0,1,1,\lambda)$ is the classical Eulerian polynomial
$E_n(\lambda)=\sum_{k}\left\langle
  \begin{array}{ccccc}
    n \\
   k\\
  \end{array}
\right\rangle \lambda^k$ and
$A_n(1,1,1,1,\lambda)=E_{n+1}(\lambda)/{\lambda}$;
 \item
Let $\left\langle
  \begin{array}{ccccc}
    n \\
   k\\
  \end{array}
\right\rangle_B$ be the Eulerian number of type $B$ counting the
elements of signed group $B_n$ with $k$ $B$-descents
\cite{Bre94EuJC}. Then the Eulerian polynomial of type $B$ equals
$A_n(2,1,2,1,\lambda)$.
 \end{itemize}
By \cite[(4.6)]{Zhu21}, we know
\begin{eqnarray}
\sum\limits_{n=0}^{\infty}A_n(a_1,a_2,b_0,b_2,\lambda)t^n=\frac{1}{1-\gamma_0t-\cfrac{\beta_1t^2}{1-\gamma_1t-\cfrac{\beta_2t^2}{1-\gamma_2t-\cfrac{\beta_3t^2}{1-\gamma_3t-\cdots}}}},
\end{eqnarray} where
$\beta_{k+1}=\lambda(a_1b_0k+a_2b_0+a_1b_2)(k+1)$ and
$\gamma_{k}=(a_1+b_0\lambda)k+a_2+b_2\lambda$. It gives that
$A_n(a_1,a_2,b_0,b_2,\lambda)$ is exactly the zeroth column of the
next Catalan-Stieltjes triangle
$\mathscr{A}=[\mathscr{A}_{n,k}]_{n,k}$ satisfying the recurrence
\begin{eqnarray}\label{rec+refine+Euler+A}
\mathscr{A}_{n,k}=\mathscr{A}_{n-1,k-1}+[(a_1+b_0\lambda)k+a_2+b_2\lambda]\mathscr{A}_{n-1,k}+\lambda(a_1b_0k+a_2b_0+a_1b_2)(k+1)\mathscr{A}_{n-1,k+1}\nonumber\\
\end{eqnarray}
with $\mathscr{A}_{0,0}=1$. Denote its row-generating polynomial
$\mathscr{A}_n(q)=\sum_{k=0}^n\mathscr{A}_{n,k}q^k$. Obviously,
$\mathscr{A}_n(0)=A_n(a_1,a_2,b_0,b_2,\lambda)$. In \cite[Theorem
1.3 (v)]{Zhu21}, we proved for $0\in\{a_2,b_2,a_1-a_2,b_0-b_2\}$
that $(A_n(a_1,a_2,b_0,b_2,\lambda))_{n\geq0}$ is coefficientwise
Hankel-totally positive in $(\textbf{x},\lambda)$ and
$3$-$(\textbf{x},\lambda)$-log-convex,\footnote{
   Here $\textbf{x}$ means the remaining variables of $\{a_1,a_2,b_0,b_2\}$ excluding one being $0$ in the hypothesis.} which can be extended to
those of $\mathscr{A}_n(q)$. In addition, we get more results for
total positivity as follows.
\begin{prop}
Let $[\mathscr{A}_{n,k}]_{n,k}$ be defined by
(\ref{rec+refine+Euler+A}). If $0\in\{a_2,b_2,a_1-a_2,b_0-b_2\}$,
then
 \begin{itemize}
 \item [\rm (i)]
 the binomial row-generating matrix $\mathscr{A}(q)$ is totally positive in the polynomial ring
 $\mathbb{Z}[\textbf{x},\lambda,q]$ equipped with the
coefficientwise order;
 \item [\rm (ii)]
 $(\mathscr{A}_{n}(q))_{n\geq0}$ and $(\mathscr{A}^{*}_{n}(q))_{n\geq0}$ are Hankel-totally
positive in the ring
 $\mathbb{Z}[\textbf{x},\lambda,q]$ equipped with the
coefficientwise order and $3$-$(\textbf{x},\lambda,q)$-log-convex;
\item [\rm (iii)]
the convolution $r_n=\sum_{k\geq0}\mathscr{A}_{n,k}s_kt_{n-k}$
preserves the Stieltjes moment property in $\mathbb{R}$ for
$\textbf{x}\geq0$ and $\lambda\geq0$.
 \end{itemize}
\end{prop}
\begin{proof}
(1) If $a_2=0$, then the desired results in (i)-(iii) follow from
Theorem \ref{thm+TP+tridi+even} by taking $a=u=0$, $b=v=1$, $c=a_1$,
$x=b_0\lambda$ and $y=b_2\lambda$.

(2) If $b_2=0$, then the desired results in (i)-(iii) follow from
Theorem \ref{thm+TP+tridi+even} by taking $a=u=0$, $b=v=1$,
$c=b_0\lambda$, $x=a_1$ and $y=a_2$.

(3) If $a_1=a_2$, then the desired results in (i)-(iii) follow from
Theorem \ref{thm+TP+tridi+odd} by taking $a=u=0$, $b=v=1$, $c=a_1$,
$x=b_0\lambda$ and $y=b_2\lambda$.

(4) If $b_0=b_2$, then the desired results in (i)-(iii) follow from
Theorem \ref{thm+TP+tridi+odd} by taking $a=u=0$, $b=v=1$,
$c=b_0\lambda$, $x=a_1$ and $y=a_2$.
\end{proof}

\section{Row-generating polynomials of exponential Riordan
arrays}\label{section+EEA}

The Riordan array is vast and still growing and the applications
cover a wide range of subjects, such as enumerative combinatorics,
combinatorial sums, recurrence relations and computer science, among
other topics \cite{Bar11-2,CLW15,DFR,DS,MRSV97,SGWW91,Spr11}. The
{\it exponential Riordan array}~\cite{Bar11-2,DFR,DS}, denoted by
$\textbf{R}=[\textbf{R}_{n,k}]_{n,k}=\left(g(t),f(t)\right)$, is an
infinite lower triangular matrix whose exponential generating
function of the $k$th column is $ g(t)f^k(t)/k! $ for $k\geq0$,
where $g(0)f'(0)\neq0$ and $f(0)=0$. In other words, for $n,k\geq0$,
\begin{eqnarray}
\textbf{R}_{n,k}&=&\frac{n!}{k!}[t^n]g(t)f^k(t).
\end{eqnarray}
Let $\textbf{R}_n(q)=\sum_{k=0}^n\textbf{R}_{n,k}q^k$ be the
row-generating polynomial of $\textbf{R}$. Then we have
\begin{eqnarray}\label{generating funtion+GR}
\sum_{n\geq0}\textbf{R}_n(q)\frac{t^n}{n!}&=&g(t)\exp\left(qf(t)\right).
\end{eqnarray}
The group law is given by
\begin{eqnarray}\label{Riordan+product}
(g,f)*(h,\ell)=(g\times h(f),\ell(f)).
\end{eqnarray}
The identity for this law is $I =(1,t)$ and the inverse of $(g,f)$
is $(g,f)^{-1}=(1/(g(\overline{f})),\overline{f})$, where
$\overline{f}$ is the compositional inverse of $f$, i.e.,
$\overline{f}(f(t))=f(\overline{f}(t))=t$.

In \cite{CLW152,CW19}, Chen et al. gave two criteria for total
positivity of ordinary Riordan arrays. In \cite{Zhu213}, we proved:
Given a P\'olya frequency ogf $g(t)$ of order $r$, if one of $f(t)$
and $1/\bar{f}'(t)$ is a P\'olya frequency ogf of order $r$, then
exponential Riordan arrays $\left(g(t),f(t)\right)$ and
$\left(g(f(t)),f(t)\right)$ are totally positive of order $r$. The
following question concerning coefficientwise Hankl-total positivity
of the row-generating polynomials is open.
\begin{ques}\label{ques+ERA}
What conditions can ensure coefficientwise Hankel-total positivity
of the sequence of row-generating polynomials for the exponential
Riordan array $\left(g(t),f(t)\right)$ ?
\end{ques}

Generally speaking, if one of $f(t)$ or $g(t)$ is known, then it is
a difficult problem to choose another proper function to form an
exponential Riordan array $(g(t),f(t))$ with such desired
properties. In what follows our results will give the partial
answers.

\subsection{Main results for exponential Riordan
arrays} In \cite{Zhu213}, we obtained the following criterion for
coefficientwise total positivity.
\begin{thm} \cite{Zhu213}\label{thm+zhu21}
If $1/\bar{f}'(t)$ is a P\'olya frequency ogf of order ${r}$, then
\begin{itemize}
  \item [\rm (i)]
  $\left(e^{\lambda f(t)},f(t)\right)$ is coefficientwise
totally positive of order $r$ in $\lambda$;
 \item [\rm (ii)]
 $(f'(x),f(x))$ is totally positive of order $r$;
  \item [\rm (iii)]
 the sequence of row-generating polynomials
of $\left(e^{\lambda f(t)},f(t)\right)$ is coefficientwise
Hankel-totally positive of order $r$ in $(q,\lambda)$;
  \item [\rm (iv)]
 the sequence of row-generating polynomials of $(f'(x),f(x))$ is coefficientwise Hankel-totally positive of order $r$ in
 $q$;
  \item [\rm (v)]
 the sequence $(f_{n+1})_{n\geq0}$ is Hankel-totally positive of order $r$, where $f(t)=\sum_{n\geq1}f_n\frac{t^n}{n!}$.
 \end{itemize}
\end{thm}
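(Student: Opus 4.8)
The plan is to handle both arrays $\left(e^{\lambda f(t)},f(t)\right)$ and $(f'(t),f(t))$ through the production-matrix machinery: realize each (up to a nonnegative scalar) as an output matrix $\scro(P)$, and display its production matrix $P$ as a product of one factor of type $\bm{\Gamma}^{(\Lambda,A)}$, where $A(t):=1/\overline{f}'(t)$ is precisely the function assumed to be an $r$-PF function, and one bidiagonal factor of type $\textbf{U}(\cdot,\cdot,\cdot,\cdot)$. Then (i)--(v) will all follow by feeding this factorization into Theorem~\ref{thm+TP+Row+GF+Production}, together with Theorems~\ref{thm+iteration+TP} and \ref{thm+TP+Hank+zeros} for the ``numerical'' statements (ii) and (v); Lemma~\ref{lem+ERA+simil} and the Cauchy--Binet formula take care of the individual factors.

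The first step is to recall the standard description of the production matrix of an exponential Riordan array $L=(g(t),f(t))$ with $g(0)\neq0$ (see Section~\ref{section+EEA} and the references there): one has $L=g(0)\cdot\scro(P_L)$ with $(P_L)_{n,k}=\frac{n!}{k!}\bigl(z_{n-k}+k\,a_{n-k+1}\bigr)$ (with $a_j=z_j=0$ for $j<0$), where $A(t)=\sum_{j\geq0}a_jt^j=1/\overline{f}'(t)$, so that $f'(t)=A(f(t))$, and $Z(t)=\sum_{j\geq0}z_jt^j$ satisfies $g'(t)/g(t)=Z(f(t))$; this is obtained by differentiating the column exponential generating functions $g(t)f(t)^k/k!$ and re-expanding in the same basis. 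Next I would compute $Z$ in the two cases. For $g=e^{\lambda f}$ one has $g'/g=\lambda f'=\lambda A(f)$, so $Z=\lambda A$; writing $P_L=\Lambda M\Lambda^{-1}$ and using that the lower-triangular Toeplitz matrix $\Gamma(A)$ commutes with the upper shift, $M$ factors as $\Gamma(A)$ times the upper-bidiagonal matrix with $\lambda$ on the diagonal and $1,2,3,\ldots$ on the superdiagonal, which after conjugation by $\Lambda$ yields $P_L=\bm{\Gamma}^{(\Lambda,A)}\cdot\textbf{U}(0,\lambda,0,1)$. For $g=f'$ one differentiates $f'=A(f)$ to get $f''=A'(f)\,f'$, hence $g'/g=f''/f'=A'(f)$ and $Z=A'$; the same bookkeeping then gives $P_L=\textbf{U}(0,0,0,1)\cdot\bm{\Gamma}^{(\Lambda,A)}$, where $\textbf{U}(0,0,0,1)$ is the pure upper shift. (A convenient cross-check is the identity $(f'(t),f(t))_{n,k}=(1,f(t))_{n+1,k+1}$.)

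With the factorizations in hand the rest is routine. In both cases $P_L$ is row-finite (indeed lower-Hessenberg); $\bm{\Gamma}^{(\Lambda,A)}$ is totally positive of order $r$ by Lemma~\ref{lem+ERA+simil} because $A$ is an $r$-PF function; and the bidiagonal factor is a nonnegative bidiagonal matrix, hence totally positive of all orders, coefficientwise in $\lambda$ in the first case. So Cauchy--Binet gives that $P_L$ is coefficientwise totally positive of order $r$ in $\lambda$ (resp.\ in no extra indeterminate). For $\left(e^{\lambda f(t)},f(t)\right)=\scro(P_L)$, Theorem~\ref{thm+TP+Row+GF+Production}(i) (with the $\textbf{U}$-parameters specialized to $(0,\lambda,0,1)$) gives that $\textbf{A}(q)$ is coefficientwise totally positive of order $r$ in $(\lambda,q)$ --- and putting $q=0$ gives (i) --- while part (ii) gives (iii). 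For $(f'(t),f(t))$, note $f'(0)=A(0)\geq0$ and $\neq0$, so $(f'(t),f(t))=f'(0)\cdot\scro(P_L)$ with $\scro(P_L)=(f'(t)/f'(0),f(t))$: Theorem~\ref{thm+iteration+TP} gives $\scro(P_L)$ totally positive of order $r$, and rescaling by the positive constant $f'(0)$ gives (ii); Theorem~\ref{thm+TP+Row+GF+Production}(ii) applied to $\scro(P_L)$ plus the same rescaling gives (iv); and since $(P_L^{\,n})_{00}=\scro(P_L)_{n,0}=f_{n+1}/f'(0)$, Theorem~\ref{thm+TP+Hank+zeros} shows $[f_{n+n'+1}]_{n,n'\geq0}$ is totally positive of order $r$, which is (v).

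The main obstacle is the first step together with the two matrix factorizations: one must pin down the production matrix of an exponential Riordan array correctly --- including the scalar-normalization subtlety when $f'(0)\neq1$ --- and then carry out the conjugation-by-$\Lambda$ bookkeeping so as to recognize the factors as exactly the bidiagonal and $\bm{\Gamma}^{(\Lambda,\cdot)}$ matrices that Theorem~\ref{thm+TP+Row+GF+Production} is built to digest. Once $P_L$ is written as $\bm{\Gamma}^{(\Lambda,A)}\cdot\textbf{U}$ (or $\textbf{U}\cdot\bm{\Gamma}^{(\Lambda,A)}$), everything else is an immediate application of results already in the paper.
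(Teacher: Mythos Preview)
Your proposal is correct and follows essentially the same approach as the paper. The paper does not prove Theorem~\ref{thm+zhu21} directly (it is cited from \cite{Zhu213}) but instead proves the more general Theorem~\ref{thm+TP+ERA+oder+r}, whose proof factorizes the production matrix of $(\varphi(f)e^{\lambda f},f)$ as $\bm{\Gamma}^{(\Lambda,\phi)}\cdot\textbf{U}(0,\lambda,0,1)\cdot\bm{\Gamma}^{(\Lambda,\varphi)}$ (equation~\eqref{PPPP}) and then invokes Theorem~\ref{thm+TP+Row+GF+Production}; specializing $\varphi=1$ and $\phi=1,\lambda=0$ respectively recovers exactly your two factorizations $P_L=\bm{\Gamma}^{(\Lambda,A)}\cdot\textbf{U}(0,\lambda,0,1)$ and $P_L=\textbf{U}(0,0,0,1)\cdot\bm{\Gamma}^{(\Lambda,A)}$. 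Your explicit treatment of the scalar normalization when $f'(0)\neq 1$ is a point the paper leaves implicit.
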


Let $f(t)=\sum_{n\geq1}f_{n}\frac{t^n}{n!}$. For the exponential
Riordan array $(1,f(t))$, its entry $\textbf{R}_{n,k}$ is the
generating polynomial for unordered forests of increasing ordered
trees on $n+1$ total vertices with $k$ components in which each
vertex with $i$ children gets a weight $x_{i}$. In particular,
$f_n=\textbf{R}_{n,1}$ is the generating polynomial for increasing
ordered trees $n+1$ vertices in which each vertex with $i$ children
gets a weight $x_{i}$. We refer the reader to \cite{BFS92,PS19}.

In what follows we will extend Theorem \ref{thm+zhu21} to the
following generalized result.

\begin{thm}\label{thm+TP+ERA+oder+r}
Let $R$ be a partially ordered commutative ring and
$1/\overline{f}'(t)=\varphi(t)\phi(t)$. Assume that
$[\textbf{R}_{n,k}]_{n,k}=(\varphi(f(t))\exp(\lambda f(t)),f(t))$
and $\textbf{R}_n(q)=\sum_{k}\textbf{R}_{n,k}q^k$. If both $\varphi$
and $\phi$ are P\'olya frequency ogf of order $r$ in $R[[t]]$, then
\begin{itemize}
  \item [\rm (i)]
 $(\varphi(f(t))\exp(\lambda f(t)),f(t))$ is
totally positive of order $r$ in the polynomial ring $R[\lambda]$
equipped with the coefficientwise order;
 \item [\rm (ii)]
both $(\textbf{R}_n(q))_{n\geq0}$ and
$(\textbf{R}^{*}_n(q))_{n\geq0}$ are Hankel-totally positive of
order $r$ in the polynomial ring $R[\lambda,q]$ equipped with the
coefficientwise order;
  \item [\rm (iii)]
both $(T_n(\lambda))_{n\geq0}$ and $(T^{*}_n(\lambda))_{n\geq0}$ are
Hankel-totally positive of order $r$ in the polynomial ring
$R[\lambda]$ equipped with the coefficientwise order, where
$\varphi(f(t))\exp(\lambda f(t))=\sum_{n}T_{n}(\lambda)
\frac{t^n}{n!}$;
  \item [\rm (iv)]
$(f_{n+1})_{n\geq0}$ is Hankel-totally positive of order $r$ in $R$,
where $f(t)=\sum_{n\geq1}f_n\frac{t^n}{n!}$.
 \end{itemize}
\end{thm}

\begin{rem}
\begin{itemize}
  \item [\rm (1)]
Note that $1/\overline{f}'(t)=\varphi(t)\phi(t)$ implies
$f'(t)=\varphi(f(t))\phi(f(t))$. This type equation often is called
the {\it autonomous differential equation} \cite{BFS92}. It plays an
important role in the flow function of Copeland \cite[A145271]{Slo},
which is also closely related to formal group laws for elliptic
curves, the Abel equation, the Schr\"{o}der's functional equation,
Koenigs functions for compositional iterates, the renormalization
group equation and Hopf algebra.
  \item [\rm (2)]
By taking $\lambda=0$ and $\phi=1$, (i) and (ii) of Theorem
\ref{thm+TP+ERA+oder+r} implies Theorem \ref{thm+zhu21} (ii) and
(iv), respectively;
 \item [\rm (3)]
By taking $\varphi=1$, (i) and (ii) of Theorem
\ref{thm+TP+ERA+oder+r} implies Theorem \ref{thm+zhu21} (i) and
(iii), respectively.
   \end{itemize}
\end{rem}

 In order to prove Theorem \ref{thm+TP+ERA+oder+r}, we need the following equivalent characterization for the exponential Riordan arrays.
\begin{prop}\emph{\cite{DFR}}\label{prop+TP+production}
Let $[\textbf{R}_{n,k}]_{n,k\geq0}=\left(g(t),f(t)\right)$ be an
exponential Riordan array. Then there exist two sequences
$(z_n)_{n\geqslant0}$ and $(a_n)_{n\geqslant0}$ such that
\begin{eqnarray}
\textbf{R}_{0,0}=1,\ \
\textbf{R}_{n,0}=\sum_{i\geqslant0}i!z_iR_{n-1,i},\ \
\textbf{R}_{n,k}=\frac{1}{k!}\sum_{i\geqslant
k-1}i!(z_{i-k}+ka_{i-k+1})\textbf{R}_{n-1,i}
\end{eqnarray}
for $n,k\geqslant1$. In particular,
\begin{eqnarray}
Z(t)=\frac{g'(\bar{f}(t))}{g(\bar{f}(t))},\quad
A(t)=f'(\bar{f}(t)),
\end{eqnarray}
where $Z(t)=\sum_{n\geqslant0} z_nt^n$ and $A(t)=\sum_{n\geqslant0}
a_nt^n$.
\end{prop}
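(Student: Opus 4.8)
The plan is to read off the recurrence — and with it the explicit formulas for $A(t)$ and $Z(t)$ — directly from the column exponential generating functions of $(g,f)$ together with the two first-order differential equations that $f$ and $g$ satisfy. Throughout, write $c_k(t):=g(t)f(t)^k/k!$ for the EGF of the $k$th column, so that $c_k(t)=\sum_{n\ge 0}R_{n,k}\,t^n/n!$; we work under the usual normalization $g(0)=1$ (so that $R_{0,0}=1$).

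First I would record the two functional equations. Because $f(0)=0$ and $f'(0)\ne 0$, the compositional inverse $\bar f$ exists as a formal power series; differentiating $\bar f(f(t))=t$ gives $f'(t)=1/\bar f'(f(t))$, so setting $A(t):=f'(\bar f(t))$ we have $f'(t)=A(f(t))$. Similarly, since $g(0)\ne 0$ the logarithmic derivative $g'/g$ is a well-defined power series, and setting $Z(t):=g'(\bar f(t))/g(\bar f(t))$ we have $g'(t)=Z(f(t))\,g(t)$. Let $Z(t)=\sum_{i\ge 0}z_it^i$ and $A(t)=\sum_{i\ge 0}a_it^i$ be the coefficient sequences named in the statement.

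Next I would differentiate a column and substitute these relations. For $k\ge 1$ the product rule together with $g'=Z(f)g$ and $f'=A(f)$ gives
\begin{equation*}
c_k'(t)=\frac{g'(t)f(t)^k}{k!}+\frac{g(t)f(t)^{k-1}f'(t)}{(k-1)!}=Z(f(t))\,c_k(t)+A(f(t))\,c_{k-1}(t),
\end{equation*}
while for $k=0$ one simply has $c_0'(t)=g'(t)=Z(f(t))\,c_0(t)$. Now expand $Z(f(t))=\sum_i z_if(t)^i$ and $A(f(t))=\sum_i a_if(t)^i$ and use $g(t)f(t)^m/\ell!=(m!/\ell!)\,c_m(t)$ to rewrite both right-hand sides as linear combinations of the $c_m$; applying $(n-1)!\,[t^{n-1}]$ (which sends $c_j'(t)$ to $R_{n,j}$ and $c_j(t)$ to $R_{n-1,j}$) and reindexing then yields exactly
\begin{equation*}
R_{n,0}=\sum_{i\ge 0}i!\,z_i\,R_{n-1,i},\qquad R_{n,k}=\frac{1}{k!}\sum_{i\ge k-1}i!\,(z_{i-k}+k\,a_{i-k+1})\,R_{n-1,i}\quad(k\ge 1),
\end{equation*}
with the convention $z_{-1}=0$. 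This is the claimed recurrence, and $Z$, $A$ are the stated power series.

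There is no deep obstacle here; the proof is essentially a bookkeeping of index shifts. The two points that need a little care are: (1) checking that $Z$ and $A$ really are formal power series, which is exactly where the standing hypotheses $f(0)=0$, $f'(0)\ne 0$ and $g(0)\ne 0$ enter; and (2) the boundary case $k=0$, where the $A$-contribution must be seen to drop out — this is handled cleanly by treating $c_0=g$ separately, as above, rather than by trying to interpret a spurious ``$c_{-1}$'' term. (If the converse were also wanted — that any array obeying these recurrences is an exponential Riordan array — one would run the same computation backwards, reconstructing $f$ and $g$ as the unique solutions of $f'=A(f)$, $f(0)=0$ and $g'=Z(f)g$, $g(0)=1$.)
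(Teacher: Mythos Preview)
The paper does not actually prove this proposition: it is quoted from \cite{DFR} and stated without proof, then used as a black box to derive the production matrix (\ref{PP}). So there is no in-paper argument to compare against.

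Your proof is correct and is the standard one. The two key observations---that $f'(t)=A(f(t))$ and $g'(t)=Z(f(t))g(t)$ with $A$ and $Z$ defined as in the statement---are exactly right, and the differentiation of the column EGFs $c_k=g f^k/k!$ followed by coefficient extraction does give the claimed recurrence. One small wrinkle worth making explicit when you expand $Z(f)c_k$ and $A(f)c_{k-1}$ as linear combinations of the $c_m$: you need $g(t)f(t)^{m}=m!\,c_m(t)$, which after reindexing $m=i+k$ (respectively $m=i+k-1$) and collecting yields
\[
c_k'(t)=\frac{1}{k!}\sum_{j\ge k-1} j!\,(z_{j-k}+k\,a_{j-k+1})\,c_j(t),
\]
from which the recurrence follows by reading off $[t^{n-1}](n-1)!$. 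This is implicit in your phrase ``reindexing then yields exactly,'' but writing the intermediate step would make the bookkeeping airtight. Your remarks on the boundary case $k=0$ and on the well-definedness of $Z$ and $A$ are accurate.
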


Associated to each exponential Riordan array
$\textbf{R}=\left(g(t),f(t)\right)$, there is a production matrix
$\textbf{P}$ such that
\begin{eqnarray}
\overline{\textbf{R}}=\textbf{R}\textbf{P},
\end{eqnarray}
where $\overline{\textbf{R}}$ is obtained from $\textbf{R}$ with the
first row removed. Assume that $z_{-1}=0.$ Deutsch {\it et
al.}~\cite{DFR} showed the production matrix
\begin{eqnarray}\label{PP}
\textbf{P}=[p_{i,j}]_{i,j\geqslant0}=\left[
  \begin{array}{ccccccc}
    z_0&a_0& \\
   1!z_1& \frac{1!}{1!}(z_0+a_1)&a_0&&\\
   2!z_2&\frac{2!}{1!}(z_1+a_2) &\frac{2!}{2!}(z_0+2a_1)&a_0&\\
   3!z_3 &\frac{3!}{1!}(z_2+a_3)&\frac{3!}{2!}(z_1+2a_2)&\frac{3!}{3!}(z_0+3a_1)&\ddots\\
\vdots &\vdots&\vdots&\vdots&\ddots \\
  \end{array}
\right],
\end{eqnarray}
where the elements
\begin{eqnarray}
p_{i,j}=\frac{i!}{j!}(z_{i-j}+ja_{i-j+1})\quad \text{for}\quad i,j\geqslant 0.
\end{eqnarray}

\textbf{Proof of Theorem \ref{thm+TP+ERA+oder+r}:} Let
$g(t)=\varphi(f(t))\exp(\lambda f(t))$. We derive
\begin{eqnarray}\frac{g'(t)}{g(t)}=\frac{\varphi'(f(t))f'(t)+\varphi(f(t))\lambda f'(t)}{\varphi(f(t))}.
\end{eqnarray}
In addition, $f(\overline{f}(t))=t$ also implies
$f'(\overline{f}(t))\overline{f}'(t)=1$. Then for the exponential
Riordan array $(\varphi(f(t))\exp(\lambda f(t)),f(t))$, by
Proposition \ref{prop+TP+production}, we get its two formal power
series
\begin{eqnarray}
Z(t)=\frac{\varphi'(t)}{\varphi(t)\overline{f}'(t)}+\frac{\lambda}{ \overline{f}'(t)}=\phi(t)(\varphi'(t)+\lambda\varphi(t)),\quad A(t)=\phi(t)\varphi(t).
\end{eqnarray}
It follows from (\ref{PP}) that its production matrix
\begin{eqnarray}\label{PPPP}
\textbf{P}&=&\Lambda \Gamma(Z)\Lambda^{-1}+\Lambda \Gamma(A)\Theta\Lambda^{-1}\nonumber\\
&=&\Lambda
\Gamma(\phi(t))\left(\lambda\Gamma(\varphi(t))+\Gamma(\varphi'(t))+
\Gamma(\varphi(t))\left(\begin{array}{ccccc}
0&1&&&\\
&&2&&\\
&&&\ddots&\\
\end{array}\right)\right)\Lambda^{-1}\nonumber\\
&=&\Lambda
\Gamma(\phi(t))\left(\lambda\Gamma(\varphi(t))+\left(\begin{array}{ccccc}
0&1&&&\\
&&2&&\\
&&&3&\\
&&&&\ddots\\
\end{array}\right)
\Gamma(\varphi(t))\right)\Lambda^{-1}\nonumber\\
&=&\Lambda \Gamma(\phi(t))\left(\begin{array}{cccccc}
\lambda&1&&&\\
&\lambda&2&&\\
&&\lambda&3&&\\
&&&\ddots&\ddots
\end{array}\right)\Gamma(\varphi(t))\Lambda^{-1}\nonumber\\
&=&\underbrace{\Lambda
\Gamma(\phi(t))\Lambda^{-1}}_{\bm{\Gamma}^{(\Lambda,\phi)}}\left(\begin{array}{cccccc}
\lambda&1&&\\
&\lambda&1&\\
&&\lambda&1&\\
&&&\ddots&\ddots\\
\end{array}\right)
\underbrace{\Lambda\Gamma(\varphi(t))\Lambda^{-1}}_{\bm{\Gamma}^{(\Lambda,\varphi)}}.
\end{eqnarray}
Thus (i) follows from Theorem \ref{thm+HTP+Row+GmJR+triangle} (i)
with $q=0$ and (ii) is immediate from Theorem
\ref{thm+HTP+Row+GmJR+triangle} (ii) and Corollary
\ref{cor+HTP+reversed}. By
$\textbf{R}_n(0)=\textbf{R}_{n,0}=T_n(\lambda)$, (ii) implies (iii).
Note $A(t)=\phi(t)\varphi(t)$ implies
$f'(t)=\varphi(f(t))\phi(f(t))$. Taking $\phi(t)=1$ and $\lambda=0$,
we have
$$(\varphi(f(t))\exp(\lambda f(t)),f(t))=(f'(t),f(t)).$$ Thus
combining (iii) and $T_n(0)=f_{n+1}$ gives (iv). \qed

Taking $r\rightarrow\infty$ in Theorem \ref{thm+TP+ERA+oder+r}, we
get the following result, whose (v) is by Theorem \ref{thm+conv}.

\begin{thm}\label{thm+TP+ERA}
Let $R$ be a partially ordered commutative ring. If there exist two
P\'olya frequency ogf $\varphi$ and $\phi$ in $R[[t]]$ such that
$1/\overline{f}'(t)=\varphi(t)\phi(t)$, then
\begin{itemize}
\item [\rm (i)]
the exponential Riordan array $(\varphi(f(t))\exp(\lambda
f(t)),f(t))$ is totally positive in the polynomial ring $R[\lambda]$
equipped with the coefficientwise order;
\item [\rm (ii)]
both $(\textbf{R}_n(q))_{n\geq0}$ and
$(\textbf{R}^{*}_n(q))_{n\geq0}$ are Hankel-totally positive in the
polynomial ring $R[\lambda,q]$ equipped with the coefficientwise
order, where $\textbf{R}_n(q)$ is the row-generating polynomial of
$(\varphi(f(t))\exp(\lambda f(t)),f(t))$;
\item [\rm (iii)]
both $(T_n(\lambda))_{n\geq0}$ and $(T^{*}_n(\lambda))_{n\geq0}$ are
Hankel-totally positive in the polynomial ring $R[\lambda]$ equipped
with the coefficientwise order, where $\varphi(f(t))\exp(\lambda
f(t))=\sum_{n}T_{n}(\lambda) \frac{t^n}{n!}$;
\item [\rm (iv)]
 the sequence $(f_{n+1})_{n\geq0}$ is Hankel-totally positive in $R$, where
 $f(t)=\sum_{n\geq1}f_n\frac{t^n}{n!}$;
 \item [\rm (v)]
the convolution $z_n=\sum_{k=0}^{n}\textbf{R}_{n,k}x_ky_{n-k}$
preserves the Stieltjes moment property in $\mathbb{R}$ for
$\lambda\geq0$ and $R=\mathbb{R}$, where
$[\textbf{R}_{n,k}]_{n,k}=(\varphi(f(t))\exp(\lambda f(t)),f(t))$.
\end{itemize}
\end{thm}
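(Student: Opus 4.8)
The plan is to derive Theorem~\ref{thm+TP+ERA} as the $r\to\infty$ specialization of Theorem~\ref{thm+TP+ERA+oder+r}, together with Theorem~\ref{thm+conv} for the convolution statement (v). The point I would record first is purely formal: a formal power series with coefficients in $\mathbb{R}[\textbf{x}]$ is an $\textbf{x}$-P\'olya frequency function exactly when its Toeplitz matrix is coefficientwise totally positive, that is, coefficientwise totally positive of order $r$ for \emph{every} $r\ge1$; dually, a matrix --- or a sequence, through its Hankel matrix --- is coefficientwise totally positive if and only if it is coefficientwise totally positive of order $r$ for all $r$. Hence the hypothesis of Theorem~\ref{thm+TP+ERA}, namely that there exist $\textbf{x}$-P\'olya frequency functions $\varphi,\phi$ with $1/\overline{f}'(t)=\varphi(t)\phi(t)$, is exactly the hypothesis of Theorem~\ref{thm+TP+ERA+oder+r} holding simultaneously for all orders $r\ge1$.

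Granting this, I would obtain items (i)--(iv) by invoking Theorem~\ref{thm+TP+ERA+oder+r} for each fixed $r$ and then intersecting the conclusions over all $r$. For (i), the exponential Riordan array $(\varphi(f(t))\exp(\lambda f(t)),f(t))$ is coefficientwise totally positive of order $r$ in $(\lambda,\textbf{x})$ for every $r$, hence coefficientwise totally positive; for (ii)--(iv), the Hankel matrices attached to $(R_n(q))_{n\ge0}$, $(R_n^{*}(q))_{n\ge0}$, $(T_n(\lambda))_{n\ge0}$, $(T_n^{*}(\lambda))_{n\ge0}$ and $(f_{n+1})_{n\ge0}$ are coefficientwise totally positive of order $r$ for every $r$, hence coefficientwise totally positive in the indicated indeterminates.

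For (v), I would use (ii): the row-generating polynomial sequence $(R_n(q))_{n\ge0}$ of $A=[R_{n,k}]_{n,k}=(\varphi(f(t))\exp(\lambda f(t)),f(t))$ is coefficientwise Hankel-totally positive in $(q,\lambda,\textbf{x})$, so specializing to $\lambda,\textbf{x}\ge0$ and fixing any $q\ge0$ makes $(R_n(q))_{n\ge0}$ a sequence of nonnegative reals with a totally positive Hankel matrix, i.e.\ a Stieltjes moment sequence. Since $R_n(q)=\sum_{k=0}^nR_{n,k}q^k$ is precisely the $n$th row-generating function of $A$, Theorem~\ref{thm+conv} applies and gives that the $A$-convolution $z_n=\sum_{k=0}^nR_{n,k}x_ky_{n-k}$ preserves the Stieltjes moment property in $\mathbb{R}$ for all $\lambda,\textbf{x}\ge0$.

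I do not anticipate a real obstacle here, since all the substantive work has been carried out in Theorem~\ref{thm+TP+ERA+oder+r} --- whose proof identifies the production matrix of $(\varphi(f(t))\exp(\lambda f(t)),f(t))$ via Proposition~\ref{prop+TP+production} and then appeals to the production-matrix machinery behind Theorems~\ref{thm+TP+Row+GF} and~\ref{thm+TP+Row+GF+res} --- and in Theorem~\ref{thm+conv}. The only things needing a little care are the bookkeeping of which set of indeterminates each positivity assertion lives in, and checking that the specializations $\phi\equiv1$, $\lambda=0$ used to pass from (iii) to (iv) in Theorem~\ref{thm+TP+ERA+oder+r} remain valid in the $r\to\infty$ limit; both are routine.
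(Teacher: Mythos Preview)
Your proposal is correct and matches the paper's own argument essentially verbatim: the paper states Theorem~\ref{thm+TP+ERA} as the $r\to\infty$ case of Theorem~\ref{thm+TP+ERA+oder+r}, with part~(v) deduced from Theorem~\ref{thm+conv}. Your added justification for~(v) --- specializing~(ii) to nonnegative parameters so that $(R_n(q))_{n\ge0}$ is Stieltjes-moment for each fixed $q\ge0$, then invoking Theorem~\ref{thm+conv} --- is exactly the intended mechanism, spelled out more fully than in the paper.
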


For Theorem \ref{thm+TP+ERA}, if $\phi(t)=1$ and
$\varphi(t)=\prod_{i=0}^{m}\frac{1}{1-x_it}$, then by (\ref{PPPP}),
the production matrix of the exponential Riordan array
$(f'(t),f(t))$ is
\begin{eqnarray}\label{Production+decomposition}
P&=&\left(\begin{array}{cccccc}
\lambda&1&&\\
&\lambda&1&\\
&&\lambda&1&\\
&&&\ddots&\ddots\\
\end{array}\right)
\Lambda\Gamma(\varphi(t))\Lambda^{-1}.
\end{eqnarray}
By (\ref{Production+decomposition}) with $q=0$ and \cite[Propostion
12.20: b]{PSZ18}, $f_{n+1}$ equals the \textbf{multivariate Eulerian
polynomial of negative type} $\mathcal {Q}_n^{(m)-}(\textbf{x})$ in
\cite[Section 12.3.1]{PSZ18}, which was defined in terms of
increasing multi-$m$-ary trees on $n+1$ vertices in each $i$-edge
gets a weights $x_{i}$ and can be expressed in terms of differential
operators \cite[Section 12.3.1]{PSZ18}. Meanwhile, for the
exponential Riordan array $(f'(t),f(t))$, its entry
$\textbf{R}_{n,k}$ is the generating polynomial for unordered
forests of increasing multi-$m$-ary trees on $n+1$ total vertices
with $k+1$ components in which each $i$-edge gets a weight $x_{i}$.
When $m\rightarrow \infty$, the multivariate Eulerian polynomial of
negative type gives the \textbf{Eulerian symmetric function of
negative type} $\mathcal {Q}_{n}^{(\infty)-}(\textbf{x})$
\cite[Section 12.3.1]{PSZ18}. Note $T_n(0)=f_{n+1}$ and
$\textbf{R}_{n}(q)=T_n(\lambda)$. Thus, both $\textbf{R}_{n}(q)$ and
$T_n(\lambda)$ are the generalizations of $\mathcal
{Q}_{n}^{(m)-}(\textbf{x})$. In addition, when $m\rightarrow
\infty$, both $\textbf{R}_{n}(q)$ and $T_n(\lambda)$ are also
extended to the generalizations of $\mathcal
{Q}_{n}^{(\infty)-}(\textbf{x})$.

In \cite{PSZ18}, the $m$-branched Stieltjes-type continued fraction
(\ref{eq+f0+mSfrac}) plays a heart role in coefficientwise
Hankel-total positivity. In the following, we also present the
$m$-branched Stieltjes-type continued fraction for the ordinary
generating function of the row-generating polynomials for
exponential Riordan arrays.

\begin{thm}\label{thm+TP+ERA+branched fraction}
Assume that
$1/\overline{f}'(t)=\phi(t)\varphi(t)=\prod_{i=0}^m(1+x_it)$. Let
$\textbf{R}_n(q)$ and $T_{n}(\lambda)$ be defined in Theorem
\ref{thm+TP+ERA}.
\begin{itemize}
\item [\rm (i)]
If $\phi(t)=1$, then we have the $(m+1)$-branched Stieltjes-type
continued fraction
\begin{eqnarray*}
   1+\sum_{n\geq0}(q+\lambda)\textbf{R}_{n}(q)t^n
   & = &
   \cfrac{1}
         {1 \,-\, \alpha_{m+1} t
            \prod\limits_{i_1=1}^{m+1}
                 \cfrac{1}
            {1 \,-\, \alpha_{m+1+i_1} t
               \prod\limits_{i_2=1}^{m+1}
               \cfrac{1}{1 - \cdots}
           }
         }
%
%   \cfrac{1}{1 - \cfrac{\alpha_{k+m} t}
%            {\prod\limits_{i_1=1}^{m}
%               \Biggl( \vspace*{-1cm}\displaystyle
%                      1 - \cfrac{{\alpha_{k+m+i_1} t}}
%                               {\displaystyle \prod\limits_{i_2=1}^{m}
%               \biggl( 1 - \frac{\alpha_{k+m+i_1+i_2} t}
%                               {1 - \cdots}
%               \biggr)
%                               }
%               \Biggr)
%            }}
\end{eqnarray*}
with coefficients
$$(\alpha_{i})_{i\geq m+1}=(\lambda+q,\underbrace{x_0,\ldots,x_m}_{m+1},\lambda+q,\underbrace{2x_0,\ldots,2x_m}_{m+1},\lambda+q,\underbrace{3x_0,\ldots,3x_m}_{m+1},\ldots);$$
In particular, we have
\begin{eqnarray*}
   1+\sum_{n\geq0}\lambda T_{n}(\lambda)t^n
  =
   \cfrac{1}
         {1 \,-\, \alpha_{m+1} t
            \prod\limits_{i_1=1}^{m+1}
                 \cfrac{1}
            {1 \,-\, \alpha_{m+1+i_1} t
               \prod\limits_{i_2=1}^{m+1}
               \cfrac{1}
            {1 \,-\,  \cdots}
            }
         }
%
%   \cfrac{1}{1 - \cfrac{\alpha_{k+m} t}
%            {\prod\limits_{i_1=1}^{m}
%               \Biggl( \vspace*{-1cm}\displaystyle
%                      1 - \cfrac{{\alpha_{k+m+i_1} t}}
%                               {\displaystyle \prod\limits_{i_2=1}^{m}
%               \biggl( 1 - \frac{\alpha_{k+m+i_1+i_2} t}
%                               {1 - \cdots}
%               \biggr)
%                               }
%               \Biggr)
%            }}
\end{eqnarray*}
with coefficients $(\alpha_{i})_{i\geq
m+1}=(\lambda,\underbrace{x_0,\ldots,x_m}_{m+1},\lambda,\underbrace{2x_0,\ldots,2x_m}_{m+1},\lambda,\underbrace{3x_0,\ldots,3x_m}_{m+1},\ldots).$
\item [\rm (ii)]
If $\varphi(t)=1$, then the $(m+1)$-branched Stieltjes-type
continued fraction
\begin{eqnarray*}
  \sum_{n\geq0}\textbf{R}_{n}(q)t^n
   & = &
   \cfrac{1}
         {1 \,-\, \alpha_{m+1} t
            \prod\limits_{i_1=1}^{m+1}
                 \cfrac{1}
            {1 \,-\, \alpha_{m+1+i_1} t
               \prod\limits_{i_2=1}^{m+1}
               \cfrac{1}{1 - \cdots}
           }
         }
%
%   \cfrac{1}{1 - \cfrac{\alpha_{k+m} t}
%            {\prod\limits_{i_1=1}^{m}
%               \Biggl( \vspace*{-1cm}\displaystyle
%                      1 - \cfrac{{\alpha_{k+m+i_1} t}}
%                               {\displaystyle \prod\limits_{i_2=1}^{m}
%               \biggl( 1 - \frac{\alpha_{k+m+i_1+i_2} t}
%                               {1 - \cdots}
%               \biggr)
%                               }
%               \Biggr)
%            }}
\end{eqnarray*}
with coefficients
$$(\alpha_{i})_{i\geq m+1}=(\lambda+q,\underbrace{x_0,\ldots,x_m}_{m+1},\lambda+q,\underbrace{2x_0,\ldots,2x_m}_{m+1},\lambda+q,\underbrace{3x_0,\ldots,3x_m}_{m+1},\ldots);$$
In particular, we have
\begin{eqnarray*}
  \sum_{n\geq0} T_{n}(\lambda)t^n
  =
   \cfrac{1}
         {1 \,-\, \alpha_{m+1} t
            \prod\limits_{i_1=1}^{m+1}
                 \cfrac{1}
            {1 \,-\, \alpha_{m+1+i_1} t
               \prod\limits_{i_2=1}^{m+1}
               \cfrac{1}
            {1 \,-\, \alpha_{m+1+i_1+i_2} t
               \prod\limits_{i_3=1}^{m+1}
               \cfrac{1}{1 - \cdots}
            }
           }
         }
%
%   \cfrac{1}{1 - \cfrac{\alpha_{k+m} t}
%            {\prod\limits_{i_1=1}^{m}
%               \Biggl( \vspace*{-1cm}\displaystyle
%                      1 - \cfrac{{\alpha_{k+m+i_1} t}}
%                               {\displaystyle \prod\limits_{i_2=1}^{m}
%               \biggl( 1 - \frac{\alpha_{k+m+i_1+i_2} t}
%                               {1 - \cdots}
%               \biggr)
%                               }
%               \Biggr)
%            }}
\end{eqnarray*}
with coefficients $(\alpha_{i})_{i\geq
m+1}=(\lambda,\underbrace{x_0,\ldots,x_m}_{m+1},\lambda,\underbrace{2x_0,\ldots,2x_m}_{m+1},\lambda,\underbrace{3x_0,\ldots,3x_m}_{m+1},\ldots).$
\item [\rm (iii)]
We have the $m$-branched Stieltjes-type continued fraction
\begin{eqnarray*}
   1+\sum_{n\geq0}x_0f_{n+1}t^{n+1}
  =
   \cfrac{1}
         {1 \,-\, \alpha_{m} t
            \prod\limits_{i_1=1}^{m}
                 \cfrac{1}
            {1 \,-\, \alpha_{m+i_1} t
               \prod\limits_{i_2=1}^{m}
               \cfrac{1}
            {1 \,-\, \alpha_{m+i_1+i_2} t
               \prod\limits_{i_3=1}^{m}
               \cfrac{1}{1 - \cdots}
            }
           }
         }
%
%   \cfrac{1}{1 - \cfrac{\alpha_{k+m} t}
%            {\prod\limits_{i_1=1}^{m}
%               \Biggl( \vspace*{-1cm}\displaystyle
%                      1 - \cfrac{{\alpha_{k+m+i_1} t}}
%                               {\displaystyle \prod\limits_{i_2=1}^{m}
%               \biggl( 1 - \frac{\alpha_{k+m+i_1+i_2} t}
%                               {1 - \cdots}
%               \biggr)
%                               }
%               \Biggr)
%            }}
\end{eqnarray*}
with coefficients $(\alpha_{i})_{i\geq
m}=(\underbrace{x_0,\ldots,x_m}_{m+1},\underbrace{2x_0,\ldots,2x_m}_{m+1},\underbrace{3x_0,\ldots,3x_m}_{m+1},\ldots).$
\end{itemize}
\end{thm}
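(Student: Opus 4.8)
The plan is to reduce each of (i)–(iii) to a production-matrix computation and then read off the branched continued fraction from \eqref{eq+f0+mSfrac} together with the explicit factorization \eqref{production+matrix+Stieltjes-Rogers} of the $m$-Stieltjes--Rogers production matrix (\cite[Proposition~8.2(b)]{PSZ18}). By \eqref{PPPP}, the production matrix of $(\varphi(f(t))\exp(\lambda f(t)),f(t))$ is $\bm{\Gamma}^{(\Lambda,\phi)}\,U_\lambda\,\bm{\Gamma}^{(\Lambda,\varphi)}$, where $U_\lambda:=\textbf{U}(0,\lambda,0,1)$ is bidiagonal with $\lambda$ on the diagonal and $1$ on the superdiagonal. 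As in the proof of Theorem~\ref{thm+TP+Row+GF+Production}, the binomial row-generating matrix $\textbf{A}(q)$ equals $\scro(\mathcal P)$ with $\mathcal P=\textbf{B}_q^{-1}P\textbf{B}_q$, so $R_n(q)=(\mathcal P^n)_{00}$; and by Lemma~\ref{lem+ERA+simil} and Corollary~\ref{coro+bi+upper}, $\mathcal P=\bm{\Gamma}^{(\Lambda,\phi)}\,U_{\lambda+q}\,\bm{\Gamma}^{(\Lambda,\varphi)}$. Finally, since $\Lambda\Gamma(1+x_it)\Lambda^{-1}=\textbf{L}(0,1,x_i)$ and the Toeplitz operator is multiplicative, the hypothesis $\phi(t)\varphi(t)=\prod_{i=0}^m(1+x_it)$ lets me factor $\bm{\Gamma}^{(\Lambda,\phi)}$ and $\bm{\Gamma}^{(\Lambda,\varphi)}$ into products of matrices $\textbf{L}(0,1,x_i)$ over the corresponding roots.

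For part (ii), where $\varphi=1$, this gives $\mathcal P=\bigl(\prod_{i=0}^{m}\textbf{L}(0,1,x_i)\bigr)U_{\lambda+q}$, which after the relabeling $\widehat x_j=x_{j-1}$ $(1\le j\le m+1)$, $\widehat x_0=0$, $\widehat y=\lambda+q$ matches \eqref{production+matrix+Stieltjes-Rogers} with $m$ replaced by $m+1$; so $\mathcal P$ is the production matrix of the $(m{+}1)$-Stieltjes--Rogers triangle with weights $(\alpha_i)_{i\ge m+1}=(\lambda{+}q,x_0,\dots,x_m,\lambda{+}q,2x_0,\dots,2x_m,\lambda{+}q,\dots)$, whence $R_n(q)=S^{(m+1)}_n(\balpha)$ and, by \eqref{eq+f0+mSfrac}, $\sum_{n\ge0}R_n(q)t^n$ equals the claimed $(m{+}1)$-branched continued fraction; the ``in particular'' assertion is the case $q=0$, where $R_n(0)=R_{n,0}=T_n(\lambda)$. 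For part (i), where $\phi=1$, the factorization produces the ``reversed'' matrix $\mathcal P=U_{\lambda+q}\bigl(\prod_{i=0}^m\textbf{L}(0,1,x_i)\bigr)$; rather than analyze it directly I would use that $\phi=1$ forces $\varphi(f(t))=1/\overline{f}'(f(t))=f'(t)$, so $\varphi(f(t))e^{(\lambda+q)f(t)}=\frac{1}{\lambda+q}\frac{d}{dt}e^{(\lambda+q)f(t)}$ and, by \eqref{generating funtion+GR} and integration, $e^{(\lambda+q)f(t)}=1+(\lambda+q)\sum_{n\ge0}R_n(q)\,t^{n+1}/(n+1)!$. Since $e^{(\lambda+q)f(t)}$ is the exponential generating function of the row-generating polynomials $\widetilde R_n(\lambda+q)$ of $(1,f(t))$, and the part-(ii) computation applied to $(1,f(t))$ (the case $\varphi=1$, $\phi=\prod_{i=0}^m(1+x_it)$, $\lambda=0$) shows that the binomial row-generating matrix of $(1,f(t))$ at parameter $\mu$ has production matrix $\bigl(\prod_{i=0}^m\textbf{L}(0,1,x_i)\bigr)U_\mu$, it follows that $\sum_{n\ge0}\widetilde R_n(\mu)t^n$ is the $(m{+}1)$-branched fraction with weights $(\mu,x_0,\dots,x_m,\mu,\dots)$. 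Taking $\mu=\lambda+q$ and comparing coefficients ($\widetilde R_0=1$, $\widetilde R_{n+1}(\lambda+q)=(\lambda+q)R_n(q)$) gives $1+(\lambda+q)\sum_{n\ge0}R_n(q)t^{n+1}=$ that fraction, which is part (i); the ``in particular'' statement is the same argument with $T_n$ and $\mu=\lambda$.

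For part (iii), note that $f_{n+1}=T_n(0)$, since $f'(t)=\sum_{n\ge0}f_{n+1}t^n/n!$ is $\varphi(f(t))e^{\lambda f(t)}$ at $\lambda=0$. Writing $\Phi(\lambda,t)$ for the $(m{+}1)$-branched fraction with weights $(\lambda,x_0,\dots,x_m,\lambda,\dots)$, the ``in particular'' identity of part (i) reads $\Phi(\lambda,t)=1+\lambda\sum_{n\ge0}T_n(\lambda)t^{n+1}$; differentiating in $\lambda$ at $\lambda=0$ and using $\partial_\lambda[\lambda\,g(\lambda)]|_{0}=g(0)$ gives $\partial_\lambda\Phi|_{\lambda=0}=\sum_{n\ge0}f_{n+1}t^{n+1}$. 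Hence part (iii) becomes the continued-fraction identity $1+x_0\,\partial_\lambda\Phi|_{\lambda=0}=\Psi(t)$, where $\Psi$ is the $m$-branched fraction with weights $(x_0,x_1,\dots,x_m,2x_0,\dots)$. I would prove this by a first-return decomposition: $\partial_\lambda\Phi|_{\lambda=0}$ enumerates the $(m{+}1)$-Dyck (equivalently $(m{+}1)$-\L{}ukasiewicz) paths carrying exactly one $\lambda$-weighted fall, and any such path must end with that fall — a drop to level $0$ from height $m{+}1$ — so excising it yields a weight-preserving bijection onto $m$-Dyck paths weighted by $(x_0,x_1,\dots)$. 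Alternatively, as noted after Theorem~\ref{thm+TP+ERA}, $f_{n+1}=\mathcal{Q}_n^{(m)-}(\textbf{x})$, whose continued fraction is established in \cite[Section~12.3.1]{PSZ18}.

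The routine part of all this is the bidiagonal bookkeeping, already prepared in Section~2 (Lemmas~\ref{lem+low-bi+TP}, \ref{lem+ERA+simil} and Corollary~\ref{coro+bi+upper}). I expect the main obstacle to be twofold: in part (i), extracting the identity from the ``reversed'' production matrix $U_{\lambda+q}\prod_i\textbf{L}(0,1,x_i)$ via the differential relation $\varphi(f)=f'$; and in part (iii), the first-return argument identifying $\partial_\lambda\Phi|_{\lambda=0}$ with the $m$-branched fraction $\Psi$ — this ``pruning'' of one branching layer is where the genuine combinatorial content lies, while everything else is transcription.
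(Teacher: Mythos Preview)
Your approach to parts (i) and (ii) is correct. For (ii) you do exactly what the paper does: identify $\mathcal P=\bigl(\prod_{i=0}^m\textbf{L}(0,1,x_i)\bigr)\,U_{\lambda+q}$ as the $(m{+}1)$-Stieltjes--Rogers production matrix via (\ref{production+matrix+Stieltjes-Rogers}) and \cite[Proposition~8.2(b)]{PSZ18}. For (i) you take a genuinely different route. The paper applies the odd contraction formula \cite[Proposition~7.6]{PSZ18} directly to the reversed product $U_{\lambda+q}\prod_{i=0}^m\textbf{L}(0,1,x_i)$. You instead exploit the differential identity $\varphi(f)=f'$ (valid when $\phi=1$) to rewrite $\sum_n R_n(q)\,t^n/n!=\tfrac{1}{\lambda+q}\tfrac{d}{dt}e^{(\lambda+q)f}$ and then reduce to the case-(ii) computation for the array $(1,f)$. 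Your route is valid and has the pleasant feature of needing only the even contraction; it also makes transparent why the left side reads $1+(\lambda+q)\sum_{n\ge0}R_n(q)\,t^{n+1}$.

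Your argument for (iii), however, has a real gap. The first-return bijection does not work as stated: excising the final $(m{+}1)$-fall from an $(m{+}1)$-Dyck path of order~$n$ leaves a partial $(m{+}1)$-Dyck path from $0$ to height~$m{+}1$ with $n{-}1$ remaining falls of size $m{+}1$, not an $m$-Dyck path of order~$n$ --- the fall size is wrong and the step counts do not match. Moreover, the paths contributing to $\partial_\lambda\Phi|_{\lambda=0}$ must not only have a unique return to~$0$; they must also avoid falls from the higher $\lambda$-heights $2m{+}3,\,3m{+}5,\ldots$, a constraint your ``excision'' does not encode. (Your fallback reference is also to the wrong object: the passage after Theorem~\ref{thm+TP+ERA} concerns $\mathcal Q_n^{(m)-}$ under the hypothesis $\overline f'(t)=\prod_i(1-x_it)$, whereas here $1/\overline f'(t)=\prod_i(1+x_it)$.) The paper's proof of (iii) is purely algebraic and much simpler: with $\phi=1$ and $\lambda=0$ the production matrix of $(f',f)$ is $U_0\prod_{i=0}^m\textbf{L}(0,1,x_i)$, and the key step you are missing is that $U_0\cdot\textbf{L}(0,1,x_0)$ collapses to a single upper-bidiagonal matrix with diagonal entries $(k{+}1)x_0$ and superdiagonal~$1$. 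This leaves $P=U'\prod_{i=1}^m\textbf{L}(0,1,x_i)$ with only $m$ lower-bidiagonal factors, which is then recognized via \cite[Propositions~7.6 and 8.2(b)]{PSZ18} as the odd-contraction production matrix for the $m$-branched Stieltjes fraction with weights $(x_0,\dots,x_m,2x_0,\dots)$. Ironically, the odd contraction you successfully bypassed in~(i) is precisely the tool that makes~(iii) immediate.
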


\begin{proof}
(i) If $\phi(t)=1$ and $\varphi(t)=\prod_{i=0}^m(1+x_it)$, then by
(\ref{PPPP}), we have
\begin{eqnarray}
\textbf{P} &=&\left(\begin{array}{cccccc}
\lambda&1&&\\
&\lambda&1&\\
&&\lambda&1&\\
&&&\lambda&1&\\
&&&&\ddots&\ddots\\
\end{array}\right)
\underbrace{\Lambda\Gamma(\varphi(t))\Lambda^{-1}}\nonumber\\
&=&\left(\begin{array}{cccccc}
\lambda&1&&\\
&\lambda&1&\\
&&\lambda&1&\\
&&&\lambda&1&\\
&&&&\ddots&\ddots\\
\end{array}\right)\Lambda\left(\prod_{i=0}^m\Gamma(1+x_it)\right)\Lambda^{-1}\nonumber\\
&=&\left(\begin{array}{cccccc}
\lambda&1&&\\
&\lambda&1&\\
&&\lambda&1&\\
&&&\lambda&1&\\
&&&&\ddots&\ddots\\
\end{array}\right)\left(\prod_{i=0}^m\Lambda\Gamma(1+x_it)\Lambda^{-1}\right)\nonumber\\
&=&\left(\begin{array}{cccccc}
\lambda&1&&\\
&\lambda&1&\\
&&\lambda&1&\\
&&&\lambda&1&\\
&&&&\ddots&\ddots\\
\end{array}\right)\prod_{i=0}^m\left[
  \begin{array}{ccccccc}
    1&& \\
    x_i&1&\\
    &2x_i&1&&\\
    &&3x_i&1&\\
 &&&\ddots&\ddots \\
  \end{array}
\right].
\end{eqnarray}
In consequence, the production matrix of $(\varphi(f(t))\exp(\lambda
f(t)),f(t))\textbf{B}_q$ equals
\begin{eqnarray*}
\textbf{B}_q^{-1}\textbf{P}\textbf{B}_q
&=&\left(\begin{array}{cccccc}
q+\lambda&1&&\\
&q+\lambda&1&\\
&&q+\lambda&1&\\
&&&q+\lambda&1&\\
&&&&\ddots&\ddots\\
\end{array}\right)\prod_{i=0}^m\left[
  \begin{array}{ccccccc}
    1&& \\
    x_i&1&\\
    &2x_i&1&&\\
    &&3x_i&1&\\
 &&&\ddots&\ddots \\
  \end{array}
\right],
\end{eqnarray*}
which in terms of Proposition 7.6 (Odd contraction formula for
m-Stieltjes-Rogers polynomials) and Proposition 8.2 (b) in
\cite{PSZ18} is exactly the production matrix for the
$(m+1)$-branched Stieltjes-type continued fraction
\begin{eqnarray}
  1+ \sum_{n\geq0}(q+\lambda)\textbf{R}_{n}(q)t^n
   =
   \cfrac{1}
         {1 \,-\, \alpha_{m} t
            \prod\limits_{i_1=1}^{m}
                 \cfrac{1}
            {1 \,-\, \alpha_{m+i_1} t
               \prod\limits_{i_2=1}^{m}
               \cfrac{1}
            {1 \,-\, \alpha_{m+i_1+i_2} t
               \prod\limits_{i_3=1}^{m}
               \cfrac{1}{1 - \cdots}
            }
           }
         }
%
%   \cfrac{1}{1 - \cfrac{\alpha_{k+m} t}
%            {\prod\limits_{i_1=1}^{m}
%               \Biggl( \vspace*{-1cm}\displaystyle
%                      1 - \cfrac{{\alpha_{k+m+i_1} t}}
%                               {\displaystyle \prod\limits_{i_2=1}^{m}
%               \biggl( 1 - \frac{\alpha_{k+m+i_1+i_2} t}
%                               {1 - \cdots}
%               \biggr)
%                               }
%               \Biggr)
%            }}
\end{eqnarray}
with coefficients
$$(\alpha_{i})_{i\geq m+1}=(\lambda+q,\underbrace{x_0,\ldots,x_m}_{m+1},\lambda+q,\underbrace{2x_0,\ldots,2x_m}_{m+1},\lambda+q,\underbrace{3x_0,\ldots,3x_m}_{m+1},\ldots).$$

(ii) Similarly, if $\varphi(t)=1$ and
$\phi(t)=\prod_{i=0}^m(1+x_it)$, then
\begin{eqnarray*}
\textbf{B}_q^{-1}\textbf{P}\textbf{B}_q &=&\prod_{0=1}^m\left[
  \begin{array}{ccccccc}
    1&& \\
    x_i&1&\\
    &2x_i&1&&\\
    &&3x_i&1&\\
 &&&\ddots&\ddots \\
  \end{array}
\right]\left(\begin{array}{cccccc}
q+\lambda&1&&\\
&q+\lambda&1&\\
&&q+\lambda&1&\\
&&&q+\lambda&1&\\
&&&&\ddots&\ddots\\
\end{array}\right),
\end{eqnarray*}
which in terms of \cite[Proposition 7.2 and Proposition 8.2
(b)]{PSZ18} implies the desired $(m+1)$-branched Stieltjes-type
continued fraction
\begin{eqnarray}
\sum_{n\geq0}\textbf{R}_{n}(q)t^n
   =
   \cfrac{1}
         {1 \,-\, \alpha_{m} t
            \prod\limits_{i_1=1}^{m}
                 \cfrac{1}
            {1 \,-\, \alpha_{m+i_1} t
               \prod\limits_{i_2=1}^{m}
               \cfrac{1}
            {1 \,-\, \alpha_{m+i_1+i_2} t
               \prod\limits_{i_3=1}^{m}
               \cfrac{1}{1 - \cdots}
            }
           }
         }
%
%   \cfrac{1}{1 - \cfrac{\alpha_{k+m} t}
%            {\prod\limits_{i_1=1}^{m}
%               \Biggl( \vspace*{-1cm}\displaystyle
%                      1 - \cfrac{{\alpha_{k+m+i_1} t}}
%                               {\displaystyle \prod\limits_{i_2=1}^{m}
%               \biggl( 1 - \frac{\alpha_{k+m+i_1+i_2} t}
%                               {1 - \cdots}
%               \biggr)
%                               }
%               \Biggr)
%            }}
\end{eqnarray}
with coefficients
$$(\alpha_{i})_{i\geq m+1}=(\lambda+q,\underbrace{x_0,\ldots,x_m}_{m+1},\lambda+q,\underbrace{2x_0,\ldots,2x_m}_{m+1},\lambda+q,\underbrace{3x_0,\ldots,3x_m}_{m+1},\ldots).$$

(iii) For $\phi(t)=1$ and $\lambda=0$, we have $T_n(0)=f_{n+1}$ and
the production matrix
\begin{eqnarray}
\textbf{P} &=&\left(\begin{array}{cccccc}
0&1&&\\
&0&1&\\
&&0&1&\\
&&&0&1&\\
&&&&\ddots&\ddots\\
\end{array}\right)\prod_{i=0}^m\left[
  \begin{array}{ccccccc}
    1&& \\
    x_i&1&\\
    &2x_i&1&&\\
    &&3x_i&1&\\
 &&&\ddots&\ddots \\
  \end{array}
\right]\nonumber\\
&=&\left(\begin{array}{cccccc}
x_0&1&&\\
&2x_0&1&\\
&&3x_0&1&\\
&&&4x_0&1&\\
&&&&\ddots&\ddots\\
\end{array}\right)\prod_{i=1}^m\left[
  \begin{array}{ccccccc}
    1&& \\
    x_i&1&\\
    &2x_i&1&&\\
    &&3x_i&1&\\
 &&&\ddots&\ddots \\
  \end{array}
\right],
\end{eqnarray}
which in terms of Proposition 7.6 (Odd contraction formula for
m-Stieltjes-Rogers polynomials) and Proposition 8.2 (b) in
\cite{PSZ18} is exactly the production matrix for the $m$-branched
Stieltjes-type continued fraction
\begin{eqnarray}
   1+\sum_{n\geq0}x_0f_{n+1}t^{n+1}
  =
   \cfrac{1}
         {1 \,-\, \alpha_{m} t
            \prod\limits_{i_1=1}^{m}
                 \cfrac{1}
            {1 \,-\, \alpha_{m+i_1} t
               \prod\limits_{i_2=1}^{m}
               \cfrac{1}
            {1 \,-\, \alpha_{m+i_1+i_2} t
               \prod\limits_{i_3=1}^{m}
               \cfrac{1}{1 - \cdots}
            }
           }
         }
%
%   \cfrac{1}{1 - \cfrac{\alpha_{k+m} t}
%            {\prod\limits_{i_1=1}^{m}
%               \Biggl( \vspace*{-1cm}\displaystyle
%                      1 - \cfrac{{\alpha_{k+m+i_1} t}}
%                               {\displaystyle \prod\limits_{i_2=1}^{m}
%               \biggl( 1 - \frac{\alpha_{k+m+i_1+i_2} t}
%                               {1 - \cdots}
%               \biggr)
%                               }
%               \Biggr)
%            }}
\end{eqnarray}
with coefficients $(\alpha_{i})_{i\geq
m}=(\underbrace{x_0,\ldots,x_m}_{m+1},\underbrace{2x_0,\ldots,2x_m}_{m+1},\underbrace{3x_0,\ldots,3x_m}_{m+1},\ldots).$
\end{proof}

For Theorem \ref{thm+TP+ERA+branched fraction}, it follows from
\cite[Section 12]{PSZ18} that the $m$-Stieltjes--Rogers polynomial
$f_{n+1}$ equals the \textbf{multivariate Eulerian polynomial}
$\mathcal {Q}_{n}^{(m)}(\textbf{x})$ in \cite[Section 12.2]{PSZ18},
which was defined in terms of increasing $(m+1)$-ary trees on $n+1$
total vertices in which each $i$-edge gets a weight $x_{i}$ and has
differential expressions \cite[Section 12.2]{PSZ18}. Meanwhile, for
the exponential Riordan array $(f'(t),f(t))$, its entry
$\textbf{R}_{n,k}$ is the generating polynomial for unordered
forests of increasing $(m+1)$-ary trees on $n+1$ total vertices with
$k+1$ components in which each $i$-edge gets a weight $x_{i}$. In
particular, $\textbf{R}_{n,1}=f_{n+1}$. When $m\rightarrow \infty$,
the multivariate Eulerian polynomial $\mathcal
{Q}_{n}^{(m)}(\textbf{x})$ gives the \textbf{Eulerian symmetric
function} $\mathcal {Q}_{n}^{(\infty)}(\textbf{x})$ \cite[Section
12.2]{PSZ18}. Note $T_n(0)=f_{n+1}$ and
$\textbf{R}_{n}(q)=T_n(\lambda)$. Thus, both $R_{n}(q)$ and
$T_n(\lambda)$ are the generalizations of the multivariate Eulerian
polynomial $\mathcal {Q}_{n}^{(m)}(\textbf{x})$. In addition, when
$m\rightarrow \infty$, both $\textbf{R}_{n}(q)$ and $T_n(\lambda)$
also give the generalizations of the Eulerian symmetric function
$\mathcal {Q}_{n}^{(\infty)}(\textbf{x})$. Copeland also call the
multivariate Eulerian polynomial as \textbf{the refined Eulerian
polynomial} \cite[A145271]{Slo}.

Finally, based on the exponential Riordan arrays, we also consider
an associated array as follows.
\begin{thm}\label{thm+TP+ERA+fraction}
Let  $f(t)\in \mathbb{R}[[t]]$.  Define $[F_{n,k}]_{n,k}:=(1,f(t))$
and
\begin{eqnarray}
\left(\frac{1}{1-qf(t)}\right)^y:&=&\sum_{n\geq0}F^{\diamond}_{n}(q,y)\frac{t^n}{n!}=\sum_{n\geq0}\sum_{k=0}^nF^{\diamond}_{n,k}(q)y^k\frac{t^n}{n!}.
\end{eqnarray}
If $1/\overline{f}'(t)$ is a P\'olya frequency ogf in
$\mathbb{R}[[t]]$, then
\begin{itemize}
  \item [\rm (i)]
 $[F^{\diamond}_{n,k}(q)]_{n,k}$ is
coefficientwise totally positive in $q$;
 \item [\rm (ii)]
$(F^{\diamond}_n(q,y))_{n\geq0}$ is a Stieltjes moment sequence (of
real numbers) for $q\geq0$ and $y\geq0$;
  \item [\rm (iii)]
$t_n=\sum_{k=0}^{n}F^{\diamond}_{n,k}(q)r_ks_{n-k}$ preserves the
Stieltjes moment property in $\mathbb{R}$ for $q\geq0$;
  \item [\rm (iv)]
if $(F_{n,k})_{k=0}^n$ is a P\'olya frequency sequence, then so is
$(F^{\diamond}_{n,k}(q))_{k=0}^n$ for $q\geq0$.
 \end{itemize}
\end{thm}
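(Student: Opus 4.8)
The plan is to first pin down the array $[F^{\diamond}_{n,k}(q)]_{n,k}$ explicitly. Since $(1-qf(t))^{-y}=\exp\bigl(y\cdot(-\log(1-qf(t)))\bigr)$, comparison with \eqref{generating funtion+GR} identifies $[F^{\diamond}_{n,k}(q)]_{n,k}$ with the exponential Riordan array $\bigl(1,\,-\log(1-qf(t))\bigr)$, which by the group law \eqref{Riordan+product} factors as $(1,f(t))*(1,-\log(1-qt))$. Substituting $t\mapsto qt$ in the classical generating function $\sum_{n}\stirlingcycle{n}{k}t^{n}/n!=(-\log(1-t))^{k}/k!$ gives $(1,-\log(1-qt))_{n,k}=q^{n}\stirlingcycle{n}{k}$, so at the matrix level $F^{\diamond}_{n,k}(q)=\sum_{j}(1,f)_{n,j}\,q^{j}\stirlingcycle{j}{k}$. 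Part~(i) then follows: by Theorem~\ref{thm+TP+ERA+oder+r}(i) (with $\varphi=1$, $\lambda=0$, $\phi=1/\overline f'$) the array $(1,f)$ is coefficientwise totally positive in $\textbf{x}$; the Stirling cycle triangle $[\stirlingcycle{n}{k}]$ is totally positive, so $[\,q^{n}\stirlingcycle{n}{k}\,]=\diag(q^{n})\,[\stirlingcycle{n}{k}]$ is coefficientwise totally positive in $q$; and the Cauchy--Binet formula makes their product coefficientwise totally positive in $(q,\textbf{x})$.

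For part~(ii), which is the substantive point, I would \emph{not} go through the production matrix of $\bigl(1,-\log(1-qf(t))\bigr)$: writing $H=-\log(1-qf)$ and $\Psi=1/\overline f'$ one finds $1/\overline H'(t)=qe^{t}\,\Psi\!\bigl((1-e^{-t})/q\bigr)$, which in general is \emph{not} a P\'olya frequency function, so Theorems~\ref{thm+TP+Row+GF+Production} and~\ref{thm+TP+Hank+zeros} do not apply. Instead I would use Euler's integral $(1-s)^{-y}=\tfrac{1}{\Gamma(y)}\int_{0}^{\infty}x^{y-1}e^{-x}e^{xs}\,dx$ for $y>0$: putting $s=qf(t)$ and taking $n!\,[t^{n}]$ (only a finite combination of Gamma integrals occurs) gives
\[
  F^{\diamond}_{n}(q,y)\;=\;\frac{1}{\Gamma(y)}\int_{0}^{\infty}x^{y-1}e^{-x}\,F_{n}(qx)\,dx,
  \qquad F_{n}(\mu):=\sum_{k}(1,f)_{n,k}\,\mu^{k}.
\]
Here $F_{n}$ is the row-generating polynomial of $(1,f)$, and by Theorem~\ref{thm+TP+ERA}(ii) (again $\varphi=1$, $\lambda=0$) the sequence $(F_{n}(\mu))_{n\ge0}$ is coefficientwise Hankel-totally positive in $(\mu,\textbf{x})$; hence for each numerical $\mu=qx\ge0$, $\textbf{x}\ge0$ it is a Stieltjes moment sequence, and since $F_{0}\equiv1$ it is represented by a probability measure $\nu_{q,x}$. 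Mixing the $\nu_{q,x}$ against the weight $x^{y-1}e^{-x}/\Gamma(y)$ yields a probability measure $\rho_{q,y}$ whose $n$-th moment is, by Tonelli's theorem, exactly $F^{\diamond}_{n}(q,y)$. Thus $(F^{\diamond}_{n}(q,y))_{n\ge0}$ is a Stieltjes moment sequence --- equivalently Hankel-totally positive --- for all $q,y,\textbf{x}\ge0$ (the case $y=0$ being trivial).

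Part~(iii) then drops out of Theorem~\ref{thm+conv}: for fixed $q,\textbf{x}\ge0$, the row-generating polynomial of $A=[F^{\diamond}_{n,k}(q)]_{n,k}$ is $A_{n}(q')=F^{\diamond}_{n}(q,q')$, which by~(ii) is a Stieltjes moment sequence for every $q'\ge0$, so the $A$-convolution preserves the Stieltjes moment property. For part~(iv), the same substitution gives, for fixed $n$ and $q>0$,
\[
  \sum_{k}F^{\diamond}_{n,k}(q)\,z^{k}\;=\;\sum_{j}(1,f)_{n,j}\,q^{j}\,(z)_{j}\;=\;\frac{1}{\Gamma(z)}\int_{0}^{\infty}x^{z-1}e^{-x}\,F_{n}(qx)\,dx,
\]
with $(z)_{j}=z(z+1)\cdots(z+j-1)$. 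If $(F_{n,k})_{k=0}^{n}$ is a P\'olya frequency sequence then $F_{n}$, hence $F_{n}(qx)$, has only real nonpositive zeros, and I would invoke the classical fact that the rising-factorial transform $\sum_{j}c_{j}z^{j}\mapsto\sum_{j}c_{j}(z)_{j}$ sends polynomials with only real nonpositive zeros to polynomials of the same type (equivalently, the Mellin--Gamma transform above is real-rooted); one may also peel off the linear factors of $F_{n}(qx)$ one at a time under the integral, producing the recursion $G_{k}(z)=zG_{k-1}(z+1)+s_{k}G_{k-1}(z)$ to which a routine interlacing argument applies. This yields that $(F^{\diamond}_{n,k}(q))_{k=0}^{n}$ is a P\'olya frequency sequence, with $q=0$ trivial. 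The only genuinely non-formal ingredients are the analytic detour in~(ii) --- Euler's integral plus the realization that the production-matrix route is unavailable --- and the real-rootedness-preservation fact used in~(iv); all the rest is a bookkeeping reduction, via the factorization $(1,f)*(1,-\log(1-qt))$, to Theorems~\ref{thm+TP+ERA+oder+r}, \ref{thm+TP+ERA} and~\ref{thm+conv}.
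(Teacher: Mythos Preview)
Your proposal is correct and, for parts (i), (iii), and (iv), essentially identical to the paper's proof: the same factorization $[F^{\diamond}_{n,k}(q)]=(1,f)\cdot\diag(q^{k})\cdot\bigl[\stirlingcycle{n}{k}\bigr]$ with Cauchy--Binet for (i); Theorem~\ref{thm+conv} for (iii); and Brenti's theorem that the rising-factorial transform $\sum_{k}c_{k}y^{k}\mapsto\sum_{k}c_{k}\langle y\rangle_{k}$ preserves real-rootedness for (iv) (the paper cites \cite[Theorem~2.4.3]{Bre89} rather than sketching an interlacing argument).

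For (ii) the paper takes a slightly different, more algebraic route. It writes $F^{\diamond}_{n}(q,y)=\sum_{k}F_{n,k}\,q^{k}\langle y\rangle_{k}$, observes via Euler's classical continued fraction that $(q^{k}\langle y\rangle_{k})_{k\ge0}$ is a Stieltjes moment sequence for $q,y>0$, and then invokes Theorem~\ref{thm+TP+ERA}(v) --- the $(1,f)$-convolution preserves the Stieltjes moment property --- with the pair of input sequences $(q^{k}\langle y\rangle_{k})_{k}$ and the constant sequence $(1)_{k}$. Your Euler-integral argument is the analytic counterpart of the same idea: indeed $q^{k}\langle y\rangle_{k}=\tfrac{1}{\Gamma(y)}\int_{0}^{\infty}x^{y-1}e^{-x}(qx)^{k}\,dx$ exhibits your Gamma weight as the representing measure for that very sequence. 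The trade-off is that your mixture-of-measures step needs the representing measures $\nu_{q,x}$ to depend measurably on $x$ before Tonelli can be applied, whereas the paper's route stays entirely at the level of moment sequences and avoids this technicality. Your observation that the production-matrix route via $1/\overline{H}'(t)$ is blocked here is correct and worth noting; the paper simply bypasses it without comment.
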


\begin{proof}
(i) Let $\langle y\rangle_k=y(y+1)\cdots(y+k-1)$. By \cite[Theorem
B, p.141]{Com74}, we have
\begin{eqnarray}
F^{\diamond}_{n}(q,y)&=&\sum_{k}F_{n,k}q^k\langle y\rangle_k\\
&=&\sum_{k}F_{n,k}q^k\sum_{i}\left[
  \begin{array}{ccccc}
    k \\
   i\\
  \end{array}
\right]y^i\\
&=&\sum_{i}\left(\sum_{k}F_{n,k}q^k\left[
  \begin{array}{ccccc}
    k \\
   i\\
  \end{array}
\right]\right)y^i,
\end{eqnarray}
where $\left[
  \begin{array}{ccccc}
    k \\
   i\\
  \end{array}
\right]$ is the signless Stirling number of the first kind. This
implies that
\begin{eqnarray}
F^{\diamond}_{n,i}(q)=\sum_{k}F_{n,k}q^k\left[
  \begin{array}{ccccc}
    k \\
   i\\
  \end{array}
\right].
\end{eqnarray}
So we have the decomposition
\begin{eqnarray}\label{decom+F+F}
[F^{\diamond}_{n,k}(q)]_{n,k}=[F_{n,k}q^k]_{n,k}\left[\left[
  \begin{array}{ccccc}
    n \\
   k\\
  \end{array}
\right]\right]_{n,k}=[F_{n,k}]_{n,k}\left[
  \begin{array}{ccccccc}
    1&&& \\
    &q&&\\
   & &q^2&&&\\
 &&&\ddots \\
  \end{array}
\right]\left[\left[
  \begin{array}{ccccc}
    n \\
   k\\
  \end{array}
\right]\right]_{n,k}.
\end{eqnarray} Note by Theorem
\ref{thm+TP+ERA} (i) that $[F_{n,k}]_{n,k}$ is totally positive
 in $\mathbb{R}$. In addition, it is known that
$\left[\left[
  \begin{array}{ccccc}
    n \\
   k\\
  \end{array}
\right]\right]_{n,k}$ is totally positive (see \cite{Bre95} for
instance), which also follows from Theorem \ref{thm+TP+ERA} (i)
because $\left[\left[
  \begin{array}{ccccc}
    n \\
   k\\
  \end{array}
\right]\right]_{n,k}=(1,-\log(1-t))$ in terms of
\begin{eqnarray}
\sum_{n,k}\left[
  \begin{array}{ccccc}
    n \\
   k\\
  \end{array}
\right]x^k\frac{t^n}{n!}=(1-t)^{-x}=e^{x\log\frac{1}{1-t}}.
\end{eqnarray}
Thus, applying the classical Cauchy-Binet formula to the
decomposition above (\ref{decom+F+F}), we have
$[F^{\diamond}_{n,k}(q)]_{n,k}$ is coefficientwise totally positive
in $q$.

(ii) and (iii) In terms of the proof in (i), we have
$F^{\diamond}_{n}(q,y)=\sum_{k}F_{n,k}q^k\langle y\rangle_k$. Note
that $(q^n\langle y\rangle_n)_{n\geq0}$ is a Stieltjes moment
sequence for $q>0$ and $y>0$ because
\begin{eqnarray}
\sum_{n\geq0}q^n\langle
y\rangle_nt^n=\frac{1}{1-\cfrac{qt}{1-\cfrac{qyt}{1-\cfrac{2qt}{1-\cfrac{(y+1)qt}{1-\cdots}}}}}
\end{eqnarray}
by \cite[section 26]{Eul1760}. Thus by Theorem \ref{thm+TP+ERA} (v),
we have $(F^{\diamond}_n(q,y))_{n\geq0}$ is a Stieltjes moment
sequence (of real numbers) for $q\geq0$ and $y\geq0$. This combines
Theorem \ref{thm+conv} to give (iii).

Finally, for (iv), it follows from the Brenti's result \cite[Theorem
2.4.3]{Bre89}: if a polynomial $\sum_{k=0}^n a_{n,k}y^k$ has only
real zeros, then so does $\sum_{k=0}^n a_{n,k}\langle y\rangle_k$.
This completes the proof.\end{proof}

\subsection{Rook polynomials and signless Laguerre polynomials}
Let $\mathfrak{S}_n(q)$ denote the rook polynomial of a square of
side $n$, which is given by
$$\mathfrak{S}_n(q)=\sum_{k=0}^n\binom{n}{k}^2k!q^k$$
(see \cite[Chapter 3. Problems 18]{Rio68} for instance). It
coincides with the matching polynomial of the complete bipartite
graph $K_{n,n}$. It has only real zeros in terms of the rook theory
or matching polynomials and is strongly $q$-log-convex \cite{ZS15}.
Sokal conjectured the following stronger property.

\begin{conj}\emph{\cite{Sok19}}\label{conj+sokal}
The sequence $(\mathfrak{S}_n(q))_{n\geq0}$ is coefficientwise
Hankel-totally positive in $q$.
\end{conj}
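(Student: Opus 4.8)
The plan is to realize the rook polynomial $\mathfrak{S}_n(q)$ as the reversed row-generating polynomial of a very explicit exponential Riordan array, and then quote Theorem~\ref{thm+TP+ERA}. The starting observation is the elementary identity $\binom{n}{k}^2k!=\binom{n}{k}\frac{n!}{(n-k)!}$, which (after the substitution $j=n-k$) gives
\[
\mathfrak{S}_n(q)\;=\;\sum_{k=0}^n\binom{n}{k}^2k!\,q^k\;=\;\sum_{j=0}^n\binom{n}{j}\frac{n!}{(n-j)!}\,q^{j}\;=\;q^n\ell_n(1/q),\qquad \ell_n(q):=\sum_{k=0}^n\binom{n}{k}\frac{n!}{k!}\,q^k .
\]
So $\mathfrak{S}_n=\ell_n^{*}$ is the reversed polynomial of the \emph{signless Laguerre polynomial} $\ell_n(q)=n!\,L_n^{(0)}(-q)$, and I would check next that $\ell_n(q)$ is exactly the row-generating polynomial of the exponential Riordan array $R=\bigl(\frac{1}{1-t},\frac{t}{1-t}\bigr)$: its $(n,k)$ entry is $\frac{n!}{k!}[t^n]\frac{1}{1-t}\bigl(\frac{t}{1-t}\bigr)^k=\binom{n}{k}\frac{n!}{k!}$, equivalently $\sum_{n\ge0}\ell_n(q)\,t^n/n!=\frac{1}{1-t}\exp\!\bigl(\frac{qt}{1-t}\bigr)$, the classical Laguerre generating function.

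Next I would put $f(t)=\frac{t}{1-t}$ and $\varphi(t)=1+t$, so that $\varphi(f(t))=1+\frac{t}{1-t}=\frac{1}{1-t}$ and hence $R=\bigl(\varphi(f(t)),f(t)\bigr)$ is the $\lambda=0$ instance of $\bigl(\varphi(f(t))\exp(\lambda f(t)),f(t)\bigr)$. The compositional inverse is $\overline{f}(t)=\frac{t}{1+t}$, so $\overline{f}'(t)=\frac{1}{(1+t)^2}$ and $1/\overline{f}'(t)=(1+t)^2=\varphi(t)\phi(t)$ with $\phi(t)=1+t$; since $1+t$ is a P\'olya frequency function (its Toeplitz matrix is bidiagonal with $1$'s on the diagonal and first subdiagonal, hence totally positive), the hypotheses of Theorem~\ref{thm+TP+ERA} hold with an empty set of extra indeterminates. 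Part~(ii) of that theorem then says that both $(R_n(q))_{n\ge0}$ and $(R_n^{*}(q))_{n\ge0}$ are coefficientwise Hankel-totally positive in $(q,\lambda)$, where $R_n$ is the row-generating polynomial of $\bigl(\varphi(f(t))\exp(\lambda f(t)),f(t)\bigr)$. Specializing $\lambda=0$ sends a polynomial in $(q,\lambda)$ with nonnegative coefficients to one in $q$ with nonnegative coefficients (so it preserves coefficientwise Hankel-total positivity of the relevant Hankel minors) and turns $R_n$ into $\ell_n$ and $R_n^{*}$ into $\ell_n^{*}=\mathfrak{S}_n$; hence $(\mathfrak{S}_n(q))_{n\ge0}$ is coefficientwise Hankel-totally positive in $q$, which is Conjecture~\ref{conj+sokal}. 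One even gets the signless Laguerre sequence $(\ell_n(q))$ for free, together with a one-parameter $\lambda$-refinement of both.

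The real work is entirely absorbed into Theorem~\ref{thm+TP+ERA} (equivalently Theorem~\ref{thm+TP+ERA+oder+r} with $r=\infty$): unwinding its proof via Proposition~\ref{prop+TP+production}, the production matrix of the binomial row-generating matrix of $\bigl(\frac{1}{1-t}e^{\lambda t/(1-t)},\frac{t}{1-t}\bigr)$ is the bidiagonal product $\textbf{L}(0,1,1)\,\textbf{U}(0,\lambda,0,1)\,\textbf{L}(0,1,1)$, which is of the type handled by Theorem~\ref{thm+TP+Row+GF+Production}; then Theorems~\ref{thm+iteration+TP}--\ref{thm+TP+Hank+zeros} together with Proposition~\ref{prop+recp} finish it. Consequently I do not expect a genuine obstacle: the only things that need care are the binomial identity and the reversal linking $\mathfrak{S}_n$ to $\ell_n^{*}$, and the routine verification that $\bigl(\frac{1}{1-t},\frac{t}{1-t}\bigr)$ is the exponential Riordan array with the stated entries.
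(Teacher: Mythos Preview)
Your proof is correct and rests on the same key identification as the paper: $\mathfrak{S}_n(q)=q^n\widetilde{L}^{(0)}_n(1/q)$ is the reversed row-generating polynomial of the exponential Riordan array $\bigl(\tfrac{1}{1-t},\tfrac{t}{1-t}\bigr)$. The only difference is the finishing theorem. The paper computes the tridiagonal production matrix explicitly (via $Z(t)=(\alpha+1)(1+t)$, $A(t)=(1+t)^2$) and then applies the Catalan--Stieltjes criterion Theorem~\ref{thm+TP+tridi+even} with $a=u=0$, $b=c=x=v=1$, $y=\alpha+1$; this handles all real $\alpha\ge-1$ in one stroke. You instead factor $1/\overline{f}'(t)=(1+t)^2=\varphi(t)\phi(t)$ with $\varphi=\phi=1+t$ and quote Theorem~\ref{thm+TP+ERA} directly, which is slightly quicker for $\alpha=0$ and gives you the $\lambda$-refinement for free, but would not extend to non-integer $\alpha$ (one would need $\phi(t)=(1+t)^{1-\alpha}$ to be P\'olya frequency). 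One small slip: the product $\textbf{L}(0,1,1)\,\textbf{U}(0,\lambda,0,1)\,\textbf{L}(0,1,1)$ you write is the production matrix of the Riordan array itself, not of its binomial row-generating matrix; after $\textbf{B}_q$-conjugation the middle factor becomes $\textbf{U}(0,\lambda+q,0,1)$ by Corollary~\ref{coro+bi+upper}. This does not affect your argument since you are citing Theorem~\ref{thm+TP+ERA} as a black box.
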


For the Laguerre polynomial $L^{(\alpha)}_n(q)$ with $\alpha\geq-1$
(see \cite{AAR99} for instance), its exponential generating function
is
\begin{equation}\label{Exp+GF+Lag}
\sum_{n\geq0}L^{(\alpha)}_n(q)\frac{t^n}{n!}=\frac{1}{(1-t)^{\alpha+1}}\exp\left(\frac{qt}{t-1}\right)
\end{equation}
and its explicit formula is
\begin{eqnarray}
L^{(\alpha)}_n(q)&=&\sum_{k=0}^n\binom{n+\alpha}{n-k}\frac{n!}{k!}(-q)^k
\end{eqnarray}
for $n\geq0$. Let $\widetilde{L}^{(\alpha)}_n(q)=L^{(\alpha)}_n(-q)$
for $n\geq0$. It is obvious for $\alpha=0$ that
\begin{eqnarray}
\mathfrak{S}_n(q)&=&q^n\widetilde{L}^{(0)}_n(1/q).
\end{eqnarray}
For signless Laguerre polynomials $\widetilde{L}^{(\alpha)}_n(q)$,
the following result was recently proved in \cite{Zhu213}, whose
(ii) also implies Conjecture \ref{conj+sokal}. Now we present a
different proof.

\begin{prop}
Let $\alpha\geq-1$ and
$\widetilde{L}^{(\alpha)^{*}}_n(q)=q^n\widetilde{L}^{(\alpha)}_n(1/q)$.
Then we have
 \begin{itemize}
\item [\rm (i)]
the triangular matrix $[\binom{n+\alpha}{n-k}\frac{n!}{k!}]_{n,k}$
is totally positive;
 \item [\rm (ii)]
both $(\widetilde{L}^{(\alpha)}_n(q))_{n\geq0}$ and
$(\widetilde{L}^{(\alpha)^{*}}_n(q))_{n\geq0}$ are coefficientwise
Hankel-totally positive in $q$ and $3$-$q$-log-convex;
\item [\rm (iii)]
the convolution
\begin{eqnarray}
z_n&=&\sum_{k\geq0}\binom{n+\alpha}{n-k}\frac{n!}{k!}x_ky_{n-k}
\end{eqnarray}
preserves the Stieltjes moment property in $\mathbb{R}$.
 \end{itemize}
\end{prop}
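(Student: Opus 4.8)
The plan is to realize $\widetilde{L}^{(\alpha)}_n(q)$ as the row-generating polynomial of an explicit Catalan--Stieltjes triangle and then apply Theorem~\ref{thm+TP+tridi+even} essentially verbatim.

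\emph{Step 1: identifying the triangle.} Replacing $q$ by $-q$ in~(\ref{Exp+GF+Lag}) gives $\sum_{n\ge0}\widetilde{L}^{(\alpha)}_n(q)\,t^n/n!=(1-t)^{-(\alpha+1)}\exp\!\big(qt/(1-t)\big)$, so the array $[\binom{n+\alpha}{n-k}\frac{n!}{k!}]_{n,k}$ is the exponential Riordan array $(g(t),f(t))=\big((1-t)^{-(\alpha+1)},\,t/(1-t)\big)$ and $\widetilde{L}^{(\alpha)}_n(q)$ is its $n$-th row-generating polynomial. Since $\overline{f}(t)=t/(1+t)$, Proposition~\ref{prop+TP+production} gives $Z(t)=g'(\overline{f})/g(\overline{f})=(\alpha+1)(1+t)$ and $A(t)=f'(\overline{f})=(1+t)^2$; reading off~(\ref{PP}), the production matrix $P$ is the tridiagonal matrix with superdiagonal $1$, diagonal $2k+\alpha+1$, and subdiagonal $k(k+\alpha)$. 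Equivalently, $[\binom{n+\alpha}{n-k}\frac{n!}{k!}]_{n,k}$ is the Catalan--Stieltjes triangle of~(\ref{recurr+PJS}) with $\delta_k=1$, $\gamma_k=2k+\alpha+1$, $\beta_{k+1}=(k+1)(k+\alpha+1)$, and $P$ factors as $\textbf{L}(0,1,1)\,\textbf{U}(1,\alpha+1,0,1)$.

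\emph{Step 2: matching Theorem~\ref{thm+TP+tridi+even}.} Specialize its parameters to $a=u=0$, $b=c=v=1$, $x=1$, $y=\alpha+1$: then $(ak+b)[u(k+1)+v]=1=\delta_k$, $(ax+cu)k^2+(ay+bx+cv)k+by=2k+\alpha+1=\gamma_k$, and $c(y+kx)(k+1)=(k+1)(k+\alpha+1)=\beta_{k+1}$, which is exactly our triangle. As $\alpha\ge-1$ the substituted values are nonnegative; moreover the affine substitution $y\mapsto\alpha+1$ preserves coefficientwise nonnegativity, so all conclusions in fact hold as statements in $(\alpha,q)$, in particular in $q$.

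\emph{Step 3: reading off the conclusions.} For~(i): $[\binom{n+\alpha}{n-k}\frac{n!}{k!}]_{n,k}$ is the $q=0$ specialization of $\textbf{J}(q)$ (where $\textbf{B}_q$ is the identity), hence totally positive by Theorem~\ref{thm+TP+tridi+even}(i); alternatively, $P=\textbf{L}(0,1,1)\,\textbf{U}(1,\alpha+1,0,1)$ is a product of nonnegative (hence totally positive) bidiagonal matrices, so Theorem~\ref{thm+iteration+TP} applies to $\scro(P)$. For~(ii): $\widetilde{L}^{(\alpha)}_n(q)$ has degree $n$ with leading coefficient $1$ (the $k=n$ term of the explicit formula), so $\widetilde{L}^{(\alpha)^{*}}_n(q)=q^n\widetilde{L}^{(\alpha)}_n(1/q)$ is well defined, and Theorem~\ref{thm+TP+tridi+even}(ii) gives coefficientwise Hankel-total positivity in $q$ and $3$-$q$-log-convexity of both $(\widetilde{L}^{(\alpha)}_n(q))_n$ and $(\widetilde{L}^{(\alpha)^{*}}_n(q))_n$ (one may instead use Proposition~\ref{prop+recp} for the reversal and Proposition~\ref{prop+3-q-log-convex} for the log-convexity). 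For~(iii): apply Theorem~\ref{thm+TP+tridi+even}(iv) with the same parameters, or Theorem~\ref{thm+conv} to $A_n(q)=\widetilde{L}^{(\alpha)}_n(q)$, which by~(ii) is a Stieltjes moment sequence in $n$ for each fixed $q\ge0$. Finally, $\mathfrak{S}_n(q)=q^n\widetilde{L}^{(0)}_n(1/q)=\widetilde{L}^{(0)^{*}}_n(q)$, so Conjecture~\ref{conj+sokal} is the case $\alpha=0$ of~(ii).

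\emph{Main obstacle.} There is no deep difficulty here: the only point requiring care is Step~1 --- computing the $Z$- and $A$-series of the exponential Riordan array (equivalently, producing the bidiagonal factorization of its tridiagonal production matrix) and checking the parameter match --- after which the statement is a direct consequence of Theorem~\ref{thm+TP+tridi+even}.
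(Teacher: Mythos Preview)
Your proof is correct and follows essentially the same route as the paper: identify $[\binom{n+\alpha}{n-k}\frac{n!}{k!}]_{n,k}$ as the exponential Riordan array $((1-t)^{-(\alpha+1)},\,t/(1-t))$, compute $Z(t)=(\alpha+1)(1+t)$ and $A(t)=(1+t)^2$ via Proposition~\ref{prop+TP+production}, read off the tridiagonal production matrix, and invoke Theorem~\ref{thm+TP+tridi+even} with $a=u=0$, $b=c=v=x=1$, $y=\alpha+1$. Your explicit bidiagonal factorization $P=\textbf{L}(0,1,1)\,\textbf{U}(1,\alpha+1,0,1)$ and the degree check for the reversed polynomial are nice added details, but the argument is the same.
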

\begin{proof}
Let
\begin{eqnarray}
g(t)=\frac{1}{(1-t)^{\alpha+1}},\quad f(t)=\frac{t}{1-t}.
\end{eqnarray}
Rewrite (\ref{Exp+GF+Lag}) as
\begin{equation}
\sum_{n\geq0}\widetilde{L}^{(\alpha)}_n(q)\frac{t^n}{n!}=g(t)\exp(qf(t)).
\end{equation}
Obviously, $\widetilde{L}^{(\alpha)}_n(q)$ is the row-generating
polynomial of $(g(t),f(t)):=[\widetilde{L}^{(\alpha)}_{n,k}]_{n,k}$.
By (\ref{PP}), we have two formal power series
\begin{eqnarray}
Z(t)=(\alpha+1)(1+t),\quad A(t)=(1+t)^2.
\end{eqnarray}
Hence we get the production matrix of $(g(t),f(t))$ is
\begin{eqnarray*}
\left[
\begin{array}{cccccc}
\alpha+1 & 1 &  &  &\\
\alpha+1 & \alpha+3 & 1 &\\
 & 2(\alpha+2) & \alpha+5 &1&\\
 & &3(\alpha+3) & \alpha+7 &1&\\
& && \ddots&\ddots & \ddots \\
\end{array}\right].
\end{eqnarray*}
Thus, the exponential Riordan array
$[\widetilde{L}^{(\alpha)}_{n,k}]_{n,k}$ satisfies the recurrence
\begin{eqnarray}
\widetilde{L}^{(\alpha)}_{n,k}&=&\widetilde{L}^{(\alpha)}_{n-1,k-1}+(\alpha+1+2k)\widetilde{L}^{(\alpha)}_{n-1,k}+(\alpha+k+1)(k+1)\widetilde{L}^{(\alpha)}_{n-1,k+1}
\end{eqnarray}
with $\widetilde{L}^{(\alpha)}_{0,0}=1$. It follows from Theorem
\ref{thm+TP+tridi+even} by taking $a=u=0$, $b=c=x=v=1$ and
$y=\alpha+1$ that we get the desired results in (i)-(iii).
\end{proof}

\subsection{Enumerative labeled trees and forests}

Let $f_{n,k}$ denote the number of forests of rooted trees on $n$
labeled vertices having $k$ components (i.e. $k$ trees) and it was
proved that
\begin{equation}
 f_{n,k}=\binom{n-1}{k-1}n^{n-k},%=\binom{n}{k}kn^{n-k-1}
\end{equation} which also counts the number of rooted labeled trees
on $n+1$ vertices with a root degree $k$, see \cite[A137452]{Slo}
for instance. In particular, $f_{n,1}=n^{n-1}$ is exactly the number
of rooted trees on $n$ labeled vertices and
$\sum_{k}f_{n,k}=(n+1)^{n-1}$ is the number of forests of rooted
trees on $n$ labeled vertices. Denote by $\mathcal
{T}(t)=\sum_{n\geq1}n^{n-1}\frac{t^n}{n!}$ the {\it tree function},
which is closely related to the famous Lambert function
$W(t)=-\mathcal {T}(-t)$. Then one has the exponential generating
function
\begin{equation}
 \sum_{n\geq0}\sum_{k=1}^nf_{n,k}q^k\frac{t^n}{n!}=\exp{(q\mathcal
{T}(t))}.
\end{equation}

  In \cite{Zhu213}, we proved that the matrix $[f_{n,k}]_{n,k}$ is
totally positive and its row-generating polynomial is
coefficientwise Hankel-totally positive. In addition,
$(n^{n-1})_{n\geq1}$ and $((n+1)^{n-1})_{n\geq1}$ are Stieltjes
moment sequences (i.e., their Hankel matrices are totally positive,
respectively). More generally, the convolution
$z_n=\sum_{k\geq0}f_{n+1,k+1}x_ky_{n-k}$ preserves Stieltjes moment
property of sequences. Some of results above were also independently
proved in \cite{Sok21}. Note that
\begin{eqnarray}
\overline{\mathcal {T}}(t)&=&te^{-t},\quad \text{and}
\quad\frac{1}{\overline{\mathcal {T}}'(t)}=\frac{e^t}{1-t}
\end{eqnarray}
is a P\'olya frequency ogf. Thus by Theorem \ref{thm+TP+ERA}, we
have a more generalized result as follows.

\begin{prop} \label{prop+labeled trees+f}
Let $\mathcal {\textbf{F}}^{(i,j)}=[\mathcal
{F}^{(i,j)}_{n,k}]_{n,k}$ be the exponential Riordan array
$(\frac{e^{i\mathcal {T}(t)}}{(1-\mathcal {T}(t))^j},\mathcal
{T}(t))$ for $i\in \{0,1\}$ and $j\in \{0,1\}$ and its
row-generating polynomial $\mathcal
{F}^{(i,j)}_n(q)=\sum_{k}\mathcal {F}^{(i,j)}_{n,k}q^k$. Then we
have
\begin{itemize}
\item [\rm (i)]
 the lower-triangle $\mathcal
{\textbf{F}}^{(i,j)}$ is totally positive;
\item [\rm (ii)]
both the row-generating polynomial sequence $(\mathcal
{F}^{(i,j)}_n(q))_{n}$ and its reversed polynomial sequence
$(\mathcal {F}^{(i,j)*}_n(q))_{n}$ are coefficientwise
Hankel-totally positive in $q$ and $3$-$q$-log-convex;
\item [\rm (iii)]
the convolution $z_n=\sum_{k=0}^{n}\mathcal
{\textbf{F}}^{(i,j)}_{n,k}x_ky_{n-k}$ preserves the Stieltjes moment
property in $\mathbb{R}$.
\end{itemize}
\end{prop}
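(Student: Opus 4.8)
The plan is to read the proposition off from Theorem~\ref{thm+TP+ERA} (the $\lambda=0$, $\textbf{x}$-free specialization of Theorem~\ref{thm+TP+ERA+oder+r}), applied to the delta series $f(t)=\mathcal{T}(t)$. Two facts are already recorded in the text: the compositional inverse is $\overline{\mathcal{T}}(t)=te^{-t}$, so that $1/\overline{\mathcal{T}}'(t)=e^{t}/(1-t)$, and the latter is a P\'olya frequency function. The only thing to supply is a factorization of $e^{t}/(1-t)$ into a product $\varphi(t)\phi(t)$ of two P\'olya frequency functions for which $\varphi(\mathcal{T}(t))$ is exactly the first component of $\mathcal{\textbf{F}}^{(i,j)}$. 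The natural choice is
\[
\varphi(t)=\frac{e^{it}}{(1-t)^{j}},\qquad \phi(t)=\frac{e^{(1-i)t}}{(1-t)^{1-j}},
\]
so that $\varphi(t)\phi(t)=e^{t}/(1-t)=1/\overline{\mathcal{T}}'(t)$ and $\varphi(\mathcal{T}(t))=e^{i\mathcal{T}(t)}/(1-\mathcal{T}(t))^{j}$; hence, with $\lambda=0$, the exponential Riordan array $\big(\varphi(\mathcal{T}(t))e^{\lambda\mathcal{T}(t)},\mathcal{T}(t)\big)$ is precisely $\mathcal{\textbf{F}}^{(i,j)}$.

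The one step needing care is checking that $\varphi$ and $\phi$ really are P\'olya frequency functions, and this is exactly where the hypotheses $0\le i\le1$ and $0\le j\le1$ are used. Since $e^{ct}$ is a P\'olya frequency function if and only if $c\ge0$, the factors $e^{it}$ and $e^{(1-i)t}$ are P\'olya frequency functions because $0\le i\le1$; and each of $(1-t)^{-j}$, $(1-t)^{-(1-j)}$ is, for the admissible values of $j$, either $1$ or the geometric series $(1-t)^{-1}$, hence a P\'olya frequency function. Being products of P\'olya frequency functions, $\varphi$ and $\phi$ are themselves P\'olya frequency functions, so the hypothesis $1/\overline{f}'(t)=\varphi(t)\phi(t)$ of Theorem~\ref{thm+TP+ERA} is met (with $\mathcal{T}(0)=0$ and $\mathcal{T}'(0)=1$ making the array legitimate).

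Granting this, the three conclusions are immediate. Theorem~\ref{thm+TP+ERA}(i) with $\lambda=0$ and no auxiliary indeterminates $\textbf{x}$ gives that $\mathcal{\textbf{F}}^{(i,j)}=\big(\varphi(\mathcal{T}(t)),\mathcal{T}(t)\big)$ is totally positive, which is~(i). Theorem~\ref{thm+TP+ERA}(ii) with $\lambda=0$ gives that $(\mathcal{F}^{(i,j)}_n(q))_{n\ge0}$ and $(\mathcal{F}^{(i,j)*}_n(q))_{n\ge0}$ are coefficientwise Hankel-totally positive in $q$; since this in particular makes the corresponding Hankel matrices coefficientwise totally positive of order~$4$, Proposition~\ref{prop+3-q-log-convex} then yields the $3$-$q$-log-convexity of both sequences, completing~(ii). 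Finally, Theorem~\ref{thm+TP+ERA}(v) with $\lambda=0$ (or, equivalently, Theorem~\ref{thm+conv} applied to~(ii)) shows that $z_n=\sum_{k}\mathcal{\textbf{F}}^{(i,j)}_{n,k}x_ky_{n-k}$ preserves the Stieltjes moment property in $\mathbb{R}$, which is~(iii). There is no substantial obstacle here: the entire argument reduces to exhibiting the factorization $\varphi\phi$ above and verifying the P\'olya frequency property; the combinatorial content ($\mathcal{\textbf{F}}^{(0,0)}=(1,\mathcal{T}(t))$ is the forest triangle $[f_{n,k}]$, and the other three arrays are the analogous tree/forest triangles) is what makes the statement worth recording but plays no role in the proof.
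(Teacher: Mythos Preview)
Your proof is correct and follows essentially the same approach as the paper, which simply observes that $1/\overline{\mathcal{T}}'(t)=e^{t}/(1-t)$ is a P\'olya frequency function and then invokes Theorem~\ref{thm+TP+ERA}. You have supplied the detail the paper leaves implicit, namely the explicit factorization $\varphi(t)=e^{it}(1-t)^{-j}$, $\phi(t)=e^{(1-i)t}(1-t)^{-(1-j)}$ and the verification that both factors are P\'olya frequency functions for the admissible integer values $i,j\in\{0,1\}$.
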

Obviously, $\mathcal {\textbf{F}}^{(0,0)}=[f_{n,k}]_{n,k}$ and
$\mathcal {\textbf{F}}^{(1,1)}=[f_{n+1,k+1}]_{n,k}$, which have
different combinatorial interpretations. It is natural to ask the
following question.
\begin{ques}
Find the combinatorial interpretation for $\mathcal
{\textbf{F}}^{(0,1)}$ and $\mathcal {\textbf{F}}^{(1,0)}$,
respectively.
\end{ques}

An {\it ordered forest of rooted trees} is simply a forest of rooted
trees in which we have specified a linear ordering of the trees. Let
$f_{n,k}^{ord}$ be the number of ordered forests of rooted trees on
the vertex set $[n]$ with $k$ components and it is obvious that
$f_{n,k}^{ord}=f_{n,k}k!$. It is known that
\begin{equation}
 \sum_{n\geq0}\sum_{k=0}^nf_{n,k}^{ord}q^k\frac{t^n}{n!}=\frac{1}{1-q\mathcal
{T}(t)}.
\end{equation}
Let $F_n^{ord}(q)=\sum_{k=0}^nf_{n,k}^{ord}q^k$. Sokal
\cite[Conjctures 6.7 and 6.8]{Sok21} made the following two
conjectures.
\begin{conj}\label{conj+F}
The polynomial sequence $(F_n^{ord}(q))_{n\geq0}$ is coefficientwise
Hankel-totally positive in $q$.
\end{conj}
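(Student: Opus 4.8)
The plan is to bring the sequence into the production-matrix framework of Theorem~\ref{thm+TP+Row+GF+Production} and then argue coefficientwise in $q$ exactly as in its proof. First I would observe that $F_n^{ord}(q)=\sum_k f_{n,k}^{ord}q^k=\sum_k f_{n,k}\,k!\,q^k$ is the row-generating polynomial of the triangle $M:=[f_{n,k}\,k!]_{n,k}$, and that $M=(1,\mathcal{T}(t))\,\Lambda$, where $(1,\mathcal{T}(t))$ is the exponential Riordan array with entries $f_{n,k}$ and $\Lambda=\diag(0!,1!,2!,\dots)$; equivalently $F_n^{ord}(q)=n!\,[t^n]\tfrac{1}{1-q\mathcal{T}(t)}$ is the zeroth column of the exponential Riordan array $\bigl(\tfrac{1}{1-q\mathcal{T}(t)},\mathcal{T}(t)\bigr)$, the reason for choosing the inner series to be $\mathcal{T}$ being that $1/\overline{\mathcal{T}}'(t)=e^{t}/(1-t)$ is a P\'olya frequency function. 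By \cite{DFR} (Proposition~\ref{prop+TP+production}) the production matrix $P_0$ of $(1,\mathcal{T})$ equals $\bm{\Gamma}^{(\Lambda,\,e^{t}/(1-t))}S$, with $S$ the superdiagonal shift, so the production matrix of $M$ is $P_M=\Lambda^{-1}P_0\Lambda=\Gamma\bigl(\tfrac{e^{t}}{1-t}\bigr)\,\textbf{U}(0,0,1,0)$, where $\Gamma$ denotes the ordinary lower-triangular Toeplitz matrix.

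Following the proof of Theorem~\ref{thm+TP+Row+GF+Production}, it then suffices to show that the production matrix $\mathcal{P}:=\textbf{B}_q^{-1}P_M\textbf{B}_q$ of the binomial row-generating matrix $M\textbf{B}_q$ is coefficientwise totally positive in $q$: granting this, Theorem~\ref{thm+TP+Hank+zeros} gives that $(F_n^{ord}(q))_{n\ge0}$, being the zeroth column of $\scro(\mathcal{P})$, is coefficientwise Hankel-totally positive in $q$, and the $3$-$q$-log-convexity of $(F_n^{ord}(q))_n$ and the statement that the ordered-forest convolution preserves the Stieltjes moment property then follow from Proposition~\ref{prop+3-q-log-convex} and Theorem~\ref{thm+conv}. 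By Corollary~\ref{coro+bi+upper}, $\textbf{B}_q^{-1}\textbf{U}(0,0,1,0)\textbf{B}_q$ is the tridiagonal matrix $\textbf{J}$ with superdiagonal $n+1$, diagonal $q(2n+1)$ and subdiagonal $q^{2}n$, which is coefficientwise totally positive in $q$; and one checks that $\textbf{B}_q^{-1}\Gamma(h(t))\textbf{B}_q=\Gamma\bigl(h(\tfrac{t}{1+qt})\bigr)$ for every $h$, so that $\mathcal{P}$ factors as $\Gamma(B)\,\textbf{J}$ with $B(t)=\tfrac{e^{t/(1+qt)}}{1-t/(1+qt)}$.

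The hard part—and I expect the real obstacle—is precisely the claim that $\mathcal{P}$ is coefficientwise totally positive in $q$. The obvious factorization above does not settle it, because $\Gamma(B)$ has entries that are polynomials in $q$ with negative coefficients, so Cauchy--Binet is unavailable; one should also be warned that the ``raw'' production matrix $P$ of $\bigl(\tfrac{1}{1-q\mathcal{T}},\mathcal{T}\bigr)$—whose $Z$-series works out to $\tfrac{q}{1-qt}\cdot\tfrac{e^{t}}{1-t}$, so that $P=\bm{\Gamma}^{(\Lambda,\,e^{t}/(1-t))}\bigl(S+\bm{\Gamma}^{(\Lambda,\,q/(1-qt))}\bigr)$—is itself \emph{not} coefficientwise totally positive in $q$, so one cannot simply apply Theorem~\ref{thm+TP+Hank+zeros} to $P$. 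Low-degree computations do indicate that $\mathcal{P}$ itself is coefficientwise totally positive, so I would attack this directly: either (a) find a genuinely coefficientwise-positive factorization of $\mathcal{P}$ into the standard building blocks of Theorem~\ref{thm+TP+Row+GF+Production} (bidiagonal $\textbf{L}/\textbf{U}$, tridiagonal $\textbf{J}$, $\bm{\Gamma}^{(\Lambda,\cdot)}$), perhaps after a judicious diagonal conjugation, or after replacing the free inner series $f$ in $\bigl(\tfrac{1}{1-q\mathcal{T}},f\bigr)$ by one that makes the $Z$- and $A$-series simultaneously P\'olya frequency in $q$; or (b) compute the minors of $\mathcal{P}$ explicitly and read off nonnegativity of their $q$-coefficients.

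Alternatively, I would try to reduce to results already in the paper. Writing $F_n^{ord}(q)=\sum_k f_{n,k}\,(k!\,q^k)$, the matrix $[f_{n,k}]=(1,\mathcal{T})$ is totally positive and $(k!\,q^k)_k$ is coefficientwise Hankel-totally positive in $q$—its Hankel matrix $[(i+j)!\,q^{i+j}]$ has every minor equal to a monomial in $q$ times the positive Hankel minor of the factorial sequence, and this is the $y=1$ specialization of the Stieltjes continued fraction for $(q^{n}\langle y\rangle_{n})_n$. One would then want to upgrade the fixed-$q$ ``mixture of Stieltjes moment measures'' argument that underlies Theorem~\ref{thm+TP+ERA+fraction}(ii) to a coefficientwise statement; this must use the exponential-Riordan-array structure of $(1,\mathcal{T})$ essentially, since a totally positive matrix transform of a coefficientwise Hankel-totally positive sequence need not be coefficientwise Hankel-totally positive in general. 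A further, possibly cleaner route is to relate $F_n^{ord}(q)$ to the arrays $\mathcal{\textbf{F}}^{(i,j)}$ of Proposition~\ref{prop+labeled trees+f} built from the tree function, or to the signless Laguerre / rook polynomials already handled via Theorem~\ref{thm+TP+ERA} and Proposition~\ref{prop+recp}, for which coefficientwise Hankel-total positivity in $q$ is known.
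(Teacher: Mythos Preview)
This statement is labeled a \emph{Conjecture} in the paper, and the paper does \emph{not} prove it. What the paper establishes (in the proposition immediately following Conjectures~\ref{conj+F} and~\ref{conj+F/n}) is strictly weaker: (i) coefficientwise Hankel-total positivity of the \emph{normalized} sequence $(F_n^{ord}(q)/n!)_{n\ge0}$, obtained by writing $\sum_n F_n^{ord}(q)\,t^n/n!=\frac{1}{1-q\mathcal{T}(t)}$ as a classical Stieltjes continued fraction with top coefficient $q$ and remaining coefficients the nonnegative reals $\alpha_i$ coming from the Stieltjes moment property of $(n^{n-1}/n!)_{n\ge1}$, and then invoking Theorem~\ref{thm+BSCF+Hankel}; and (iii) Hankel-total positivity of $(F_n^{ord}(q))_{n\ge0}$ only \emph{pointwise} for each fixed $q\ge0$, by multiplying the Stieltjes moment sequence in (i) by the Stieltjes moment sequence $(n!)_n$. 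The full coefficientwise statement remains open in the paper.

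Your proposal is therefore not in competition with a proof in the paper; rather, you have correctly located the genuine obstruction. Your production-matrix setup is sound, and you are right that the natural factorization of $\mathcal{P}=\textbf{B}_q^{-1}P_M\textbf{B}_q$ fails to be coefficientwise nonnegative, so Cauchy--Binet is unavailable. None of your suggested workarounds (a coefficientwise-positive refactorization, a direct minor computation, or an upgrade of the mixture argument behind Theorem~\ref{thm+TP+ERA+fraction}(ii) to a coefficientwise one) is carried out, and indeed each would constitute a new result beyond what the paper achieves. In short: you do not have a proof, but you have accurately diagnosed why---and the paper does not have one either.
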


\begin{conj}\label{conj+F/n}
The polynomial sequence $(F_n^{ord}(q)/n!)_{n\geq0}$ is
coefficientwise Hankel-totally positive in $q$.
\end{conj}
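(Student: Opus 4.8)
The plan is to realise $(F_n^{ord}(q)/n!)_{n\ge 0}$ as the zeroth column of a $1$-Stieltjes--Rogers triangle and then invoke Theorem~\ref{thm+BSCF+Hankel}. Starting from $\sum_{n\ge 0}F_n^{ord}(q)\,t^n/n!=(1-q\mathcal{T}(t))^{-1}$ we have $F_n^{ord}(q)/n!=[t^n]\,(1-q\mathcal{T}(t))^{-1}$. The decisive observation is the functional equation of the tree function, $\mathcal{T}(t)=t\,e^{\mathcal{T}(t)}$, i.e.\ $\mathcal{T}(t)/t=e^{\mathcal{T}(t)}$, so that
\[
 \frac{1}{1-q\mathcal{T}(t)}\;=\;\frac{1}{1-q\,t\,e^{\mathcal{T}(t)}}.
\]
If $(c_i)_{i\ge 1}$ denote the coefficients of the Stieltjes-type continued fraction of $e^{\mathcal{T}(t)}=\sum_{n\ge 0}\tfrac{(n+1)^{n-1}}{n!}t^n$, then substituting into the identity above isolates the variable $q$ at the top:
\[
 \frac{1}{1-q\mathcal{T}(t)}\;=\;\cfrac{1}{1-\cfrac{q\,t}{1-\cfrac{c_1 t}{1-\cfrac{c_2 t}{1-\cdots}}}},
\]
which is the generating function of the $1$-Stieltjes--Rogers polynomials $S^{(1)}_n(\balpha)$ with $\balpha=(q,c_1,c_2,\ldots)$. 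Thus $F_n^{ord}(q)/n!=S^{(1)}_n(\balpha)$.

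Granting for the moment that $c_i\ge 0$ for all $i$, the conclusion is immediate. By Theorem~\ref{thm+BSCF+Hankel} with $m=1$, the sequence $(S^{(1)}_n(\balpha))_{n\ge 0}$ is coefficientwise Hankel-totally positive in $\Z[\balpha]$; the ring homomorphism $\Z[\balpha]\to\R[q]$ sending $\alpha_1\mapsto q$ and $\alpha_i\mapsto c_{i-1}$ $(i\ge 2)$ carries every minor of the Hankel matrix (a polynomial in $\balpha$ with nonnegative integer coefficients) to a polynomial in $q$ with nonnegative real coefficients, since each $c_i\ge 0$; this is exactly Conjecture~\ref{conj+F/n}. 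One may also run this directly through production matrices: the production matrix of the $1$-Stieltjes--Rogers triangle above factors as $P:=L\,U$, where $L$ is lower-bidiagonal with $1$'s on the diagonal and $(c_1,c_3,c_5,\dots)$ on the subdiagonal, and $U$ is upper-bidiagonal with diagonal $(q,c_2,c_4,\dots)$ and $1$'s on the superdiagonal; both factors are coefficientwise totally positive in $q$, hence so is $P$ by the Cauchy--Binet formula, and Theorem~\ref{thm+TP+Hank+zeros} applied over the partially ordered ring $\R[q]$ gives coefficientwise total positivity of $[(P^{n+n'})_{00}]_{n,n'\ge 0}=[F^{ord}_{n+n'}(q)/(n+n')!]_{n,n'\ge 0}$.

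The genuine obstacle is the hypothesis $c_i\ge 0$, i.e.\ that $e^{\mathcal{T}(t)}=\mathcal{T}(t)/t=\sum_{n\ge 0}\tfrac{(n+1)^{n-1}}{n!}t^n$ is a Stieltjes moment sequence (equivalently, that its infinite Hankel matrix is totally positive). This carries no free indeterminate, so the multivariate machinery of the paper does not reach it; I would prove it analytically. The tree function $\mathcal{T}(t)=-W(-t)$ is analytic on $\C\setminus[e^{-1},\infty)$ and $\mathcal{T}(t)/t$ extends analytically across $t=0$, so $\mathcal{T}(t)/t$ is holomorphic on $\C\setminus[e^{-1},\infty)$; computing its boundary values on the cut and checking that the jump of $\mathcal{T}(t)/t$ across $[e^{-1},\infty)$ is of fixed sign, Stieltjes inversion yields $\mathcal{T}(t)/t=\int_{e^{-1}}^{\infty}\frac{d\nu(s)}{1-t/s}$ for a nonnegative measure $\nu$, which is precisely total positivity of the Hankel matrix of $(\,[t^n]e^{\mathcal{T}(t)}\,)_{n\ge 0}$ (a classical property of the tree function). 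With $c_i\ge 0$ secured, Theorem~\ref{thm+BSCF+Hankel} closes the proof. The companion Conjecture~\ref{conj+F} should fall to the same ideas applied to $F^{ord}_n(q)=n!\,[t^n]\,(1-q\mathcal{T}(t))^{-1}$, viewed as the zeroth column of the exponential Riordan array $\big((1-q\mathcal{T}(t))^{-1},\mathcal{T}(t)\big)$, whose production matrix one reads off from Proposition~\ref{prop+TP+production} and which, again modulo the Stieltjes-moment property of $e^{\mathcal{T}(t)}$, should be coefficientwise totally positive in $q$.
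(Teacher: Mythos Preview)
Your proposal is correct and follows essentially the same route as the paper: both reduce to the Stieltjes continued-fraction identity
\[
\frac{1}{1-q\mathcal{T}(t)}=\cfrac{1}{1-\cfrac{qt}{1-\cfrac{\alpha_0 t}{1-\cfrac{\alpha_1 t}{1-\cdots}}}}
\]
with $\alpha_i\ge 0$, and then invoke Theorem~\ref{thm+BSCF+Hankel}. The only substantive difference is in establishing $\alpha_i\ge 0$ (equivalently, that $((n+1)^{n-1}/n!)_{n\ge 0}$ is a Stieltjes moment sequence): where you sketch a Stieltjes-inversion argument via the branch cut of $W$, the paper simply cites the explicit integral representation $n^n/n!=\pi^{-1}\int_0^\pi\big(\tfrac{\sin v}{v}e^{v\cot v}\big)^n dv$ and multiplies by the Stieltjes moment sequence $(1/n)_{n\ge 1}$---a cleaner route you may prefer.
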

The following result confirms Conjecture \ref{conj+F/n} and gives a
stronger support for Conjecture \ref{conj+F}.

\begin{prop} Let $F_n^{ord}(q)$ and $f_{n,k}^{ord}$ be defined above. Then we have
\begin{itemize}
\item [\rm (i)]
$(F_n^{ord}(q)/n!)_{n\geq0}$ is coefficientwise Hankel-totally
positive in $q$;
\item [\rm (ii)]
the convolution $z_n=\sum_{k\geq0}\frac{1}{n!}f_{n,k}^{ord}\,
x_ky_{n-k}$ preserves the Stieltjes moment property in $\mathbb{R}$.
 \item [\rm (iii)]
$(F_n^{ord}(q))_{n\geq0}$ is a Stieltjes moment sequence (of real
numbers) for any fixed $q\geq0$;
 \end{itemize}
\end{prop}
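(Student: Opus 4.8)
The plan is to reduce part~(i) to a classical Stieltjes continued fraction for the tree function and then read off parts~(ii) and~(iii). Since the $k$th column of the exponential Riordan array $(1,\mathcal{T}(t))=[f_{n,k}]_{n,k}$ has exponential generating function $\mathcal{T}(t)^k/k!$, one has
\[
\sum_{n\geq0}F_n^{ord}(q)\frac{t^n}{n!}
=\sum_{k\geq0}q^k\,\mathcal{T}(t)^k
=\frac{1}{1-q\mathcal{T}(t)},
\]
so $F_n^{ord}(q)/n!=[t^n]\tfrac{1}{1-q\mathcal{T}(t)}$; equivalently $\big[f_{n,k}^{ord}/n!\big]_{n,k}$ is the \emph{ordinary} Riordan array $(1,\mathcal{T}(t))$ and $F_n^{ord}(q)/n!$ is its row-generating polynomial. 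Because $\mathcal{T}(t)=te^{\mathcal{T}(t)}$ we may write $\mathcal{T}(t)=t\cdot\frac{\mathcal{T}(t)}{t}$ with $\frac{\mathcal{T}(t)}{t}=e^{\mathcal{T}(t)}=1+t+\cdots$ having nonnegative coefficients; if $\frac{\mathcal{T}(t)}{t}$ admits a Stieltjes-type continued fraction $\frac{\mathcal{T}(t)}{t}=\cfrac{1}{1-\cfrac{\gamma_1 t}{1-\cfrac{\gamma_2 t}{1-\cdots}}}$ with every $\gamma_i\ge0$, then
\[
\frac{1}{1-q\mathcal{T}(t)}
=\cfrac{1}{1-\cfrac{qt}{1-\cfrac{\gamma_1 t}{1-\cfrac{\gamma_2 t}{1-\cdots}}}},
\]
a classical ($m=1$) Stieltjes-type continued fraction whose coefficient sequence $(q,\gamma_1,\gamma_2,\ldots)$ is coefficientwise nonnegative in $q$.

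Granting the nonnegativity of the $\gamma_i$, part~(i) follows from Theorem~\ref{thm+BSCF+Hankel} with $m=1$: the sequence $\big([t^n]\tfrac{1}{1-q\mathcal{T}(t)}\big)_{n\ge0}$ is the zeroth column of the $1$-Stieltjes--Rogers triangle attached to the weight sequence $(q,\gamma_1,\gamma_2,\ldots)$, hence coefficientwise Hankel-totally positive in the polynomial ring over those weights; specializing the nonnegative constants $\gamma_1,\gamma_2,\ldots$ while keeping the first weight equal to $q$ preserves coefficientwise nonnegativity of every minor, so $\big(F_n^{ord}(q)/n!\big)_{n\ge0}$ is coefficientwise Hankel-totally positive in $q$.

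The crux is therefore to show that $\frac{\mathcal{T}(t)}{t}=e^{\mathcal{T}(t)}=\sum_{n\ge0}\frac{(n+1)^{n-1}}{n!}t^n$ has a Stieltjes continued fraction with nonnegative coefficients, equivalently that $\big(\tfrac{(n+1)^{n-1}}{n!}\big)_{n\ge0}$ is a Stieltjes moment sequence. I would obtain this from production matrices applied to the \emph{ordinary} Riordan array $\big(\tfrac{\mathcal{T}(t)}{t},\mathcal{T}(t)\big)$: from $\mathcal{T}(t)=t\,e^{\mathcal{T}(t)}$ one computes that it has $A$-sequence $e^{s}$ and $Z$-sequence $\frac{e^{s}-1}{s}$, and since the $m$th coefficient $\frac1{(m+1)!}$ of $\frac{e^{s}-1}{s}$ is exactly the $(m+1)$st coefficient of $e^{s}$, its production matrix is precisely the lower-Hessenberg Toeplitz matrix $[a_{i-j+1}]_{i,j\ge0}$ with $a_m=[s^m]e^{s}$ (and $a_m=0$ for $m<0$). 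That matrix is totally positive because $e^{s}$ is a P\'olya frequency function (one may factor $\Gamma(e^{s})$ into totally positive lower-bidiagonal matrices via $e^{s}=\lim_{N\to\infty}(1+s/N)^N$ and delete its first row), so by Theorems~\ref{thm+iteration+TP} and~\ref{thm+TP+Hank+zeros} its output matrix is totally positive and its zeroth column $\big(\tfrac{(n+1)^{n-1}}{n!}\big)_{n\ge0}$ is Hankel-totally positive, giving $\gamma_i\ge0$ for all $i$. I expect the identification of these $A$- and $Z$-sequences and the total positivity of this Hessenberg--Toeplitz matrix to be the only genuine point of care; the rest of part~(i) is a specialization of results already established.

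Parts~(ii) and~(iii) are then immediate. For~(iii): $1/\overline{\mathcal{T}}'(t)=e^{t}/(1-t)$ is a product of the P\'olya frequency functions $e^{t}$ and $1/(1-t)$, hence a P\'olya frequency function, so Theorem~\ref{thm+TP+ERA+fraction}(ii) applies with $f=\mathcal{T}$ and $y=1$ and gives that $(F_n^{\diamond}(q,1))_{n\ge0}=(F_n^{ord}(q))_{n\ge0}$ is Hankel-totally positive for every fixed $q\ge0$. For~(ii): by~(i) the row-generating polynomial sequence $\big(F_n^{ord}(q)/n!\big)_{n\ge0}$ of the triangle $\big[f_{n,k}^{ord}/n!\big]_{n,k}$ is a Stieltjes moment sequence for each fixed $q\ge0$, so Theorem~\ref{thm+conv} shows that the convolution $z_n=\sum_{k\ge0}\frac{f_{n,k}^{ord}}{n!}\,x_k y_{n-k}$ preserves the Stieltjes moment property in $\mathbb{R}$.
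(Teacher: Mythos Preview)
Your proof is correct and follows the same overall architecture as the paper's for part~(i): show that $\mathcal{T}(t)/t$ has a Stieltjes continued fraction with nonnegative coefficients, insert it into $\tfrac{1}{1-q\mathcal{T}(t)}$, and invoke Theorem~\ref{thm+BSCF+Hankel}; part~(ii) then follows from Theorem~\ref{thm+conv} exactly as in the paper.

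The genuine difference is \emph{how} you establish that $\big(\tfrac{(n+1)^{n-1}}{n!}\big)_{n\ge0}$ is a Stieltjes moment sequence. The paper imports an analytic fact from the literature: the integral representation $\tfrac{n^n}{n!}=\tfrac{1}{\pi}\int_0^\pi\big(\tfrac{\sin v}{v}e^{v\cot v}\big)^n\,dv$ shows $(n^n/n!)_n$ is Stieltjes moment, and multiplying by the Stieltjes moment sequence $(1/n)_{n\ge1}$ gives the claim. Your route stays entirely inside the production-matrix framework: you compute the $A$- and $Z$-sequences of the \emph{ordinary} Riordan array $(\mathcal{T}(t)/t,\mathcal{T}(t))$ as $A(s)=e^s$ and $Z(s)=(e^s-1)/s$, observe that $z_i=a_{i+1}$ so the production matrix is the Toeplitz matrix of $e^s$ with its first row deleted (hence TP because $e^s$ is P\'olya frequency), and then apply Theorems~\ref{thm+iteration+TP} and~\ref{thm+TP+Hank+zeros}. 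This is a nice self-contained alternative that avoids the external integral identity; the paper's citation is quicker but less intrinsic to the machinery developed here.

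For part~(iii) you also diverge slightly: the paper simply multiplies the Stieltjes moment sequence $\big(F_n^{ord}(q)/n!\big)_n$ (from~(i), with $q\ge0$ fixed) by the Stieltjes moment sequence $(n!)_n$, whereas you invoke Theorem~\ref{thm+TP+ERA+fraction}(ii) with $f=\mathcal{T}$ and $y=1$. Both are valid; the paper's argument is the more elementary one-line deduction from~(i).
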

\begin{proof}
Note that the sequence $(n^n/n!)_{n\geq0}$ is a Stieltjes moment
sequence because
\begin{eqnarray}
\frac{n^n}{n!}&=&\frac{1}{\pi}\int_{0}^{\pi}\left(\frac{\sin
x}{x}e^{x\cot v}\right)^n dx
\end{eqnarray}
(see \cite[Corollary 2.4]{KJC12} for instance) and $(1/n)_{n\geq1}$
is also a Stieltjes moment sequence. Thus their product sequence
$(n^{n-1}/n!)_{n\geq1}$ is Stieltjes moment. In terms of Stieltjes's
continued fraction criterion for Stieltjes moment sequences, there
exist nonnegative real numbers $\alpha_0,\alpha_1,\alpha_2,\ldots$
such that
\begin{eqnarray}
\mathcal{T}(t)&=&\sum_{n\geq1}\frac{n^{n-1}}{n!}t^n=\frac{t}{1-\cfrac{\alpha_0t}{1-\cfrac{\alpha_1t}{1-\cdots}}}.
\end{eqnarray}
Thus we have
\begin{eqnarray}
\sum_{n\geq0}F_n^{ord}(q)\frac{t^n}{n!}&=&\frac{1}{1-q\mathcal
{T}(t)}=\frac{1}{1-\cfrac{qt}{1-\cfrac{\alpha_0t}{1-\cfrac{\alpha_1t}{1-\cdots}}}}.
\end{eqnarray}
By (\ref{eq+f0+mSfrac}) and Theorem \ref{thm+BSCF+Hankel}, we
immediately get that $(F_n^{ord}(q)/n!)_{n\geq0}$ is a 1-Stieltjes-
Rogers polynomial sequence with coefficientwise Hankel-total
positivity in $q$. This also implies that (ii) holds in terms of
Theorem \ref{thm+conv}. For (iii), note that $(n!)_{n}$ is a
Stieltjes moment sequence and the product of two Stieltjes moment
sequences is still a Stieltjes moment sequence. Thus (iii) is
immediate from (i).
\end{proof}
In fact, the ordered forest number $f_{n,k}^{ord}$ also counts the
number of functional digraphs on the vertex set $[n]$ with $k$
cyclic vertices. Recall that a functional digraph is a directed
graph $G=(V, \overrightarrow{E})$ in which every vertex has
out-degree $1$ and a vertex of a functional digraph is {\it cyclic}
if it lies on one of the cycles (or equivalently, is the root of one
of the underlying trees).  Let $\psi_{n,k}$ be the number of
functional digraphs on the vertex set $[n]$ with $k$ (weakly
connected) components, whose bivariate exponential generating
function is
\begin{equation}
 \sum_{n\geq0}\sum_{k=0}^n\psi_{n,k}y^k\frac{t^n}{n!}=\frac{1}{(1-\mathcal
{T}(t))^y}.
\end{equation}
Let $\bm{\psi}_n(y)=\sum_{k=0}^n\psi_{n,k}y^k$. Sokal
\cite[Conjcture 6.9]{Sok21} made the following conjecture.
\begin{conj}\label{conj+functional digraph+y}Let $\psi_{n,k}$ and $\bm{\psi}_n(y)$ be defined above.
\begin{itemize}
 \item [\rm (i)]
The lower-triangular matrix $[\psi_{n,k}]_{n,k}$ is totally
positive.
\item [\rm (ii)]
The polynomial sequence $(\bm{\psi}_n(y))_{n\geq0}$ is
coefficientwise Hankel-totally positive in $y$.
\item [\rm (iii)]
The sequence $(\psi_{n+1,1})_{n\geq0}$ is Hankel-totally positive
(i.e. is a Stieltjes moment sequence).
 \end{itemize}
\end{conj}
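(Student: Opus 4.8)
The plan is to realise $[\psi_{n,k}]_{n,k}$ as an exponential Riordan array and then apply the machinery of the exponential-Riordan-array sections. Since a functional digraph is a set of its weakly connected components and a connected functional digraph is a cyclic arrangement of rooted trees, the exponential formula gives $\sum_{n,k}\psi_{n,k}y^k t^n/n! = (1-\mathcal{T}(t))^{-y}=e^{yC(t)}$ with $C(t):=-\log(1-\mathcal{T}(t))$; hence $[\psi_{n,k}]_{n,k}=(1,C(t))$ and $\bm{\psi}_n(y)$ is its $n$th row-generating polynomial. Expanding $(1-\mathcal{T}(t))^{-y}=\sum_j\langle y\rangle_j\,\mathcal{T}(t)^j/j!$ and using that the $j$th column of $(1,\mathcal{T}(t))$ has exponential generating function $\mathcal{T}(t)^j/j!$ — i.e.\ its entries are the rooted-forest numbers $f_{n,j}$ — yields $\psi_{n,k}=\sum_j f_{n,j}\stirlingcycle{j}{k}$, equivalently $[\psi_{n,k}]_{n,k}=[f_{n,j}]_{n,j}\,[\stirlingcycle{j}{k}]_{j,k}$ and $\bm{\psi}_n(y)=\sum_j f_{n,j}\langle y\rangle_j$; thus $[\psi_{n,k}]$ is also the $q=1$ specialisation of the array $[F^{\diamond}_{n,k}(q)]$ of Theorem~\ref{thm+TP+ERA+fraction} taken with $f=\mathcal{T}$.

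Part (i) is then immediate: $[f_{n,j}]=(1,\mathcal{T}(t))$ is totally positive because $1/\overline{\mathcal{T}}'(t)=e^t/(1-t)$ is a P\'olya frequency function (Theorem~\ref{thm+TP+ERA}(i), with $\varphi\equiv1$, $\phi=e^t/(1-t)$), the signless Stirling cycle triangle $[\stirlingcycle{j}{k}]=(1,-\log(1-t))$ is totally positive (classical, or Theorem~\ref{thm+TP+ERA}(i) again), and the Cauchy--Binet formula gives total positivity of the product $[\psi_{n,k}]$; alternatively this is Theorem~\ref{thm+TP+ERA+fraction}(i) at $q=1$. Granting part (ii), part (iii) follows by a lowest-degree-coefficient argument: for $n\ge1$ one has $\bm{\psi}_n(y)=\psi_{n,1}\,y+O(y^2)$ with $\psi_{n,1}\ge1$, so in any $p\times p$ minor of the Hankel matrix $[\bm{\psi}_{i+j}(y)]_{i,j\ge0}$ taken with rows $i\ge1$ every entry has a genuine linear term and the coefficient of $y^p$ in that minor is the corresponding $p\times p$ minor of $[\psi_{m+m'+1,1}]_{m,m'\ge0}$; coefficientwise Hankel-total positivity of $(\bm{\psi}_n(y))_n$ forces this coefficient to be nonnegative, so $[\psi_{m+m'+1,1}]$ is totally positive, i.e.\ $(\psi_{n+1,1})_{n\ge0}$ is a Stieltjes moment sequence.

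The content is part (ii), which I would prove as a mixing statement. By Proposition~\ref{prop+refined Stirling cycle polynomial} the Stirling cycle polynomials $(\langle y\rangle_j)_{j\ge0}$ are coefficientwise Hankel-totally positive in $y$ — their Jacobi-type continued fraction has $\gamma_j=2j+y$ and $\beta_{j+1}=(j+y)(j+1)$, both in $\mathbb{R}_{\ge0}[y]$ — so $\langle y\rangle_j$ is, coefficientwise in $y$, a Stieltjes moment $\int_0^\infty s^j\,d\mu_y(s)$. On the other hand, the row-generating polynomials $F_n(q)=\sum_j f_{n,j}q^j$ of $(1,\mathcal{T}(t))$ are coefficientwise Hankel-totally positive in $q$ (Theorem~\ref{thm+TP+ERA}(ii), equivalently Theorem~\ref{thm+TP+Row+GF+Production} applied to the totally positive production-matrix factorisation of $(1,\mathcal{T}(t))$), hence $(F_n(s))_{n\ge0}$ is a Stieltjes moment sequence $\int_0^\infty r^n\,d\nu_s(r)$ for every $s\ge0$. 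Since $\bm{\psi}_n(y)=\sum_j f_{n,j}\langle y\rangle_j=\int_0^\infty F_n(s)\,d\mu_y(s)$, and a mixture over $\mu_y$ of Stieltjes moment sequences is again a Stieltjes moment sequence (with representing measure $\int_0^\infty\nu_s\,d\mu_y(s)$), the sequence $(\bm{\psi}_n(y))_n$ is, coefficientwise in $y$, a Stieltjes moment sequence, i.e.\ coefficientwise Hankel-totally positive in $y$.

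The step that needs care, and which I expect to be the main obstacle, is making this mixing argument rigorous at the \emph{coefficientwise} level: one needs the linear-transform analogue of Theorem~\ref{thm+conv}, namely that applying a matrix whose row-generating polynomials are coefficientwise Hankel-totally positive to a coefficientwise Hankel-totally positive sequence again produces a coefficientwise Hankel-totally positive sequence. (The numerical version — $(\bm{\psi}_n(y))_n$ Hankel-totally positive for each fixed $y\ge0$ — is already contained in Theorem~\ref{thm+TP+ERA+fraction}(ii) at $q=1$, via the Euler continued fraction for $\sum_j\langle y\rangle_j z^j$.) A natural route is to carry this out entirely with production matrices and branched continued fractions, using the $1$-Jacobi--Rogers presentation $\langle y\rangle_j=J^{(1)}_{j,0}(\bm\beta(y))$ with $\bm\beta(y)$ as above together with the totally positive factorisation of the production matrix of $(1,\mathcal{T}(t))$. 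Worth noting is that the \emph{direct} production-matrix attack on $(1,C(t))$ does not work: the series $1/\overline{C}'(t)=e^{2t}e^{1-e^{-t}}$, although it has nonnegative Taylor coefficients, is not a P\'olya frequency function (it is an entire function of infinite order), so $\bm{\Gamma}^{(\Lambda,1/\overline{C}'(t))}$ fails to be totally positive and Theorems~\ref{thm+TP+Row+GF} and \ref{thm+TP+ERA} do not apply to it as they stand.
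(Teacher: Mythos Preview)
First, note that this statement is a \emph{conjecture} of Sokal that the paper reports; the paper itself establishes only parts (i) and (iii), while part (ii) --- the coefficientwise Hankel-total positivity of $(\bm{\psi}_n(y))_n$ --- is left open. The paper proves only the numerical version of (ii), i.e.\ Hankel-total positivity for each fixed $y\ge0$, via Theorem~\ref{thm+TP+ERA+fraction}(ii) with $q=1$.

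Your argument for (i) is correct and is exactly the paper's: the factorisation $[\psi_{n,k}]=[f_{n,j}]\bigl[\stirlingcycle{j}{k}\bigr]$, total positivity of both factors, and Cauchy--Binet (equivalently Theorem~\ref{thm+TP+ERA+fraction}(i) at $q=1$).

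Your argument for (ii) has a genuine gap, and you have correctly located it: the mixing representation $\bm{\psi}_n(y)=\int F_n(s)\,d\mu_y(s)$ is meaningful only for fixed $y\ge0$, where a representing measure $\mu_y$ exists. At the coefficientwise level there is no measure with values in $\mathbb{R}_{\ge0}[y]$, and the ``linear-transform analogue of Theorem~\ref{thm+conv}'' you invoke is not available; nonnegativity of a polynomial for all $y\ge0$ does not force nonnegativity of its coefficients. Your remark that the direct production-matrix route via $1/\overline{C}'(t)=e^{2t}e^{1-e^{-t}}$ also fails is correct and is precisely why (ii) remains open in the paper.

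Your derivation of (iii) from (ii) via the $y^p$-coefficient is a valid implication, but since (ii) is unavailable it is not a proof. The paper proves (iii) directly and differently: writing $\psi_{n,1}=\sum_{k\ge1}(k-1)!\,f_{n,k}$ (a connected functional digraph is a cyclic arrangement of rooted trees), one uses that the $[f_{n,k}]$-convolution preserves the Stieltjes moment property (Proposition~\ref{prop+labeled trees+f}(iii), itself a consequence of Theorem~\ref{thm+conv} applied to the coefficientwise Hankel-total positivity of $(F_n(q))_n$) together with the fact that $((k-1)!)_{k\ge1}$ is a Stieltjes moment sequence. This yields (iii) unconditionally.
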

Let $\bm{\psi}_n(q,y)$ be the generating polynomial for functional
digraphs on the vertex set $[n]$ with a weight $q$ for each cyclic
vertex and a weight $y$ for each component. Obviously,
$\bm{\psi}_n(q,1)=F^{ord}_n(q)$ and
$\bm{\psi}_n(1,y)=\bm{\psi}_n(y)$. In addition, one has the
bivariate exponential generating function
\begin{equation}\label{eq+EGF+functional digraphs}
 \sum_{n\geq0}\bm{\psi}_n(q,y)\frac{t^n}{n!}=\frac{1}{(1-q\mathcal
{T}(t))^y}.
\end{equation}
Let $\psi^Y_{n,k}(q)=[y^k]\bm{\psi}_n(q,y)$. Sokal \cite[Conjcture
6.10 ]{Sok21} also conjectured the following, which in particular
implies Conjecture \ref{conj+functional digraph+y}.
\begin{conj}\label{conj+functional digraph+y+q}Let $\bm{\psi}_n(q,y)$ and $\psi^Y_{n,k}(q)$ be defined above.
\begin{itemize}
\item [\rm (i)]
The lower-triangular matrix $[\psi^Y_{n,k}(q)]_{n,k}$ is
coefficientwise totally positive in $q$.
 \item [\rm (ii)]
The polynomial sequence $(\bm{\psi}_n(q,y))_{n\geq0}$ is
coefficientwise Hankel-totally positive in $(q,y)$.
\item [\rm (iii)]
The polynomial sequence $(\bm{\psi}_{n+1,1}^Y(q))_{n\geq0}$ is
coefficientwise Hankel-totally positive in $q$.
 \end{itemize}
\end{conj}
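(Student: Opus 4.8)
The plan is to reduce everything to the tree function $\mathcal{T}(t)=\sum_{n\ge1}n^{n-1}t^n/n!$, for which $\overline{\mathcal{T}}(t)=te^{-t}$ and $1/\overline{\mathcal{T}}'(t)=e^t/(1-t)$ is a P\'olya frequency function, and then to couple Theorem~\ref{thm+TP+ERA+fraction} with Euler's continued fraction for $\langle y\rangle_k$. Expanding $(1-q\mathcal{T}(t))^{-y}=\sum_{k\ge0}\frac{\langle y\rangle_k}{k!}\,(q\mathcal{T}(t))^k$ (with $\langle y\rangle_k=y(y+1)\cdots(y+k-1)$) and comparing with the exponential Riordan array $\textbf{F}:=(1,\mathcal{T}(t))=[f_{n,k}]_{n,k}$ — where $f_{n,k}=\binom{n-1}{k-1}n^{n-k}$ is the rooted-forest number already introduced — one reads off
\[
\bm\psi_n(q,y)=\sum_k f_{n,k}\,q^k\langle y\rangle_k ,
\]
and since $\langle y\rangle_k=\sum_i\stirlingcycle ki y^i$ this gives the matrix factorization $[\psi^Y_{n,k}(q)]_{n,k}=[f_{n,k}]_{n,k}\,\diag(q^k)\,[\stirlingcycle nk]_{n,k}$; moreover $[f_{n,k}]=(1,\mathcal{T}(t))$, $[\stirlingcycle nk]=(1,-\log(1-t))$, and hence $[\psi^Y_{n,k}(q)]=(1,-\log(1-q\mathcal{T}(t)))$. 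For part~(i): $\textbf{F}=(1,\mathcal{T}(t))$ is totally positive by Theorem~\ref{thm+TP+ERA}(i) (take $\varphi=1$, $\phi=e^t/(1-t)$, $f=\mathcal{T}$), $[\stirlingcycle nk]=(1,-\log(1-t))$ is totally positive by the same theorem (here $1/\overline{(-\log(1-t))}'(t)=e^t$ is a P\'olya frequency function), and $\diag(q^k)$ is coefficientwise totally positive in $q$; Cauchy--Binet applied to the displayed factorization then yields (i) (this is the $f=\mathcal{T}$ case of Theorem~\ref{thm+TP+ERA+fraction}(i)).

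For parts~(ii) and~(iii) the new ingredient is that $(q^k\langle y\rangle_k)_{k\ge0}$ is coefficientwise Hankel-totally positive in $(q,y)$. Indeed, by Euler's continued fraction, $\sum_k q^k\langle y\rangle_k\,t^k$ is a Stieltjes-type continued fraction whose partial numerators are $\alpha_1t,\alpha_2t,\dots$ with $\alpha_{2j-1}=jq$ and $\alpha_{2j}=(y+j-1)q$, all polynomials with nonnegative coefficients in $(q,y)$; so $(q^k\langle y\rangle_k)_k$ is the $1$-Stieltjes--Rogers polynomial sequence for these weights, and Theorem~\ref{thm+BSCF+Hankel} (case $m=1$) applies. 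Equivalently, $(q^k\langle y\rangle_k)_k$ is the zeroth column of the Catalan--Stieltjes triangle $\textbf{S}$ whose production matrix $P_{\textbf{S}}$ is the product of a lower- and an upper-bidiagonal matrix with nonnegative-coefficient entries in $(q,y)$, hence coefficientwise totally positive. Consequently $\bm\psi_n(q,y)$ is the zeroth column of $\textbf{F}\textbf{S}$, which is an output matrix $\scro(\widehat P)$ with $\widehat P=\textbf{S}^{-1}P_{\textbf{F}}\textbf{S}$, where $P_{\textbf{F}}=\bm\Gamma^{(\Lambda,e^t)}\,\bm\Gamma^{(\Lambda,1/(1-t))}\,\textbf{U}(0,0,0,1)$. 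I would prove that $\widehat P$, and its binomial-$y$ conjugate $\textbf{B}_y^{-1}\widehat P\,\textbf{B}_y$ (the production matrix of the binomial row-generating matrix of $\textbf{F}\textbf{S}$ in the variable $y$), are coefficientwise totally positive in $(q,y)$ by tracking how the bidiagonal pieces of $P_{\textbf{S}}$ interact with the $\bm\Gamma$-pieces of $P_{\textbf{F}}$; then Theorems~\ref{thm+iteration+TP} and~\ref{thm+TP+Hank+zeros} deliver all of (i)--(iii). For (iii), note that $\psi^Y_{n,1}(q)$ is the $k=1$ column of $[\psi^Y_{n,k}(q)]=(1,-\log(1-q\mathcal{T}(t)))$, with exponential generating function $-\log(1-q\mathcal{T}(t))$, so $(\psi^Y_{n+1,1}(q))_n$ is governed by the same production matrix.

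The hard part will be exactly the coefficientwise total positivity of $\widehat P$, i.e.\ showing that conjugation of $P_{\textbf{F}}$ by the output matrix $\textbf{S}$ preserves coefficientwise total positivity. One cannot simply invoke Theorem~\ref{thm+TP+Row+GF+Production}: the ``obvious'' production matrix of $[\psi^Y_{n,k}(q)]=(1,-\log(1-q\mathcal{T}(t)))$ has entries containing $1/q$, and the production matrix of $((1-q\mathcal{T}(t))^{-y},\mathcal{T}(t))$ — whose zeroth column already equals $\bm\psi_n(q,y)$ — is \emph{not} coefficientwise totally positive in $(q,y)$ (a $2\times2$ minor has a negative coefficient), because the non-integer exponent $y$ blocks a direct P\'olya-frequency argument for $(1-q\mathcal{T}(t))^{-y}$. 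The resolution is to work with the product $\textbf{F}\textbf{S}$ and to push the coefficientwise positivity of the Euler continued fraction through the tree-function transform $\textbf{F}$; this ``composition of the Euler continued fraction with the tree-function structure'' is precisely the kind of statement the branched-continued-fraction and production-matrix machinery of \cite{PSZ18} is built to support, and making it precise is the bulk of the work — after which parts~(ii),~(iii) and the coefficientwise strengthening of part~(i) all follow simultaneously.
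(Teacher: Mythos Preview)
This statement is a \emph{conjecture} in the paper, not a theorem: the paper proves only part~(i) in full, and for (ii),~(iii) establishes merely the pointwise (not coefficientwise) versions --- Hankel-total positivity for each fixed $q,y\ge 0$ --- via Theorem~\ref{thm+TP+ERA+fraction} and the Euler continued fraction for $\langle y\rangle_k$. Your treatment of~(i) is correct and is exactly the paper's argument (the factorization $[f_{n,k}]\cdot\diag(q^k)\cdot[\stirlingcycle{n}{k}]$ plus Cauchy--Binet, i.e.\ Theorem~\ref{thm+TP+ERA+fraction}(i) with $f=\mathcal T$).

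For (ii) and (iii), however, there is a genuine gap. You correctly observe that the obvious production-matrix routes fail (the one for $(1,-\log(1-q\mathcal T))$ involves $1/q$, and the one for $((1-q\mathcal T)^{-y},\mathcal T)$ is not coefficientwise TP), and you propose instead to work with $\textbf{F}\textbf{S}$ and the conjugated production matrix $\widehat P=\textbf{S}^{-1}P_{\textbf{F}}\textbf{S}$. But conjugation of a coefficientwise TP matrix by an (even coefficientwise TP) output matrix does \emph{not} in general preserve coefficientwise TP; you would need a concrete structural reason --- e.g.\ that $\widehat P$ itself factors as a product of the special bidiagonal/tridiagonal/$\bm\Gamma^{(\Lambda,f)}$ pieces handled by Lemmas~\ref{lem+low-bi+TP}--\ref{lem+ERA+simil} --- and you have not supplied one. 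Your phrase ``tracking how the bidiagonal pieces of $P_{\textbf{S}}$ interact with the $\bm\Gamma$-pieces of $P_{\textbf{F}}$'' is exactly the missing argument, not a proof. Since the paper itself leaves the coefficientwise versions of (ii) and (iii) open, this gap is not a slip on your part but the actual open problem; what you have written is a plausible strategy, not a proof.
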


From \cite{Sok21}, we know that
$\bm{\psi}_{n+1,1}^Y=\sum_{k=1}^n(k-1)!f_{n,k}q^k$, which enumerates
connected functional digraphs on the vertex set $[n]$ with a weight
$q$ for each cyclic vertex; equivalently, they enumerate cyclically
ordered forests of rooted trees on the vertex set $[n]$ with a
weight $q$ for each tree. Its exponential generating function is
$-\log(1-q\mathcal {T}(t))$.

For Conjectures \ref{conj+functional digraph+y} and
\ref{conj+functional digraph+y+q}, we obtain the next result, whose
(i) confirms Conjecture \ref{conj+functional digraph+y+q} (i) and
implies Conjecture \ref{conj+functional digraph+y} (i), (ii) gives a
stronger support for Conjecture \ref{conj+functional digraph+y+q}
(ii), Conjecture \ref{conj+functional digraph+y} (ii) and Conjecture
\ref{conj+F}, and (iii) provide a stronger support for Conjecture
\ref{conj+functional digraph+y+q} (iii) and confirms Conjecture
\ref{conj+functional digraph+y} (iii).

\begin{prop}
\begin{itemize}
\item [\rm (i)]
The lower-triangular matrix $[\psi^Y_{n,k}(q)]_{n,k}$ is
coefficientwise totally positive in $q$.
\item [\rm (ii)]
For any fixed $y>0$ and $q>0$, $(\bm{\psi}_n(q,y))_{n\geq0}$ is a
Stieltjes moment sequence (of real numbers).
\item [\rm (iii)]
For any fixed $q\geq0$, $(\bm{\psi}_{n+1,1}^Y(q))_{n\geq0}$ is a
Stieltjes moment sequence (of real numbers).
\item [\rm (iv)]
The convolution $z_n=\sum_{k\geq0}\psi^Y_{n,k}(q)\, x_ky_{n-k}$
preserves the Stieltjes moment property in $\mathbb{R}$ for
$q\geq0$.
\item [\rm (v)]
The polynomial $\bm{\psi}_n(q,y)$ in $y$ has only real zeros for
$q>0$.
 \end{itemize}
\end{prop}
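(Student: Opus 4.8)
The plan is to realize all of (i)--(v) as consequences of Theorem~\ref{thm+TP+ERA+fraction} applied to the tree function $\mathcal {T}(t)=\sum_{n\ge1}n^{n-1}t^n/n!$. Take $f(t)=\mathcal {T}(t)$ and $[F_{n,k}]_{n,k}=(1,\mathcal {T}(t))$; then $F_{n,k}=f_{n,k}$, and comparing the defining relation $\sum_{n\ge0}\bm{\psi}_n(q,y)t^n/n!=(1-q\mathcal {T}(t))^{-y}$ with $\big(1/(1-qf(t))\big)^y=\sum_n F^{\diamond}_n(q,y)\,t^n/n!$ identifies $F^{\diamond}_n(q,y)=\bm{\psi}_n(q,y)$ and $F^{\diamond}_{n,k}(q)=\psi^Y_{n,k}(q)$. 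The single hypothesis to verify is that $1/\overline{f}'(t)$ is a P\'olya frequency function: since $\mathcal {T}(t)=te^{\mathcal {T}(t)}$ one has $\overline{\mathcal {T}}(t)=te^{-t}$, hence $1/\overline{\mathcal {T}}'(t)=e^t/(1-t)$, which is P\'olya-frequency because it is the product of the two P\'olya frequency functions $e^t$ and $1/(1-t)$ (their Toeplitz matrices are totally positive, and a product of totally positive Toeplitz matrices is totally positive). With this in place, parts (i), (ii) and (iii) of Theorem~\ref{thm+TP+ERA+fraction} (with empty indeterminate set $\textbf{x}$) give, respectively, items (i), (ii) and (iv) of the proposition; item (iv) can also be obtained from (i) via Theorem~\ref{thm+conv}.

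For (v) I would use part (iv) of Theorem~\ref{thm+TP+ERA+fraction}, which requires each row $(f_{n,k})_{k=0}^n$ to be a P\'olya frequency sequence. This holds because the row-generating polynomial is explicit: $\sum_k f_{n,k}y^k=\sum_{k=1}^n\binom{n-1}{k-1}n^{n-k}y^k=y(y+n)^{n-1}$ for $n\ge1$ (and $1$ for $n=0$), which is real-rooted. Theorem~\ref{thm+TP+ERA+fraction}(iv) then yields that $(\psi^Y_{n,k}(q))_{k=0}^n$ is a P\'olya frequency sequence for every $q\ge0$, so its generating polynomial $\bm{\psi}_n(q,y)=\sum_k\psi^Y_{n,k}(q)y^k$ has only real zeros; for $q>0$ this is exactly (v).

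The remaining item (iii) is not literally a conclusion of Theorem~\ref{thm+TP+ERA+fraction}, and I expect it to be the step needing the most care; I would deduce it from (ii) by a shift-and-degenerate argument. For $q=0$ the sequence $(\bm{\psi}_{n+1,1}^Y)_{n\ge0}$ is identically $0$ (its generating function $-\log(1-q\mathcal {T}(t))$ vanishes), hence trivially Hankel-totally positive, so fix $q>0$. By (ii) the Hankel matrix $[\bm{\psi}_{i+j}(q,y)]_{i,j\ge0}$ is totally positive for each $y>0$; deleting its zeroth row shows the shifted sequence $(\bm{\psi}_{n+1}(q,y))_{n\ge0}$ is Hankel-totally positive, and since $\bm{\psi}_{n+1}(q,0)=0$ the entries of $(\bm{\psi}_{n+1}(q,y)/y)_{n\ge0}$ are again polynomials in $y$ whose Hankel matrix, obtained by dividing every entry by $y>0$, is still totally positive. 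Letting $y\to0^+$, each minor converges to the corresponding minor of $[\bm{\psi}_{i+j+1,1}^Y]_{i,j\ge0}$ and stays nonnegative, which is (iii). The only genuinely delicate point throughout is this last limit: one must use that $\bm{\psi}_{n+1}(q,\cdot)$ vanishes at $0$, so that dividing by $y$ keeps the entries polynomial and total positivity survives both the division and the passage to the limit.
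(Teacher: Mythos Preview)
Your proof is correct. For items (i), (ii), (iv), (v) you proceed exactly as the paper does: identify $\bm{\psi}_n(q,y)$ and $\psi^Y_{n,k}(q)$ with $F^{\diamond}_n(q,y)$ and $F^{\diamond}_{n,k}(q)$ for $f=\mathcal{T}$, verify that $1/\overline{\mathcal{T}}'(t)=e^t/(1-t)$ is P\'olya-frequency, and read off the conclusions from Theorem~\ref{thm+TP+ERA+fraction}; (v) uses the real-rootedness of $\sum_k f_{n,k}y^k=y(y+n)^{n-1}$ just as in the paper.

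The genuine difference is in (iii). The paper invokes the explicit expansion $\bm{\psi}^Y_{n+1,1}=\sum_{k}(k-1)!\,f_{n,k}\,q^k$ together with the convolution-preserving property of Proposition~\ref{prop+labeled trees+f}(iii), i.e.\ it feeds a specific Stieltjes moment input into the tree-triangle transformation. Your route is instead a shift-and-limit argument: from (ii) the Hankel matrix of $(\bm{\psi}_{n+1}(q,y))_{n\ge0}$ is totally positive for each $y>0$ (as a submatrix of that of $(\bm{\psi}_n(q,y))_n$), division of every entry by $y>0$ preserves total positivity, and letting $y\to0^+$ yields the Hankel-total positivity of $(\bm{\psi}^Y_{n+1,1})_{n\ge0}$ by continuity of minors. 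Both arguments are valid; yours is more self-contained (it needs neither the explicit formula for $\bm{\psi}^Y_{n+1,1}$ nor Proposition~\ref{prop+labeled trees+f}), while the paper's approach ties (iii) back to the structural result on tree convolutions and makes the combinatorial provenance of the sequence visible.
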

\begin{proof}
For (i) and (ii), they follow from Theorem \ref{thm+TP+ERA+fraction}
(i) and (ii), respectively. (iv) is immediate from (ii). Finally,
for (iii), by Proposition \ref{prop+labeled trees+f} (iii) that the
convolution $z_n=\sum_{k\geq0}f_{n,k} x_ky_{n-k}$ preserves the
Stieltjes moment property in $\mathbb{R}$, for any fixed $q>0$,
$\bm{\psi}_{n+1,1}^Y=\sum_{k=1}^n(k-1)!f_{n,k}q^k$ forms a Stieltjes
moment sequence for $n\geq0$.

(v) It is obviously that
$\sum_{k}f_{n,k}q^k=\sum_{k}\binom{n-1}{k-1}n^{n-k}x^k=(n+x)^{n-1}$
has only real zeros. In consequence, $(f_{n,k})_{k}$ is a P\'{o}lya
frequency sequence, which by Theorem \ref{thm+TP+ERA+fraction} (iv)
implies that $(\psi^Y_{n,k}(q))_{k}$ is a P\'{o}lya frequency
sequence for $q>0$. So the polynomial $\bm{\psi}_n(q,y)$ in $y$ has
only real zeros for $q>0$.
\end{proof}

\subsection{Stirling permutations and $\textbf{r}$th-order Eulerian polynomials}
A few years ago, Sokal ever conjectured that the sequence of
reversed $2$th-order Eulerian polynomials is coefficientwise
Hankel-totally positive. In \cite{PSZ18}, authors gave a
combinatorial proof of coefficientwise Hankel-total positivity for
$r$th-order Eulerian polynomials. Now we will present an algebraic
proof.

Let $\bfr = (r_1,\ldots,r_n)$ and $|\textbf{r}|=\sum_{i}r_i$ for
$r_i\in \mathbb{N}$, and define the multiset $M_\bfr = \{1^{r_1},
2^{r_2}, \ldots, n^{r_n}\}$ consisting of $r_i$ copies of the letter
$i$. A \textbfit{permutation} of $M_\bfr$ is a word $w_1 \cdots
w_{|\bfr|}$ containing $r_i$ copies of the letter $i$, for each $i
\in [n]$; it is called a \textbfit{Stirling permutation} of $M_\bfr$
when the word $\bfw = w_1 \cdots w_L$ satisfies the condition: if $i
< j < k$ and $w_i = w_k$ imply $w_j \ge w_i$. Stirling permutations
were introduced by Gessel and Stanley \cite{GS78} for the case $r_1
= \ldots = r_n = 2$; this was generalized to $r_1 = \ldots = r_n =
r$ [which we denote by the shorthand $\bfr = (r^n)$] by Gessel
\cite{Ges78a} and Park \cite{Park_94a,Park_94b}, and to general
multisets $M_\bfr$ by Brenti \cite{Bre89,Bre98} and others
\cite{Jan11,Dzh14}. We refer to Stirling permutations of $M_{(r^n)}$
as \textbfit{$\bm{r}$-Stirling permutations} of order~$n$.

For any word $\bfw = w_1 \cdots w_L$ on a totally ordered alphabet
$\mathbb{A}$, a pair $(i,i+1)$ with $1 \le i \le L-1$ is called a
\textbfit{descent} if $w_i > w_{i+1}$. Let $\euler{n}{k}^{\! (r)}$
denote the number of $r$-Stirling permutations with $k$ descents and
call them the \textbfit{$r$th-order Eulerian numbers}.\footnote{
   Here we follow the convention of Graham {\em et al.}\/ \cite{Graham_94}
   that (when $n \ge 1$)
   $\euler{n}{k}^{\! (r)}$ is nonzero for $0 \le k \le n-1$.
} The \textbfit{$r$th-order Eulerian polynomial} is defined to be
\be
   A_n^{(r)}(x)
   \;=\;
   \sum\limits_{k=0}^n \euler{n}{k}^{\!\! (r)} x^k
   \;.
 \label{def.eulerianr}
\ee The $r$th-order Eulerian numbers satisfy the recurrence
%% \cite{?????}
\be
   \euler{n}{k}^{\! (r)}   \;=\;\big[ rn-(r-1)-k \big] \euler{n-1}{k-1}^{\!\! (r)}
      +(k+1) \euler{n-1}{k}^{\!\! (r)}
   \quad\hbox{for } n \ge 1
 \label{eq.eulerian.recurrence}
\ee with initial condition $\euler{0}{k}^{\! (r)} = \delta_{0k}$.

Note that a bijection from $r$-Stirling permutations of order~$n$ to
increasing $(r+1)$-ary trees with $n$ vertices was found by Gessel
\cite{Ges78a} (see \cite{Park_94a}) and independently by Janson {\em
et al.}\/ \cite{Jan11}. In \cite{PSZ18}, by using this bijection to
map statistics between Stirling permutations and trees, authors
proved that the $r$th-order Eulerian polynomial is a special case of
the multivariate Eulerian polynomial on increasing ary trees and
obtained:

\begin{cor}\cite{PSZ18}
   \label{cor.eulerianr.0}
For any integer $r \ge 1$, the $r$th-order Eulerian polynomial
$A_n^{(r)}(x)$ defined in \reff{def.eulerianr} equals the
$r$-Stieltjes--Rogers polynomial $S_n^{(r)}(\balpha)$ and has the
$r$-branched Stieltjes-type continued fraction
\begin{eqnarray}
\sum_{n\geq0}A_n^{(r)}(x)t^n
   & = &
   \cfrac{1}
         {1 \,-\, \alpha_{r} t
            \prod\limits_{i_1=1}^{r}
                 \cfrac{1}
            {1 \,-\, \alpha_{r+i_1} t
               \prod\limits_{i_2=1}^{r}
               \cfrac{1}
            {1 \,-\, \alpha_{r+i_1+i_2} t
               \prod\limits_{i_3=1}^{r}
               \cfrac{1}{1 - \cdots}
            }
           }
         }
\end{eqnarray}
with coefficients $(\alpha_{i})_{i\geq
r}=(\underbrace{1,x,x,\ldots,x}_{r+1},\underbrace{2,2x,2x,\ldots,2x}_{r+1},\underbrace{3,3x,3x,\ldots,3x}_{r+1},\ldots).$
Therefore the sequence $(A_n^{(r)}(x))_{n \ge 0}$ is coefficientwise
Hankel-totally positive in $x$.
\end{cor}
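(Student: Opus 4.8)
The plan is to identify $A_n^{(r)}(x)$ with an $r$-Stieltjes--Rogers polynomial and then invoke Theorem~\ref{thm+BSCF+Hankel}; the bridge is the autonomous-differential-equation machinery packaged in Theorem~\ref{thm+TP+ERA+branched fraction}. First I would turn the recurrence \reff{eq.eulerian.recurrence} for the $r$th-order Eulerian \emph{numbers} into one for the polynomials: multiplying by $x^k$ and summing over $k$ gives $A^{(r)}_{n+1}(x)=(1+rnx)\,A^{(r)}_n(x)+x(1-x)\,\tfrac{d}{dx}A^{(r)}_n(x)$ for $n\ge0$, with $A^{(r)}_0(x)=1$. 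Consequently the exponential generating function $g(t,x):=\sum_{n\ge0}A^{(r)}_n(x)\,t^n/n!$ is the unique formal power series with $g(0,x)=1$ solving the linear first-order partial differential equation $(1-rxt)\,\partial_t g=g+x(1-x)\,\partial_x g$.

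The technical heart is to upgrade this to an \emph{autonomous} differential equation: $\partial_t g=g\,(1-x+xg)^r$ with $g(0,x)=1$. To prove this, let $G$ be the unique formal power series with $G(0,x)=1$ solving the autonomous equation, set $h:=1-x+xG$ (so that $\partial_t h=x\,\partial_t G$ and $\partial_t G=Gh^{r}$), and put $\Phi:=(1-rxt)\,\partial_t G-G-x(1-x)\,\partial_x G$. Since $\partial_t G|_{t=0}=1$ and $\partial_x G|_{t=0}=0$, we get $\Phi|_{t=0}=0$; and a direct computation — differentiate $\Phi$ in $t$, substitute $\partial_t^2 G$ and $\partial_t\partial_x G$ from the autonomous equation, and simplify using $h=1-x+xG$ — shows that the $\partial_x G$-terms cancel identically and that what remains is $\partial_t\Phi=h^{r-1}(h+rxG)\,\Phi$. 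Hence $\Phi\equiv0$, so $G$ satisfies the same PDE with the same initial value as $g$, and by uniqueness $g=G$. Equivalently, writing $f:=g-1$, the series $f$ satisfies the autonomous differential equation $f'(t)=(1+f(t))(1+xf(t))^r$ with $f(0)=0$; that is, $1/\overline{f}'(t)=(1+t)(1+xt)^r$.

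Now apply Theorem~\ref{thm+TP+ERA+branched fraction}(iii) with $m=r$ and $(x_0,x_1,\ldots,x_r)=(1,x,\ldots,x)$, which is legitimate because $1/\overline{f}'(t)=(1+t)(1+xt)^r$ by the preceding step. Writing $f(t)=\sum_{n\ge1}f_n\,t^n/n!$, that theorem gives $1+\sum_{n\ge0}f_{n+1}t^{n+1}=\sum_{n\ge0}S^{(r)}_n(\balpha)\,t^n$, where the right-hand side is the $r$-branched Stieltjes-type continued fraction \reff{eq+f0+mSfrac} with coefficients $(\alpha_i)_{i\ge r}=(\underbrace{1,x,\ldots,x}_{r+1},\underbrace{2,2x,\ldots,2x}_{r+1},\ldots)$. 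On the other hand $f_n=n!\,[t^n]f=n!\,[t^n]g=A^{(r)}_n(x)$ for $n\ge1$, while $A^{(r)}_0(x)=1$; comparing coefficients of $t^n$ on both sides thus yields $A^{(r)}_n(x)=S^{(r)}_n(\balpha)$ for all $n\ge0$, together with the asserted $r$-branched continued fraction. Finally, Theorem~\ref{thm+BSCF+Hankel} says $\bigl(S^{(r)}_n(\balpha)\bigr)_{n\ge0}$ is coefficientwise Hankel-totally positive in $\Z[\balpha]$; substituting for each indeterminate $\alpha_i$ the monomial in $x$ prescribed above is a substitution by monomials with nonnegative integer coefficients, which sends any polynomial with nonnegative coefficients in $\balpha$ to one with nonnegative coefficients in $x$. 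Hence $\bigl(A^{(r)}_n(x)\bigr)_{n\ge0}$ is coefficientwise Hankel-totally positive in $x$.

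The main obstacle is the middle paragraph: spotting and verifying the autonomous differential equation behind the $r$th-order Eulerian polynomials, i.e.\ the identity $\partial_t\Phi=h^{r-1}(h+rxG)\,\Phi$, since it is precisely the autonomous form that allows the $r$-branched-continued-fraction theory of \cite{PSZ18} (as repackaged in Theorem~\ref{thm+TP+ERA+branched fraction}) to apply. Everything else is bookkeeping plus an appeal to results already established in the paper. One could bypass this step by citing the autonomous equation directly from the Stirling-permutation / increasing-$(r+1)$-ary-tree literature, but deriving it from \reff{eq.eulerian.recurrence} keeps the proof self-contained and purely algebraic.
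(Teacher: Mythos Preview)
Your proof is correct. The paper actually presents two proofs of this corollary: the original one (cited from \cite{PSZ18}) is combinatorial, via the Gessel/Janson--Kuba--Panholzer bijection between $r$-Stirling permutations and increasing $(r{+}1)$-ary trees, which identifies $A_n^{(r)}(x)$ as a specialization of the multivariate Eulerian polynomial and hence as an $r$-Stieltjes--Rogers polynomial. The paper then gives an \emph{algebraic} reproof (the argument leading to Proposition~\ref{prop+r+roder+E+poly}(vi)) that is essentially your route: one shows $1/\overline{f}'(t)=(1+xt)^r(1+t)$ and applies Theorem~\ref{thm+TP+ERA+branched fraction}(iii).

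The genuine difference is in how the autonomous equation is obtained. The paper quotes the closed-form expression for $F(t)=\sum_n A_n^{(r)}(x)\,t^n/n!$ from \cite{BSV15} and computes $\overline{f}$ and $1/\overline{f}'(t)$ from it; you instead derive the linear PDE directly from the recurrence~\reff{eq.eulerian.recurrence} and then verify the autonomous ODE $g_t=g(1-x+xg)^r$ by the neat $\Phi$-trick (your identity $\partial_t\Phi=h^{r-1}(h+rxG)\,\Phi$ does check out). Your derivation is more self-contained and avoids the somewhat heavy explicit formula from \cite{BSV15}; the paper's approach is shorter once that formula is granted. Either way, the final step---applying Theorem~\ref{thm+TP+ERA+branched fraction}(iii) with $(x_0,x_1,\ldots,x_r)=(1,x,\ldots,x)$ and then Theorem~\ref{thm+BSCF+Hankel}---is identical.
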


A slightly different definition of the $r$th-order Eulerian
polynomials was used in \cite{PS20}:
\begin{eqnarray}
   E_0^{[r]}(x)  & = &  1  \\
   E_n^{[r]}(x)  & = &  x \,A_n^{(r)}(x)
       \;=\;  \sum\limits_{k=0}^{n-1} \euler{n}{k}^{\!\! (r)} x^{k+1}
           \quad\hbox{for $n \ge 1$}
 \label{eq.reveuler0}
\end{eqnarray}
Then the reversed $r$th-order Eulerian polynomials are defined by
$E_n^{[r]*}(x) = x^n \, E_n^{[r]}(1/x)$. A few years ago, Sokal ever
conjectured:
\begin{conj}
The sequence $(E_n^{[2]*}(x))_{n \ge 0}$ is coefficientwise
Hankel-totally positive in $x$.
\end{conj}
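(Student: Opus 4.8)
The plan is to exhibit $(E_n^{[2]*}(x))_{n\ge 0}$ as the sequence of $2$-Stieltjes--Rogers polynomials of a triangle whose weights are monomials in $x$, so that coefficientwise Hankel-total positivity becomes an instance of Theorem~\ref{thm+BSCF+Hankel}. Recall that, by definition, $E_n^{[2]}(x)=x\,A_n^{(2)}(x)$ for $n\ge 1$ and $E_0^{[2]}(x)=1$, while by Corollary~\ref{cor.eulerianr.0} we have $A_n^{(2)}(x)=S_n^{(2)}(\balpha)$ with $\balpha=(1,x,x,\,2,2x,2x,\,3,3x,3x,\dots)$; hence $\sum_{n\ge 0}A_n^{(2)}(x)\,t^n$ is the $2$-branched Stieltjes-type continued fraction \reff{eq+f0+mSfrac} for those weights. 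First I would write $\sum_{n\ge0}E_n^{[2]}(x)\,t^n=1-x+x\sum_{n\ge0}A_n^{(2)}(x)\,t^n$ and apply the substitution $x\mapsto 1/x$, $t\mapsto xt$, which implements the polynomial reversal $E_n^{[2]*}(x)=x^nE_n^{[2]}(1/x)$; this gives
\[
   \sum_{n\ge0}E_n^{[2]*}(x)\,t^n \;=\; 1-\frac1x+\frac1x\,\widetilde f_0(t),
\]
where $\widetilde f_0(t)=\sum_{n\ge0}S_n^{(2)}(\widetilde\balpha)\,t^n$ is the $2$-branched Stieltjes-type continued fraction for the rescaled weights $\widetilde\balpha=(x,1,1,\,2x,2,2,\,3x,3,3,\dots)$.

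The decisive step is to absorb the affine operation $g\mapsto 1-\tfrac1x+\tfrac1x g$ into the branched continued fraction itself. I claim the result is again a $2$-branched Stieltjes-type continued fraction, namely the one for the \emph{reversed Eulerian weights}
\[
  \balpha^{\mathrm{rev}}=(1,1,x,\,2,2,2x,\,3,3,3x,\dots),\qquad
  \alpha^{\mathrm{rev}}_{3k+2}=\alpha^{\mathrm{rev}}_{3k+3}=k+1,\ \ \alpha^{\mathrm{rev}}_{3k+4}=(k+1)x ,
\]
so that $E_n^{[2]*}(x)=S_n^{(2)}(\balpha^{\mathrm{rev}})$ for every $n\ge 0$; the first values $1,1,x+2,x^2+8x+6,\dots$ are readily checked, and the same identity holds for each $r\ge1$ with $E_n^{[r]*}(x)=S_n^{(r)}(\balpha^{\mathrm{rev},(r)})$, $\balpha^{\mathrm{rev},(r)}=(j,\dots,j,jx)_{j\ge1}$ ($r$ copies of $j$, then one $jx$), so this argument settles the whole family at once. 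Writing $\widetilde f_0=1/(1-xt\,\widetilde g)$ and the target as $1/(1-t\,g^{\mathrm{rev}})$, the claim is equivalent to the identity $g^{\mathrm{rev}}=\widetilde g/(1-(x-1)t\,\widetilde g)$ between the branched continued fractions $\widetilde g,g^{\mathrm{rev}}$; I would prove it either by a contraction/equivalence transformation of $m$-branched Stieltjes-type continued fractions --- conjugating the relevant production matrix by a suitable diagonal matrix, in the spirit of the Lemmas of Section~2 and of \cite[Propositions~7.2,~7.6,~8.2]{PSZ18} --- or, combinatorially, from a weight-preserving bijection on the underlying $2$-Dyck paths (equivalently on increasing ternary trees or on $2$-Stirling permutations).

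Granting this identity, the conclusion is immediate. Every entry of $\balpha^{\mathrm{rev}}$ is a monomial in $x$ with nonnegative integer coefficient; substituting indeterminates by polynomials with nonnegative coefficients turns any coefficientwise totally positive matrix into one that is coefficientwise totally positive in the new variables, so Theorem~\ref{thm+BSCF+Hankel} with $m=2$ (the $2$-Stieltjes--Rogers polynomials are coefficientwise Hankel-totally positive in the generic weights) yields that $(E_n^{[2]*}(x))_{n\ge0}=(S_n^{(2)}(\balpha^{\mathrm{rev}}))_{n\ge0}$ is coefficientwise Hankel-totally positive in $x$, which is the assertion. Equivalently one may invoke Theorem~\ref{thm+TP+Row+GF+Stie-Roger} with $m=2$ and $(x_0,x_1,x_2,y)=(1,1,x,1)$ and then set $q=0$, since for these parameters $S_n(\balpha;0)=S_{n,0}^{(2)}(\balpha)=S_n^{(2)}(\balpha^{\mathrm{rev}})$.

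The hard part is the middle step: identifying the precise reversed weight sequence $\balpha^{\mathrm{rev}}$ and proving the continued-fraction identity $g^{\mathrm{rev}}=\widetilde g/(1-(x-1)t\,\widetilde g)$. It is a genuinely branched phenomenon --- $\sum_n E_n^{[2]*}(x)\,t^n$ has no ordinary (one-branched) Jacobi- or Stieltjes-type continued fraction with polynomial coefficients, since already its Hankel determinant $\Delta_2$ is not divisible by $\Delta_1^2$ --- and it is not visible path-by-path: the two weightings of the lattice paths do not agree term by term, only after summation. A related boundary subtlety to watch is $n=0$: the naive relation $E_n^{[2]*}(x)=x^{-1}S_n^{(2)}(x,1,1)$ is valid only for $n\ge1$ (for $n=0$ the right side is $x^{-1}\neq1$), and it is exactly the weight sequence $\balpha^{\mathrm{rev}}$ --- rather than a rescaling of $(x,1,1,2x,\dots)$ --- that repairs this discrepancy, which is why getting $\balpha^{\mathrm{rev}}$ right is the crux.
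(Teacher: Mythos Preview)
Your overall plan is exactly the paper's (Corollary~\ref{cor.eulerianr}): exhibit $E_n^{[2]*}(x)$ as an $m$-Stieltjes--Rogers polynomial with monomial weights in~$x$ and then invoke Theorem~\ref{thm+BSCF+Hankel}. Your target weight sequence $\balpha^{\mathrm{rev}}=(1,1,x,\,2,2,2x,\,3,3,3x,\ldots)$ is correct, and your remark that the same argument works for all~$r$ is also right.

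The gap is that you do not actually prove the identification $E_n^{[2]*}(x)=S_n^{(2)}(\balpha^{\mathrm{rev}})$: you only say you ``would prove it'' via a continued-fraction equivalence or a bijection, and you yourself flag this as ``the hard part'' and ``the crux''. In fact neither of those devices is needed, and trying to establish the identity $g^{\mathrm{rev}}=\widetilde g/\bigl(1-(x-1)t\,\widetilde g\bigr)$ directly at the level of the branched continued fraction is attacking it from the wrong end. The clean argument, which underlies the paper's treatment via \cite[Section~12]{PSZ18} and Theorem~\ref{thm+TP+ERA+branched fraction}(iii), is this: for weights $(x_0,\ldots,x_m,2x_0,\ldots,2x_m,\ldots)$ one has $S_n^{(m)}=x_0\,f_n(x_0,\ldots,x_m)$ for $n\ge1$, where $f$ is determined by $f'=\prod_{i=0}^m(1+x_if)$ with $f(0)=0$; these $f_n$ are \emph{symmetric} in $x_0,\ldots,x_m$ and \emph{homogeneous of degree $n-1$}. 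Since $A_n^{(r)}(x)=S_n^{(r)}(1,x,\ldots,x,\ldots)=f_n(1,x,\ldots,x)$ by Corollary~\ref{cor.eulerianr.0}, one gets in one line
\[
E_n^{[r]*}(x)\;=\;x^{\,n-1}A_n^{(r)}(1/x)\;=\;x^{\,n-1}f_n(1,1/x,\ldots,1/x)\;=\;f_n(x,1,\ldots,1)\;=\;f_n(1,\ldots,1,x)\;=\;S_n^{(r)}(\balpha^{\mathrm{rev}})
\]
for all $n\ge1$ (and trivially for $n=0$). This simultaneously repairs the $n=0$ boundary issue you noticed and renders your proposed CF identity a corollary rather than a hypothesis. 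Once you have this, either Theorem~\ref{thm+BSCF+Hankel} directly, or your alternative via Theorem~\ref{thm+TP+Row+GF+Stie-Roger} with $(x_0,x_1,x_2,y)=(1,1,x,1)$ and $q=0$, finishes the proof.
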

Using the branched continued fraction for $A_n^{(r)}(x)$, in
\cite{PSZ18} authors also obtained:

\begin{cor}\cite{PSZ18}
   \label{cor.eulerianr}
For any integer $r \ge 1$, the reversed $r$th-order Eulerian
polynomial $E_n^{[r]*}(x)$ equals the $r$-Stieltjes--Rogers
polynomial $S_n^{(r)}(\balpha)$ with $(\alpha_{i})_{i\geq
r}=(\underbrace{1,\ldots,1}_{r},x,\underbrace{2,\ldots,2}_{r},\\x,\underbrace{3,\ldots,3}_{r},x,\ldots).$
Therefore the sequence $(E_n^{[r]*}(x))_{n \ge 0}$ is
coefficientwise Hankel-totally positive in $x$.
\end{cor}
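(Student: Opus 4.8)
The plan is to derive Corollary~\ref{cor.eulerianr} from Corollary~\ref{cor.eulerianr.0} by a purely formal substitution in the $r$-branched Stieltjes-type continued fraction, followed by an appeal to Theorem~\ref{thm+BSCF+Hankel}. The crucial point is that, since $\deg E_n^{[r]} = n$ (because $\deg A_n^{(r)} = n-1$ by the convention for $\euler{n}{k}^{\!(r)}$), the reversal $P(x)\mapsto x^n P(1/x)$ of a degree-$n$ polynomial is encoded on the level of ordinary generating functions by the joint substitution $x\mapsto 1/x$, $t\mapsto xt$.

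First I would record the elementary relations $E_n^{[r]}(x)=x\,A_n^{(r)}(x)$ for $n\ge1$ and $E_0^{[r]}(x)=1$, so that $\sum_{n\ge0}E_n^{[r]}(x)\,t^n = 1-x+x\,\Phi(x,t)$ with $\Phi(x,t):=\sum_{n\ge0}A_n^{(r)}(x)\,t^n$; applying the substitution above gives $\sum_{n\ge0}E_n^{[r]*}(x)\,t^n = 1-\tfrac1x+\tfrac1x\,\Phi(1/x,\,xt)$. Next I would feed in the $r$-branched Stieltjes-type continued fraction for $\Phi$ from Corollary~\ref{cor.eulerianr.0}: every level there has the shape $1/(1-\alpha_i\,t\,\prod\cdots)$, so under $x\mapsto 1/x$, $t\mapsto xt$ each coefficient $\alpha_i$ is replaced by $x$ times its value after $x\mapsto 1/x$; the ``integer'' coefficients $k$ (heads of the length-$(r{+}1)$ blocks) become $kx$, and the coefficients $kx$ (the remaining $r$ entries of each block) become $k$. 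Thus $\Phi(1/x,\,xt)$ is again an $r$-branched Stieltjes-type continued fraction, equal to $\sum_{n\ge0}S_n^{(r)}(\bgamma)\,t^n$ for an explicit weight list $\bgamma$ beginning $(x,\underbrace{1,\dots,1}_r,2x,\underbrace{2,\dots,2}_r,\dots)$. Since every $r$-Dyck path from the origin ends with an $r$-fall from height $r$, which now carries weight $x$, a common factor of $x$ can be extracted from $\sum_{n\ge1}S_n^{(r)}(\bgamma)\,t^n$; this is exactly what the leading normalization $1-\tfrac1x+\tfrac1x(\cdot)$ absorbs (it also cancels the $t^0$ mismatch between $E_0^{[r]}=1$ and $x\,A_0^{(r)}(x)=x$), and an odd contraction of the $r$-branched Stieltjes-type continued fraction, in the form proved in~\cite{PSZ18}, rewrites the result as the $r$-branched Stieltjes-type continued fraction with the displayed coefficient sequence $(\underbrace{1,\dots,1}_r,x,\underbrace{2,\dots,2}_r,x,\dots)$; that is, $E_n^{[r]*}(x)=S_n^{(r)}(\balpha)$. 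The coefficientwise Hankel-total positivity of $(E_n^{[r]*}(x))_{n\ge0}$ in $x$ is then immediate from Theorem~\ref{thm+BSCF+Hankel}.

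The main obstacle is the continued-fraction bookkeeping in this last stage: one must check precisely how the substitution interacts with the $r$-fold branching at each level, how the extracted factor of $x$ redistributes down the branches so that the quotient is again a genuine $r$-branched Stieltjes-type continued fraction of the required shape, and how the odd contraction moves the block pattern $(\underbrace{k,\dots,k}_r,kx)$ to $(\underbrace{k,\dots,k}_r,x)$; the low-order terms $t^0$ and $t^1$, where the degree-$n$ reversal and the convention $E_0^{[r]}=1$ interact nontrivially, deserve to be verified by hand. A slightly different route organizes the same computation through production matrices: one identifies the displayed weight sequence, via the odd-contraction/production-matrix dictionary of~\cite{PSZ18}, with an $(r{+}1)$-banded lower-Hessenberg production matrix whose associated recurrence is, after relabeling $k\mapsto n-1-k$, exactly the Eulerian recurrence~\reff{eq.eulerian.recurrence}, and then concludes via Theorem~\ref{thm+BSCF+Hankel} in the same way; but it runs into the same matching of recurrences at the decisive point.
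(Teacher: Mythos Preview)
The paper does not give its own proof of Corollary~\ref{cor.eulerianr}; it is simply cited from \cite{PSZ18}, with the remark that the result there is obtained ``using the branched continued fraction for $A_n^{(r)}(x)$''. So there is nothing to compare your argument against except the methods of \cite{PSZ18}, and I will evaluate the proposal on its own terms.

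Your overall strategy---derive the continued fraction for $\sum_n E_n^{[r]*}(x)\,t^n$ from Corollary~\ref{cor.eulerianr.0} via the substitution $x\mapsto 1/x$, $t\mapsto xt$, and then invoke Theorem~\ref{thm+BSCF+Hankel}---is sound in outline, and your algebra through the identity $\sum_{n\ge 0}E_n^{[r]*}(x)\,t^n = 1-\tfrac{1}{x}+\tfrac{1}{x}\,\Phi(1/x,\,xt)$ is correct. The substitution does transform the weight list $(1,\underbrace{x,\ldots,x}_r,2,\underbrace{2x,\ldots,2x}_r,\ldots)$ into $\bgamma=(x,\underbrace{1,\ldots,1}_r,2x,\underbrace{2,\ldots,2}_r,\ldots)$, as you say.

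The genuine gap is the next step. You assert that an ``odd contraction \ldots\ in the form proved in~\cite{PSZ18}'' rewrites $1-\tfrac{1}{x}+\tfrac{1}{x}\,\Phi(1/x,xt)$ as an $r$-branched Stieltjes-type continued fraction with weights $(\underbrace{1,\ldots,1}_r,x,\underbrace{2,\ldots,2}_r,x,\ldots)$, and that it ``moves the block pattern $(\underbrace{k,\dots,k}_r,kx)$ to $(\underbrace{k,\dots,k}_r,x)$''. But the odd contraction of \cite{PSZ18} relates an $m$-Stieltjes-type continued fraction to an $m$-Jacobi-type one (equivalently, it expresses a product $U\cdot L_1\cdots L_m$ of bidiagonal matrices as the production matrix of an $m$-S-fraction with a specific weight pattern). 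It does not take one $r$-Stieltjes-type continued fraction to another, and in particular it does not rescale some of the weights from $kx$ to $x$ while leaving the others intact. If you unfold what you actually have,
\[
\Psi(x,t)\;=\;1+\frac{t\prod_{i=1}^r g_i}{1-xt\prod_{i=1}^r g_i}
\;=\;\frac{1-(x-1)\,t\prod_{i=1}^r g_i}{1-xt\prod_{i=1}^r g_i},
\]
this is not of the shape $1/(1-\alpha t\prod_i h_i)$, so the identification with the target continued fraction requires a further nontrivial identity that you have not supplied. You yourself flag this as ``the main obstacle''---and it really is the whole content of the corollary; everything before it is routine. The alternative route you sketch via production matrices runs into the same unproved matching, as you note. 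So as it stands the proposal does not constitute a proof: it reduces the statement to exactly the point that needs an argument.
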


For the coefficientwise Hankel-total positivity of
$(A_n^{(r)}(x))_{n\geq0}$ and $(E_n^{[r]*}(x))_{n \ge 0}$, by
Theorem \ref{thm+TP+ERA}, we will give an algebraic proof from the
exponential Riordan array.

Let $F(t)=\sum_{n}A^{(r)}_n(x)\frac{t^n}{n!}$. In \cite{BSV15}, it
was proved
$$F(t)=\frac{h\left(e^{t(1-x)^r}\overline{h}(x)\right)}{x}\frac{1-x}{1-h\left(e^{t(1-x)^r}\overline{h}(x)\right)},$$
where the compositional inverse of the function $h(z)$ is given by
\begin{eqnarray}
\overline{h}(z)&=&z\exp\left(\sum_{k=1}^{r-1}\binom{r-1}{k}\frac{(-z)^k}{k}\right).
\end{eqnarray}
Let
\begin{eqnarray}\label{eq+f+E2}
f(t):=F(t)-1=\frac{h\left(e^{t(1-x)^r}\overline{h}(x)\right)-x}{x\left(1-h\left(e^{t(1-x)^r}\overline{h}(x)\right)\right)}.
\end{eqnarray}
We have
\begin{eqnarray}h\left(e^{t(1-x)^r}\overline{h}(x)\right)&=&\frac{x(1+f(t))}{1+xf(t)},
\end{eqnarray}
which implies
\begin{eqnarray}
xe^{\overline{f}(t)(1-x)^r}\exp\left(\sum_{k=1}^{r-1}\binom{r-1}{k}\frac{(-x)^k}{k}\right)&=&\frac{x(1+t)}{1+xt}\exp\left(\sum_{k=1}^{r-1}\frac{1}{k}\binom{r-1}{k}\left(-\frac{x(1+t)}{1+xt}\right)^k\right).\nonumber\\
\end{eqnarray}
Then we derive
\begin{eqnarray}\frac{1}{\overline{f}'(t)}&=&(1+xt)^{r}(1+t),
\end{eqnarray}
whose coefficient sequence is coefficientwise Toeplitz-totally
positive in $x$. Then by Theorem \ref{thm+TP+ERA} and Theorem
\ref{thm+TP+ERA+branched fraction}, we conclude more results for
total positivity related to $A_n^{(r)}(x)$ and $E_n^{[r]*}(x)$ as
follows.
\begin{prop}\label{prop+r+roder+E+poly}
 Let $f(t)$ be defined by (\ref{eq+f+E2}) and $[\euler{n}{k}^{\!\! (i,j)}]_{n,k}$ be the exponential Riordan array $((1+xf(t))^{i}(1+f(t))^j,f(t))$
for $i\in\{0,1,\ldots,r\}$ and $j\in\{0,1\}$. Then
\begin{itemize}
\item [\rm (i)]
 the triangle $[\euler{n}{k}^{\!\! (i,j)}]_{n,k}$ is coefficientwise totally
positive in $x$;
\item [\rm (ii)]
the row-generating polynomial $(A^{(i,j)}_n(q))_{n}$ of
$[\euler{n}{k}^{\!\! (i,j)}]_{n,k}$ and its reversed polynomial
$(A^{(i,j)*}_n(q))_{n}$ are coefficientwise Hankel-totally positive
in $(x,q)$;
\item [\rm (iii)]
the $r$th-order Eulerian polynomial $(A_n^{(r)}(x))_{n\geq1}$ is
coefficientwise Hankel-totally positive in $x$;
\item [\rm (iv)]
 the sequence $(T_n^{(i,j)}(x))_{n\geq0}$ is coefficientwise Hankel-totally positive
in $x$, where
 $(1+xf(t))^{i}(1+f(t))^j=\sum_{n\geq0}T_n^{(i,j)}(x)\frac{t^n}{n!}$;
 \item [\rm (v)]
the convolution $z_n=\sum_{k=0}^{n}\euler{n}{k}^{\!\!
(i,j)}x_ky_{n-k}$ preserves the Stieltjes moment property in
$\mathbb{R}$ for $x\geq1$;
 \item [\rm (vi)]
we have the $r$-branched Stieltjes-type continued fraction
\begin{eqnarray}
1+\sum_{n\geq1}A_n^{(r)}(x)t^n
   =
   \cfrac{1}
         {1 \,-\, \alpha_{r} t
            \prod\limits_{i_1=1}^{r}
                 \cfrac{1}
            {1 \,-\, \alpha_{r+i_1} t
               \prod\limits_{i_2=1}^{r}
               \cfrac{1}
            {1 \,-\, \alpha_{r+i_1+i_2} t
               \prod\limits_{i_3=1}^{r}
               \cfrac{1}{1 - \cdots}
            }
           }
         }
\end{eqnarray}
with coefficients $(\alpha_{i})_{i\geq
r}=(\underbrace{1,x,x,\ldots,x}_{r+1},\underbrace{2,2x,2x,\ldots,2x}_{r+1},\underbrace{3,3x,3x,\ldots,3x}_{r+1},\ldots).$
\end{itemize}
\end{prop}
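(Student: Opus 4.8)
The plan is to obtain all of (i)--(vi) by feeding the identity $1/\overline{f}'(t)=(1+xt)^r(1+t)$ established just above into Theorems~\ref{thm+TP+ERA} and~\ref{thm+TP+ERA+branched fraction} and Corollary~\ref{cor.eulerianr.0}, choosing for each pair $(i,j)$ the appropriate factorization of $1/\overline{f}'(t)$.

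First I would fix $0\le i\le r$, $0\le j\le 1$ and set $\varphi(t)=(1+xt)^{i}(1+t)^{j}$ and $\phi(t)=(1+xt)^{r-i}(1+t)^{1-j}$, so that $\varphi(t)\phi(t)=(1+xt)^{r}(1+t)=1/\overline{f}'(t)$. Both $\varphi$ and $\phi$ are finite products of factors of the form $(1+xt)$ and $(1+t)$; the Toeplitz matrix of $1+xt$ is lower-bidiagonal with nonnegative entries $1$ and $x$, hence coefficientwise totally positive in $x$, and Toeplitz matrices multiply via the Cauchy--Binet formula, so $\varphi$ and $\phi$ are $x$-P\'olya frequency functions of every order. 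With this choice, the triangle $[\euler{n}{k}^{\!\!(i,j)}]_{n,k}=((1+xf(t))^{i}(1+f(t))^{j},f(t))$ is exactly the exponential Riordan array $(\varphi(f(t))\exp(\lambda f(t)),f(t))$ of Theorem~\ref{thm+TP+ERA} evaluated at $\lambda=0$. Thus one reads off (i) from Theorem~\ref{thm+TP+ERA}(i), (ii) from Theorem~\ref{thm+TP+ERA}(ii), (iv) from Theorem~\ref{thm+TP+ERA}(iii) (noting $\varphi(f(t))=\sum_{n}T_n^{(i,j)}(x)\,t^n/n!$), and (v) from Theorem~\ref{thm+TP+ERA}(v) together with Theorem~\ref{thm+conv}; in each case one specializes at $\lambda=0$, which is harmless because setting an indeterminate to zero cannot destroy nonnegativity of coefficients.

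For (iii) I would use $A_0^{(r)}(x)=1$, so that $f(t)=F(t)-1=\sum_{n\ge1}A_n^{(r)}(x)\,t^n/n!$; hence in the normalization $f(t)=\sum_{n\ge1}f_n t^n/n!$ one has $f_n=A_n^{(r)}(x)$, and Theorem~\ref{thm+TP+ERA}(iv) says precisely that $(A_{n+1}^{(r)}(x))_{n\ge0}$, i.e. $(A_n^{(r)}(x))_{n\ge1}$, is coefficientwise Hankel-totally positive in $x$. For (vi), again $A_0^{(r)}(x)=1$ makes the left-hand side equal to $\sum_{n\ge0}A_n^{(r)}(x)t^n$, so (vi) is literally Corollary~\ref{cor.eulerianr.0}; alternatively it follows from Theorem~\ref{thm+TP+ERA+branched fraction}(iii), writing $1/\overline{f}'(t)=\prod_{i=0}^{r}(1+x_it)$ with root multiset $\{1,x,\dots,x\}$ and using that the Toeplitz factorization of $\Gamma(\varphi)$ commutes so that any single root may play the distinguished role --- choosing $x_0=1$, $x_1=\cdots=x_r=x$ then yields $f_{n+1}=A_{n+1}^{(r)}(x)$ and the $r$-branched Stieltjes-type continued fraction with coefficients $(\alpha_i)_{i\ge r}=(1,x,\dots,x,2,2x,\dots,2x,\dots)$.

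The genuine content is the identity $1/\overline{f}'(t)=(1+xt)^r(1+t)$, which has already been carried out above via the explicit inversion formula for the exponential generating function $F(t)$ from~\cite{BSV15}. Given it, the proof above is purely formal; the only steps requiring a line of care are the $\lambda=0$ specialization in (i)--(v) and the bookkeeping identification $f_n=A_n^{(r)}(x)$ in (iii) and (vi). I do not anticipate any obstacle beyond the already-settled identity.
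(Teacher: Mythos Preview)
Your proposal is correct and follows essentially the same route as the paper: the paper simply states that the identity $1/\overline{f}'(t)=(1+xt)^r(1+t)$ together with Theorems~\ref{thm+TP+ERA} and~\ref{thm+TP+ERA+branched fraction} yields (i)--(vi), and you have filled in precisely the implicit factorization $\varphi(t)=(1+xt)^i(1+t)^j$, $\phi(t)=(1+xt)^{r-i}(1+t)^{1-j}$ and the $\lambda=0$ specialization that makes this work. The only minor redundancy is that Theorem~\ref{thm+TP+ERA}(v) already contains the conclusion of (v) without a separate appeal to Theorem~\ref{thm+conv}, and for (vi) the derivation via Theorem~\ref{thm+TP+ERA+branched fraction}(iii) (rather than Corollary~\ref{cor.eulerianr.0}) is the one in the spirit of the section, since the point here is to give an algebraic rather than combinatorial proof.
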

Obviously, $A^{(i,j)}_n(0)=T_n^{(i,j)}(x)$ and
$A^{(r,1)}_n(0)=A_{n+1}^{(r)}(x)$. Therefore, we ask the following
question.
\begin{ques}
Let $i\in\{0,1,\ldots,r\}$ and $j\in\{0,1\}$. Find the combinatorial
interpretation of the triangular array $[\euler{n}{k}^{\!\!
(i,j)}]_{n,k}$ and polynomials $A^{(i,j)}_n(q)$ and
$T_n^{(i,j)}(x)$, respectively.
\end{ques}

\subsection{Multivariate Ward polynomials}
The Ward numbers $W_{n,k}$ were studied in \cite{War34}, which
satisfy the recurrence
\begin{eqnarray}
W_{n,k}&=&(n+k-1)W_{n-1,k-1}+kW_{n-1,k}
\end{eqnarray}
for $n\geq k\geq0$ with $W_{0,0}=1$. The row-generating polynomial
of the Ward triangle $W_n(x)=\sum_{k=0}^nW_{n,k}x^k$ is called the
{\it Ward polynomial}. It is known that
$W_n(x)=(1+x)^nA^{(2)}_{n}\left(\frac{1}{1+x}\right)$, which
combines Propositions \ref{prop+recp} and \ref{prop+r+roder+E+poly}
to give that both $W_n(x)$ and $W^{*}_n(x)$ are coefficientwise
Hankel-totally positive in $x$, see \cite{PS20} for a different
proof from the Thon-continued fraction. In fact, Elvey Price and
Sokal \cite{PS20} obtained a Thron-type continued fraction for the
ordinary generating function of a five-variable Ward polynomial. See
\cite[A134991/A181996/A269939]{Slo} for further information on the
Ward numbers and Ward polynomials. There are different combinatorial
interpretations for the Ward polynomials. For example, the Ward
polynomial $W_n(x)$ is the generating polynomial for phylogenetic
trees on $n+1$ labeled leaves in which each internal vertex gets a
weight $x$. More generally, a multivariate Ward polynomial
$\textbf{W}_n(x_1,x_2,\ldots)$ is the generating polynomial for
phylogenetic trees on $n+1$ labeled leaves in which each internal
vertex with $i\geq2$ children gets a weight $x_{i-1}$. We list the
first few $\textbf{W}_n(x_1,x_2,\ldots)$ as follows:
\begin{eqnarray}\textbf{W}_0(x_1,x_2,\ldots)=1,\quad \textbf{W}_1(x_1,x_2,\ldots)=x_1,\quad \textbf{W}_2(x_1,x_2,\ldots)=3x_1^2+x_2.
\end{eqnarray}
Let
\begin{eqnarray}
\mathscr{W}(t,\textbf{x})=\sum_{n\ge0}\textbf{W}_n(x_1,x_2,\ldots)\frac{t^{n+1}}{(n+1)!}=t+\sum_{n\ge2}\textbf{W}_{n-1}(x_1,x_2,\ldots)\frac{t^{n}}{n!}.
\end{eqnarray}
It is known that the compositional inverse of the generic power
series is
\begin{eqnarray}
\mathscr{F}(t,x)=t-\sum_{n\ge2}x_{n-1}\frac{t^{n}}{n!}.
\end{eqnarray}
That is to say
\begin{eqnarray} \label{Eq+mult+War}
\mathscr{W}(t,\textbf{x})=t+\sum_{n\ge2}x_{n-1}\frac{\mathscr{W}(t,\textbf{x})^{n}}{n!}.
\end{eqnarray}
We refer the reader to \cite[pp. 151]{Com74}, \cite{HS89,PS20},
\cite[pp. 181]{Rio68} for more properties of
$\textbf{W}_n(x_1,x_2,\ldots)$. By (\ref{Eq+mult+War}), we derive an
autonomous differential equation
\begin{eqnarray}
\mathscr{W}'(t,\textbf{x})=\frac{1}{1-\sum_{n\ge1}x_{n}\frac{\mathscr{W}(t,\textbf{x})^{n}}{n!}}.
\end{eqnarray}
Hence, by Theorem \ref{thm+TP+ERA+oder+r} (iv), we have:
\begin{prop}
Let $x_1,x_2,\ldots$ be elements of a partially ordered commutative
ring $R$ such that $\frac{1}{1-\sum_{n\ge1}x_{n}\frac{t^{n}}{n!}}$
is a P\'olya frequency ogf of order $r$ in the ring $R$, then the
sequence $(\textbf{W}_n(x_1,x_2,\ldots))_{n\geq1}$ is Hankel-totally
positive of order $r$ in the ring $R$.
\end{prop}

\subsection{Labeled series-parallel networks and Nondegenerate fanout-free functions}
Let $s_n$ be the number of labeled series-parallel networks with $n$
edges. It dates from MacMahon studying Yoke-trains and multipartite
compositions \cite{Mac1891}. It also counts the number of plane
increasing trees on $n$ vertices, where each vertex of outdegree
$k\geq1$ can be in one of $2$ colors. For $n\geq1$, the first few
are $1,2,8,52,472,\ldots$. Gutkovskiy \cite[A006351]{Slo} noted that
it satisfies the recurrence
\begin{eqnarray}
s_1=1,s_n=s_{n-1}+\sum_{k=1}^{n-1}\binom{n-1}{k}s_ks_{n-k}.
\end{eqnarray}
Let $s(t)=\sum_{n\geq1}s_n\frac{t^n}{n!}$. Bala \cite[A006351]{Slo}
gave the autonomous differential equation
\begin{eqnarray}
s'(t)=\frac{1+s(t)}{1-s(t)}.
\end{eqnarray} We refer the reader to Lomnicki
\cite{Lom72}, Moon \cite{Moo87} and \cite[A006351]{Slo} for more
information on $s_n$.

Let $p_n$ be $1,4,32,416,7552,176128,\ldots$ for $n\geq1$, see
\cite[A005172]{Slo}. It counts the number of nondegenerate
fanout-free functions of $n$ variables with the basic logical
operation ``AND" rank $1$ \cite{Ha76} and also is the number of
plane increasing trees on $n$ vertices, where each vertex of
outdegree $k\geq1$ can be colored in $2^{k+1}$ ways (see Bala's
remarks in \cite[A005172]{Slo}). Kruchinin \cite[A005172]{Slo}
obtained for $n>1$ that
\begin{eqnarray}
p_n &=& \sum_{k=1}^{n-1} \sum_{j=1}^k\sum_{i=0}^j
\frac{2^{n-i+j-1}(n+k-1)!}{(k-j)!(n-i+j-1)!i!}\left[
  \begin{array}{ccccc}
    n-i+j-1 \\
   j-i\\
  \end{array}
\right].
\end{eqnarray}
Luschny \cite[A005172]{Slo} also gave
$p_n=2p_{n-1}+\sum_{j=1}^{n-1}\binom{n}{j}p_jp_{n-j}$ for $n>1$. Let
the exponential generating function
$p(t)=\sum_{n\geq1}p_n\frac{t^n}{n!}$. It satisfies the autonomous
differential equation
\begin{eqnarray}p'(t)& =& \frac{1+2p(t)}{1-2p(t)}
\end{eqnarray}
with $p(0)= 0$. We refer the reader to \cite[A042977]{Slo} for more
properties on $p_n$.

In order state some properties related to $s_n$ and $p_n$ for
$n\geq1$, we will consider a generalization of $s(t)$ and $p(t)$.
Let $a$ and $b$ be indeterminates. Define
$S(t)=\sum_{n\geq1}S_n(a,b)\frac{t^n}{n!}$ by
\begin{eqnarray}\label{func+equ+S}
S'(t)&=&\frac{1+a\,S(t)}{1-b\,S(t)}.
\end{eqnarray}
By (\ref{func+equ+S}), we have
\begin{eqnarray}
\frac{1}{\overline{S}'(t)}&=&\frac{1+a\,t}{1-b\,t}.
\end{eqnarray}
Obviously, the coefficient sequence of $\frac{1+a\,t}{1-b\,t}$ is
coefficientwise Toeplitz-totally positive in $a$ and $b$. It follows
from Theorem \ref{thm+TP+ERA} that we immediately obtain:

\begin{thm}\label{thm+networks+S}
Let $a$ and $b$ be indeterminates and
$S(t)=\sum_{n\geq1}S_n(a,b)\frac{t^n}{n!}$ satisfy the equation
(\ref{func+equ+S}). Define an exponential Riordan array
$[S^{(i,j)}_{n,k}(a,b)]_{n,k}:=\left(\frac{(1+a\,S(t))^i}{(1-b\,S(t))^j},S(t)\right)$
for $i\in\{0,1\}$ and $j\in\{0,1\}$. Then we have
\begin{itemize}
\item [\rm (i)]
 the triangle $[S^{(i,j)}_{n,k}(a,b)]_{n,k}$ is coefficientwise totally
positive in $(a,b)$;
\item [\rm (ii)]
the row-generating polynomial sequence $(S^{(i,j)}_n(q))_{n}$ and
its reversed polynomial sequence $(S^{(i,j)*}_n(q))_{n}$ are
coefficientwise Hankel-totally positive in $(a,b,q)$ and
$3$-$(a,b,q)$-log-convex;
\item [\rm (iii)]
the zeroth column sequence $(S^{(i,j)}_{n,0}(a,b))_{n}$ is
coefficientwise Hankel-totally positive in $(a,b)$ and
$3$-$(a,b)$-log-convex;
\item [\rm (iv)]
the sequence $(S_n(a,b))_{n\geq1}$ is coefficientwise Hankel-totally
positive in $(a,b)$ and $3$-$(a,b)$-log-convex;
\item [\rm (v)]
 the sequence $(S^{\circ}_{n}(b))_{n\geq0}$ is coefficientwise Hankel-totally positive in $b$ and
$3$-$b$-log-convex, where
 $\frac{1}{1-b\,S(t)}=\sum_{n\geq0}S^{\circ}_n(b)\frac{t^n}{n!}$;
 \item [\rm (vi)]
the convolution $z_n=\sum_{k=0}^{n}S^{(i,j)}_{n,k}(a,b)x_ky_{n-k}$
preserves the Stieltjes moment property in $\mathbb{R}$ for $a\geq0$
and $b\geq0$.
\end{itemize}
\end{thm}

In particular, taking $a=b=1$ in (\ref{func+equ+S}), then $S(t)$
reduces to $s(t)$ and taking $a=b=2$ in (\ref{func+equ+S}) yields
$p(t)$. So the following is immediate from Theorem
\ref{thm+networks+S}.
\begin{prop}
We have
\begin{itemize}
\item [\rm (i)]
the sequence $(s_{n+1})_{n\geq0}$ is Stieltjes moment and
$3$-log-convex;
\item [\rm (ii)]
 the sequence $(s^{\circ}_{n})_{n\geq0}$ is a Stieltjes moment sequence (of
real numbers) and $3$-log-convex, where
 $\frac{1}{1-s(t)}=\sum_{n\geq0}s^{\circ}_n\frac{t^n}{n!}$;
\item [\rm (iii)]
the sequence $(p_{n+1})_{n\geq0}$ is a Stieltjes moment sequence (of
real numbers) and $3$-log-convex;
\item [\rm (iv)]
 the sequence $(p^{\circ}_{n})_{n\geq0}$ is a Stieltjes moment sequence (of
real numbers) and $3$-log-convex, where
 $\frac{1}{1-2p(t)}=\sum_{n\geq0}p^{\circ}_n\frac{t^n}{n!}$.
\end{itemize}
\end{prop}

It is very interesting to find an answer to the following question.
\begin{ques}
Find the combinatorial interpretation of the triangular array
$[S^{(i,j)}_{n,k}(a,b)]_{n,k}$ and its row-generating polynomial
$S^{(i,j)}_n(q)$, respectively.
\end{ques}

\subsection{An array from the Lambert function} The Lambert $W$ function
was defined in \cite{CGHJK} by $$We^W=x.$$ The $n$-th derivative of
$W$ is given implicitly by
\begin{eqnarray}
\frac{d^nW(x)}{dx^n}&=&\frac{e^{-nW(x)}\beta_n(W(x))}{(1+W(x))^{2n-1}}
\end{eqnarray}
for $n\geq1$, where
$\beta_n(x)=(-1)^{n-1}\sum_{k=0}^{n-1}\beta_{n,k}x^k$ are
polynomials satisfying the recurrence relation
\begin{eqnarray}
\beta_{n+1}(x)=-(nx+3n-1)\beta_n(x)+(1+x)\beta'_n(x)
\end{eqnarray}
for $n\geq1$, and the array $[\beta_{n,k}]_{n\geq1,k\geq0}$
satisfies the recurrence relation
\begin{eqnarray}\label{rec+lab+array}
\beta_{n+1,k}=(3n-k-1)\beta_{n,k}+n\beta_{n,k-1}-(k+1)\beta_{n,k+1}
\end{eqnarray}
for $n,k\geq0$ with $\beta_{1,0}=1$. Kalugin and Jeffrey \cite{KJ11}
proved that each polynomial $(-1)^{n-1}\beta_n(x)$ has all positive
coefficients and $\frac{dW(x)}{dx}$ is a completely monotonic
function. An explicit expression for the coefficients $\beta_{n,k}$
is
\begin{eqnarray}
\beta_{n,k}&=&\sum_{m=0}^k\frac{1}{m!}\binom{2n-1}{k-m}\sum_{i=0}^m\binom{m}{i}(-1)^i(i+n)^{m+n-1}.
\end{eqnarray}
See \cite[A042977]{Slo} for more properties of $\beta_{n,k}$.
Jovovic \cite[A042977]{Slo} gave
\begin{eqnarray}
\sum_{n}(-1)^{n-1}\beta_n(x)\frac{t^n}{n!}&=&\frac{-W(e^x(x-t(1+x)^2))+x}{1+x}.
\end{eqnarray}
Let \begin{eqnarray}\label{def+function+beta}
\beta(t)=\frac{-W(e^x(x-t(1+x)^2))+x}{1+x}.
\end{eqnarray}  We derive that
\begin{eqnarray}
\overline{\beta}(t)=\frac{x+[(1+x)t-x]e^{-(1+x)t}}{(1+x)^2},\quad
\frac{1}{(\overline{\beta}(t))'}=\frac{e^{(1+x)t}}{1-t}.
\end{eqnarray}
It is not hard to prove that $e^{(1+x)t}/(1-t)$ is a P\'olya
frequency function in $x$. Therefore, by Theorem \ref{thm+TP+ERA},
we obtain:

\begin{prop} Let $\beta(t)$ be defined by (\ref{def+function+beta}) and $\bm\beta^{(i,j)}=[\bm\beta^{(i,j)}_{n,k}]_{n,k}$ be the exponential Riordan array
$\left(\frac{e^{i(1+x)\beta(t)}}{(1-\beta(t))^j},\beta(t)\right)$
for $i\in\{0,1\}$ and $j\in\{0,1\}$. Then
\begin{itemize}
\item [\rm (i)]
 the triangle $\bm\beta^{(i,j)}$ is coefficientwise totally positive in $x$;
\item [\rm (ii)]
the row-generating polynomial sequence $(\bm\beta^{(i,j)}_n(q))_{n}$
and its reversed polynomial sequence $(\bm\beta^{(i,j)*}_n(q))_{n}$
are coefficientwise Hankel-totally positive in $(x,q)$ and
$3$-$(x,q)$-log-convex;
\item [\rm (iii)]
$((-1)^{n-1}\beta_n(x))_{n\geq1}$ is coefficientwise Hankel-totally
positive in $x$ and $3$-$x$-log-convex;
\item [\rm (iv)]
$({\beta}^{\circ}_{n}(x))_{n\geq0}$ is coefficientwise
Hankel-totally positive in $x$ and $3$-$x$-log-convex, where
 $\frac{1}{1-\beta(t)}=\sum_{n\geq0}{\beta}^{\circ}_n(x)\frac{t^n}{n!}$;
 \item [\rm (v)]
$({\beta}^{\bullet}_{n}(x))_{n\geq0}$ is coefficientwise
Hankel-totally positive in $x$ and $3$-$x$-log-convex, where
 $e^{(1+x)\beta(t)}=\sum_{n\geq0}{\beta}^{\bullet}_n(x)\frac{t^n}{n!}$;
 \item [\rm (vi)]
the convolution $z_n=\sum_{k=0}^{n}\bm\beta^{(i,j)}_{n,k}x_ky_{n-k}$
preserves the Stieltjes moment property in $\mathbb{R}$ for
$x\geq0$.
\end{itemize}
\end{prop}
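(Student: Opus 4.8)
The plan is to recognize each array $\bm\beta^{(i,j)}$ as one of the exponential Riordan arrays handled by Theorem \ref{thm+TP+ERA}. Concretely, I would identify the compositional inverse $\overline{\beta}$, show that $1/\overline{\beta}'(t)$ factors as a product of two $\textbf{x}$-P\'olya frequency functions in two ways (one suited to $j=0$, one to $j=1$), and then invoke Theorem \ref{thm+TP+ERA} together with Proposition \ref{prop+3-q-log-convex} to read off all of (i)--(vi).

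First I would verify the inverse. Putting $u=\beta(t)$ in $(1+x)u=x-W\!\left(e^x(x-t(1+x)^2)\right)$ and using $W(z)e^{W(z)}=z$ gives $e^x\big(x-t(1+x)^2\big)=\big(x-(1+x)u\big)e^{x-(1+x)u}$; solving for $t$ yields $\overline{\beta}(t)=\big(x+[(1+x)t-x]e^{-(1+x)t}\big)/(1+x)^2$. Differentiating, the two terms carrying $e^{-(1+x)t}$ combine and one gets $\overline{\beta}'(t)=(1-t)e^{-(1+x)t}$, hence $1/\overline{\beta}'(t)=e^{(1+x)t}/(1-t)$; note also $\beta(0)=0$ (since $W(xe^x)=x$) and $\beta'(0)=1$, so $\beta$ is admissible as the ``$f(t)$'' of an exponential Riordan array.

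Next I would check that $e^{(1+x)t}/(1-t)$ is an $\textbf{x}$-P\'olya frequency function. The Toeplitz matrix of $1/(1-t)$ is the all-ones lower-triangular matrix, which is totally positive; and $e^{ct}$ is, coefficientwise in $c$, a P\'olya frequency function (for instance $\Lambda\,\Gamma(e^{ct})\,\Lambda^{-1}=\textbf{B}_c$, the binomial matrix, whose minors are monomials in $c$ with nonnegative coefficients, and conjugation by the positive diagonal $\Lambda$ preserves coefficientwise total positivity), so $e^{(1+x)t}$ is an $\textbf{x}$-P\'olya frequency function because $1+x$ has nonnegative coefficients. Since $\Gamma(gh)=\Gamma(g)\Gamma(h)$ and Cauchy--Binet shows the $\textbf{x}$-P\'olya frequency functions are closed under multiplication, both
\[
\frac{1}{\overline{\beta}'(t)}=1\cdot\frac{e^{(1+x)t}}{1-t}\qquad\text{and}\qquad\frac{1}{\overline{\beta}'(t)}=\frac{1}{1-t}\cdot e^{(1+x)t}
\]
exhibit $1/\overline{\beta}'(t)$ as a product of $\textbf{x}$-P\'olya frequency functions.

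Finally I would match parameters: for $j=0$ take $\varphi\equiv1$, $\phi(t)=e^{(1+x)t}/(1-t)$, $\lambda=i(1+x)$, and for $j=1$ take $\varphi(t)=1/(1-t)$, $\phi(t)=e^{(1+x)t}$, $\lambda=i(1+x)$; then $\bm\beta^{(i,j)}=\big(\varphi(\beta(t))\exp(\lambda\beta(t)),\beta(t)\big)$, the hypotheses of Theorem \ref{thm+TP+ERA} hold, and since $1+x$ has nonnegative coefficients the substitution $\lambda\mapsto i(1+x)$ (or $\lambda\mapsto 0$) preserves coefficientwise positivity in $x$ and $q$. Now Theorem \ref{thm+TP+ERA}(i) gives (i) and (ii) gives (ii); applying (iii) to $(i,j)=(1,0)$ gives (v) and to $(i,j)=(0,1)$ gives (iv) (there $T_n$ is exactly the exponential-generating coefficient of $e^{(1+x)\beta(t)}$, resp.\ of $1/(1-\beta(t))$); part (iv) together with Jovovic's identity $\sum_{n\ge1}(-1)^{n-1}\beta_n(x)t^n/n!=\beta(t)$, i.e.\ $f_{n+1}=(-1)^n\beta_{n+1}(x)$, gives (iii); and part (v) gives (vi). The $3$-$x$- and $3$-$(x,q)$-log-convexity assertions then follow from Proposition \ref{prop+3-q-log-convex}, since coefficientwise Hankel-total positivity entails coefficientwise total positivity of order $4$. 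The only non-routine obstacle is the first step --- the implicit-function computation producing $\overline{\beta}$ and the simplification of $1/\overline{\beta}'(t)$ to $e^{(1+x)t}/(1-t)$; once that factorization is in hand, the rest is bookkeeping over the four pairs $(i,j)$.
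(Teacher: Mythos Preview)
Your proposal is correct and follows essentially the same approach as the paper: the paper computes $\overline{\beta}(t)$ and $1/\overline{\beta}'(t)=e^{(1+x)t}/(1-t)$ in the text preceding the proposition, observes this is an $x$-P\'olya frequency function, and then simply invokes Theorem~\ref{thm+TP+ERA}. Your version is more explicit about the two factorizations $\varphi\cdot\phi$ needed for $j=0$ versus $j=1$, the substitution $\lambda\mapsto i(1+x)$, and the appeal to Proposition~\ref{prop+3-q-log-convex} for the $3$-log-convexity claims, but the underlying argument is identical.
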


It is obvious that $\bm\beta^{(0,0)}=(1,\beta(t))$ and
$\bm\beta^{(1,1)}=(\beta'(t),\beta(t))=[\bm\beta^{(0,0)}_{n+1,k+1}]_{n,k}$.
Therefore we pose the following question.
\begin{ques}
Find the combinatorial interpretation of the triangular arrays
$\bm\beta^{(0,1)}$ and $\bm\beta^{(1,0)}$ and the polynomial
$\bm\beta^{(i,j)}_n(q)$, respectively.
\end{ques}

\subsection{A generalization of Lah numbers}

In \cite{Zhu213}, we studied a generalized Lah triangle
$\mathscr{L}=[\mathscr{L}_{n,k}]_{n,k\geq0}$ satisfying the
following recurrence relation
\begin{eqnarray}\label{rec+four+}
\mathscr{L}_{n,k}=c\,
\mathscr{L}_{n-1,k-1}+\left[ab(n-1)+bk+abd+c\lambda\right]\mathscr{L}_{n-1,k}+b\lambda(k+1)
\mathscr{L}_{n-1,k+1}
\end{eqnarray}
for $n,k\geq1$, where $\mathscr{L}_{0,0}=1$. For $a=b=c=1$ and
$d=\lambda=0$, $\mathscr{L}$ reduces to the well-known signless Lah
triangle $[\binom{n-1}{k-1}\frac{n!}{k!}]_{n,k}$, which counts the
number of partitions of $[n]$ into $k$ lists, where a list means an
ordered subset \cite[A008297]{Slo} and satisfies
\begin{eqnarray}\exp\left(\frac{qt}{1-t}\right)&=&\sum_{n\geq0}
\sum_{k=0}^n\binom{n-1}{k-1}\frac{n!}{k!}q^k \frac{t^n}{n}
\end{eqnarray}
\cite[p.133-134]{Com74}. In addition, the row-generating function
$\sum_{k=0}^n\binom{n-1}{k-1}\frac{n!}{k!}q^k$ is called {\it the
Lah polynomial}. For $2\leq a\leq 4$, $b=c=1$ and $d=\lambda=0$,
$[\mathscr{L}_{n,k}]_{n,k}$ reduces to the triangle \cite[A035342,
A035469, A049029]{Slo} enumerating unordered $n$-vertex $k$-forests
composed of $k$ plane increasing quartic ($a$-ary) trees. In
\cite{Zhu213}, we showed some results concerning total positivity
for the triangle $\mathscr{L}$ under some special conditions. As an
application of Theorem \ref{thm+TP+ERA}, we have the following
generalized result.

\begin{prop}\label{prop+two+term+triangle}
Let $\mathscr{L}$ be the generalized Lah triangle defined by
(\ref{rec+four+}) and
$\mathscr{L}_n(q)=\sum_{k\geq0}\mathscr{L}_{n,k}q^k$. Assume that
$\{a,ad\}\in \mathbb{N}$ and $c>0$. If $0\leq ad\leq a+1=m$, then we
have
\begin{itemize}
\item [\rm (i)]
the generalized Lah triangle $\mathscr{L}$ is coefficientwise
totally positive in $(b,\lambda)$;
 \item [\rm (ii)]
the polynomial sequence $(\mathscr{L}_{n}(q))_{n\geq0}$ and its
reversed polynomial sequence $(\mathscr{L}_{n}^{*}(q))_{n\geq0}$ are
coefficientwise Hankel-totally positive in $(b,\lambda,q)$ and
$3$-$(b,\lambda,q)$-log-convex;
\item [\rm (iii)]$(\mathscr{L}_{n,0})_{n\geq0}$ is coefficientwise Hankel-totally positive in
$(b,\lambda)$ and $3$-$(b,\lambda)$-log-convex;
\item [\rm (iv)]
the convolution $z_n=\sum_{k\geq0}\mathscr{L}_{n,k}x_ky_{n-k}$
preserves Stieltjes moment property in $\mathbb{R}$ for $b\geq0$ and
$\lambda\geq0$;
\item [\rm (v)]
for $d=0$, the $m$-branched Stieltjes-type continued fraction
expression
\begin{eqnarray}
   \sum_{n\geq0}\mathscr{L}_{n}(q)t^n
=
   \cfrac{1}
         {1 \,-\, \alpha_{m} t
            \prod\limits_{i_1=1}^{m}
                 \cfrac{1}
            {1 \,-\, \alpha_{m+i_1} t
               \prod\limits_{i_2=1}^{m}
               \cfrac{1}
            {1 \,-\, \alpha_{m+i_1+i_2} t
               \prod\limits_{i_3=1}^{m}
               \cfrac{1}{1 - \cdots}
            }
           }
         }
%
%   \cfrac{1}{1 - \cfrac{\alpha_{k+m} t}
%            {\prod\limits_{i_1=1}^{m}
%               \Biggl( \vspace*{-1cm}\displaystyle
%                      1 - \cfrac{{\alpha_{k+m+i_1} t}}
%                               {\displaystyle \prod\limits_{i_2=1}^{m}
%               \biggl( 1 - \frac{\alpha_{k+m+i_1+i_2} t}
%                               {1 - \cdots}
%               \biggr)
%                               }
%               \Biggr)
%            }}
\end{eqnarray}
with coefficients
$$(\alpha_{i})_{i\geq{m}}=(c(q+\lambda),\underbrace{b,\ldots,b}_{m},c(q+\lambda),\underbrace{2b,\ldots,2b}_{m},c(q+\lambda),\underbrace{3b,\ldots,3b}_{m},\ldots).$$
\item [\rm (vi)]
the $(m-1)$-branched Stieltjes-type continued fraction expression
\begin{eqnarray}
  1+\sum_{n\geq1}bf_{n}t^n
   =
   \cfrac{1}
         {1 \,-\, \alpha_{m-1} t
            \prod\limits_{i_1=1}^{m-1}
                 \cfrac{1}
            {1 \,-\, \alpha_{m-1+i_1} t
               \prod\limits_{i_2=1}^{m-1}
               \cfrac{1}
            {1 \,-\,  - \cdots}
            }
            }
            %   \cfrac{1}{1 - \cfrac{\alpha_{k+m} t}
%            {\prod\limits_{i_1=1}^{m}
%               \Biggl( \vspace*{-1cm}\displaystyle
%                      1 - \cfrac{{\alpha_{k+m+i_1} t}}
%                               {\displaystyle \prod\limits_{i_2=1}^{m}
%               \biggl( 1 - \frac{\alpha_{k+m+i_1+i_2} t}
%                               {1 - \cdots}
%               \biggr)
%                               }
%               \Biggr)
%            }}
\end{eqnarray}
with
$(\alpha_{i})_{i\geq{m-1}}=(\underbrace{b,\ldots,b}_{m},\underbrace{2b,\ldots,2b}_{m},\underbrace{3b,\ldots,3b}_{m},\ldots),$where
$\sum_{n\geq1}f_n\frac{t^n}{n!}=\frac{(1-abt)^{-\frac{1}{a}}-1}{b}$.
 \end{itemize}
\end{prop}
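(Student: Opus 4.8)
The plan is to recognize the triangle $\mathscr{L}$, after a diagonal rescaling that eliminates $c$, as an exponential Riordan array of the type governed by Theorem~\ref{thm+TP+ERA} (for (i)--(iv)) and Theorem~\ref{thm+TP+ERA+branched fraction} (for (v)--(vi)). Since $c>0$ is a fixed constant, setting $\mathscr{L}_{n,k}=c^{n}L_{n,k}$ amounts to the factorization $\mathscr{L}=\operatorname{diag}(1,c,c^{2},\ldots)\,L$ by a totally positive diagonal matrix, so it suffices to establish everything for $L$ and transfer it back at the end; a brief manipulation of (\ref{rec+four+}) shows that $L$ is the generalized Lah triangle with parameters $(a,B,1,d,\lambda)$, where $B:=b/c$, i.e.
\begin{equation*}
L_{n,k}=L_{n-1,k-1}+\bigl[aB(n-1)+Bk+aBd+\lambda\bigr]L_{n-1,k}+B\lambda(k+1)L_{n-1,k+1}.
\end{equation*}
(If $a=0$ this is already an $n$-independent Catalan--Stieltjes recurrence and the claims follow from the results of the earlier sections, so I assume $a\ge1$.)

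The core step is to identify $L$ with an exponential Riordan array. Writing $G(t,q)=\sum_{n\ge0}L_{n}(q)\,t^{n}/n!$ with $L_{n}(q)=\sum_{k}L_{n,k}q^{k}$, the displayed recurrence is equivalent to the first-order PDE
\begin{equation*}
(1-aBt)\,\partial_{t}G=(q+aBd+\lambda)\,G+B(q+\lambda)\,\partial_{q}G,\qquad G(0,q)=1 ,
\end{equation*}
and I would verify that it is solved by $G(t,q)=(1-aBt)^{-d}\exp\bigl((q+\lambda)f(t)\bigr)$, where $f$ is the power series with $f(0)=0$ and $1+Bf(t)=(1-aBt)^{-1/a}$ --- equivalently the solution of the autonomous equation $f'(t)=(1+Bf(t))^{m}$, $m=a+1$. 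Since $\varphi(f(t))=(1+Bf(t))^{ad}=(1-aBt)^{-d}$ with $\varphi(t):=(1+Bt)^{ad}$, this exhibits $L$ as the exponential Riordan array $\bigl(\varphi(f(t))\,e^{\lambda f(t)},\,f(t)\bigr)$.

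Then the hypotheses of Theorem~\ref{thm+TP+ERA} are met: from $f'(t)=(1+Bf(t))^{m}$ we get $1/\overline{f}'(t)=(1+Bt)^{m}$, and because $a,\,ad\in\mathbb{N}$ and $0\le ad\le m$, the factorization $1/\overline{f}'(t)=\varphi(t)\phi(t)$ with $\varphi(t)=(1+Bt)^{ad}$ and $\phi(t)=(1+Bt)^{m-ad}$ displays both factors as powers of the lower-bidiagonal Toeplitz matrix $\Gamma(1+Bt)$, hence as $b$-P\'olya frequency functions (recall $c>0$ is fixed). Theorem~\ref{thm+TP+ERA} now gives the coefficientwise total positivity of $L$ in $(b,\lambda)$, the coefficientwise Hankel-total positivity of $(L_{n}(q))_{n}$ and $(L_{n}^{*}(q))_{n}$ in $(b,\lambda,q)$, and preservation of the Stieltjes moment property under the $L$-convolution; $3$-log-convexity then follows from Proposition~\ref{prop+3-q-log-convex}, and putting $q=0$ (using $L_{n}(0)=L_{n,0}$) gives the zeroth-column statements. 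Transferring to $\mathscr{L}=\operatorname{diag}(c^{n})\,L$ preserves all of these: left multiplication by a totally positive diagonal matrix preserves coefficientwise total positivity (Cauchy--Binet), $\mathscr{L}_{n}(q)=c^{n}L_{n}(q)$, and $H\left((c^{n}L_{n}(q))_{n}\right)=\operatorname{diag}(c^{n})\,H\left((L_{n}(q))_{n}\right)\operatorname{diag}(c^{n})$ keeps the Hankel minors nonnegative. This proves (i)--(iv).

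For (v), take $d=0$, so $\varphi\equiv1$ and $1/\overline{f}'(t)=(1+Bt)^{m}=\prod_{i=0}^{m-1}(1+Bt)$; following the proof of Theorem~\ref{thm+TP+ERA+branched fraction}(ii) --- conjugate the production matrix by $\textbf{B}_{q}$ to pass from the zeroth column to the row-generating polynomials, and invoke \cite[Propositions~7.2 and~8.2(b)]{PSZ18} --- one gets for $\sum_{n\ge0}L_{n}(q)t^{n}$ an $m$-branched Stieltjes-type continued fraction with coefficients $(\lambda+q,\underbrace{B,\ldots,B}_{m},\lambda+q,\underbrace{2B,\ldots,2B}_{m},\ldots)$, and the substitution $t\mapsto ct$ forced by $\mathscr{L}_{n}(q)=c^{n}L_{n}(q)$ multiplies every coefficient by $c$, yielding the asserted fraction. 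For (vi), which involves neither $c$, $d$ nor $\lambda$, I would apply Theorem~\ref{thm+TP+ERA+branched fraction}(iii) directly to $\widetilde f(t)=\bigl((1-abt)^{-1/a}-1\bigr)/b$, for which $1/\overline{\widetilde f}'(t)=(1+bt)^{m}=\prod_{i=0}^{m-1}(1+bt)$, obtaining the $(m-1)$-branched fraction with coefficients $(\underbrace{b,\ldots,b}_{m},\underbrace{2b,\ldots,2b}_{m},\ldots)$. The main obstacle is the middle step --- translating (\ref{rec+four+}) into the PDE above and checking its closed-form solution, i.e.\ the exponential-Riordan-array description of $L$ --- together with the routine bookkeeping ensuring that the ``coefficientwise in $(b,\lambda)$'' conclusions survive both the substitution $b\mapsto b/c$ and the diagonal rescaling; everything afterwards is a direct appeal to the already-proved theorems.
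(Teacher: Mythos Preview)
Your approach is correct and essentially the same as the paper's: identify $\mathscr{L}$ as an exponential Riordan array $(\varphi(f)e^{\lambda f},f)$ with $1/\overline{f}'(t)=(1+(b/c)t)^{a+1}$, then invoke Theorem~\ref{thm+TP+ERA} for (i)--(iv) and Theorem~\ref{thm+TP+ERA+branched fraction} for (v)--(vi). The only differences are cosmetic: the paper works directly with $\mathscr{L}$ (keeping $c$ in the formulae, noting $c(1+\tfrac{b}{c}t)^{a+1}$ is already a P\'olya frequency function) rather than rescaling it out, and it obtains the Riordan-array description by citing \cite[Proposition~4.7]{Zhu213} instead of deriving it from a PDE.
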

\begin{proof}
It follows from \cite[Proposition 4.7]{Zhu213} that $\mathscr{L}$ is
the exponential Riordan array $(g(t)e^{\lambda f(t)},f(t))$, where
\begin{eqnarray}g(t)=(1-abt)^{-d},\quad
f(t)=c\left[\frac{(1-abt)^{-\frac{1}{a}}-1}{b}\right].
\end{eqnarray}
Then
\begin{eqnarray}
\bar{f}(t)&=&\frac{1-(1+\frac{bt}{c})^{-a}}{ab},\quad\,1/\bar{f}'(t)=c(1+\frac{b}{c}t)^{a+1}.
\end{eqnarray}
Obviously, the coefficient sequence of $c(1+\frac{b}{c}t)^{a+1}$ is
coefficientwise Toeplitz-totally positive in $b$ and
\begin{eqnarray}
(1+\frac{b}{c}f(t))^{ad}&=&\left[(1-abt)^{-\frac{1}{a}}\right]^{ad}=(1-abt)^{-d}=g(t).
\end{eqnarray}
Hence, by Theorem \ref{thm+TP+ERA}, we have (i)-(iv)

For (v), by taking $q\rightarrow c(q+\lambda)/b$ and $t\rightarrow
bt$, it suffices to prove for the reduced case $b=c=1$ and
$\lambda=0$ that
\begin{eqnarray}
   \sum_{n\geq0}\mathscr{L}_{n}(q)t^n
   & = &
   \cfrac{1}
         {1 \,-\, \alpha_{m} t
            \prod\limits_{i_1=1}^{m}
                 \cfrac{1}
            {1 \,-\, \alpha_{m+i_1} t
               \prod\limits_{i_2=1}^{m}
               \cfrac{1}
            {1 \,-\, \alpha_{m+i_1+i_2} t
               \prod\limits_{i_3=1}^{m}
               \cfrac{1}{1 - \cdots}
            }
           }
         }
%
%   \cfrac{1}{1 - \cfrac{\alpha_{k+m} t}
%            {\prod\limits_{i_1=1}^{m}
%               \Biggl( \vspace*{-1cm}\displaystyle
%                      1 - \cfrac{{\alpha_{k+m+i_1} t}}
%                               {\displaystyle \prod\limits_{i_2=1}^{m}
%               \biggl( 1 - \frac{\alpha_{k+m+i_1+i_2} t}
%                               {1 - \cdots}
%               \biggr)
%                               }
%               \Biggr)
%            }}
\end{eqnarray}
with coefficients
$$(\alpha_{i})_{i\geq{m}}=(q,\underbrace{1,\ldots,1}_{m},q,\underbrace{2,\ldots,2}_{m},q,\underbrace{3,\ldots,3}_{m},\ldots).$$
For the reduced case, we have
$$\frac{1}{\bar{f}'(t)}=(1+t)^{m}.$$ Hence we immediately get the continued fraction expansion for the reduced case
by Theorem \ref{thm+TP+ERA+branched fraction}. This completes the
proof.
\end{proof}

We propose the following interesting question.
\begin{ques}
What is the combinatorial interpretation of the triangular array
$[\mathscr{L}_{n,k}]_{n,k\geq0}$ and the polynomial
$\mathscr{L}_n(q)$, respectively ?
\end{ques}

For the reversed generalized Lah polynomial
$\mathscr{L}_n^{*}(q)=q^n\mathscr{L}_n(1/q)$ and the reversed
generalized Lah numbers $\mathscr{L}^*_{n,k}=\mathscr{L}_{n,n-k}$,
the following is immediate from Proposition
\ref{prop+two+term+triangle}.

\begin{prop}
Assume that $\{a,ad\}\in \mathbb{N}$ and $c>0$. If $0\leq ad\leq
a+1=m$, then we have the following results.
\begin{itemize}
\item [\rm (i)]
The reversed generalized Lah triangle
$[\mathscr{L}^*_{n,k}]_{n,k\geq0}$ satisfies the next recurrence
\begin{eqnarray*}
\mathscr{L}^*_{n,k}=c\,
\mathscr{L}^*_{n-1,k}+\left[ab(n-1)+b(n-k)+abd+c\lambda\right]\mathscr{L}^*_{n-1,k-1}+b\lambda(n-k+1)
\mathscr{L}^*_{n-1,k-2}
\end{eqnarray*}
for $n,k\geq1$, where $\mathscr{L}^*_{0,0}=1$.
\item [\rm (ii)]
The exponential generating function of $\mathscr{L}^*_{n}(q)$ can be
written as
\begin{eqnarray}
\sum_{n\geq0}\mathscr{L}^*_{n}(q)\frac{t^n}{n!}&=&(1-abqt)^{-d}\exp\left(\frac{(q\lambda+1)c}{q}\left[\frac{(1-abqt)^{-\frac{1}{a}}-1}{b}\right]\right).
\end{eqnarray}
\item [\rm (iii)]
The sequence $(\mathscr{L}^*_{n}(q))_{n\geq0}$ is coefficientwise
Hankel-totally positive in $(b,\lambda,q)$ and
$3$-$(b,\lambda,q)$-log-convex. In particular,
$(\mathscr{L}^*_{n,n})_{n\geq0}$ is coefficientwise Hankel-totally
positive in $(b,\lambda)$ and $3$-$(b,\lambda)$-log-convex.
\item [\rm (iv)]
The convolution $z_n=\sum_{k\geq0}\mathscr{L}^*_{n,k}x_ky_{n-k}$
preserves Stieltjes moment property of sequences for $b\geq0$ and
$\lambda\geq0$;
\item [\rm (v)]
For $d=0$, we have the $m$-branched Stieltjes-type continued
fraction expansion
\begin{eqnarray}
   \sum_{n\geq0}\mathscr{L}^*_{n}(q)t^n
   =
   \cfrac{1}
         {1 \,-\, \alpha_{m} t
            \prod\limits_{i_1=1}^{m}
                 \cfrac{1}
            {1 \,-\, \alpha_{m+i_1} t
               \prod\limits_{i_2=1}^{m}
               \cfrac{1}
            {1 \,-\, \alpha_{m+i_1+i_2} t
               \prod\limits_{i_3=1}^{m}
               \cfrac{1}{1 - \cdots}
            }
           }
         }
%
%   \cfrac{1}{1 - \cfrac{\alpha_{k+m} t}
%            {\prod\limits_{i_1=1}^{m}
%               \Biggl( \vspace*{-1cm}\displaystyle
%                      1 - \cfrac{{\alpha_{k+m+i_1} t}}
%                               {\displaystyle \prod\limits_{i_2=1}^{m}
%               \biggl( 1 - \frac{\alpha_{k+m+i_1+i_2} t}
%                               {1 - \cdots}
%               \biggr)
%                               }
%               \Biggr)
%            }}
\end{eqnarray}
with coefficients
$$(\alpha_{i})_{i\geq{m}}=(c(1+q\lambda),\underbrace{bq,\ldots,bq}_{m},c(1+q\lambda),\underbrace{2bq,\ldots,2bq}_{m},c(1+q\lambda),\underbrace{3bq,\ldots,3bq}_{m},\ldots).$$
\end{itemize}
\end{prop}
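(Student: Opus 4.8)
The plan is to obtain all five parts by reversal from Proposition~\ref{prop+two+term+triangle}, using its exponential Riordan array representation $\mathscr{L}=\big(g(t)e^{\lambda f(t)},f(t)\big)$ with $g(t)=(1-abt)^{-d}$ and $f(t)=c\big[((1-abt)^{-1/a}-1)/b\big]$. The passage $\mathscr{L}\mapsto\mathscr{L}^{\ast}$ is the row-reversal $\mathscr{L}^{\ast}_{n,k}=\mathscr{L}_{n,n-k}$, equivalently $\mathscr{L}^{\ast}_n(q)=q^n\mathscr{L}_n(1/q)$; this is well defined since $\mathscr{L}_{n,n}=c^n>0$ by \eqref{rec+four+}, so $\deg\mathscr{L}_n(q)=n$. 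Part (i) is then pure bookkeeping: substituting $k\mapsto n-k$ in \eqref{rec+four+} and rewriting $\mathscr{L}_{n-1,n-k-1}$, $\mathscr{L}_{n-1,n-k}$, $\mathscr{L}_{n-1,n-k+1}$ as $\mathscr{L}^{\ast}_{n-1,k}$, $\mathscr{L}^{\ast}_{n-1,k-1}$, $\mathscr{L}^{\ast}_{n-1,k-2}$ respectively (each is $\mathscr{L}_{n-1,(n-1)-j}$ for the appropriate $j$), the coefficients become $c$, $ab(n-1)+b(n-k)+abd+c\lambda$ and $b\lambda(n-k+1)$, which is the asserted recurrence.

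For part (ii), \eqref{generating funtion+GR} gives $\sum_{n}\mathscr{L}_n(q)t^n/n!=g(t)\exp\big((\lambda+q)f(t)\big)$, hence $\sum_{n\ge0}\mathscr{L}^{\ast}_n(q)t^n/n!=\sum_{n\ge0}\mathscr{L}_n(1/q)(qt)^n/n!=g(qt)\exp\big((\lambda+\tfrac1q)f(qt)\big)$; substituting $g(qt)=(1-abqt)^{-d}$, $f(qt)=c\big[((1-abqt)^{-1/a}-1)/b\big]$ and $\lambda+\tfrac1q=(q\lambda+1)/q$ produces exactly the generating function displayed in (ii).

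Part (iii) is in essence a reformulation of Proposition~\ref{prop+two+term+triangle}(ii)--(iii): the coefficientwise Hankel-total positivity of $(\mathscr{L}^{\ast}_n(q))_{n\ge0}$ in $(b,\lambda,q)$ is exactly what is stated there, and $(\mathscr{L}^{\ast}_{n,n})_{n\ge0}=(\mathscr{L}_{n,0})_{n\ge0}$ is covered by part (iii) of that proposition. (Here $d$ is not a free indeterminate but the fixed rational $ad/a$ determined by the fixed integers $a$ and $ad$, so Hankel-total positivity in $(b,d,\lambda,q)$ means Hankel-total positivity in $(b,\lambda,q)$.) If a self-contained argument is preferred, apply Theorem~\ref{thm+TP+ERA} to the array $\big(\varphi(\widetilde f(t))e^{\lambda\widetilde f(t)},\widetilde f(t)\big)$ with $\widetilde f(t)=f(qt)$ and $\varphi(s)=(1+\tfrac bc s)^{ad}$: indeed $\varphi(\widetilde f(t))=(1-abqt)^{-d}$ and $1/\overline{\widetilde f}{}'(t)=cq(1+\tfrac bc t)^{a+1}=\varphi(t)\cdot cq(1+\tfrac bc t)^{m-ad}$ is a product of two $b$-P\'olya frequency functions because $0\le ad\le m$; then Proposition~\ref{prop+recp} transfers the Hankel-total positivity to the reversed sequence, and $3$-$(b,\lambda,q)$-log-convexity follows from Proposition~\ref{prop+3-q-log-convex}. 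Part (iv) is then immediate: by Theorem~\ref{thm+conv} it suffices that the row-generating polynomials $\sum_k\mathscr{L}^{\ast}_{n,k}q^k=\mathscr{L}^{\ast}_n(q)$ of $[\mathscr{L}^{\ast}_{n,k}]_{n,k}$ form a Stieltjes moment sequence for each fixed $q\ge0$, a specialization of (iii).

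For part (v), take the $m$-branched Stieltjes-type continued fraction of Proposition~\ref{prop+two+term+triangle}(v) for $\sum_n\mathscr{L}_n(q)t^n$ (valid for $d=0$), with weights $(\alpha_i)_{i\ge m}=\big(c(q+\lambda),\underbrace{b,\dots,b}_{m},c(q+\lambda),\underbrace{2b,\dots,2b}_{m},\dots\big)$, and perform $q\mapsto1/q$ followed by $t\mapsto qt$. Dilating $t\mapsto qt$ multiplies every weight of an $m$-branched Stieltjes continued fraction by $q$, while $q\mapsto1/q$ turns each $c(q+\lambda)$ into $c(1+q\lambda)/q$; combining, the weights become $c(1+q\lambda),\,qb,\dots,qb,\,c(1+q\lambda),\,2qb,\dots$, i.e.\ exactly the asserted $(\alpha_i)_{i\ge m}$, and the left-hand side becomes $\sum_n q^n\mathscr{L}_n(1/q)t^n=\sum_n\mathscr{L}^{\ast}_n(q)t^n$. (Equivalently, rescale to $b=c=1$ as in the proof of Proposition~\ref{prop+two+term+triangle}(v) and invoke Theorem~\ref{thm+TP+ERA+branched fraction} for the array of (iii) with $d=0$.) There is no deep obstacle here; the one point that demands care is the interaction of the variable reversal $q\mapsto1/q$ with the dilation $t\mapsto qt$ in parts (ii) and (v), so that the constant $c$, the linear-in-$q$ weights $c(1+q\lambda)$ and the arithmetic-progression weights $kqb$ all emerge with the correct normalization.
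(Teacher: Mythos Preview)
Your proof is correct and follows essentially the same approach as the paper, which simply declares the proposition ``immediate from Proposition~\ref{prop+two+term+triangle}''; you have filled in the details that the paper omits. The one minor point is that your alternative self-contained argument for (iii) via $\widetilde f(t)=f(qt)$ and Theorem~\ref{thm+TP+ERA} is unnecessary and slightly muddled (the variable $q$ would then play two roles), but your primary argument---that Proposition~\ref{prop+two+term+triangle}(ii) already asserts the coefficientwise Hankel-total positivity of $(\mathscr{L}^{\ast}_n(q))_{n\ge0}$---is the right one and suffices.
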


%%%%%%%%%%%%%%% References
\section{Acknowledgements}
The author wants to thank the anonymous referees for many valuable
remarks and suggestions to improve the original manuscript and is
very grateful to one of referees for suggesting some results for the
matrices $M$ and $\widetilde{M}$ in Section $3$.


\begin{thebibliography}{99}
%\bibitem{Aig991}
%M. Aigner, Catalan-like numbers and determinants, J. Combin. Theory,
%Ser. A 87 (1999) 33--51.
\bibitem{Aig01}
M. Aigner, Catalan and other numbers--a recurrent theme, in: H.
Crapo, D. Senato (Eds.), Algebraic Combinatorics and Computer
Science, Springer, Berlin, 2001, pp. 347--390.
%\bibitem{Aig99}
%M. Aigner, A characterization of the Bell numbers, Discrete Math.
%205 (1999) 207--210.
\bibitem{AAR99}
G.E. Andrews, R. Askey and R. Roy, Special Functions, Cambridge
University Press, Cambridge, 1999.
\bibitem{AEGL11}
G.E. Andrews, E.S. Egge, W. Gawronski, L.L. Littlejohn, The
Jacobi-Stirling numbers, J. Combin. Theory, Ser. A 120 (2013)
288-303.
\bibitem{AGL11}
G.E. Andrews, W. Gawronski, L.L. Littlejohn, The Legendre-Stirling
numbers, Discrete Math. 311 (2011) 1255--1272.

\bibitem{AL09}
G.E. Andrews, L.L. Littlejohn, A combinatorial interpretation of the
Legendre-Stirling numbers, Proc. Amer. Math. Soc. 137(8) (2009)
2581--2590.

\bibitem{Aval_08}
J.-C. Aval, Multivariate Fuss-Catalan numbers,
   Discrete Math. 308 (2008) 4660--4669.
\bibitem{BSV15}
J.F. Barbero G., J. Salas and E.J.S. Villase\~{n}or, Generalized
Stirling permutations and forests: higher-order Eulerian and Ward
numbers, Electron. J. Combin. 22 (2015), paper 3.37 .
%\bibitem{BK16}
%A. Baricz, S. Koumandos, Tur\'{a}n-type inequalities for some Lommel
%functions of the first kind, Proc. Edinb. Math. Soc. (2) 59 (2016)
%no. 3, 569--579.

\bibitem{Bar11-2}
P. Barry, Riordan arrays, Orthogonal polynomials as moments, and
Hankel transforms, J. Integer Seq. 14 (2011), Article 11.2.2.

%\bibitem{Be97}
%M. Benoumhani, On some numbers related to Whitney numbers of Dowling
%lattices, Adv. Appl. Math. 19 (1997) 106--116.
%\bibitem{Be99}
%M. Benoumhani, Log-concavity of Whitney numbers of Dowling lattices,
%Adv. Appl. Math. 22 (1999) 181--189.
\bibitem{BFS92}
F. Bergeron, P. Flajolet and B. Salvy, Varieties of increasing
trees, in CAAP'92, edited by J.-C. Raoult, Lecture Notes in Computer
Science 581 (SpringerVerlag, Berlin, 1992), pp. 24--48.

\bibitem{BFZ96}
A. Berenstein, S. Fomin, and A. Zelevinsky, Parametrizations of
canonical bases and totally positive matrices, Adv. Math. 122 (1996)
49--149.
%\bibitem{BB05}
%A. Bj\"orner, F. Brenti, Combinatorics of Coxeter Groups, Grad.
%Texts in Math., vol. 231, Springer-Verlag, 2005.
\bibitem{Bra06}
P. Br\"and\'en, On linear transformations preserving the P\'olya
frequency property, Trans. Amer. Math. Soc. 358 (2006) 3697--3716.
%\bibitem{Bra15}
%P. Br\"{a}nd\'{e}n, Unimodality, log-concavity, real-rootedness and
%beyond, in Handbook of Combinatorics (M. Bona, ed.), CRC Press,
%2015, pp. 437--483.
\bibitem{Bre89}
F. Brenti, Unimodal, log-concave, and P\'olya frequency sequences in
combinatorics,  Mem. Amer. Math. Soc. 413 (1989).
\bibitem{Bre94}
F. Brenti, Log-concave and unimodal sequences in algebra,
combinatorics, and geometry: an update, Contemp. Math. 178 (1994)
71--89.
\bibitem{Bre94EuJC}
F. Brenti, $q$-Eulerian polynomials arising from Coxeter Groups,
European J. Combin. 15 (1994) 417--441.
\bibitem{Bre95}
F. Brenti, Combinatorics and total positivity, J. Combin. Theory,
Ser. A 71 (1995) 175--218.
\bibitem{Bre98}  F. Brenti, Hilbert polynomials in combinatorics,
   J. Algebraic Combin.  7 (1998) 127--156 .

%\bibitem{Bre00}
%F. Brenti, A class of $q$-symmetric functions arising from plethysm,
%J. Combin. Theory, Ser. A \textbf{91} (2000) 137--170.
%\bibitem{Bro84}
%A.Z. Broder The $r$-Stirling numbers, Discrete Math. 49 (1984)
%241--259.
\bibitem{Cameron_16}  N.T. Cameron and J.E. McLeod,
   Returns and hills on generalized Dyck paths,
   J. Integer Seq. 19 (2016), article 16.6.1, 28~pp.


\bibitem{CTWY10}
W.Y.C. Chen, R.L. Tang, L.X.W. Wang, A.L.B. Yang, The
$q$-log-convexity of the Narayana polynomials of type B, Adv. in
Appl. Math. 44(2) (2010) 85--110.
\bibitem{CWY11}
W.Y.C. Chen, L.X.W. Wang, A.L.B. Yang, Recurrence relations for
strongly $q$-log-convex polynomials, Canad. Math. Bull. 54 (2011)
217--229.

\bibitem{CDDGS21}
X. Chen, B. Deb, A. Dyachenko, T. Gilmore and A.D. Sokal,
Coefficientwise total positivity of some matrices defined by linear
recurrences, S\'{e}minaire Lotharingien de Combinatoire 85B, article
\#30 (2021).
\bibitem{CLW15}
X. Chen, H. Liang, Y. Wang, Total positivity of recursive matrices,
Linear Algebra Appl. 471 (2015) 383--393.
\bibitem{CLW152}
X. Chen, H. Liang, Y. Wang, Total positivity of Riordan arrays,
European J. Combin. 46 (2015) 68--74.
\bibitem{CW19}
X. Chen, Y. Wang, Notes on the total positivity of Riordan arrays,
Linear Algebra Appl. 569 (2019) 156--161.
%\bibitem{CKS12}
%G.-S. Cheon, H. Kim and L.W. Shapiro, Combinatorics of Riordan
%arrays with iden- tical A and Z sequences, Discrete Math. 312 (2012)
%2040--2049.
%\bibitem{CJ12}
%G.-S. Cheon and J.-H. Jung, $r$-Whitney numbers of Dowling lattices,
%Discrete Math. 312 (2012) 2337--2348.

\bibitem{Com74}
L. Comtet, Advanced combinatorics, Revised and enlarged edition,
Reidel, Dordrecht, 1974.
\bibitem{CF05}
E.V.F. Conrad, P. Flajolet, The Fermat cubic, elliptic functions,
continued fractions and a combinatorial excursion, S\'{e}m. Lothar.
Combin. 54 (2005) 1--44.
\bibitem{CGHJK}
R. M. Corless, G. H. Gonnet, D. E. G. Hare, D. J. Jeffrey and D. E.
Knuth, On the Lambert W function, Adv. Comp. Math. 5 (1996)
329--359.
\bibitem{DFR05}  E. Deutsch, L. Ferrari and S. Rinaldi,
   Production matrices,  Adv. Appl. Math. 34 (2005) 101--122.
\bibitem{DFR}
E. Deutsch, L. Ferrari, S. Rinaldi, Production matrices and Riodan
arrays, Ann. Comb. 13 (2009) 65--85.

\bibitem{DS}
 E. Deutsh, L. Shapiro, Exponential Riordan arrays, Lecture
Notes, Nankai University, 2004, available electronically at
―http://www.combinatorics.net/ppt2004/Louis.

\bibitem{Dum74}
D. Dumont, Interprétations combinatoires des nombres de Genocchi,
Duke Math. J. 41 (1974) 305--318.
\bibitem{CPVWJ08}
A. Cuyt, V. Petersen, B. Verdonk, H. Waadeland, W.B. Jones, Handbook
of Continued Fractions for Special Functions, Springer, 2008.
%\bibitem{Do73}
%T.A. Dowling, A class of geometric lattices based on finite groups,
%J. Combin. Theory Ser. B 14 (1973) 61--86.
\bibitem{Dzh14}  A. Dzhumadil'daev and D. Yeliussizov,
   Stirling permutations on multisets,
   European J. Combin. 36 (2014) 377--392.

\bibitem{Dum81}
D. Dumont, Une approche combinatoire des fonctions elliptiques de
Jacobi, Adv. Math. 1 (1981) 1--39.

%\bibitem{EGH15}
%J. Engbers, D. Galvin, J. Hilyard, Combinatorially interpreting
%generalized Stirling numbers, European J. Combin. 43 (2015) 32--54.

\bibitem{Eg10}
E.S. Egge, Legendre-Stirling permutations, European J. Combin. 31(7)
(2010) 1735--1750.

\bibitem{PS20}
A. Elvey Price, A.D. Sokal, Phylogenetic trees, augmented perfect
matchings, and a Thron-type continued fraction (T-fraction) for the
Ward polynomials, Electron. J. Combin. 27(4) (2020), article P4.6.


\bibitem{ELW02}
W.N. Everitt, L.L. Littlejohn, R.Wellman, Legendre polynomials,
Legendre-Stirling numbers, and the left-definite analysis of the
Legendre diffierential expression, J. Comput. Appl. Math. 148(1)
(2002) 213--238.
\bibitem{EKLWY07}
W.N. Everitt, K.H. Kwon, L.L. Littlejohn, R. Wellman, G.J. Yoon,
Jacobi-Stirling numbers, Jacobi polynomials, and the left definite
analysis of the classical Jacobi differential expression, J. Comput.
Appl. Math. 208 (2007) 29--56.
\bibitem{Eul1760}  L. Euler, De seriebus divergentibus,
   Novi Commentarii Academiae Scientiarum Petropolitanae {\bf 5}, 205--237
   (1760);
   reprinted in {\em Opera Omnia}\/, ser.~1, vol.~14, pp.~585--617.
   [Latin original and English and German translations available at
    \url{http://eulerarchive.maa.org/pages/E247.html}]
%\bibitem{Fisk08}
%S. Fisk, Polynomials, roots, and interlacing, arXiv:math/0612833.


\bibitem{Fla80}
P. Flajolet, Combinatorial aspects of continued fractions, Discrete
Math. 32 (1980) 125--161.
\bibitem{FF89}
P. Flajolet, J. Fran\c{c}on, Elliptic functions, continued fractions
and doubled permutations, European J. Combin. 10 (1989) 235--241.
%\bibitem{FS70}
%D. Foata and M. Sch\"utzenberger, Th\'eorie G\'eom\'etrique des
%Polyn\^omes Euleriens, Lecture Notes in Mathematics, Vol. 138,
%Springer-Verlag, Berlin/New York, 1970.
\bibitem{FZ99}
S. Fomin and A. Zelevinsky, Double Bruhat cells and total
positivity, J. Amer. Math. Soc. 12 (1999) 335--380.
%\bibitem{FZ00}
%S. Fomin and A. Zelevinsky. Total positivity: tests and
%parametrizations. Math. Intelligencer 22(1) (2000) 23--33.

\bibitem{Gar11}
F.G. Garvan, Higher-order spt functions, Adv. Math. 228 (2011)
241--265.


\bibitem{GZ10}
Y. Gelineau, J. Zeng, Combinatorial interpretations of the
Jacobi-Stirling numbers. Electron. J. Combin. 17 (2010), Research
Paper 70.

\bibitem{Ges78a} I. Gessel, A note on Stirling permutations,
   https://arxiv.org/abs/2005.04133.

\bibitem{GV85}
I. Gessel and G. Viennot, Binomial determinants, path, and hook
length formulae, Adv. Math. 58 (1985) 300--321.
\bibitem{GS78}
 I. Gessel and R.P. Stanley, Stirling polynomials, J. Combin. Theory, Ser. A 24 (1978) 25--33.

 \bibitem{Graham_94}  R.L. Graham, D.E. Knuth and O. Patashnik,
    Concrete Mathematics: A Foundation for Computer Science\/,
   2nd ed.~(Addison-Wesley, Reading, Mass., 1994).

\bibitem{HS89}
M. Haiman and W. Schmitt, Incidence algebra antipodes and Lagrange
inversion in one and several variables, J. Combin. Theory, Ser. A 50
(1989) 172--185.
\bibitem{Hackl_18}  B. Hackl, C. Heuberger and H. Prodinger,
   %% Ascents in non-negative lattice paths,
   Counting ascents in generalized Dyck paths,
   in {\em 29th International Conference on Probabilistic, Combinatorial
       and Asymptotic Methods for the Analysis of Algorithms  (AofA 2018)},
   edited by J.A.~Fill and M.D.~Ward,
   Leibniz International Proceedings in Informatics (LIPIcs)
  110 (2018) 26:1--26:15.

 %\bibitem{HLP52}
%G.H. Hardy, J.E. Littlewood, G. P\'olya, Inequalities, Cambridge
%University Press, Cambridge, 1952.
\bibitem{Ha76}
J.P. Hayes, Enumeration of fanout-free Boolean functions, J. ACM 23
(1976) 700--709.
%\bibitem{HS98}
%L. Hsu, P.-S. Shiue, A unified approach to generalized Stirling
%numbers, Adv. Appl. Math. 20 (1998) 366--384.

\bibitem{Jan11}  S. Janson, M. Kuba and A. Panholzer,
   Generalized Stirling permutations, families of increasing trees
   and urn models,
   J. Combin. Theory, Ser. A 118 (2011) 94--114.
% \bibitem{Gou61}
%H. Gould, The q-Stirling numbers of the first and second kinds, Duke
%Math. J. 28 (1961) 281-289.
 %\bibitem{HV12}
 %J. Haglund, M. Visontai, Stable multivariate Eulerian polynomials and generalized Stirling permutations,
%European J. Combin. 33 (2012), 477--487.
%\bibitem{JKP11}
%S. Janson, M. Kuba and A. Panholzer, Generalized Stirling
%permutations, families of increasing trees and urn models, J.
%Combin. Theory Ser. A, 118 (2011), 94--114.

\bibitem{JT80}
W.B. Jones and W.J. Thron, Continued Fractions Analytic Theory and
Applications, Encyclopedia of Mathematics, 1980.
%\bibitem{J14}
%M. Josuat-Verg\`{e}s, Enumeration of snakes and cycle-alternating
%permutations, Austraian J. Combin. 60(3) (2014) 279--305.
\bibitem{Kar68}
S. Karlin, Total Positivity, Vol. I, Stanford University Press,
Stanford, 1968.
\bibitem{KJ11}
G.A. Kalugin, D.J. Jeffrey, Unimodal sequences show that Lambert W
is Bernstein, C. R. Math. Acad. Sci. Soc. R. Can. 33 (2011) 50--56.
\bibitem{KJC12}
G.A. Kalugin, D.J. Jeffrey and R.M. Corless, Bernstein, Pick,
Poisson and related integral expressions for Lambert $W$, Integral
Transforms Spec. Funct. 23 (2012) 817--829.

\bibitem{Kem66}
J.G. Kemeny, Representation theory for denumerable Markov chains,
Trans. Amer. Math. Soc. 125 (1966) 47--62 .
\bibitem{KW11}
Y. Kodama and L.K.Williams, KP solitons, total positivity and
cluster algebras, Proc. Natl. Acad. Sci. USA, 108 (2011) 8984--8989.
\bibitem{KW14}
Y. Kodama and L.K. Williams, KP solitons and total positivity for
the Grassmannian, Invent. Math. 198(3) (2014) 637--699.
\bibitem{LMW16}  H. Liang, L. Mu and Y. Wang,
   Catalan-like numbers and Stieltjes moment sequences,
   Discrete Math. 339 (2016) 484--488.
%\bibitem{LW-RZP}
%L.L. Liu, Y. Wang, A unified approach to polynomial sequences with
%only real zeros, Adv. in Appl. Math. 38(4) (2007) 542--560.
\bibitem{LW07}
L.L. Liu, Y. Wang, On the log-convexity of combinatorial sequences,
Adv. in Appl. Math.  39 (2007) 453--476.
%\bibitem{M66} M. Marden, Geometry of Polynomials, Math. Surveys
%Monogr., vol. 3, Amer. Math. Soc., Providence, RI, 1966.
\bibitem{Lom72}
Z. A. Lomnicki, Two-terminal series-parallel networks, Adv. Appl.
Prob. 4 (1972) 109--150.
%\bibitem{LW92}
%L. Lorentzen, H. Waadeland, Continued Fractions with Applications,
%North Holland, 1992.
\bibitem{Lothaire_97}  M. Lothaire, Combinatorics on Words\/,
   2nd ed.\ (Cambridge University Press, Cambridge, 1997).
\bibitem{Lus94}
G. Lusztig, Total positivity in reductive groups, Lie theory and
geometry. Progr. Math., vol. 123, Birkh\"{a}user Boston, Boston, MA,
1994, pp. 531--568.
\bibitem{Lusztig_98}  G. Lusztig, Introduction to total positivity,
   in {\em Positivity in Lie Theory: Open Problems}\/,
   edited by J.~Hilgert, J.D.~Lawson, K.-H.~Neeb and E.B.~Vinberg
   (de Gruyter, Berlin, 1998), pp.~133--145.
   %% Available at http://dedekind.mit.edu/~gyuri/papers/pod.ps

\bibitem{Lusztig_08}  G. Lusztig, A survey of total positivity,
   Milan J. Math. 76 (2008) 125--134 .
\bibitem{Mac1891}
P. A. MacMahon, Yoke-trains and multipartite compositions in
connexion with the analytical forms called ``trees'', Proc. London
Math. Soc. (2) 22 (1891) 330--346
\bibitem{Mac1921}
 P. A. MacMahon, Divisors of numbers and their continuations in the theory of partitions, Proc. London Math.
Soc. Ser. (2) 19 (1921) 75--113.

\bibitem{MRSV97}
D. Merlini, D.G. Rogers, R. Sprugnoli, M.C. Verri, On some
alternative characterizations of Riordan arrays, Canad. J. Math.
49(2)(1997)301--320.


\bibitem{Moo87}
J.W. Moon, Some enumerative results on series-parallel networks,
Annals Discrete Math., 33 (1987) 199--226.
\bibitem{Monge12}
P. Mongelli, Total positivity properties of Jacobi--Stirling
numbers, Adv. in. Appl. Math. 48 (2012) 354--364.
%\bibitem{Monge122}
%P. Mongelli, Combinatorial interpretations of particular evaluations
%of complete and elementary symmetric functions, Electron. J. Combin.
%19 (2012), Research Paper 60.

\bibitem{MMW17}
 L. Mu, J. Mao, Y. Wang, Row polynomial matrices of Riordan arrays, Linear Algebra Appl. 522 (2017) 1--14.
%\bibitem{MWY17}
%L. Mu, Y. Wang, Y.-N. Yeh, Hankel determinants of linear
%combinations of consecutive Catalan-like numbers, Discrete Math. 340
%(2017) 3097--3103.
%\bibitem{MRSV97}
%D. Merlini, D.G. Rogers, R. Sprugnoli, M.C. Verri, On some
%alternative characterizations of Riordan arrays, Canad. J. Math.
%49(2)(1997)301--320.
%\bibitem{M10}
%I. Mez\H{o}, A new formula for the Bernoulli polynomials, Results
%Math. 58 (2010) 329--335.
%\bibitem{Mi78}
%S.C. Milne, A q-analog of restricted growth functions, Dobinski’s
%equality, and Charlier polynomials, Trans. Amer. Math. Soc. 245
%(1978) 89--118.
%\bibitem{Mi82}
%S.C. Milne, Restricted growth functions, rank row matchings of
%partition lattices, and $q$-Stirling numbers, Adv. Math. 43 (1982)
%173--196.
%\bibitem{PZ16}
%Q. Pan, J. Zeng, On total positivity of Catalan-Stieltjes matrices,
%The Electronic J. Combin. 23 (4) (2016) P.4.33.
%\bibitem{Pet07}
%T.K. Petersen, Enriched P-partitions and peak algebras, Adv. Math.
%209 (2007) 561--610.

\bibitem{PZ16}
Q. Pan, J. Zeng, On total positivity of Catalan-Stieltjes matrices,
Electron. J. Combin. 23 (2016), no. 4, Paper 4.33, 18 pp.


\bibitem{Park_94a}  S.-K. Park, The $r$-multipermutations,
   J. Combin. Theory A 67 (1994) 44--71.

\bibitem{Park_94b}  S.-K. Park, Inverse descents of $r$-multipermutations,
   Discrete Math. 132 (1994) 215--229.

\bibitem{PS19}
M. P\'{e}tr\'{e}olle, A.D. Sokal, Lattice paths and branched
continued fractions. II. Multivariate Lah polynomials and Lah
symmetric functions, European J. Combin. 92 (2021), Article 103235.
\bibitem{PSZ18}
M. P\'{e}tr\'{e}olle, A.D. Sokal and B.-X. Zhu, Lattice paths and
branched continued fractions: An infinite sequence of
generalizations of the Stieltjes-Rogers and Thron-Rogers
polynomials, with coefficientwise Hankel-total positivity,  to
appear in Mem. Amer. Math. Soc. 291 (2023) No. 1450, 158 pp,
arXiv:1807.03271.
\bibitem{Pin10}
A. Pinkus, Totally Positive Matrices, Cambridge University Press,
Cambridge, 2010.
\bibitem{Pitman_06}  J. Pitman, Combinatorial Stochastic Processes\/
   (Ecole d'Et\'e de Probabilit\'es de Saint-Flour XXIII -- 2002),
   Lecture Notes in Mathematics \#1875 (Springer-Verlag, Berlin, 2006).

   \bibitem{PS64}
G. P\'olya and G. Szeg\"o, Problems and Theorems in Analysis, Vol.
II, Springer-Verlag, Berlin, 1976.

\bibitem{Pos06}
A. Postnikov, Total positivity, Grassmannians, and networks,
Preprint, arXiv:math/0609764, 2006.
%\bibitem{R14}
%M. Rahmani, Some results on Whitney numbers of Dowling lattices,
%Arab J. Math. Sci. 20(1) (2014) 11-27.
%\bibitem{PS64}
%G. P\'olya and G. Szeg\"o, Problems and Theorems in Analysis, Vol.
%II, Springer-Verlag, Berlin, 1976.


\bibitem{Prodinger_16}  H. Prodinger, Returns, hills, and $t$-ary trees,
   J. Integer Seq. 19 (2016), article 16.7.2, 8~pp.
%\bibitem{Pos06}
%A. Postnikov, Total positivity, Grassmannians, and networks,
%Preprint, arXiv:math/0609764, 2006.
%\bibitem{R14}
%M. Rahmani, Some results on Whitney numbers of Dowling lattices,
%Arab J. Math. Sci. 20(1) (2014) 11-27.
\bibitem{Rie03} K. Rietsch,
Totally positive Toeplitz matrices and quantum cohomology of partial
flag varieties, J. Amer. Math. Soc. 16(2) (2003) 363--392.
\bibitem{Rio68}
J. Riordan, Combinatorial Identities, Wiley, New York, 1968.
%\bibitem{R77}
%J. Riordan, Letter, Oct 31 1977,
%http://oeis.org/A001861/a$001861_1$.pdf.

\bibitem{Roblet_94}  E. Roblet,
   Une interpr\'etation combinatoire des approximants de Pad\'e,
   Th\`ese de doctorat,
   Universit\'e Bordeaux~I (1994).
   Reprinted as Publications du Laboratoire de Combinatoire
   et d'Informatique Math\'ematique (LACIM) \#17,
   Universit\'e du Qu\'ebec \`a Montr\'eal (1994).
   Available on-line at \url{http://lacim.uqam.ca/en/les-parutions/}
 \bibitem{R07}
L.J. Rogers, On the representation of certain asymptotic series as
convergent continued fractions, Proc. Lond. Math. Soc. (Ser. 2) 4
(1907) 72--89.

%\bibitem{Sag921}
%B.E. Sagan, Log concave sequences of symmetric functions and analogs
%of the Jacobi-Trudi determinants, Trans. Amer. Math. Soc. 329 (1992)
%795--811.
\bibitem{Sc30}
I. Schoenberg, \"{U}ber variationsvermindernde lineare
Transformationen, Math. Z. 32 (1930) 321--322.
%\bibitem{Sha76}
%L.W. Shapiro, A Catalan triangle, Discrete Math. 14 (1976) 83--90.
%\bibitem{ST43}
\bibitem{SGWW91}
L.W. Shapiro, S. Getu, W.-J. Woan, L.C. Woodson, The Riordan group,
Discrete Appl. Math. 34 (1--3) (1991) 229--239.
\bibitem{ST43}
J.A. Shohat and J.D. Tamarkin, The Problem of Moments, Amer. Math.
Soc., New York, 1943.
%\bibitem{Spr11}
%R. Sprugnoli, Combinatorial sums through Riordan arrays, J. Geom.
%101 (2011) 195--210.

\bibitem{Slo}
N.J.A. Sloane, The On-Line Encyclopedia of Integer Sequences,
https://oeis.org.
\bibitem{Sok19}
A.D. Sokal, Continued fractions and Hankel-total positivity, $15$th
International Symposium on Orthogonal Polynomials, Special Functions
and Applications, Hagenberg, Austria, 23 July 2019,
https://www3.risc.jku.at/conferences/opsfa2019/talk/sokal/\\pdf.
\bibitem{Sok21}
A.D. Sokal, Total positivity of some polynomial matrices that
enumerate labeled trees and forests I. Forests of rooted labeled
trees, Monatsh. Math. 200 (2023) 389--452.
\bibitem{Sok}
A.D. Sokal, Coefficientwise total positivity (via continued
fractions) for some Hankel matrices of combinatorial polynomials, in
preparation, 2018.

\bibitem{Spr11}
R. Sprugnoli, Combinatorial sums through Riordan arrays, J. Geom.
101 (2011) 195--210.
%\bibitem{Sta89}
%R.P. Stanley, Log-concave and unimodal sequences in algebra,
%combinatorics, and geometry, Ann. New York Acad. Sci. 576 (1989)
%500--534.
%\bibitem{Sta16}
%R.P. Stanley, Smith normal form in combinatorics. J. Combin. Theory
%Ser. A 144 (2016) 476--495.
\bibitem{Stanley_86}  R.P. Stanley, Enumerative Combinatorics\/,
      vol. 1 (Wadsworth \& Brooks/Cole, Monterey, California, 1986).
      Reprinted by Cambridge University Press, 1999.

\bibitem{Stanley_99}  R.P. Stanley, Enumerative Combinatorics\/,
      vol.~2 (Cambridge University Press, Cambridge--New York, 1999).


\bibitem{S90}
J. Stembridge, Nonintersecting paths, pfaffians, and plane
partitions, Adv. Math. 83 (1990) 96--131.
%\bibitem{St97}
%J. Stembridge, Enriched P-partitions, Trans. Amer. Math. Soc. 349
%(1997) 763--788.
\bibitem{S89}
T. Stieltjes, Sur la r\'{e}duction en fraction continue d'une
s\'{e}rie proc\'{e}dant selon les puissances descendantes d'une
variable, Ann. Fac. Sci. Toulouse 4 (1889) 1--17.
%\bibitem{S48}
%G. Szeg\"{o}, On an inequality of P. Tur\'{a}n concerning Legendre
%polynomials, Bull. Am. Math. Soc. 54 (1948) 401--405.
%\bibitem{Tan75}
%S. Tanny, On some numbers related to the Bell numbers, Canad. Math.
%Bull. 17 (1975) 733--738.
%\bibitem{T50}
%P. Tur\'{a}n, On the zeros of the polynomials of Legendre,
%\v{C}asopis Pest. Mat. Fys. 75 (1950), 113--122.

\bibitem{Varvak_04}  A.L. Varvak, Encoding properties of lattice paths,
   Ph.D.~thesis, Brandeis University, May 2004.
   Available on-line at
   \url{http://people.brandeis.edu/~gessel/homepage/students/varvakthesis.pdf}

\bibitem{Viennot_83}  G. Viennot, Une th\'eorie combinatoire des polyn\^omes
   orthogonaux g\'en\'eraux, Notes de conf\'erences donn\'ees
   \`a l'Universit\'e du Qu\'ebec \`a Montr\'eal,
   septembre-octobre 1983.
   Available on-line at
   \url{http://www.xavierviennot.org/xavier/polynomes_orthogonaux.html}

%\bibitem{Vie85}
%G. Viennot, A combinatorial theory for general orthogonal
%polynomials with extensions and applications, Orthogonal polynomials
%and applications (Bar-le-Duc, 1984), 139--157, Lecture Notes in
%Math., 1171, Springer, Berlin, 1985.
%\bibitem{Wag92}
%D.G. Wagner, Total positivity of Hadamard products, J. Math. Anal.
%Appl. 163 (1992) 459--483.
%\bibitem{W02}
%Y. Wang, Proof of a conjecture of Ehrenborg and Steingr\'imsson on
%excedance statistic, European J. Combin. 23 (2002) 355--365.
\bibitem{War34}
M. Ward, The representation of Stirling’s numbers and Stirling’s
polynomials as sums of factorials, Amer. J. Math. 56 (1934) 87--95.
\bibitem{WYjcta05}
Y. Wang and Y.-N. Yeh, Polynomials with real zeros and P\'olya
frequency sequences, J. Combin. Theory, Ser. A 109 (2005) 63--74.
%\bibitem{WY07}
%Y. Wang, Y.-N. Yeh, Log-concavity and LC-positivity, J. Combin.
%Theory Ser. A 114 (2007) 195--210.
\bibitem{WZ16}
Y. Wang, B.-X. Zhu, Log-convex and Stieltjes moment sequences, Adv.
in Appl. Math. 81 (2016) 115--127.
\bibitem{Wid41}
D.V. Widder, The Laplace Transform, Princeton University Press,
Princeton, 1946.

\bibitem{Zhu13}
B.-X. Zhu, Log-convexity and strong $q$-log-convexity for some
triangular arrays, Adv. in Appl. Math. 50 (2013) 595--606.
\bibitem{Zhu14}
B.-X. Zhu, Some positivities in certain triangular array, Proc.
Amer. Math. Soc. 142(9) (2014) 2943--2952.
\bibitem{Zhu17}
B.-X. Zhu, Log-concavity and strong q-log-convexity for Riordan
arrays and recursive matrices, Proc. Roy. Soc. Edinburgh Sect. A 147
(2017) 1297--1310.
\bibitem{Zhu182}
B.-X. Zhu, $q$-log-convexity from linear transformations and
polynomials with real zeros, European J. Combin. 73 (2018) 231--246.
%\bibitem{Zhu17}
%B.-X. Zhu, Positivity, $x$-Stieltjes moment property and continued
%fractions from combinatorial arrays, submitted.
%\bibitem{Zhu18}
%B.-X. Zhu, Positivity of iterated sequences of polynomials, SIAM J.
%Discrete Math. 32 (2018) 1993-2010.
%\bibitem{Zhu2018}
%B.-X. Zhu, Total positivity, continued fractions and Stieltjes
%moment sequences, preprint, 2018.
\bibitem{Zhu19}
B.-X. Zhu, Positivity and continued fractions from the binomial
transformation, Proc. Roy. Soc. Edinburgh A 149 (2019) 831--847.
\bibitem{Zhu191}
B-X. Zhu, Hankel-total positivity of some sequences, Proc. Amer.
Math. Soc. 147 (2019) 4673--4686.
\bibitem{Zhu201}
B.-X. Zhu, A generalized Eulerian triangle from staircase tableaux
and tree-like tableaux, J. Combin. Theory, Ser. A 172 (2020),
Article 105206, 32pp.

\bibitem{Zhu21}
B.-X. Zhu, Stieltjes moment properties and continued fractions from
combinatorial triangles, preprint, Adv. in Appl. Math. 130 (2021),
Article 102232, 33pp.

\bibitem{Zhu212}
B.-X. Zhu, On a Stirling-Whitney-Riordan triangle, J. Algebraic
Combin. 54 (2021) 999--1019.

\bibitem{Zhu213}
B.-X. Zhu, Total positivity from the exponential Riordan arrays,
SIAM J. Discrete Math. 35 (2021) 2971--3003.

\bibitem{ZS15}
B.-X. Zhu, H. Sun, Linear transformations preserving the strong
$q$-log-convexity of polynomials, Electron. J. Combin. 22 (3)
(2015), Paper P3.26.
\end{thebibliography}
\end{document}